\definecolor{cite}{rgb}{0.2,0.5,0.5}
\definecolor{link}{rgb}{0.2,0.5,0.5}
\definecolor{url}{rgb}{0.4,0.4,0.4}
\g@addto@macro\bfseries{\boldmath}
\let\theoldbibliography\thebibliography
\renewcommand{\thebibliography}[1]{%
  \theoldbibliography{#1}%
  \setlength{\parskip}{0ex}
  \setlength{\itemsep}{0.5ex plus 0.2ex minus 0.2ex}
  \small
}
\apptocmd{\thebibliography}{\raggedright}{}{}
\renewcommand{\title}[1]{\newcommand{\thetitle}{#1}}
\renewcommand{\author}[1]{\newcommand{\theauthor}{#1}}
\renewcommand{\maketitle}{%
  \begin{center}
    {\linespread{1.15}%
      \bfseries\MakeTextUppercase%
      \thetitle\par} \vspace{4.0ex}
    \footnotesize
    {\MakeUppercase \theauthor}
  \end{center}
  \vspace{3.0ex}
  \thispagestyle{fancy}
}
\renewenvironment{abstract}{\noindent\begin{center}\begin{minipage}{0.8\linewidth}\small{\scshape Abstract.}}{\end{minipage}\end{center}}
\newlength{\tagsep}
\def\fullwidthdisplay{\displayindent\z@ \displaywidth\columnwidth}
\edef\@tempa{\noexpand\fullwidthdisplay\the\everydisplay}
\everydisplay\expandafter{\@tempa}
\titleformat{\section}{\centering}{\textsection\thesection.}{1.5\tagsep}{\scshape}
\titleformat{\subsection}[runin]{}{\fontseries{b}\selectfont\textsection\bfseries\thesubsection.}{1.5\tagsep}{\bfseries}[.]
\titlespacing*{\section}{0pt}{4ex}{\medskipamount}
\titlespacing*{\subsection}{0pt}{\bigskipamount}{0.5em}
\newcommand{\crefeqfmt}[1]{
  \crefformat{#1}{(##2##1##3)}
  \Crefformat{#1}{(##2##1##3)}
  \crefrangeformat{#1}{(##3##1##4--##5##2##6)}
  \Crefrangeformat{#1}{(##3##1##4--##5##2##6)}
  \crefmultiformat{#1}{(##2##1##3}{, ##2##1##3)}{, ##2##1##3}{, ##2##1##3)}
  \Crefmultiformat{#1}{(##2##1##3}{, ##2##1##3)}{, ##2##1##3}{, ##2##1##3)}
  \crefrangemultiformat{#1}{(##3##1##4--##5##2##6}{, ##3##1##4--##5##2##6)}{, ##3##1##4--##5##2##6}{, ##3##1##4--##5##2##6)}
  \Crefrangemultiformat{#1}{(##3##1##4--##5##2##6}{, ##3##1##4--##5##2##6)}{, ##3##1##4--##5##2##6}{, ##3##1##4--##5##2##6)}
}
\newcommand{\crefsecfmt}[1]{%
  \crefformat{#1}{\textsection##2##1##3}
  \Crefformat{#1}{\textsection##2##1##3}
  \crefrangeformat{#1}{\textsection\textsection##3##1##4--##5##2##6}
  \Crefrangeformat{#1}{\textsection\textsection##3##1##4--##5##2##6}
  \crefmultiformat{#1}{\textsection\textsection##2##1##3}{--##2##1##3}{, ##2##1##3}{ and~##2##1##3}
  \Crefmultiformat{#1}{\textsection\textsection##2##1##3}{--##2##1##3}{, ##2##1##3}{ and~##2##1##3}
  \crefrangemultiformat{#1}{\textsection\textsection##3##1##4--##5##2##6}{ and~##3##1##4--##5##2##6}{, ##3##1##4--##5##2##6}{ and~##3##1##4--##5##2##6}
  \Crefrangemultiformat{#1}{\textsection\textsectionXS##3##1##4--##5##2##6}{ and~##3##1##4--##5##2##6}{, ##3##1##4--##5##2##6}{ and~##3##1##4--##5##2##6}
}
\numberwithin{equation}{block}
\crefname{part}{Part}{Parts}
\crefname{chapter}{Chapter}{Chapters}
\crefname{figure}{Figure}{Figures}
\newcommand{\blocknumfont}{\bfseries}
\newcommand{\blockheadfont}{\bfseries}
\newcommand{\blocknotefont}{\normalfont}
\newcommand{\blockspecialfont}{\itshape}
\newcommand{\blockhorizspace}{0.4em}
\newcommand{\blocknotespace}{0.4em}
\newcommand{\blocknumsep}{}
\newcommand{\blocksep}{.}
\newcommand{\blockvertspace}{\medskipamount}
\newtheoremstyle{block}%
  {\blockvertspace}
  {\blockvertspace}
  {}
  {}
  {} 
  {}
  {0em}
  {\thmname{{\blockheadfont#1}}%
    \@ifnotempty{#1}{\@ifnotempty{#2}{ }}%
    \thmnumber{{\blocknumfont #2\blocknumsep}}%
    \@ifnotempty{#1#2}{{\blockheadfont\blocksep}}%
    \thmnote{\hspace{\blocknotespace}[{\blocknotefont#3}]}%
    \hspace{\blockhorizspace}%
  }
\newtheoremstyle{blockspecial}%
  {\blockvertspace}
  {\blockvertspace}
  {\blockspecialfont}
  {}
  {} 
  {}
  {0em}
  {\thmname{{\blockheadfont#1}}%
    \@ifnotempty{#1}{\@ifnotempty{#2}{ }}%
    \thmnumber{{\blocknumfont #2\blocknumsep}}%
    \@ifnotempty{#1#2}{{\blockheadfont\blocksep}}%
    \thmnote{\hspace{\blocknotespace}[{\blocknotefont#3}]}%
    \hspace{\blockhorizspace}%
  }
\newtheoremstyle{blocknamed}%
  {\blockvertspace}
  {\blockvertspace}
  {}
  {}
  {} 
  {}
  {0em}
  {\@ifempty{#3}{\thmname{\blockheadfont#1}}{\thmname{\blockheadfont#3}}%
    \@ifnotempty{#1#3}{\blockheadfont\blocksep}\hspace{\blockhorizspace}%
  }
\theoremstyle{blocknamed}
\newtheorem*{pf}{Proof}
\renewenvironment{proof}[1][]{%
  \pushQED{\qed}%
  \begin{pf}[#1]
}{%
  \popQED\endtrivlist\@endpefalse%
  \end{pf}
}
\newcommand{\defblock}[3]{%
  \theoremstyle{block}
  \newtheorem{#1}[block]{#2}%
  \Crefname{#1}{#2}{#3}
  \newtheorem*{#1*}{#2}%
}
\newcommand{\defblockspecial}[3]{%
  \theoremstyle{blockspecial}
  \newtheorem{#1}[block]{#2}%
  \Crefname{#1}{#2}{#3}
  \newtheorem*{#1*}{#2}%
}
\setlist{%
  parsep=0ex, listparindent=\parindent,%
  itemsep=0.75ex, topsep=0.75ex,%
  leftmargin=2.5em,%
}
\setlist[enumerate, 1]{%
  label={\upshape(\arabic*)},%
  ref={\upshape\arabic*},%
}
\setlist[enumerate, 2]{%
  leftmargin=*,%
  label={\upshape(\theenumi-\roman*)},%
  ref=\theenumi-\roman*,%
  widest=0,
}
\setlist[itemize, 1]{%
  label={--},%
}
\setlist[itemize, 2]{%
  label=--,%
}
\newcommand{\lblto}[1]{\xrightarrow{#1}}
\newcommand{\isoto}{\lblto{\raisebox{-0.3ex}[0pt]{$\scriptstyle \sim$}}}
\newcommand{\inj}{\hookrightarrow}
\newcommand{\surj}{\twoheadrightarrow}
\newcommand{\iso}{\simeq}
\numberwithin{block}{subsection}
\newcommand{\num}[1]{\mathbb{#1}}
\newcommand{\Ab}{\mathrm{Ab}}
\newcommand{\Alg}{\mathrm{Alg}}
\newcommand{\Beil}{\mathrm{B}}
\newcommand{\MBL}{\mathrm{MBL}}
\newcommand{\BMS}{\mathrm{BMS}}
\newcommand{\BP}{\mathrm{BP}}
\newcommand{\Bcir}{{\mathrm{BS}^1}}
\newcommand{\BiFilCAlg}{\mathrm{BiFilCAlg}}
\newcommand{\BiFilSpt}{\mathrm{BiFilSpt}}
\newcommand{\BiGrSpt}{\mathrm{BiGrSpt}}
\newcommand{\CAlg}{\mathrm{CAlg}}
\newcommand{\Cp}{\mathrm{C}_p}
\newcommand{\Mod}{\mathrm{Mod}}
\newcommand{\CycCAlg}{\mathrm{CycCAlg}}
\newcommand{\CycMod}{\mathrm{CycMod}}
\newcommand{\CycSpt}{\mathrm{CycSpt}}
\newcommand{\D}{\mathrm{D}}
\newcommand{\E}{\mathbb{E}}
\newcommand{\F}{\num{F}}
\newcommand{\FilCAlg}{\mathrm{FilCAlg}}
\newcommand{\FilMod}{\mathrm{FilMod}}
\newcommand{\FilSpt}{\mathrm{FilSpt}}
\newcommand{\Gr}{\mathrm{Gr}}
\newcommand{\GrAb}{\mathrm{GrAb}}
\newcommand{\GrSpt}{\mathrm{GrSpt}}
\renewcommand{\H}{\mathrm{H}}
\newcommand{\HC}{\mathrm{HC}}
\newcommand{\HH}{\mathrm{HH}}
\newcommand{\HP}{\mathrm{HP}}
\newcommand{\HKR}{\mathrm{HKR}}
\renewcommand{\L}{\mathrm{L}}
\newcommand{\LSym}{\operatorname{LSym}}
\newcommand{\Map}{\mathrm{Map}}
\newcommand{\MU}{\mathrm{MU}}
\newcommand{\MW}{\mathrm{MW}}
\newcommand{\Q}{\num{Q}}
\renewcommand{\S}{\num{S}}
\newcommand{\Spc}{\mathrm{Spc}}
\newcommand{\Spec}{\mathrm{Spec}}
\newcommand{\Spf}{\mathrm{Spf}}
\newcommand{\Spt}{\mathrm{Spt}}
\newcommand{\TC}{\mathrm{TC}}
\newcommand{\THH}{\mathrm{THH}}
\newcommand{\TP}{\mathrm{TP}}
\newcommand{\Tor}{\operatorname{Tor}}
\newcommand{\Z}{\num{Z}}
\newcommand{\alg}{\mathrm{alg}}
\newcommand{\can}{\mathrm{can}}
\newcommand{\cir}{{\mathrm{S}^1}}
\newcommand{\cofib}{\operatorname*{cofib}}
\newcommand{\colim}{\operatorname*{colim}}
\newcommand{\cpl}[1]{#1^\wedge}
\newcommand{\ev}{\mathrm{ev}}
\newcommand{\gr}{\operatorname{gr}}
\newcommand{\fib}{\operatorname*{fib}}
\newcommand{\fil}{\operatorname{fil}}
\newcommand{\h}{\mathrm{h}}
\newcommand{\mot}{\mathrm{mot}}
\newcommand{\op}{\mathrm{op}}
\newcommand{\tate}{\mathrm{t}}
\renewcommand{\llbracket}{\lsem}
\renewcommand{\rrbracket}{\rsem}
\title{A motivic filtration on the topological cyclic homology of commutative ring spectra}
\author{Jeremy Hahn, Arpon Raksit, and Dylan Wilson}
\date{}
\begin{document}

\maketitle

\begin{abstract}
  For a prime number $p$ and a $p$-quasisyntomic commutative ring $R$, Bhatt--Morrow--Scholze defined motivic filtrations on the $p$-completions of $\THH(R), \TC^{-}(R), \TP(R),$ and $\TC(R)$, with the associated graded objects for $\TP(R)$ and $\TC(R)$ recovering the prismatic and syntomic cohomology of $R$, respectively. We give an alternate construction of these filtrations that applies also when $R$ is a well-behaved commutative ring spectrum; for example, we can take $R$ to be $\mathbb{S}$, $\mathrm{MU}$, $\mathrm{ku}$, $\mathrm{ko}$, or $\mathrm{tmf}$.  We compute the mod $(p,v_1)$ syntomic cohomology of the Adams summand $\ell$ and observe that, when $p \ge 3$, the motivic spectral sequence for $V(1)_*\mathrm{TC}(\ell)$ collapses at the $\mathrm{E}_2$-page.
\end{abstract}

{\small
\setcounter{tocdepth}{1}
\tableofcontents
\vspace{3.0ex}
}

\section{Introduction}
Topological Hochschild homology is an invariant that to an associative ring spectrum $R$ assigns a cyclotomic spectrum $\THH(R)$, closely related to the algebraic K-theory of $R$. In \cite{BMS}, Bhatt--Morrow--Scholze 
fix a prime number $p$ and study $p$-completed topological Hochschild homology $\cpl{\THH(R)}_p$ for a class of 
discrete commutative rings $R$ that they call \emph{quasisyntomic}.
In particular, they construct a natural \emph{motivic
filtration} on $\cpl{\THH(R)}_p$ for such $R$.
After accounting for the cyclotomic structure on $\cpl{\THH(R)}_{p}$, the BMS motivic filtration allows one to define both the prismatic cohomology\footnote{Throughout this paper, the term ``prismatic cohomology'' will be used to refer to what is more properly called ``Nygaard-completed absolute prismatic cohomology.''} and syntomic cohomology of $R$.
The construction of prismatic and syntomic cohomology has in turn led to an explosion of new results in algebraic $K$-theory and mixed characteristic algebraic geometry \cite{BhattICM}.

In the ICM address \cite[Conjecture 5.1]{RognesICM}, Rognes explains his long-standing conjecture that a similar motivic filtration exists on the mod $(p,v_1)$ algebraic $\mathrm{K}$-theory of the $p$-completed Adams summand $\cpl{\ell}_p$, which is a commutative but not discrete ring spectrum.  Specifically, the calculation by Ausoni and Rognes of $\mathrm{K}^{\alg}(\cpl{\ell}_p)/(p,v_1)$ strongly suggests that there is a motivic filtration on $\THH(\ell)$ that is appropriately compatible with cyclotomic structure \cite{AusoniRognes}.  In \cite[\textsection 6.1]{HahnWilson}, the first and third authors studied a filtration on $\THH(\ell)$, and more generally on the $\THH$ of truncated Brown--Peterson spectra, that behaves much like the conjectured motivic filtration.  Our construction of this filtration was very much ad-hoc, and it was unclear in what generality it could be defined. Furthermore, while the filtration was obviously $\cir$-equivariant, it did not obviously interact well with the cyclotomic Frobenius. In other words, we were able to define the Nygaard-filtered prismatic cohomology of $\ell$, but not the syntomic cohomology of $\ell$.

Here, we offer and attempt to justify the following proposal:

\begin{proposal}
  \label{in--main-proposal}
  There is a functorial filtration defined on all $\E_\infty$-rings that we call the \emph{even filtration}.  For a class of $\E_\infty$-rings $R$ that we call \emph{chromatically quasisyntomic}, the even filtration on $\THH(R)$ is the motivic filtration on $\THH(R)$, and it is appropriately compatible with all cyclotomic structure.
\end{proposal}

The even filtration has a straightforward definition, and can be applied to $\E_\infty$ rings that do not arise from $\THH$ constructions.  In addition to recovering the BMS motivic filtration, we prove that it can be used to recover the Adams--Novikov filtration on the sphere, the ``synthetic analog'' functor $\nu$ into Morel--Voevodsky's $\mathcal{SH}(\mathbb{C})$, the Hochschild–Kostant–Rosenberg filtration, and the global motivic filtration of Morin and Bhatt--Lurie.

For chromatically quasisyntomic $\E_\infty$-rings $R$, such as $R=\mathbb{S}$, $\MU$, $\ell$, $\mathrm{ku}$, $\mathrm{ko}$, or $\mathrm{tmf}$, the even/motivic filtration on $\THH(R)$ can be calculated using explicit presentations: we find a map of cyclotomic $\E_\infty$-rings $\THH(R) \to B$ such that $\fil^n_{\mot}\THH(R) \simeq \lim_{\Delta} (\tau_{\ge 2n} (B^{\otimes_{\THH(R)} \bullet +1}))$. Readers familiar with \cite{BMS} may appreciate the analogy where $R$ is a discrete $p$-quasisyntomic ring and $B$ is $\THH$ of a quasiregular semiperfectoid cover (and everything is $p$-completed). Such presentations in particular allow us to compute $V(1)_* \TC(\ell)$ at the prime $p=3$, which was not previously known.

\subsection{The even filtration and comparison theorems}
\label{in--ev}

For motivation, let us recall how Bhatt--Morrow--Scholze defined their motivic filtration, which we denote here by $\fil^\star_\BMS \cpl{\THH(R)}_p$. First, they focus on a certain class of discrete commutative rings, which they call quasisyntomic and which from here forwards we will call $p$-complete $p$-quasisyntomic. Given a $p$-complete $p$-quasisyntomic ring $R$, they prove that there
exist another such ring $S$ and a map $R \to S$ satisfying certain conditions that in particular guarantee the following:
\begin{enumerate}
\item the canonical map $\cpl{\THH(R)}_p \to \lim_{\Delta}(\cpl{\THH(S^{\otimes_R \bullet+1})}_p)$ is an equivalence;
\item for each $n \ge 0$, the spectrum $\cpl{\THH(S^{\otimes_R n+1})}_p$ is \emph{even}, i.e. its homotopy groups are concentrated in even degrees.
\end{enumerate}
They then define $\fil^k_\BMS \cpl{\THH(R)}_p$ to be $\lim_{\Delta}(\tau_{\ge 2k}(\cpl{\THH(S^{\otimes_R \bullet+1})}_p))$, showing that this is independent of the choice of map $R \to S$ using the formalism of Grothendieck topologies.

More generally, the strategy of computing  $\THH(R)$ by realizing it as a totalization of even ring spectra has been broadly applied to great effect \cite{BMS, LiuWang,HahnWilson,NikolausKrause,Zpn,DavidLee}.  It inspires the following construction:

\begin{definition}
  \label{in--fil-ev}
  An $\E_\infty$-ring $B$ is \emph{even} if its homotopy groups $\pi_*B$ are concentrated in even degrees. For any $\E_\infty$-ring $A$, we define $\fil^n_{\ev} A$ to be the limit, over all maps of $\E_\infty$-rings $A \to B$ with $B$ even, of $\tau_{\ge 2n} B$. Together, the spectra $\fil^n_{\ev} A$ assemble to define a filtered $\E_\infty$-ring $\fil^\star_{\ev} A$.\footnote{For the definition of ``filtered $\E_\infty$-ring'', see \Cref{ConventionsSection}.} We refer to this construction as the \emph{even filtration}.
\end{definition}

\begin{remark}
  \label{in--fil-ev-rmk}
  We make the above definition precise by considering the category $\CAlg$ of $\E_\infty$-rings, the full subcategory $\CAlg^{\ev}$ of even $\E_\infty$-rings, and the category $\FilCAlg$ of filtered $\E_\infty$-rings. Then $\fil^\star_\ev(\--)$ is the right Kan extension, along the inclusion $\CAlg^{\ev} \inj \CAlg$, of the double-speed Postnikov filtration functor $\tau_{\ge 2*}:\CAlg^{\ev} \to \FilCAlg$.\footnote{The set-theoretic issues involved in this Kan extension are addressed in \cref{SecEvenFiltration}.}
\end{remark}

Suppose now that $R$ is a discrete commutative ring with bounded $p$-power torsion for all primes $p$ and such that the algebraic cotangent complex $\L^\alg_R$ has Tor-amplitude contained in $[0,1]$.  For example, $R$ might be a polynomial ring over $\mathbb{Z}$, or the quotient of a polynomial ring by a finite regular sequence.  For each prime number $p$, the $p$-completion $\cpl{R}_p$ is $p$-complete $p$-quasisyntomic, so we may speak of $\fil^\star_{\BMS}\cpl{\THH(R)}_p = \fil^\star_{\BMS}\cpl{\THH(\cpl{R}_p)}_p $.  By gluing together the BMS filtration at all primes $p$, as well as rational information, Morin \cite{Morin} and Bhatt--Lurie \cite[\textsection 6.4]{BL} constructed a \emph{global motivic filtration} $\fil^\star_{\mot} \THH(R)$.  Here, we prove that this motivic filtration is the even filtration:

\begin{theorem} \label{IntroBLcomparison}
  Let $R$ be a discrete commutative ring with bounded $p$-power torsion for all primes $p$ and such that the algebraic cotangent complex $\L^\alg_R$ has Tor-amplitude contained in $[0,1]$.  Then there is a canonical equivalence
  \[
    \fil^\star_{\mot} \THH(R) \simeq \fil^\star_{\ev} \THH(R),
  \]
  where $\fil^\star_{\mot}$ denotes the global motivic filtration of Morin and Bhatt--Lurie.
\end{theorem}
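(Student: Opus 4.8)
The plan is to establish the equivalence locally — one prime at a time, plus rationally — and then glue, matching the way the global motivic filtration of Morin and Bhatt--Lurie is itself assembled. Concretely, both $\fil^\star_{\mot}\THH(R)$ and $\fil^\star_{\ev}\THH(R)$ are filtered spectra whose underlying object is $\THH(R)$, so it suffices to produce a natural comparison map and check it is an equivalence after $p$-completion for every prime $p$ and after rationalization. For the rational statement, the cotangent complex hypothesis forces $\THH(R)\otimes\Q$ to be the HKR-type object $\bigoplus_i (\Lambda^i_{R_\Q} \Omega^1_{R_\Q})[i]$, and on both sides the filtration should reduce to the evident HKR/Postnikov filtration; the excerpt already advertises that the even filtration recovers the HKR filtration, so I would invoke that comparison.

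The crux is the $p$-complete comparison: I would show that, for $R$ as in the hypothesis, $\fil^\star_{\ev}\THH(R)$ is $p$-completely equivalent to $\fil^\star_{\BMS}\cpl{\THH(\cpl R_p)}_p$. The key input is that the hypothesis (bounded $p$-power torsion and $\L^\alg_R$ of Tor-amplitude in $[0,1]$) makes $\cpl R_p$ a $p$-complete $p$-quasisyntomic ring, so BMS applies. Then I would argue the two filtrations agree by a descent/site-theoretic argument: choose a quasisyntomic cover $\cpl R_p\to S$ with $\cpl{\THH(S^{\otimes_{R}\bullet+1})}_p$ termwise even, as BMS do. By property (1) this exhibits $\cpl{\THH(R)}_p$ as the totalization, and the BMS filtration is the totalization of the termwise double-speed Postnikov filtrations. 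The point is that an even $\E_\infty$-ring $B$ has $\fil^\star_{\ev}B \simeq \tau_{\ge 2\star}B$ — the identity $B\to B$ is already a cofinal object in the diagram defining $\fil_\ev$ — and $\fil_\ev$, being a right Kan extension (hence a limit-preserving construction in a suitable sense), commutes with the relevant totalization. So $\fil^\star_{\ev}(\cpl{\THH(R)}_p)$, computed via the even cover, equals $\lim_\Delta \tau_{\ge 2\star}\cpl{\THH(S^{\otimes_R\bullet+1})}_p = \fil^\star_{\BMS}$. One subtlety is commuting $p$-completion past the limit defining $\fil_\ev$ and past the totalization; this is controlled because the relevant spectra are bounded below and the cover is finite-dimensional enough in each degree, so the completions and limits interchange.

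I expect the main obstacle to be precisely this interchange of operations: $\fil_\ev$ is defined as a limit over the (large) category of all maps $A\to B$ with $B$ even, and one must check that $p$-completing $A$ and then forming $\fil_\ev$ agrees with $p$-completing the value $\fil_\ev A$, and moreover that $\fil_\ev$ of a cosimplicial limit of even rings is the limit of the $\fil_\ev$'s. The first point requires knowing that the cofinal even objects over $\cpl{\THH(R)}_p$ are $p$-complete and that $\tau_{\ge 2n}$ interacts well with completion on bounded-below spectra; the second is a cofinality statement — maps out of $\lim_\Delta C^\bullet$ into an even ring factor, up to the filtration, through the cosimplicial diagram — which again uses that $C^\bullet$ is termwise even and bounded below so that connectivity estimates make the totalization converge. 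Once these technical points are in place, the identification with $\fil_\BMS$ at each prime is formal, and gluing with the rational comparison via the global construction of \cite{Morin, BL} yields the theorem.
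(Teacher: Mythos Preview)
Your broad strategy---arithmetic-style gluing, with the $p$-complete pieces matched to $\fil^\star_\BMS$ via a quasisyntomic cover and the rational/integral piece matched to HKR---is exactly the shape of the paper's argument. In particular, the paper does reduce to showing (i) $\fil^\star_\ev\HH(R)\simeq\fil^\star_\HKR\HH(R)$, (ii) $\cpl{(\fil^\star_\ev\THH(R))}_p\simeq\fil^\star_\BMS\cpl{\THH(\cpl R_p)}_p$, and (iii) that $\fil^\star_\ev\THH(R)$ sits in the same pullback square (over $\prod_p$-completions and $\HH$) that defines $\fil^\star_\BL$. So you have identified the right decomposition.

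The genuine gap is your justification of descent for $\fil^\star_\ev$. The claim ``$\fil_\ev$, being a right Kan extension (hence a limit-preserving construction in a suitable sense), commutes with the relevant totalization'' is not correct: right Kan extension along $\CAlg^\ev\hookrightarrow\CAlg$ says nothing about $\fil_\ev$ preserving limits of diagrams in $\CAlg$. Nor does your proposed cofinality fix work---an arbitrary even ring under $\cpl{\THH(R)}_p$ has no reason to receive a map from a term of the \v{C}ech nerve of your chosen cover. What the paper does instead is introduce the notion of an \emph{eff} (evenly faithfully flat) map and prove a flat-descent theorem for $\tau_{\ge 2\star}$ on the site of even rings; this yields $\fil^\star_\ev(A)\simeq\lim_\Delta\tau_{\ge 2\star}(B^{\otimes_A\bullet+1})$ precisely when $A\to B$ is eff with $B$ even. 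The substantive work in the $p$-complete comparison is then to verify that a specific quasisyntomic cover $R\to S$ (obtained by freely adjoining $p$-power roots) induces a discretely $p$-completely eff map $\cpl{\THH(R)}_p\to\cpl{\THH(S)}_p$; this is checked by factoring through spherical polynomial rings and invoking a computation that $\THH(S/k)\to S$ is evenly free when $\pi_*S$ is polynomial over $\pi_*k$. Similarly, the interchange of $\fil_\ev$ with $p$-completion and with rationalization is not formal; the paper proves dedicated lemmas for this using the existence of a $1$-connective evenly free map $\THH(R)\to B$ to an even ring. Without these ingredients your proposal does not go through.
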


\begin{remark}
  Bhatt--Lurie further define the global motivic filtration on $\THH(R)$ for all animated commutative rings $R$, by left Kan extension from the case when $R$ is a polynomial $\mathbb{Z}$-algebra. Since polynomial $\mathbb{Z}$-algebras satisfy the conditions of \Cref{IntroBLcomparison}, one may use $\fil^\star_{\ev}$ and left Kan extension to recover $\fil_{\mot}^{\star} \THH(R)$ for any animated commutative ring $R$. By $p$-completion, one may then recover $\fil_{\BMS}^{\star} \cpl{\THH(R)}_p$ for any $p$-complete $p$-quasisyntomic $R$; this can be also recovered directly from a $p$-complete variant of the even filtration, as will be discussed further below.
\end{remark}

In light of the above theorem and remark, it is fair to say that the even filtration provides an alternate construction of the motivic filtration on the $\THH$ of animated commutative rings. Notably, the construction is inherently global, and avoids mention of perfectoid rings and the quasisyntomic site.

Even more notably, the even filtration is defined on \emph{any} $\E_\infty$-ring, not only $\E_\infty$-rings that arise as the $\THH$ of discrete commutative rings.  For example, we may take the even filtration of the sphere spectrum $\S$: it turns out that the result is the d\'ecalage of the Adams--Novikov filtration. More generally, we have the following result:

\begin{theorem}
For any $\E_\infty$-ring $A$, 
\[
  \fil^{\star}_{\ev} A \simeq \lim_{\Delta}(\fil^{\star}_{\ev}(A \otimes \mathrm{MU}^{\otimes \bullet+1})),
\]
where the limit is taken in the category of filtered $\E_\infty$-rings.
\end{theorem}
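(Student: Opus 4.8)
The plan is to prove this as a descent statement: that the even filtration satisfies flat (Amitsur/cobar) descent along the cosimplicial $\E_\infty$-ring $A \to A \otimes \MU^{\otimes \bullet+1}$. The canonical comparison map $\fil^\star_\ev A \to \lim_\Delta \fil^\star_\ev(A \otimes \MU^{\otimes\bullet+1})$ is induced by applying $\fil^\star_\ev(\--)$ to the augmentation maps $A \to A \otimes \MU^{\otimes n+1}$, and the task is to show it is an equivalence in $\FilCAlg_{\kappa_2}$. Two classical facts about $\MU$ are the arithmetic input. First, if $B$ is an even $\E_\infty$-ring then each $B \otimes \MU^{\otimes n+1}$ is again even: $B$ is complex orientable, so $\pi_*(B \otimes \MU) \cong \pi_*(B)[b_1, b_2, \dots]$ is free over $\pi_*(B)$ on generators in positive even degrees, and one iterates; consequently, since an even $\E_\infty$-ring is the initial object of its own category of even covers, the right Kan extension description of \Cref{in--fil-ev-rmk} gives $\fil^\star_\ev(B \otimes \MU^{\otimes n+1}) \simeq \tau_{\ge 2\star}(B \otimes \MU^{\otimes n+1})$. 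Second, flatness then identifies the cosimplicial graded ring $\pi_*(B \otimes \MU^{\otimes\bullet+1})$ with the Amitsur complex of the faithfully flat extension $\pi_*(B) \to \pi_*(B \otimes \MU)$, which is therefore an augmented-exact resolution of $\pi_*(B)$.

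The first real step is the case when $A = B$ is itself even, where by the above the statement reads $\tau_{\ge 2\star} B \simeq \lim_\Delta \tau_{\ge 2\star}(B \otimes \MU^{\otimes\bullet+1})$ in filtered $\E_\infty$-rings. I would fix a weight $n$ and run the Bousfield--Kan spectral sequence computing the homotopy of $\lim_\Delta \tau_{\ge 2n}(B \otimes \MU^{\otimes\bullet+1})$. Its $E_2$-page is the cohomology of the cosimplicial abelian group $[m] \mapsto \pi_t\, \tau_{\ge 2n}(B \otimes \MU^{\otimes m+1})$, which vanishes for $t < 2n$ and for $t \ge 2n$ is the internal-degree-$t$ part of the resolution above; so $E_2$ is concentrated in cohomological degree $0$, where it is $\pi_t B$ for $t \ge 2n$ and $0$ otherwise. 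The spectral sequence therefore collapses; being supported on a single column it has no differentials and vanishing derived $E_\infty$-term, hence converges strongly, so no $\lim^1$ subtlety arises even though $B$ need not be bounded below. This identifies the totalization with $\tau_{\ge 2n}B$, naturally in $n$, and finishes the even case.

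The second step bootstraps from even rings to general $A$ via the right Kan extension description: $\fil^\star_\ev A = \lim_{A \to B \text{ even}} \tau_{\ge 2\star}B$, so applying the even case to each such $B$ and interchanging the limits gives $\fil^\star_\ev A \simeq \lim_\Delta \big( \lim_{A \to B \text{ even}} \fil^\star_\ev(B \otimes \MU^{\otimes\bullet+1}) \big)$. It then remains to see that for each $n$ the canonical map $\lim_{A \to B \text{ even}} \fil^\star_\ev(B \otimes \MU^{\otimes n+1}) \to \fil^\star_\ev(A \otimes \MU^{\otimes n+1})$ is an equivalence; equivalently, that the even covers of $A \otimes \MU^{\otimes n+1}$ of the form $B \otimes \MU^{\otimes n+1}$ for $B$ an even cover of $A$, together with the maps between them, are cofinal in all even covers of $A \otimes \MU^{\otimes n+1}$. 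The input making this plausible is that any even cover $C$ of $A \otimes \MU^{\otimes n+1}$ is in particular an even cover of $A$, and that the cobase-change identity $B \otimes \MU^{\otimes n+1} \simeq B \otimes_A (A \otimes \MU^{\otimes n+1})$ — used with $B = C$ and the $n+1$ orientations of $C$ extracted from $\MU^{\otimes n+1} \to A \otimes \MU^{\otimes n+1} \to C$ — produces the comparisons needed to run Quillen's Theorem A on the relevant comma categories.

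I expect this last cofinality argument — bootstrapping from the even case to arbitrary $\E_\infty$-rings while keeping precise track of the cosimplicial structure and the comma categories of even covers — to be the main obstacle; by contrast the $\MU$-specific ingredient is entirely classical and the even case is a short spectral-sequence computation. A pleasant feature of the resulting proof is that it is inherently global, never mentions perfectoid rings, and applies verbatim to $\E_\infty$-rings $A$ that do not arise from any $\THH$ construction.
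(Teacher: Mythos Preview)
Your approach is correct and, at its core, the same as the paper's: both arguments reduce to (i) faithfully flat descent for $\tau_{\ge 2\star}$ on even rings (your Bousfield--Kan step is the paper's \cref{ev--even-filtration-descent}\cref{ev--even-filtration-descent--plain}, which checks the same thing on associated graded), and (ii) passing from the even case to arbitrary $A$ via the right Kan extension description. The paper packages step (ii) inside a general framework---it introduces the notion of an \emph{eff} map (\cref{de--evenly-flat}), proves that the even filtration descends along any eff map (\cref{ev--magic-criterion}), and then observes that $\S \to \MU$ is eff---whereas you work out the special case $\MU$ directly. The paper's route is more reusable (the same lemma is applied repeatedly later, e.g.\ to $\THH(S/k) \to S$ and to maps into perfectoid covers), but for this single statement your direct argument is equally valid.

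One remark on the step you flag as the ``main obstacle'': the cofinality is in fact painless. The functor $B \mapsto B \otimes \MU^{\otimes n+1}$ from even $\E_\infty$-rings under $A$ to even $\E_\infty$-rings under $A \otimes \MU^{\otimes n+1}$ is left adjoint to the forgetful functor (this is exactly the universal property of the pushout $B \otimes_A (A \otimes \MU^{\otimes n+1})$, and both functors preserve evenness since even rings are complex orientable). Left adjoints are initial, so the limit over all even covers of $A \otimes \MU^{\otimes n+1}$ agrees with the limit over those of the form $B \otimes \MU^{\otimes n+1}$---no further analysis of comma categories is needed.
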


The Adams--Novikov filtration features heavily in Morel--Voevodsky's theory of $\mathbb{C}$-motivic stable homotopy theory \cite{MorelVoevodsky}, and hence the above theorem connects the even filtration to that theory. Precisely, the corollary below follows from the above theorem and the work of Gheorghe--Isaksen--Krause--Ricka \cite{Cmot} (cf. \cite{Pstragowski,SpecialFiber}):

\begin{corollary}
  Fix a prime number $p$. Then the $p$-completed cellular subcategory of the category $\mathcal{SH}(\mathbb{C})$ of $\mathbb{C}$-motivic spectra is equivalent to the category of $p$-complete filtered modules over $\fil^\star_\ev(\S)$. Under this identification, for every $\E_\infty$-ring $A$, the $p$-completion of $\fil^{\star}_\ev A$ is naturally an $\E_\infty$-algebra object in the category of $\mathbb{C}$-motivic spectra, and if $A$ is bounded below and $\MU_*A$ is even, then,
  \[
   \cpl{\left(\fil^{\star}_{\ev}A\right)}_p \simeq \cpl{\nu(A)}_p,
  \]
  where $\cpl{\nu(-)}_p$ is the synthetic analog functor from spectra to the $p$-completed cellular subcategory of $\mathcal{SH}(\mathbb{C})$.
\end{corollary}

\begin{remark}
  \label{in--nu}
  In \cref{mod}, we extend the notion of the even filtration to modules over $\mathbb{E}_{\infty}$-rings, defining a functor $(A,M) \mapsto \fil^\star_{\ev/A}M$, which recovers the above definition when $M=A$. With this notation, it follows from the results in \cref{mod} that, under the identification from \cite{Cmot} between $p$-complete filtered modules over $\fil^\star_\ev(\S)$ and $p$-complete cellular $\mathbb{C}$-motivic spectra,
  \[
    \cpl{\left(\fil^{\star}_{\ev/\mathbb{S}} M\right)}_p \simeq \cpl{\nu(M)}_p
  \]
  for any bounded below spectrum $M$ (or, more generally, for any $\MU$-complete spectrum $M$). It is then special to the scenario where $\MU_*A$ is even that we have the second equivalence in the string
  \[
    \fil^\star_\ev A \iso \fil^\star_{\ev/A}A \simeq \fil^\star_{\ev/\mathbb{S}}A.
  \]
  In general, filtered modules over $\fil^{\star}_\ev A$ can be viewed as a deformation of the category of $A$-modules, and the functor $\fil^\star_{\ev/A}:\Mod_A \to \FilMod_{\fil^\star_\ev A}$ associates a natural deformation to any $A$-module.
\end{remark}

The even filtration may also be used to  recover constructions in classical algebra. Namely, for $k \to R$ a map of discrete commutative rings, the Hochschild--Kostant--Rosenberg (HKR) filtration on the relative Hochschild homology $\mathrm{HH}(R/k)$ is a classical analog of the motivic filtration on $\THH(R)$. In the quasi-lci setting, the HKR filtration also turns out to be a special case of the even filtration:

\begin{theorem} \label{introHKRcomparison}
  Let $k \to R$ be a map of discrete commutative rings such that the algebraic cotangent complex $\L^\alg_{R/k}$ has Tor-amplitude contained in $[0,1]$. Then
  \[
    \fil^\star_{\ev} \HH(R/k) \simeq \fil^\star_{\HKR} \HH(R/k).
  \]
\end{theorem}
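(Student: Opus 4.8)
The plan is to run, in the purely algebraic setting, the same argument we use elsewhere to compare the even filtration with the BMS and Bhatt--Lurie filtrations. Two inputs are needed. First, the even filtration satisfies descent along suitable ``even covers'': if $A \to B$ is a map of $\E_\infty$-rings for which the Amitsur complex $B^{\otimes_A \bullet + 1}$ is a cosimplicial even $\E_\infty$-ring with $A \simeq \Tot(B^{\otimes_A\bullet+1})$, then $\fil^\star_\ev A \simeq \Tot\bigl(\tau_{\ge 2\star} B^{\otimes_A\bullet+1}\bigr)$; this is one of the computational engines of \Cref{SecEvenFiltration}. Second, $\HH(-/k)$ and its HKR filtration both satisfy fpqc descent among $k$-algebras: the former because $\HH(-/k)$ is symmetric monoidal and commutes with base change, so $\HH(B^{\otimes_A n+1}/k) \simeq \HH(B/k)^{\otimes_{\HH(A/k)}n+1}$ with $\HH(A/k)\to\HH(B/k)$ faithfully flat; the latter because the HKR filtration is the filtered structure sheaf on the filtered loop space and hence ``quasi-coherent''. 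Granting these, it suffices, for each $k$-algebra $R$ with $\L^\alg_{R/k}$ of Tor-amplitude $[0,1]$, to produce a faithfully flat cover $R \to S$ over $k$ such that $\HH(S^{\otimes_R\bullet+1}/k)$ is an even cosimplicial $\E_\infty$-ring, and to check that on its terms the HKR filtration coincides with the double-speed Postnikov filtration.

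The cover is modeled on \cite{BMS}. The Tor-amplitude hypothesis says precisely that $R$ is quasi-lci over $k$; fpqc-locally on $k$ and Zariski-locally on $R$, the ring $R$ is a localization of a quotient of a polynomial $k$-algebra by a Koszul-regular sequence, and one takes $S$ to be a ``relatively quasiregular semiperfectoid'' cover, obtained by adjoining compatible systems of $p$-power roots of a regular system of generators (over a base of characteristic $p$ this is a relative perfection; the general case is discussed below). One then checks that $\L^\alg_{S/k}$ --- and hence, by base change along the transitivity triangles, $\L^\alg_{S^{\otimes_R n+1}/k}$ for all $n$ --- is concentrated in homological degree $1$ and flat there. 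The décalage identity $\L\wedge^n_S(\L^\alg_{S/k})[n] \simeq \bigl(\L\Gamma^n_S(\L^\alg_{S/k}[-1])\bigr)[2n]$ then puts the $n$-th HKR graded piece of $\HH(S^{\otimes_R n+1}/k)$, namely $\L\wedge^n(\L^\alg_{S^{\otimes_R n+1}/k})[n]$, in degree exactly $2n$; since the HKR filtration is complete and exhaustive this forces $\HH(S^{\otimes_R\bullet+1}/k)$ to be termwise even, with HKR filtration equal to $\tau_{\ge 2\star}$. Combining with the two inputs: $\fil^\star_\ev\HH(R/k) \simeq \Tot\bigl(\tau_{\ge 2\star}\HH(S^{\otimes_R\bullet+1}/k)\bigr) \simeq \Tot\bigl(\fil^\star_\HKR\HH(S^{\otimes_R\bullet+1}/k)\bigr) \simeq \fil^\star_\HKR\HH(R/k)$.

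The main obstacle is the construction of the cover over bases that are not of characteristic $p$. When $k$ is an $\mathbb{F}_p$-algebra one simply takes relative perfections and $\L^\alg_{S/k}$ vanishes below degree $1$ on the nose; but over $\Z$ or $\Q$ there is no faithfully flat extension of, say, $k[x]$ over $k$ on which the Kähler differentials vanish, so adjoining roots alone does not push $\L^\alg_{S/k}$ into degree $1$. One repairs this by working one prime at a time and rationally: both $\fil^\star_\ev\HH(R/k)$ and $\fil^\star_\HKR\HH(R/k)$ are finitary in $R$ with graded pieces built from the Tor-amplitude-$[0,1]$ complex $\L^\alg_{R/k}$, so each is recovered from its $p$-completions and its rationalization by the arithmetic fracture square. $p$-adically one reruns the argument above with $p$-completed relatively quasiregular semiperfectoid covers, for which $S/p$ is relatively semiperfect and $(\L^\alg_{S/k})^\wedge_p$ is genuinely concentrated in degree $1$ and $p$-completely flat; rationally, the strong HKR decomposition splits $\HH(R/k)_\Q$ and reduces matters to polynomial $k$-algebras using that both filtrations are multiplicative for $\otimes_k$. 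The remaining point is to see that these local comparisons glue --- i.e., that $\fil^\star_\ev$ and $\fil^\star_\HKR$ of $\HH(R/k)$ satisfy the same arithmetic fracture square as filtered $\E_\infty$-rings --- which again uses the finitariness of the even filtration established in \Cref{SecEvenFiltration}.
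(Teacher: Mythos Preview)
Your strategy is to descend along a cover $R \to S$ of the \emph{ring}, pushing toward a relatively quasiregular semiperfectoid target so that $\L^\alg_{S/k}$ becomes concentrated in degree $1$. As you yourself note, no such cover exists integrally: over $\Q$ or $\Z$ there is no faithfully flat $k$-algebra extension of a polynomial ring with vanishing K\"ahler differentials. You then patch this with an arithmetic fracture argument, but the rational step (``the strong HKR decomposition splits $\HH(R/k)_\Q$ and reduces matters to polynomial $k$-algebras'') is not a proof: the splitting does not reduce $R$ to a polynomial algebra, and it is not clear how to identify the even filtration of the resulting direct sum with the HKR filtration without already knowing the theorem. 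The claim that the even filtration satisfies the requisite fracture square is also not established in \Cref{SecEvenFiltration} in the generality you need.

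The paper avoids all of this by descending along the \emph{base} rather than the ring. Factor $k \to S \to R$ with $S$ a polynomial $k$-algebra surjecting onto $R$; then $\HH(R/k) \to \HH(R/S)$ is the pushout of $\HH(S/k) \to S$, which is evenly free (\cref{hkr--quasismooth-HH-atlas}), hence eff. The Amitsur complex is $\HH(R/S^{\otimes_k\bullet+1})$, and each $\L^\alg_{R/S^{\otimes_k n+1}}$ has Tor-amplitude exactly $1$ because $S \to R$ is a quasiregular quotient; so each term is even with HKR filtration equal to $\tau_{\ge 2\star}$, with no characteristic hypothesis needed. The remaining point---that $\fil^\star_\HKR\HH(R/k) \to \lim_\Delta \fil^\star_\HKR\HH(R/S^{\otimes_k\bullet+1})$ is an equivalence---is checked on associated graded using a cosimplicial homotopy equivalence for $\L^\alg_{R/k} \to \L^\alg_{R/S^{\otimes_k\bullet+1}}$ extracted from \cite{BhattDR}. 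This is both simpler and works uniformly over any base.

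A secondary point: your first ``input'' overstates the descent available. \Cref{ev--magic-criterion} requires the map $A \to B$ to be eff---for \emph{every} even $C$ under $A$, the pushout $B \otimes_A C$ must be even and faithfully flat over $C$---not merely that the Amitsur terms $B^{\otimes_A\bullet+1}$ happen to be even. Faithful flatness of $R \to S$ does not obviously make $\HH(R/k) \to \HH(S/k)$ eff, so even granting a good cover you would need an additional argument here.
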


\begin{remark}
  \label{in--ev--proofs}
  Our proofs of the above comparison theorems rely on descent properties of $\fil^\star_{\ev}$, studied in \Cref{SecEvenFiltration}. Specifically, we identify certain maps of $\E_\infty$-rings $A \to B$, which we call \emph{evenly faithfully flat} (\emph{eff}), along which there is an identification $\fil_{\ev}^{\star}(A) \iso \lim_{\Delta}(\fil_{\ev}^{\star}(B^{\otimes_{A} \bullet+1}))$; when $B$ is moreover even, this formula simplifies to $\fil_{\ev}^{\star}(A) \iso \lim_{\Delta}(\tau_{\ge2\star}(B^{\otimes_{A} \bullet+1}))$. It is this technique of finding eff maps $A \to B$ with $B$ even that allows us to control the even filtration in the contexts above.
\end{remark}

\subsection{Variants of the even filtration}
\label{in--var}

Returning to topological Hochschild homology, we recall that the $\THH$ of an $\E_\infty$-ring is naturally a cyclotomic $\E_\infty$-ring \cite{NikolausScholze}.  In other words, it carries an $\cir$-action, allowing us to form $\TC^{-}(R)=\THH(R)^{\h\cir}$ and $\TP(R)=\THH(R)^{\tate\cir}$, together with a Frobenius $\varphi$ at each prime $p$, which Nikolaus--Scholze observed allows one to define $p$-completed $\TC(R)$ by the formula
\[
  \cpl{\TC(R)}_p \iso \fib\left(\cpl{\TC^{-}(R)}_p \lblto{\varphi^{\h\cir}-\mathrm{can}} \cpl{\TP(R)}_p\right),
\]
at least when $R$ is connective \cite{NikolausScholze}.
For $R$ a $p$-complete $p$-quasisyntomic discrete ring, Bhatt--Morrow--Scholze defined motivic filtrations on not just $\cpl{\THH(R)}_p$, but also $\cpl{\TC^{-}(R)}_p$, $\cpl{\TP(R)}_p$, and $\cpl{\TC(R)}_p$. Similarly, Morin and Bhatt--Lurie defined motivic filtrations not just on $\THH(R)$ but on $\TC^-(R)$ and $\TP(R)$.

In \Cref{SecComparison}, we prove that variants of the even filtration that account for $p$-completeness and/or equivariant structure can be used to recover all of these filtrations. Let us illustrate here by formulating a few of these variants and stating the comparison result for $p$-completed $\TC$.

\begin{definition}
  \label{in--var--p}
  We say that an $\E_\infty$-ring $B$ is of \emph{bounded $p$-power torsion} if for each $n \in \Z$ the homotopy group $\pi_n(B)$ has bounded $p$-power torsion. For $A$ a $p$-complete $\E_\infty$-ring, we define $\fil^\star_{\ev,p} A$ to be the limit, over all maps of $\E_\infty$-rings $A \to B$ with $B$ even, $p$-complete, and of bounded $p$-power torsion, of $\tau_{\ge2\star}(B)$.
\end{definition}

\begin{definition}
  \label{in--var--cir}
  We say that an $\cir$-equivariant $\E_\infty$-ring\footnote{Here ``$\cir$-equivariant $\E_\infty$-ring'' refers to a local system of $\E_\infty$-rings over $\mathrm{B}\cir$, not a more sophisticated notion of genuine equivariant homotopy theory.} $B$ is even if its underlying $\E_\infty$-ring is even. For $A$ an $\cir$-equivariant $\E_\infty$-ring, we define $\fil^\star_{\ev,\h\cir} A$ (resp. $\fil^\star_{\ev,\tate\cir} A$) to be the limit, over all maps of $\cir$-equivariant $\E_\infty$-rings $A \to B$ with $B$ even, of $\tau_{\ge2\star}(B^{\h\cir})$ (resp. $\tau_{\ge2\star}(B^{\tate\cir})$).
\end{definition}

The modifications introduced in \cref{in--var--p,in--var--cir} can be combined in an evident manner. Similarly, we have the following definition in the setting of $p$-complete cyclotomic $\E_\infty$-rings.

\begin{definition}
  \label{in--var--TC}
  We say that a cyclotomic $\E_\infty$-ring is \emph{even} or of \emph{bounded $p$-power torsion} if its underlying $\E_\infty$-ring is so. For any connective, $p$-complete cyclotomic $\E_\infty$-ring $A$, we define $\fil^{\star}_{\ev,p,\TC} A$ to be the limit, over all cyclotomic $\E_\infty$-ring maps $A \to B$ with $B$ even, connective, $p$-complete, and of bounded $p$-power torsion, of
  \[
    \fib\left(\tau_{\ge 2\star}\cpl{(B^{\h\cir})}_p \lblto{\varphi^{\h\cir}-\mathrm{can}} \tau_{\ge 2\star} \cpl{(B^{\tate\cir})}_p\right).
  \]
\end{definition}

\begin{theorem}
  \label{in--var--BMS}
  Let $R$ be a $p$-complete $p$-quasisyntomic commutative ring. Then
  \[
    \fil^{\star}_{\BMS}\cpl{\TC(R)}_p \simeq \fil^{\star}_{\ev,p,\TC} \THH(R).
  \]
\end{theorem}

\begin{remark}
  \label{in--var--proofs}
  The comments made in \cref{in--ev--proofs} apply also to the variants of the even filtration discussed in this subsection. That is, these variants also satisfy descent along eff maps, and we are able to control their behavior by finding eff maps into even objects. As usual, a slight variant notion of \emph{$p$-completely eff} is necessary in the $p$-complete contexts. On the other hand, a map of $\cir$-equivariant or cyclotomic $\E_\infty$-rings is ($p$-completely) eff if and only if its underlying map of $\E_\infty$-rings is so.
\end{remark}

\begin{remark}
  \label{in--var--other}
  There are many other variants of the even filtration that one could consider. As one example, one could define a version for the integral $\TC$ of connective cyclotomic $\E_\infty$-rings in a similar fashion to \cref{in--var--TC}, using the formula for integral $\TC$ given in \cite{NikolausScholze}; we choose not to pursue this here.
\end{remark}

\subsection{$\THH$ of chromatically quasisyntomic rings}

So far, we have discussed theorems showing that the even filtration recovers known filtrations. The ubiquity of these results suggests that there may be broader, unstudied contexts in which the even filtration is a useful tool. Here, we introduce the following class of $\E_\infty$-rings $R$ for which the even filtration on $\THH(R)$ can be controlled:

\begin{definition}
  A connective $\E_\infty$-ring $R$ is \emph{chromatically quasisyntomic} if $\MU_*R$ is even, has bounded $p$-power torsion for all primes $p$, and (when considered as an ungraded commutative ring) has algebraic cotangent complex $\L^{\alg}_{\MU_*R}$ with Tor-amplitude contained in $[0,1]$.
\end{definition}

\begin{example}
  If $R$ is a discrete commutative ring with bounded $p$-power torsion for each prime $p$ and such that the cotangent complex $\L^{\alg}_{R}$ has Tor-amplitude contained in $[0,1]$, then $R$ is chromatically quasisyntomic. This is because $\MU_*R$ is a polynomial algebra $R[b_1,b_2,\cdots]$, by complex orientation theory.
\end{example}

\begin{example}
  The $\E_\infty$-ring spectra $\mathbb{S}$, $\MU$, $\mathrm{ku}$, $\mathrm{ko}$, and $\mathrm{tmf}$ are all chromatically quasisyntomic. Indeed, if $R$ is any of these ring spectra, then $\MU_*R$ is a polynomial $\mathbb{Z}$-algebra concentrated in even degrees.  The Adams summand $\ell$ is also chromatically quasisyntomic, with $\MU_*\ell$ a polynomial $\mathbb{Z}_{(p)}$-algebra.
\end{example}

\begin{definition}
  \label{in--xq--filmot}
  For $R$ a chromatically quasisyntomic $\E_\infty$-ring, we define:
  \begin{itemize}
  \item $\fil_{\mot}^{\star} \THH(R)$ to be $\fil_{\ev}^{\star} \THH(R)$ (\cref{in--fil-ev})
  \item $\fil_{\mot}^{\star} \TC^{-}(R)$ to be $\fil^\star_{\ev,\h\cir} \THH(R)$ (\cref{in--var--cir}).

  \item $\fil_{\mot}^{\star} \TP(R)$ to be $\fil^\star_{\ev,\tate\cir} \THH(R)$ (\cref{in--var--cir}).
  \item $\fil^{\star}_{\mot} \cpl{\TC(R)}_p$ to be $\fil_{\ev,p,\TC}^{\star} \THH(R)$ (\cref{in--var--TC}).
  \end{itemize}
  These \emph{motivic filtrations} lead to \emph{motivic spectral sequences} for $\THH_*(R)$, $\TC^{-}_*(R)$, $\TP_*(R)$, and $\pi_*\left(\cpl{\TC(R)}_p\right)$, respectively.  In analogy with the discrete case, we call the associated graded of $\fil^{\star}_{\mot}\TP(R)$ the \emph{prismatic cohomology} of $R$,\footnote{To be more precise, we could define the \emph{Nygaard-completed absolute prismatic cohomology of $R$ with Breuil--Kisin twist $n$} to be $\gr^n_\mot \TP(R)[-2n]$. However, in this paper, we will simply write things in terms of $\gr^*_\mot \TP(R)$, and allow ourselves to use the term ``prismatic cohomology'' to refer to this graded object. Note that our choice not to incorporate the shift $[-2n]$ has an effect on what we refer to as the ``degree'' of a class in prismatic cohomology; see \cref{convention-adams-grading} for more detail.} and we call the associated graded of $\fil_{\mot}^{\star}\cpl{\TC(R)}_p$ the \emph{syntomic cohomology} of $R$.\footnote{The comments made about prismatic cohomology in the previous footnote apply analogously to syntomic cohomology. In addition, beware that, in our terminology, the syntomic cohomology of $R$ depends only on the $p$-completion of $R$ (for the implicit prime $p$). When $R$ is discrete, this recovers what is called the ``syntomic cohomology of $\Spf(R)$'' in \cite[\textsection 7.4]{BL}, as opposed to the ``syntomic cohomology of $\Spec(R)$'' defined in \cite[\textsection 8.4]{BL}.}
\end{definition}

In this context, we prove the following results:

\begin{theorem}
  \label{motConv}
  For $R$ a chromatically quasisyntomic $\E_\infty$-ring, the colimits of the filtered diagrams $\fil_{\mot}^{\star} \THH(R)$, $\fil_{\mot}^{\star} \TC^{-}(R)$, $\fil_{\mot}^{\star} \TP(R)$, and $\fil_{\mot}^{\star} \cpl{\TC(R)}_p$ are $\THH(R)$, $\TC^{-}(R)$, $\mathrm{TP}(R)$, and $\cpl{\TC(R)}_p$, respectively.
\end{theorem}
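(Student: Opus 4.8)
The plan is to reduce all four convergence statements to one soft fact: the error term measuring the failure of exhaustiveness lives in spectra that become uniformly more coconnective as the filtration degree $n \to -\infty$, and hence vanishes in the colimit over $n$. To set this up I would first invoke the descent results for the even filtration together with the analysis of $\THH$ of chromatically quasisyntomic rings to fix a map of connective cyclotomic $\E_\infty$-rings $\THH(R) \to B$ that is evenly faithfully flat and such that every $B^{\otimes_{\THH(R)} m+1}$ (for $m \ge 0$) is even. By eff descent $\THH(R) \simeq \lim_{\Delta}\bigl(B^{\otimes_{\THH(R)}\bullet+1}\bigr)$, and since the even filtration of an even $\E_\infty$-ring is its double-speed Postnikov filtration (fully faithfulness of $\CAlg^{\ev} \inj \CAlg$), this upgrades to $\fil^n_{\mot}\THH(R) \simeq \lim_{\Delta}\bigl(\tau_{\ge 2n}\,B^{\otimes_{\THH(R)}\bullet+1}\bigr)$. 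Applying the limit-preserving functors $(-)^{\h\cir}$, $(-)^{\tate\cir}$, and $p$-completion, and using the descent results again to reduce the defining limits over all even $\cir$-equivariant (resp.\ cyclotomic) covers in \cref{in--xq--filmot} to limits over the cobar nerve of $B$, one obtains
\[
  \TC^{-}(R) \simeq \lim_{\Delta}(B^{\otimes\bullet+1})^{\h\cir},\qquad \fil^n_{\mot}\TC^{-}(R) \simeq \lim_{\Delta}\tau_{\ge 2n}\bigl((B^{\otimes\bullet+1})^{\h\cir}\bigr),
\]
the evident variant for $\TP(R)$ with $(-)^{\tate\cir}$ in place of $(-)^{\h\cir}$, and --- since each $B^{\otimes m+1}$ is bounded below and cyclotomic, so that the Nikolaus--Scholze fiber sequence applies at every cosimplicial level ---
\[
  \cpl{\TC(R)}_p \simeq \lim_{\Delta}\fib\bigl(\cpl{((B^{\otimes\bullet+1})^{\h\cir})}_p \lblto{\varphi^{\h\cir}-\can}\cpl{((B^{\otimes\bullet+1})^{\tate\cir})}_p\bigr),
\]
with $\fil^n_{\ev,p,\TC}\THH(R)$ the totalization of the levelwise fibers of the corresponding $\tau_{\ge 2n}$-truncated maps.

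Next, in each of the four cases the natural transformations $\tau_{\ge 2n}(-) \to (-)$ induce, levelwise and then after $\lim_{\Delta}$, a map $\fil^n_{\mot}(-) \to (-)$ compatible in $n$, and it suffices to show $\colim_n \cofib\bigl(\fil^n_{\mot}(-) \to (-)\bigr) = 0$. Exactness of $\lim_{\Delta}$ identifies $\cofib(\fil^n_{\mot}\THH(R) \to \THH(R))$ with $\lim_{\Delta}\tau_{\le 2n-1}(B^{\otimes\bullet+1})$, and likewise with $\tau_{\le 2n-1}\bigl((B^{\otimes\bullet+1})^{\h\cir}\bigr)$, resp.\ $\tau_{\le 2n-1}\bigl((B^{\otimes\bullet+1})^{\tate\cir}\bigr)$, for $\TC^{-}$ and $\TP$; for $\TC$, applying the fact that the cofiber of a map of fiber sequences is again a fiber sequence to $\fib(\tau_{\ge 2n}A' \to \tau_{\ge 2n}A'') \to \fib(A' \to A'')$ (with $A' = \cpl{((B^{\otimes\bullet+1})^{\h\cir})}_p$ and $A'' = \cpl{((B^{\otimes\bullet+1})^{\tate\cir})}_p$) identifies the cofiber levelwise with $\fib(\tau_{\le 2n-1}A' \to \tau_{\le 2n-1}A'')$. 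In all four cases the levelwise cofiber is $(2n-1)$-truncated.

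To conclude: since the $(2n-1)$-truncated spectra are closed under limits, each $C_n := \cofib\bigl(\fil^n_{\mot}(-) \to (-)\bigr)\simeq\lim_{\Delta}(\text{levelwise cofiber})$ is $(2n-1)$-truncated. For any fixed integer $k$ we have $\pi_k C_n = 0$ whenever $2n - 1 < k$, so $n \mapsto \pi_k C_n$ vanishes for $n \ll 0$ and therefore $\pi_k(\colim_n C_n) \cong \colim_n \pi_k C_n = 0$. Hence $\colim_n C_n = 0$, so $\colim_n \fil^n_{\mot}(-) \to (-)$ is an equivalence for each of $\THH(R)$, $\TC^{-}(R)$, $\TP(R)$, and $\cpl{\TC(R)}_p$.

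The real content is the first step: producing the cyclotomic, evenly faithfully flat cover $B$ with even tensor powers, and verifying that the ``limit over all even covers'' definitions of \cref{in--xq--filmot} are computed by its cobar nerve --- this is exactly where chromatic quasisyntomicity of $R$ enters, and it relies on the descent properties of the even filtration and on the structural results for $\THH$ of chromatically quasisyntomic rings from the earlier sections. Granting those presentations, convergence is the formal truncation-plus-cofinality argument above; the only remaining care is the exactness or limit-preservation of $\lim_{\Delta}$, $(-)^{\h\cir}$, $(-)^{\tate\cir}$, and $(-)^{\wedge}_p$, together with boundedness below of each $B^{\otimes m+1}$ (from connectivity of $R$) so that the Nikolaus--Scholze fiber sequence holds at every level.
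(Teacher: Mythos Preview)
Your approach matches the paper's: reduce via an eff cover $\THH(R)\to B$ with even cobar terms, then show the cofibers of $\fil^n_{\mot}(-)\to(-)$ are $(2n-1)$-truncated (the paper phrases this dually as the fibers being $(2i-2)$-truncated) and conclude by letting $n\to-\infty$.

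One point needs tightening. The identification $\THH(R)\simeq\lim_\Delta B^{\otimes_{\THH(R)}\bullet+1}$ does not follow from ``eff descent'': the results of \cref{SecEvenFiltration} give descent for the \emph{filtration}, and taking $\fil^0$ only yields $\fil^0_{\mot}\THH(R)\simeq\lim_\Delta B^{\otimes\bullet+1}$, which is what you are trying to prove. The paper instead observes that the map $\THH(R)\to B$ produced from the Wilson-space/strongly-even construction is $1$-connective, and then invokes the standard fact that for any $1$-connective map $A\to B$ of connective $\E_\infty$-rings one has $A\simeq\lim_\Delta B^{\otimes_A\bullet+1}$. The same $1$-connectivity is what makes the totalization commute with $(-)^{\h\cir}$ and $(-)^{\tate\cir}$: the Tate construction is not limit-preserving in general, but it does commute with this particular Tot because the partial totalizations of a $1$-connective cobar resolution have increasing connectivity. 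With that correction in place your argument is complete and agrees with the paper's.
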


\begin{theorem}
  \label{filteredFrob}
  Let $R$ be a chromatically quasisyntomic $\E_\infty$-ring.  Then, for each prime number $p$, the Nikolaus--Scholze Frobenius
  \[
    \varphi:\mathrm{TC^{-}}(R)_p^{\wedge} \to \mathrm{TP}(R)_p^{\wedge}
  \]
  refines to a natural map
  \[
    \varphi:\fil_{\mot}^{\star} \TC^{-}(R)^{\wedge}_p\to \fil_{\mot}^{\star} \mathrm{TP}(R)_p^{\wedge}.
  \]
  The same is true of the canonical map between the same objects, and $\fil^{\star}_{\mot} \cpl{\TC(R)}_p$ can be computed as the equalizer of the filtered Frobenius and canonical maps.
\end{theorem}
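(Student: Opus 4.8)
The plan is to construct the filtered Frobenius and canonical maps on a single explicit cyclotomic resolution and then identify the result with the motivic filtrations of \cref{in--xq--filmot}. Fix a prime $p$. By the descent theory of \Cref{SecEvenFiltration} and the cyclotomic presentations recalled in the introduction, choose a map of cyclotomic $\E_\infty$-rings $\THH(R)\to B$ such that each $C_n := B^{\otimes_{\THH(R)}n+1}$ is an even, connective cyclotomic $\E_\infty$-ring and $\fil_{\mot}^{\star}\THH(R)\simeq\lim_\Delta\tau_{\ge 2\star}(C_\bullet)$. The first task is to promote this $\THH$-descent to the statement that the same cosimplicial object computes the other motivic filtrations:
\[
  \fil_{\mot}^{\star}\TC^{-}(R)\simeq\lim_\Delta\tau_{\ge 2\star}(C_\bullet^{\h\cir}),\qquad
  \fil_{\mot}^{\star}\TP(R)\simeq\lim_\Delta\tau_{\ge 2\star}(C_\bullet^{\tate\cir}),
\]
and $\fil_{\mot}^{\star}\cpl{\TC(R)}_p\simeq\lim_\Delta\fib\bigl(\tau_{\ge 2\star}\cpl{(C_\bullet^{\h\cir})}_p\to\tau_{\ge 2\star}\cpl{(C_\bullet^{\tate\cir})}_p\bigr)$. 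I expect this to be the main obstacle: one must show that a single explicit cyclotomic resolution already computes the limits defining $\fil_{\mot}^{\star}$ of $\TC^{-}$, $\TP$, and $\cpl{\TC}_p$, which are taken a priori over all $\cir$-equivariant, resp.\ all cyclotomic even $p$-complete, covers. This is a descent argument of the type proved for $\THH$ in \Cref{SecEvenFiltration}, now carried out for the $\cir$-homotopy-fixed-point and $\cir$-Tate constructions of the resolution and after $p$-completion; it uses the evenness established in the next paragraph together with the finiteness and bounded $p$-power torsion built into chromatic quasisyntomicity, which keep the relevant $p$-completed fixed-point rings even, so that $\tau_{\ge 2\star}$ behaves on them as the even filtration.

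What makes the cyclotomic Frobenius available termwise is that each $C = C_n$ has even $\cir$-fixed-point constructions. Indeed, since $\cir$ is connected it acts trivially on $\pi_\ast C$, so the homotopy fixed point and Tate spectral sequences for $C^{\h\cir}$ and $C^{\tate\cir}$ have $\mathrm{E}_2$-pages concentrated in even total degree and therefore collapse; hence $C^{\h\cir}$ and $C^{\tate\cir}$ are even $\E_\infty$-rings. Moreover $C$ is bounded below, so the Nikolaus--Scholze Tate orbit lemma identifies $\cpl{\bigl((C^{\tate\Cp})^{\h\cir}\bigr)}_p$ with $\cpl{(C^{\tate\cir})}_p$ as $\E_\infty$-rings, so this too is even and $p$-complete. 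Consequently, applying $(-)^{\h\cir}$ to the $\cir$-equivariant cyclotomic Frobenius $C\to C^{\tate\Cp}$ and to the canonical map, and then $p$-completing, yields maps of even $p$-complete $\E_\infty$-rings
\[
  \varphi^{\h\cir},\ \can\colon \cpl{(C^{\h\cir})}_p\longrightarrow \cpl{(C^{\tate\cir})}_p
\]
natural in $C$. Since the double-speed Postnikov filtration $\tau_{\ge 2\star}$ is the even filtration on even $\E_\infty$-rings and is functorial (\cref{in--fil-ev-rmk}), applying it produces natural maps of filtered $\E_\infty$-rings; taking $C = C_\bullet$, passing to $\lim_\Delta$, and invoking the identifications of the first paragraph yields the filtered maps $\varphi,\ \can\colon \fil_{\mot}^{\star}\TC^{-}(R)^{\wedge}_p\to \fil_{\mot}^{\star}\TP(R)^{\wedge}_p$.

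Finally, since $\lim_\Delta$ commutes with fibers, the levelwise fiber sequences defining $\fil_{\ev,p,\TC}^{\star}$ on the terms $C_\bullet$ assemble, after $\lim_\Delta$, to identify $\fil_{\mot}^{\star}\cpl{\TC(R)}_p$ with the equalizer of the two filtered maps just constructed; this is the last assertion of the theorem. To check that these filtered maps genuinely \emph{refine} the Nikolaus--Scholze maps, take the colimit over the filtration variable $\star$: by \Cref{motConv} this recovers $\TC^{-}(R)^{\wedge}_p$, $\TP(R)^{\wedge}_p$ and $\cpl{\TC(R)}_p$, and under these identifications $\colim_\star\tau_{\ge 2\star}(\varphi^{\h\cir})$ and $\colim_\star\tau_{\ge 2\star}(\can)$ on the resolution become the ordinary Frobenius and canonical maps (using that $(-)^{\h\cir}$ and $(-)^{\tate\cir}$ commute with the totalization and the naturality of the underlying-object functor on filtered $\E_\infty$-rings); the equalizer statement then degenerates to the Nikolaus--Scholze formula $\cpl{\TC(R)}_p\simeq\mathrm{eq}(\varphi,\can)$.
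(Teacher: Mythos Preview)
Your overall shape is right---build an explicit even cyclotomic resolution of $\cpl{\THH(R)}_p$, apply $\tau_{\ge 2\star}$ termwise, and read off the filtered Frobenius and canonical maps---but the proposal has a genuine gap at the very first step. You write ``choose a map of cyclotomic $\E_\infty$-rings $\THH(R)\to B$ such that each $C_n$ is an even, connective cyclotomic $\E_\infty$-ring,'' citing the introduction. But the introduction is only announcing what the paper will prove; the existence of such a $B$ is exactly the nontrivial content of the theorem. Constructing an even cyclotomic $\E_\infty$-ring receiving a $p$-completely eff map from $\cpl{\THH(R)}_p$ is the whole point of \Cref{WilsonAppendix}: one builds a strongly even ring $A$ (e.g.\ $\MW$), uses Wilson-space obstruction theory to make $A \otimes \Sigma^{\infty}_+W$ into a \emph{cyclotomic base} (\cref{finalAppThm}), and then takes $B = \cpl{\THH(A\otimes R/A\otimes\Sigma^{\infty}_+W)}_p$. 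Chromatic quasisyntomicity enters precisely here, to guarantee that $A\otimes R$ is even and $p$-quasisyntomic (\cref{mo--strongly-even-quasisyntomic}), that the map is $p$-completely eff (\cref{hkr--quasismooth-HH-atlas}), and that the target is even (\cref{hkr--quasiregular-HH-even}). Without this, your opening ``choose'' is unjustified, and indeed the remark immediately following \cref{filteredFrob} warns that this step is not formal.

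Relatedly, you misidentify the main obstacle. Once an even cyclotomic target and a $p$-completely eff map of cyclotomic $\E_\infty$-rings are in hand, the identification of the $\TC^-$, $\TP$, and $\TC$ filtrations with the cosimplicial ones is essentially packaged in \cref{ev--compute-tc-as-fiber}: the right Kan extensions defining the cyclotomic variant of the even filtration automatically produce the filtered $\varphi$ and $\can$, and the fiber formula follows. So the step you flag as hard is handled by the general descent machinery, while the step you treat as given (existence of the cyclotomic even cover) is where all the work lies.
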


\begin{remark}
  Our proof of \Cref{filteredFrob} is not formal, and in particular crucially uses the hypothesis that $R$ is chromatically quasisyntomic. The difficulty is that $\fil_{\mot}^{\star} \TC^{-}(R)$ and $\fil_{\mot}^{\star} \TP(R)$ are defined using only the $\cir$-action on $\THH(R)$, while $\fil^{\star}_{\mot} \cpl{\TC(R)}_p$ is defined using the full cyclotomic structure.  To deduce the theorem, we will need to show that there is a sufficient supply of cyclotomic $\mathbb{E}_{\infty}$-rings under $\THH(R)$. Specifically, we show that there exists an eff map of cyclotomic $\mathbb{E}_{\infty}$-rings
  \[
    \THH(R) \to B,
  \]
  where $B$ is connective, even, and of bounded $p$-power torsion for all primes $p$. The existence of such implies that the variants of the even filtration involving different structure are consistent with one other (see \cref{in--ev--proofs,in--var--proofs}).
	
  A key ingredient in establishing this existence is a result of Steve Wilson, which states, for each positive integer $i$ and even ring spectrum $A$, that $A_*\Omega^{\infty} \Sigma^{2i} \mathrm{MU}$ is an even, polynomial $A_*$-algebra.  In \Cref{WilsonAppendix}, we enhance Wilson's result by proving that, for at least some even $\E_\infty$-rings $A$, $\THH$ relative to $A \otimes \Sigma^{\infty}_+\Omega^{\infty} \Sigma^{2i} \mathrm{MU}$ carries a cyclotomic structure. In other words, there is a dashed arrow and commutative diagram of $\cir$-equivariant $\E_\infty$-rings
  \[
    \begin{tikzcd}
      \THH(A \otimes \Sigma^{\infty}_+\Omega^{\infty} \Sigma^{2i} \MU)  \arrow{r}{\varphi} \arrow{d}{\pi}  &  \THH(A \otimes \Sigma^{\infty}_+\Omega^{\infty} \Sigma^{2i} \MU) ^{\tate\Cp} \arrow{d}{\pi^{\tate\Cp}}\\
      A \otimes \Sigma^{\infty}_+\Omega^{\infty} \Sigma^{2i} \MU \arrow[dashed]{r}  & (A \otimes \Sigma^{\infty}_+\Omega^{\infty} \Sigma^{2i} \MU)^{\tate\Cp}.
    \end{tikzcd}
  \] 
\end{remark}

It would be interesting to know if any analog of \Cref{filteredFrob} holds for a broader class of $\E_\infty$-rings $R$.

\begin{remark}
  Concretely, given a chromatically quasisyntomic $\E_\infty$-ring $R$, one can compute the motivic spectral sequence for $\THH_*(R)$ by finding an $\E_\infty$-$\MU$-algebra map $S \to \MU \otimes R$ with the following properties:
  \begin{enumerate}
    \item $\pi_*S$ is a polynomial $\MU_*$-algebra, with polynomial generators in even degrees.
    \item The map $\pi_*S \to \MU_*R$ is surjective.
    \end{enumerate}
    The motivic filtration will then be given by descent along the composite 
    \[
      \THH(R) \to \THH(R) \otimes \MU \simeq \THH(R \otimes \MU/\MU) \to \THH(R \otimes \MU/S).
    \]
\end{remark}

\begin{remark}
  Suppose that $R$ is a chromatically quasisyntomic $\E_\infty$-ring spectrum such that $\pi_0R$ is also chromatically quasisyntomic.  Given any filtration on $\cpl{\mathrm{K}^{\mathrm{alg}}(\pi_0R)}_p$ compatible with the motivic filtration on $\cpl{\TC(\pi_0R)}_p$, one can define a filtration on $\cpl{\mathrm{K}^{\mathrm{alg}}(R)}_p$ by pullback \cite[Theorem 0.0.2]{DGM}.  We thus expect that, by mixing Voevodsky's filtration on the algebraic $\mathrm{K}$-theory of discrete rings with our motivic filtration on $\TC$ of $\E_\infty$-rings, one may obtain a motivic spectral sequence for the algebraic $\mathrm{K}$-theory of many chromatically quasisyntomic ring spectra.
\end{remark}

\begin{remark} \label{number-ring-example}
Let $\mathcal{O}_K$ be the ring of integers in a local number field, and fix a choice of uniformizer $\pi$. The works \cite{BMS,LiuWang,NikolausKrause,Zpn} study $\cpl{\THH(\mathcal{O}_K)}_p$ and $\cpl{\TC(\mathcal{O}_K)}_p$ by ($p$-completed) descent along the map 
\[\THH(\mathcal{O}_K) \to \THH(\mathcal{O}_K/\mathbb{S}[\pi]),\]
which is a map of cyclotomic $\E_\infty$-ring spectra. 
It follows from our work here (specifically, from the fact that $\THH(\Sigma^{\infty}_+ \mathbb{N}) \to \Sigma^{\infty}_+ \mathbb{N}$ is evenly free) that the descent filtration so obtained agrees with the even filtration on $\cpl{\TC(\mathcal{O}_K)}_p$, and hence with the BMS filtration on $\cpl{\TC(\mathcal{O}_K)}_p$.  A comparison of the descent and BMS filtrations was independently obtained by Antieau--Krause--Nikolaus, and will appear in their announced work \cite{Zpn}.
\end{remark}

The work of Liu--Wang referenced in \Cref{number-ring-example} shows that the motivic filtration allows for the practical computation of $\TC$ of number rings \cite{LiuWang}.  Additionally, works such as \cite{Zpn, Sulyma} and \cite[\textsection 10]{RecentTHH} show that the motivic filtration can be used to make precise calculations of the algebraic $K$-theories of commutative rings with lci singularities.

Similarly, we expect the motivic filtration to be a useful computational tool in higher chromatic contexts. In \cite{DavidLee} David Jongwon Lee completely computes $\THH_*(\mathrm{ku})$ by use of the motivic spectral sequence.  No complete computation of $\THH_*(\mathrm{ku})$ previously existed in the literature, though tremendous progress was achieved by Ausoni \cite{AusoniTHH} and Angeltveit--Hill--Lawson \cite{THHlko}, building on work of McClure--Staffeldt \cite{McClureStaffeldt} and Angeltveit--Rognes \cite{AngeltveitRognes}.  In the final section of the paper, we explain another computational application of the motivic spectral sequence.

\subsection{The motivic spectral sequence for $V(1)_\star \TC(\ell)$}

In \Cref{SecAdamsSummand} we will study the connective Adams summand $\ell$ of $\mathrm{ku}_{(p)}$ at an arbitrary prime $p \ge 2$.  In seminal work, Ausoni and Rognes computed $V(1)_* \TC(\ell)=\pi_* \left(\TC(\ell) / (p,v_1)\right)$ for primes $p \ge 5$ \cite{AusoniRognes}.  We demonstrate the computability of syntomic cohomology by giving an independent proof of their result.  Furthermore, our methods just as easily compute $V(1)_* \TC(\ell)$ at the prime $p=3$, which was not previously accessible.
Our main theorem is the following, stated in terms of mod $(p,v_1)$ syntomic cohomology as formally defined in \Cref{dfn:reduced-greven}:

\begin{theorem} \label{thm:introSynell} 
  For any prime $p \ge 2$, the mod $(p,v_1)$ syntomic cohomology groups of $\ell$, i.e.
  \[
    \pi_*(\gr^*_\mot \TC(\ell)/(p,v_1)),
  \]
  form a free $\mathbb{F}_p[v_2]$-module on finitely many generators. A complete list of module generators is given by:
  \begin{enumerate}
  \item $ \{1\}$, in Adams weight $0$ and degree $0$.
  \item $\{\partial,t^{d}\lambda_1,t^{dp}\lambda_2\text{ }|\text{ }0 \le d < p\}$, in Adams weight $1$.  Here, $|\partial|=-1$, $|t^d\lambda_1|=2p-2d-1$, and $|t^{dp}\lambda_2|=2p^2-2dp-1$.
  \item $\{t^d\lambda_1\lambda_2, t^{dp} \lambda_1 \lambda_2, \partial \lambda_1,\partial\lambda_2\text{ }|\text{ }0 \le d < p\}$, in Adams weight $2$.  Here, $|t^d \lambda_1\lambda_2|=2p^2-2p-2d-2$, $|t^{dp} \lambda_1\lambda_2|=2p^2-2p-2dp-2$, $|\partial \lambda_1|=2p-2$, and $|\partial\lambda_2|=2p^2-2$.
  \item $\{\partial\lambda_1\lambda_2\}$, in Adams weight $3$ and degree $2p^2+2p-3$.
  \end{enumerate}
The degree of $v_2$ is $2p^2-2$, and it has Adams weight $0$.
\end{theorem}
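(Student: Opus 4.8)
The plan is to compute the mod $(p,v_1)$ syntomic cohomology of $\ell$ as the associated graded of the motivic filtration on $\cpl{\TC(\ell)}_p$, using the explicit even‑cyclotomic presentation of this filtration that is available because $\ell$ is chromatically quasisyntomic. Since $V(1)$ need not be a ring spectrum when $p \le 3$, I work throughout with the filtered $\E_\infty$‑ring $\fil^{\star}_{\mot}\THH(\ell)$ and its descendants $\fil^{\star}_{\mot}\TC^{-}(\ell)$, $\fil^{\star}_{\mot}\TP(\ell)$, $\fil^{\star}_{\mot}\cpl{\TC(\ell)}_p$, and reduce mod $(p,v_1)$ using the $\E_\infty$‑$\ell$‑module structure (both $p$ and $v_1$ lift to $\pi_{\ast}\ell$, so this is unambiguous at every prime); by definition the associated graded of the result is the mod $(p,v_1)$ syntomic cohomology. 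By \Cref{filteredFrob} this associated graded is the fiber of $\varphi - \can$ from $\gr^{\star}_{\mot}\TC^{-}(\ell)/(p,v_1)$ to $\gr^{\star}_{\mot}\TP(\ell)/(p,v_1)$, so it suffices to compute (i) the mod $(p,v_1)$ motivic associated gradeds of $\THH(\ell)$, $\TC^{-}(\ell)$ and $\TP(\ell)$ --- equivalently, the mod $(p,v_1)$ prismatic cohomology of $\ell$ together with its Nygaard filtration --- and (ii) the filtered Frobenius on them.

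For (i), I use the concrete recipe recorded just after \cref{in--xq--filmot}: choose an $\E_\infty$‑$\MU$‑algebra $S$ whose homotopy is a polynomial $\MU_{\ast}$‑algebra on even generators and which surjects onto $\MU_{\ast}\ell$ --- concretely a tensor product of copies of $\MU \otimes \Sigma^{\infty}_{+}\Omega^{\infty}\Sigma^{2i}\MU$, whose homotopy is even and polynomial by S.\ Wilson's theorem and for which the relevant relative $\THH$ carries a cyclotomic structure by \Cref{WilsonAppendix}. Because $\MU_{\ast}\ell$ is then the quotient of $\pi_{\ast}S$ by a regular sequence, $B := \THH(\ell \otimes \MU / S)$ is an even cyclotomic $\E_\infty$‑ring under $\THH(\ell)$ --- the analogue of $\THH$ of a quasiregular semiperfectoid cover in \cite{BMS} --- and $\fil^{\star}_{\mot}\THH(\ell) \iso \lim_{\Delta}\tau_{\geq 2\star}(B^{\otimes_{\THH(\ell)} \bullet+1})$, with $(-)^{\h\cir}$ and $(-)^{\tate\cir}$ inserted for $\TC^{-}$ and $\TP$. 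Running the resulting descent (motivic) spectral sequences mod $(p,v_1)$ and matching them against the classical inputs $V(1)_{\ast}\THH(\ell) \cong E(\lambda_1,\lambda_2) \otimes \mathbb{F}_p[\mu_2]$ with $|\mu_2| = 2p^2$ of McClure--Staffeldt \cite{McClureStaffeldt} and the forms of $V(1)_{\ast}\TC^{-}(\ell)$, $V(1)_{\ast}\TP(\ell)$ from Ausoni--Rognes \cite{AusoniRognes} (which reappear here as computations of the \emph{associated graded}, refining the filtration studied in \cite{HahnWilson}), one reads off that mod $(p,v_1)$ the prismatic cohomology $\gr^{\star}_{\mot}\TP(\ell)$ is a Laurent‑polynomial family in the weight grading, with the weight‑one periodicity generator $t$, tensored with an exterior algebra $E(\lambda_1,\lambda_2)$ supported in cohomological degrees $0,1,2$; its Nygaard filtration $\gr^{\star}_{\mot}\TC^{-}(\ell)$ is the evident connective part, and the class $v_2$ is detected in low cohomological degree.

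For (ii), the filtered Nikolaus--Scholze Frobenius on $\gr^{\star}_{\mot}\TP(\ell)$ is determined by its effect on $t$, $\lambda_1$ and $\lambda_2$; I would compute these using the cyclotomic structure on $B$ furnished by \Cref{WilsonAppendix} together with an analysis of how $\varphi$ interacts with the even and Nygaard filtrations, and then chase the long exact sequence attached to $\gr^{\star}_{\mot}\cpl{\TC(\ell)}_p/(p,v_1) \iso \fib(\varphi - \can)$. The upshot should be that, in each internal degree, $\varphi - \can$ is injective away from a finite‑dimensional obstruction on which $v_2$ acts invertibly, so that the fiber is a free $\mathbb{F}_p[v_2]$‑module on finitely many generators concentrated in cohomological (Adams) weight $\leq 3$; the classes $\partial(-)$ arise from the shift inherent in $\fib$, and bookkeeping the degrees reproduces exactly the list (1)--(4). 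Convergence of the motivic spectral sequence (\Cref{motConv}) then lets us pass to $V(1)_{\ast}\TC(\ell)$; when $p \geq 3$ this $E_2$‑page is free over $\mathbb{F}_p[v_2]$ and concentrated in weights $0$--$3$, so every potential differential (which raises weight by $\geq 2$ and lowers total degree by $1$) vanishes and $E_2 = E_{\infty}$, while at $p = 2$ the sparseness argument is weaker and, together with the failure of $V(1)$ to be a ring there, requires separate care.

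The step I expect to be the main obstacle is (ii): identifying the filtered Frobenius on the associated graded is the one genuinely non‑formal ingredient --- it is precisely here that the hypothesis that $\ell$ is chromatically quasisyntomic, and the cyclotomic refinement of Wilson's theorem in \Cref{WilsonAppendix}, are used (cf.\ the remark following \Cref{filteredFrob}). Once $\varphi$ is pinned down on $t$, $\lambda_1$ and $\lambda_2$, the remaining work --- assembling the long exact sequence and extracting the free $\mathbb{F}_p[v_2]$‑module structure with its explicit generators --- is finite, if intricate, homological algebra, and agreement with \cite{AusoniRognes} for $p \geq 5$ provides a consistency check.
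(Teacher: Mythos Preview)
Your overall architecture is right, but the plan has two genuine gaps that the paper's argument is designed to circumvent.

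First, you propose to compute $\gr^\star_{\mot}\TP(\ell)/(p,v_1)$ and $\gr^\star_{\mot}\TC^-(\ell)/(p,v_1)$ directly, citing the Ausoni--Rognes forms as input. But Ausoni--Rognes only applies for $p\ge 5$, so this is either circular or insufficient at $p=2,3$, which is exactly where the theorem is new. The paper instead works \emph{mod $(p,v_1,v_2)$} first: it runs an algebraic $t$-Bockstein spectral sequence from $\gr^\star_{\mot}\THH(\ell)/(p,v_1)\cong\Lambda(\lambda_1,\lambda_2)\otimes\mathbb{F}_p[\mu]$ (McClure--Staffeldt/Angeltveit--Rognes, valid at all primes) to compute $\gr^\star_{\mot}\TP(\ell)/(p,v_1,v_2)\cong\mathbb{F}_p[t^{\pm p^2}]\otimes\Lambda(\lambda_1,\lambda_2)$ and similarly for $\TC^-$. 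Note the periodicity is $t^{p^2}$, not $t$; the differentials $d_p(t)=t^{p+1}\lambda_1$ and $d_{p^2}(t^p)=t^{p^2+p}\lambda_2$ are proved by explicit cobar-complex calculations with $\eta_R(t)$, not by importing $V(1)_\ast\TP(\ell)$. Only at the very end does the paper run the $v_2$-Bockstein back up to mod $(p,v_1)$, and that collapses for easy degree reasons.

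Second, and more seriously, your step (ii) is where the content lives and you have no concrete mechanism. Saying you will ``compute $\varphi$ on $t,\lambda_1,\lambda_2$ using the cyclotomic structure on $B$'' is not a method. The paper's device is the isomorphism
\[
g:\ \gr^\star_{\mot}\TP(\ell)/(p,v_1,v_2)\ \xrightarrow{\ \cong\ }\ \gr^\star_{\mot}\THH(\ell)^{\tate C_p}/(p,v_1),
\]
which holds because $v_2\equiv 0$ in any $\pi_\ast\ell$-algebra mod $(p,v_1)$ and because $v_2$ is a unit multiple of $[p](t)$ in each cobar term. Through $g$, the Frobenius on the associated graded becomes the map induced by the cyclotomic Frobenius $\THH(\ell)\to\THH(\ell)^{\tate C_p}$ mod $(p,v_1)$, and \emph{that} map is known by the Segal conjecture for $\ell$ (it inverts $\mu$). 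This is the non-formal input you are missing; without it there is no way to see, for instance, that $\varphi(\mu^k)$ hits a unit times $t^{-kp^2}$, which is exactly what produces the finite list of generators and the $\partial$-classes in the fiber. The Wilson-space machinery in \S\ref{WilsonAppendix} is used only to guarantee that a filtered $\varphi$ exists; it does not compute it.
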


Here we have used the following convention:

\begin{definition}[Adams weight]
  \label{convention-adams-grading}
  For $M^*$ a graded spectrum, we say that an element of $\pi_n(M^m)$ has \emph{Adams weight} $2m-n$ and \emph{degree} $n$, and we write $|-|$ for degree.\footnote{This convention can be translated as follows. Suppose that for a graded spectrum $N^*$ we say that an element of $\pi_d(N^m)$ has \emph{cohomological degree} $-d$ and \emph{motivic weight} $m$. Then, for a graded spectrum $M^*$, if we set $N^m := M^m[-2m]$, a class in $\pi_\bullet(M^*)$ with Adams weight $a$ and degree $n$ corresponds to a class in $\pi_\bullet(N^*)$ of cohomological degree $a$ and motivic weight $\frac{a+n}{2}$.}
\end{definition}

When $p \ge 3$, so that $V(1)=\mathbb{S}/(p,v_1)$ exists as a spectrum, the mod $(p,v_1)$ syntomic cohomology described by \Cref{thm:introSynell} is the $\mathrm{E}_2$-page of a motivic spectral sequence converging to $V(1)_* \TC(\ell)$.  Below, we draw a picture of the $\mathrm{E}_2$-page of this spectral sequence for $p=5$, with the horizontal axis recording degree and the vertical axis recording Adams weight.  The Adams grading convention means that $d_r$ differentials decrease degree
by $1$ and increase Adams weight by $r$.

\includegraphics[scale=1,trim={4cm 18.5cm 3.5cm 2.7cm},clip]{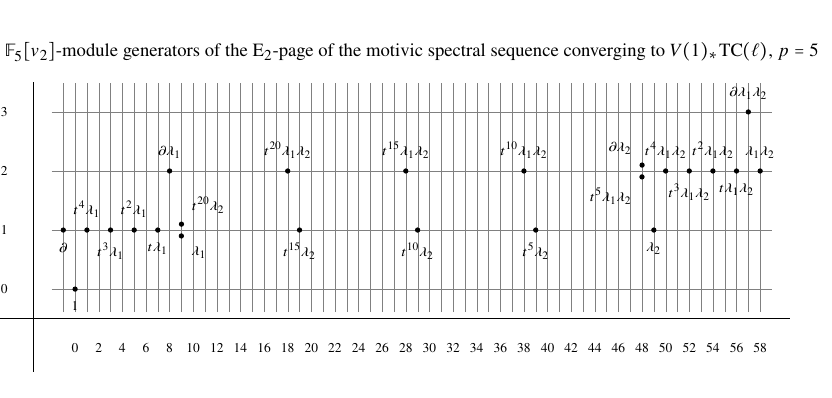}

The reader may notice the similarity between the picture of the motivic spectral sequence for $V(1)_* \TC(\ell)$, which we rigorously define here, and Rognes' conjectured picture for a conjectured motivic spectral sequence converging to $V(1)_* \mathrm{K}^{\mathrm{alg}}(\ell_p^{\wedge})$ \cite[Example 5.2]{RognesICM}.  The rotational symmetry present in the above picture is indicative of Rognes' conjectured Tate--Poitou duality, which we will explore in future work.

The $\mathrm{E}_2$-page of the motivic spectral sequence for $V(1)_* \TC(\ell)$ is concentrated on four horizontal lines, with classes of odd degree on the $1$- and $3$-lines and classes of even degree on the $0$- and $2$-lines.  It follows by parity considerations that the only possible differentials are $d_3$'s from the $0$-line to the $3$-line.  On the $0$-line is a copy of $\mathbb{F}_p[v_2]$, where the degree of $v_2$ is $2p^2-2$.  A $d_3$ differential off of the $0$-line would therefore have target of degree $1$ less than a multiple of $2p^2-2$.  However, the $3$-line is concentrated in degrees $2p-1$ more than a multiple of $2p^2-2$.  No differentials are possible, and we conclude the following corollary:

\begin{corollary} \label{cor:introTCell} Let $p \ge 3$, so that $V(1)=\mathbb{S}/(p,v_1)$ exists as a spectrum.  Then the motivic spectral sequence for $V(1)_* \TC(\ell)$ degenerates at the $\mathrm{E}_2$-page.
\end{corollary}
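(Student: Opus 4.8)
The plan is to deduce the collapse purely from the four-line structure of the $E_2$-page supplied by \Cref{thm:introSynell}, together with a count of degrees modulo $2p^2-2$, exactly along the lines of the paragraph preceding the corollary. Since $p \ge 3$, the spectrum $V(1) = \mathbb{S}/(p,v_1)$ exists, so smashing the convergent motivic filtration of \Cref{motConv} with $V(1)$ produces a spectral sequence with $E_2$-page the mod $(p,v_1)$ syntomic cohomology of $\ell$ and abutment $V(1)_*\TC(\ell)$; here a $d_r$ lowers degree by $1$ and raises Adams weight by $r$, and convergence is strong because for each fixed degree only boundedly many Adams weights — namely $0,1,2,3$ — contribute.

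First I would record the parity pattern. By the degree formulas in \Cref{thm:introSynell}, the generators in items (1) and (3) have even degree and those in (2) and (4) have odd degree, while $v_2$ has even degree $2p^2-2$; hence the entire $0$- and $2$-lines are concentrated in even degrees and the entire $1$- and $3$-lines in odd degrees. A $d_r$ changes the parity of the degree and raises the Adams weight by $r \ge 2$, so its source and target weights must be of opposite degree-parity and differ by at least $2$. Among the pairs $0\to 1$, $0\to 3$, $1\to 2$, $2\to 3$ — all the ways to pass from an even-degree line to an odd-degree line (or vice versa) within weights $0$ through $3$ — only $0 \to 3$ is compatible with $r \ge 2$, and it forces $r = 3$. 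In particular $E_2 = E_3$, and it remains to show $d_3 = 0$.

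Finally I would carry out the degree bookkeeping. The $0$-line is the free $\mathbb{F}_p[v_2]$-module on $1 \in \pi_0$, so its classes lie in degrees $\equiv 0 \pmod{2p^2-2}$, whence a $d_3$ out of the $0$-line has target in degree $\equiv -1 \pmod{2p^2-2}$. The $3$-line is the free $\mathbb{F}_p[v_2]$-module on $\partial\lambda_1\lambda_2$, which sits in degree $2p^2+2p-3 \equiv 2p-1 \pmod{2p^2-2}$. A nonzero $d_3$ would therefore require $2p-1 \equiv -1 \pmod{2p^2-2}$, i.e. $p \equiv 0 \pmod{p^2-1}$, which is impossible since $p^2-1 > p$ for $p \ge 3$. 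Hence $d_3 = 0$ and the spectral sequence degenerates at $E_2$.

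There is essentially no obstacle once \Cref{thm:introSynell} is in hand: the collapse is a formal consequence of the vanishing lines and of the arithmetic gap between the degrees occurring on the $0$-line and on the $3$-line. The only point deserving a word of care is strong convergence of the $V(1)$-linear motivic spectral sequence, which follows from \Cref{motConv} together with the boundedness of Adams weights in \Cref{thm:introSynell}; I expect this to be routine rather than a genuine difficulty.
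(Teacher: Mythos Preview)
Your proposal is correct and follows essentially the same approach as the paper: the four-line parity structure from \Cref{thm:introSynell} forces any differential to be a $d_3$ from the $0$-line to the $3$-line, and the degree count modulo $2p^2-2$ rules this out. The only addition is your remark on strong convergence, which the paper leaves implicit.
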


At primes $p \ge 5$, $v_2$ is a self map of $V(1)$ and the motivic spectral sequence for $V(1)_* \TC(\ell)$ is one of $\mathbb{F}_p[v_2]$-modules.  We note that there are no possible $\mathbb{F}_p[v_2]$-module extension problems, and conclude that $V(1)_{*} \TC(\ell)$ is a free $\mathbb{F}_p[v_2]$-module on generators with the same names as those in \Cref{thm:introSynell}.  At the prime $p=3$, $v_2^9$ is a self map of $V(1)$ \cite{v29exists}, and we may similarly conclude that $V(1)_{*} \TC(\ell)$ is a free $\mathbb{F}_3[v_2^9]$-module.

\begin{remark}
It would be excellent to see the results of \cite{EllipticTC} or \cite{MoravaKTC} similarly recovered via motivic spectral sequences.  These works compute the topological cyclic homologies of non-commutative ring spectra, which presents an obvious complication, but see \Cref{example:BPn}.
\end{remark}

Finally, we explain in \Cref{subsec:finalsyntomic} how \Cref{thm:introSynell} allows us to deduce the following two qualitative results:

\begin{theorem} \label{thm:introLQell}
For any prime $p \ge 2$ and type $3$ $p$-local finite complex $F$, $F_* \TC(\ell)$ is finite.  In particular,
\[\TC(\ell)_{(p)} \to L_2^{f} \TC(\ell)_{(p)}\]
is a $\pi_*$-iso for $* \gg 0$.
\end{theorem}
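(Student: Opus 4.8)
The plan is to deduce both statements from the computation of mod $(p,v_1)$ syntomic cohomology in \Cref{thm:introSynell} together with the convergence of the motivic filtration (\Cref{motConv}). First observe that the finite $p$-local spectra $F$ for which $F_*\TC(\ell)$ is finite form a thick subcategory, since finiteness of $\pi_*(F\wedge\TC(\ell))$ is preserved under (de)suspension, retracts, and cofiber sequences in the variable $F$. By the thick subcategory theorem, any type $3$ finite complex generates the thick subcategory of all type $\ge 3$ finite $p$-local spectra, so it is enough to exhibit a \emph{single} type $3$ complex $F$ with $F_*\TC(\ell)$ finite. As such an $F$ is $p$-power torsion, $F\wedge\TC(\ell)\simeq F\wedge\cpl{\TC(\ell)}_p$, and we may work $p$-completely, using the motivic filtration $\fil^{\star}_{\mot}\cpl{\TC(\ell)}_p$ of \Cref{in--xq--filmot}.

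When $p$ is odd, $V(1)=\S/(p,v_1)$ exists (via the Adams self-map of $\S/p$), and \Cref{cor:introTCell} and the discussion after it identify $V(1)_*\TC(\ell)$ with a finitely generated free module over $\mathbb{F}_p[v_2]$ for $p\ge 5$, and over $\mathbb{F}_3[v_2^9]$ for $p=3$. Choose $i$ with $v_2^i$ an honest self-map $v\colon\Sigma^d V(1)\to V(1)$ ($i=1$ for $p\ge 5$, $i=9$ for $p=3$), and set $F=\cofib(v)$, of type $3$. The endomorphism of $V(1)_*\TC(\ell)$ induced by $v$ is the polynomial generator over which $V(1)_*\TC(\ell)$ is free, hence injective, so the long exact sequence attached to $\Sigma^d V(1)\wedge\TC(\ell)\xrightarrow{v}V(1)\wedge\TC(\ell)\to F\wedge\TC(\ell)$ collapses to an isomorphism $F_*\TC(\ell)\cong V(1)_*\TC(\ell)/v$. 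The target is a free $\mathbb{F}_p$-module of finite rank, hence finite, which proves the finiteness assertion for $p$ odd.

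For $p=2$ the complex $V(1)$ is not available, and I would instead work with the motivic filtration itself. Let $Y=\cofib(v_1^4\colon\Sigma^8\S/2\to\S/2)$ be the type $2$ complex built from the Adams self-map of $\S/2$, fix a $v_2$-self-map $v_2^j\colon\Sigma^e Y\to Y$, and put $F=\cofib(v_2^j)$, of type $3$. Smashing $F$ into the convergent filtered $\E_\infty$-ring $\fil^{\star}_{\mot}\cpl{\TC(\ell)}_2$ gives a filtration on $F\wedge\cpl{\TC(\ell)}_2$ whose associated graded is $F$ smashed with the syntomic cohomology of $\ell$ (\Cref{in--xq--filmot}), i.e.\ the syntomic cohomology of $\ell$ reduced mod $(2,v_1^4,v_2^j)$. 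By \Cref{thm:introSynell} the mod $(2,v_1)$ syntomic cohomology of $\ell$ is finitely generated and free over $\mathbb{F}_2[v_2]$; since the self-maps above act on syntomic cohomology through its $\mathbb{F}_2[v_2]$-module structure as $v_1^4$ and $v_2^j$, the further reductions — finitely many cofiber sequences carrying $v_1$ to $v_1^4$, and one more killing $v_2^j$ — leave a finitely generated $\mathbb{F}_2[v_2]$-module that is $v_2$-power-torsion, hence finite. With finite associated graded, the motivic spectral sequence for $F\wedge\cpl{\TC(\ell)}_2$ has only finitely many nonzero entries and converges by \Cref{motConv}, so $\pi_*(F\wedge\TC(\ell))$ is finite. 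The same argument runs verbatim at every prime, so \Cref{cor:introTCell} can in fact be dispensed with.

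For the ``in particular'', note that $\ell$ is connective, so $\TC(\ell)$, and hence $\TC(\ell)_{(p)}$, is bounded below; together with the finiteness above this exhibits $\TC(\ell)_{(p)}$ as an fp-spectrum of type $\le 2$ in the sense of Mahowald--Rezk. For such spectra the fiber of the map to the finite localization $L_2^f$ is bounded above, i.e.\ $\TC(\ell)_{(p)}\to L_2^f\TC(\ell)_{(p)}$ is a $\pi_*$-isomorphism in all sufficiently large degrees, as asserted. I expect the main obstacle to be the $p=2$ bookkeeping of the third step: making ``reduction mod $(2,v_1^4,v_2^j)$'' precise by matching the $v_1$ and $v_2$ of \Cref{thm:introSynell} with the operators the topological self-maps of $\S/2$ and of $Y$ induce on $\gr^{\star}\cpl{\TC(\ell)}_2$, checking the resulting associated graded is a finitely generated $v_2$-power-torsion $\mathbb{F}_2[v_2]$-module, and confirming — via completeness of the motivic filtration (or a vanishing line in its spectral sequence) — that finiteness transfers from the $\mathrm{E}_1$-page to $\pi_*(F\wedge\cpl{\TC(\ell)}_2)$.
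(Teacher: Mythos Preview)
Your approach is essentially the paper's: reduce via the thick subcategory theorem to a single type $3$ complex, then use the motivic filtration on $\cpl{\TC(\ell)}_p$ together with the finiteness of the mod $(p,v_1,v_2)$ syntomic cohomology to bound the homotopy. The ``in particular'' via fp-type/Mahowald--Rezk is exactly what the paper does as well (citing \cite[\textsection 3]{HahnWilson}).

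The paper's execution is a bit more economical. It works uniformly at all primes by choosing a generalized Moore spectrum $M=\S/(p^i,v_1^j,v_2^k)$ with $j\gg i$, $k\gg j$ (so that the quotient makes sense in $\MU_*\MU$-comodules), asserts that the motivic spectral sequence for $M_*\TC(\ell)$ begins with $\gr^{\star}_{\mot}(\TC(\ell))/(p^i,v_1^j,v_2^k)$ in the sense of the definition at the start of \cref{SecAdamsSummand}, and then simply observes that this object is resolved by finitely many copies of $\gr^{\star}_{\mot}(\TC(\ell))/(p,v_1,v_2)$, which is finite by \cref{thm:ellsyntomic}. Two things are gained this way. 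First, the bookkeeping you flag at $p=2$ evaporates: by working with a complex whose $\MU$-homology is exactly the comodule quotient, the identification of the $E_1$-page with the algebraic quotient is built in, rather than requiring you to match topological $v_i$-self-maps with algebraic operators on syntomic cohomology. Second, the argument uses only the finiteness of mod $(p,v_1,v_2)$ syntomic cohomology, never the collapse of the motivic spectral sequence (\cref{cor:introTCell}); your odd-prime route through $V(1)_*\TC(\ell)$ is valid but an unnecessary detour, as you yourself note at the end.
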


\begin{theorem} \label{thm:introTelescope}
The telescope conjecture is true of $\TC(\ell)$.  In other words, the natural map
\[L_2^{f} \TC(\ell) \to L_2 \TC(\ell)\]
is an equivalence.
\end{theorem}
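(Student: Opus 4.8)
The plan is to deduce this from \Cref{thm:introSynell}, reducing the height-$2$ telescope conjecture for $\TC(\ell)$ to the classical cases at heights $\le 1$ and to a finiteness input supplied by the explicit computation. Working $p$-locally, we must show that the natural map $L_2^{f}\TC(\ell)\to L_2\TC(\ell)$ is an equivalence. The telescope conjecture at heights $\le 1$ is a theorem of Mahowald, Miller, and Ravenel, so $L_1^{f}\TC(\ell)\simeq L_1\TC(\ell)$; feeding this into the chromatic fracture squares for $L_2$ and for the finite localization $L_2^{f}$ --- which reconstruct $L_2\TC(\ell)$ and $L_2^{f}\TC(\ell)$ from $L_1\TC(\ell)$ and their height-$2$ monochromatic layers $M_2\TC(\ell)$, $M_2^{f}\TC(\ell)$, the first built from $L_{K(2)}\TC(\ell)$ and the second from $L_{T(2)}\TC(\ell)$ --- it suffices to prove that the canonical comparison map $L_{T(2)}\TC(\ell)\to L_{K(2)}\TC(\ell)$ is an equivalence.

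To prove this, I would smash with a finite complex $V$ of type $2$ --- the Smith--Toda complex $V(1)$ when $p\ge 5$, and a type-$2$ complex bearing a $v_2^{j}$ self-map when $p\in\{2,3\}$ --- which is harmless since $-\otimes V$ commutes with both $L_{T(2)}$ and $L_{K(2)}$. On the one hand, since $L_{T(2)}$ is smashing and the telescope $T(2)=v_2^{-1}V$ is $T(2)$-local, one identifies $L_{T(2)}(\TC(\ell)\otimes V)\simeq v_2^{-1}(\TC(\ell)\otimes V)$, whose homotopy is, by \Cref{thm:introSynell} together with the convergence and collapse of the motivic spectral sequence (\Cref{cor:introTCell}), a finitely generated free $\mathbb{F}_p[v_2^{\pm 1}]$-module --- in particular finite in each degree. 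On the other hand, $\TC(\ell)\otimes V\to v_2^{-1}(\TC(\ell)\otimes V)$ is a $K(2)$-equivalence, so $L_{K(2)}(\TC(\ell)\otimes V)\simeq L_{K(2)}\bigl(v_2^{-1}(\TC(\ell)\otimes V)\bigr)$. The crux is then that \emph{a $T(2)$-local spectrum with degreewise finite homotopy groups is already $K(2)$-local}; granting this, $v_2^{-1}(\TC(\ell)\otimes V)$ is $K(2)$-local and hence equal to $L_{K(2)}(\TC(\ell)\otimes V)$, so the comparison map is an equivalence after $-\otimes V$. Its fiber $C'$ is $T(2)$-local and satisfies $C'\otimes V\simeq 0$, hence $C'\otimes T(2)\simeq v_2^{-1}(C'\otimes V)\simeq 0$; a $T(2)$-acyclic $T(2)$-local spectrum vanishes, so $C'\simeq 0$ and we conclude $L_2^{f}\TC(\ell)\simeq L_2\TC(\ell)$. (The relevant finiteness is exactly the statement, recorded in \Cref{thm:introLQell}, that $\TC(\ell)$ is an fp-spectrum of fp-type $\le 2$; alternatively, one can finish by observing that the fiber of $L_2^{f}\TC(\ell)\to L_2\TC(\ell)$ is --- using that both localizations are smashing, together with \Cref{thm:introLQell} --- simultaneously $\mathbb{Z}_{(p)}$-homology-acyclic and bounded above, hence zero.)

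The step I expect to be the main obstacle is precisely the one separating $\TC(\ell)$ from the sphere: that telescopic and Morava localization agree on $T(2)$-local spectra of finite type. This is false for $\mathbb{S}$ --- the height-$2$ telescope conjecture fails there --- so it genuinely requires the finiteness of the mod $(p,v_1)$ syntomic cohomology of $\ell$ from \Cref{thm:introSynell}. Beyond this one should verify the standard chromatic inputs ($L_{T(2)}$ is smashing, $T(2)$ is $T(2)$-local, and chromatic fracture holds for the finite localization), and handle the primes $2$ and $3$ with care, since there the Smith--Toda complex $V(1)$ does not exist and one must work with a type-$2$ complex carrying a $v_2^{j}$ self-map, tracking the attendant $\mathbb{F}_p[v_2^{j}]$-module structures as in the discussion around \Cref{cor:introTCell}.
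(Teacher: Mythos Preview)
Your reduction via chromatic fracture and the height-$\le 1$ telescope conjecture to the comparison $L_{T(2)}\TC(\ell)\to L_{K(2)}\TC(\ell)$ is sound and parallels the paper's reduction (there phrased as passing to $F\otimes(-)$ for $F$ finite of type~$2$, citing Mahowald and Miller). The gap is exactly the step you flag as the crux: the assertion that a $T(2)$-local spectrum with degreewise finite homotopy groups is automatically $K(2)$-local. This is not a known theorem, and you offer no argument for it; absent extra structure there is no mechanism by which finiteness of $\pi_*$ alone forces $K(2)$-locality. Your parenthetical alternative has the same defect in disguise: \cref{thm:introLQell} controls only the map $\TC(\ell)\to L_2^f\TC(\ell)$ in high degrees, not the map $L_2^f\TC(\ell)\to L_2\TC(\ell)$, so you have no way to conclude that the fiber of the latter is bounded above.

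The paper closes this gap by a different maneuver that never leaves the world of $\MU$-modules, where $L_2^f=L_2$ is known. It first proves the height-$2$ telescope conjecture for the $\E_\infty$-ring $\TC^{-}(\ell)$; since $\TP(\ell)$ is a module over a ring for which the conjecture holds, and $\TC(\ell)$ sits in a fiber sequence with $\TC^{-}(\ell)$ and $\TP(\ell)$, the result for $\TC(\ell)$ follows. For $\TC^{-}(\ell)$, the motivic (equivalently, $\MU$-descent) resolution exhibits $F\otimes\TC^{-}(\ell)$ as the totalization of $F\otimes\TC^{-}(\ell/\MU^{\otimes\bullet+1})$, each term of which is an $\MU$-module. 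The finiteness from the syntomic computation enters not as a bare statement about homotopy groups but as a horizontal vanishing line in the motivic spectral sequence for $F_*\TC^{-}(\ell)$, which is exactly what one needs to commute $v_2^{-1}(-)$ past the totalization. Once that commutation is established, $v_2^{-1}(F\otimes\TC^{-}(\ell))$ is a limit of $L_2$-local spectra and hence $L_2$-local. The moral is that degreewise finiteness of homotopy does not by itself buy $K(2)$-locality, but finiteness organized through a convergent $\MU$-resolution does.
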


\Cref{thm:introLQell} was previously proved for $p \ge 5$ in \cite{AusoniRognes}, and at all primes in \cite{HahnWilson}.  On the other hand, \Cref{thm:introTelescope} is new to this paper.  Of course, a natural next question is to compute the syntomic cohomology of $\ell$ without modding out by $p$ or $v_1$.  The authors 
and collaborators plan to say more about the prismatic and syntomic cohomologies of $\ell$, $\mathrm{ku}$, $\mathrm{ko}$, and $\MU$ in future work. We note that the prismatic cohomologies of $\E_\infty$-ring spectra may be of interest even to those studying prismatic cohomology of discrete rings. Specifically, the sequence of $\E_\infty$-ring maps $\mathbb{S} \to \MU \to \mathrm{ku} \to \mathbb{Z}$ suggests a natural factorization of known connections between $\mathrm{WCart}$ and the moduli stack of formal groups.

\subsection{Acknowledgments}
The authors thank Gabriel Angelini--Knoll, Bhargav Bhatt, Sanath Devalapurkar, Lars Hesselholt, Mike Hopkins, Achim Krause, Markus Land, David Jongwon Lee, Ishan Levy, Jacob Lurie, Haynes Miller, Andrew Senger, Guozhen Wang, and Allen Yuan for helpful conversations. We would particularly like to acknowledge Akhil Mathew and Thomas Nikolaus for teaching us a great deal about cyclotomic spectra. We thank John Rognes, both for his inspiring work and conjectures and for his many detailed comments on earlier drafts. We thank Ben Antieau too for many helpful comments and suggestions and for his kind enthusiasm. We are also grateful to Ferdinand Wagner for detailed feedback on the previous version of this paper, in particular catching some errors in that version which have now been corrected. Finally, we thank the referees for their readings and reports which have much improved the paper.

During the course of this work, the second author was supported by NSF grant DMS-2103152 and the Bell System Fellowship at the IAS, and the first author was supported by the Sloan foundation and a grant from the IAS School of Mathematics.

\subsection{Conventions} \label{ConventionsSection}

\begin{enumerate}[leftmargin=*]
\item $\Spc$ denotes the ($\infty$-)category of spaces; $\Spt$ denotes the category of spectra; $\CAlg$ denotes the category of $\E_\infty$-rings; $\Ab$ denotes the category of abelian groups.
\item The symbol $p$ always refers to a prime number.
\item $\CycSpt_p$ denotes the category of
bounded below
$p$-typical cyclotomic spectra (following the same conventions as in \cite[Definition 3.2.1]{HahnWilson}, in particular including the hypothesis of $p$-completeness); $\CycCAlg_p$ denotes the category of bounded below,
$p$-typical cyclotomic $\E_\infty$-rings, i.e. commutative algebra objects in $\CycSpt_p$.
\item $\FilSpt$ denotes the category of filtered spectra, i.e. the category of functors from the poset $(\mathbb{Z},\ge)$ to $\Spt$; we will generally denote a filtered spectrum by a symbol of the form $\fil^\star_? X$, referring to a diagram of spectra
  \[
    \cdots \to \fil^2_? X \to \fil^1_? X \to \fil^0_? X \to \fil^{-1}_? X \to \fil^{-2}_? X \to \cdots.
  \]
  Given a filtered spectrum $\fil^\star_? X$, we refer to $\colim_{n \to \infty} \fil^{-n}_? X$ as its \emph{underlying object}, and we say $\fil^\star_? X$ is \emph{complete} if $\lim_{n \to \infty} \fil^n_? X \iso 0$. The category $\BiFilSpt$ of bifiltered spectra is defined similarly, using instead the product poset $(\Z,\ge) \times (\Z,\ge)$; we will generally denote a bifiltered spectrum by a symbol of the form $\fil^\filledsquare_{??} \fil^\star_? X$, where again the $\filledsquare$ and $\star$ stand for the two integer indices.
\item $\FilCAlg$ denotes the category of filtered $\E_\infty$-rings, i.e. commutative algebra objects in $\FilSpt$ with respect to the Day convolution symmetric monoidal structure (determined by addition on $\Z$), and $\BiFilCAlg$ is defined similarly. For $\fil^\star_? A$ a filtered $\E_\infty$-ring, $\FilMod_{\fil^\star_? A}$ denotes the category of modules over $\fil^\star_? A$ in $\FilSpt$.
\item $\GrSpt$ denotes the category of graded spectra, i.e. the category of functors from the discrete set $\Z$ to $\Spt$, and the category $\BiGrSpt$ of bigraded spectra is defined similarly. A filtered spectrum $\fil^\star_? X$ has an \emph{associated graded} object $\gr^*_? X = \{\gr^n_? X\}_{n \in \Z}$, given by the formula
  \[
    \gr^n_? X \iso \cofib(\fil^{n+1}_? X \to \fil^n_? X).
  \]
  We recall that a map of complete filtered spectra is an equivalence if and only if the induced map of associated graded objects is.
\item \label{in--con--grab}
  $\GrAb$ denotes the category of graded abelian groups, defined similarly to $\GrSpt$ above. A graded abelian group $M_* = \{M_n\}_{n \in \Z}$ has an \emph{underlying abelian group} $\bigoplus_{n \in \Z} M_n$.  By a \emph{graded commutative ring}, we will mean a commutative algebra in $\GrAb$ with respect to the Day convolution symmetric monoidal structure; note that this involves no Koszul sign rule, so in particular a graded commutative ring has an underlying commutative ring. (Of course, graded rings that are commutative in the sense of the Koszul sign rule will also appear in this paper, but we will not refer to them as graded commutative rings.)
\item For $A$ a commutative ring and $a \in A$, when we refer to an $A$-module $M$ being \emph{$a$-complete}, we always mean in the derived sense. This applies in particular to the case that $A = \Z$ and $a = p$.
\item Given two elements $a,b$ in an $\mathbb{F}_p$-algebra, we will sometimes write $a \doteq b$ to mean that $a$ is equal to an element of $\mathbb{F}_p^{\times}$ times $b$. 
\end{enumerate}

\section{The even filtration} \label{SecEvenFiltration}
In this section, we define the even filtration and its many variants (\cref{ev--def}). We then formulate and prove flat descent properties for each of these variants (\cref{ev--desc}), and then use these to deduce further useful properties (\cref{ev--exh,ev--arith}). The arguments here are not complicated, but must be carefully repeated in slightly different contexts ($p$-complete, $\cir$-equivariant, cyclotomic, etc.).


\subsection{Defining the filtrations}
\label{ev--def}

The most basic version of the even filtration construction was roughly formulated in \cref{in--fil-ev}. We now state it more precisely.

\begin{notation}
  \label{ev--def--ev}
  Let $\CAlg^\ev$ denote the full subcategory of $\CAlg$ spanned by the even $\E_\infty$-rings.
\end{notation}

\begin{proposition}
  \label{ev--def--accessible}
  $\CAlg^\ev$ is an accessible subcategory of $\CAlg$.
\end{proposition}

\begin{proof}
  Let $\GrAb^\ev$ denote the full subcategory of $\Gr\Ab$ spanned by those graded abelian groups $\{M_n\}_{n \in \Z}$ such that $M_n \iso 0$ for all odd integers $n$. By definition, we have a pullback diagram of $\infty$-categories
  \[
    \begin{tikzcd}
      \CAlg^\ev \ar[r] \ar[d, "\pi_*", swap] &
      \CAlg \ar[d, "\pi_*"] \\
      \GrAb^\ev \ar[r] &
      \GrAb,
    \end{tikzcd}
  \]
  where the horizontal arrows are the inclusion functors. The categories $\CAlg$, $\GrAb$, and $\GrAb^\ev$ are accessible, and both $\pi_* : \CAlg \to \GrAb$ and the inclusion functor $\GrAb^\ev \inj \GrAb$ preserve filtered colimits, hence are accessible. Applying \cite[Proposition 5.4.6.6]{htt}, we obtain that $\CAlg^\ev$ is an accessible subcategory of $\CAlg$.
\end{proof}
 
\begin{construction}
  \label{ev--def--fil-ev}
  Recall that there is a functor $\tau_{\ge 2\star} : \Spt \to \FilSpt$ sending a spectrum $X$ to its double-speed Postnikov filtration
  \[
    \cdots \to \tau_{\ge 4}(X) \to \tau_{\ge 2}(X) \to \tau_{\ge 0}(X) \to \tau_{\ge -2}(X) \to \tau_{\ge -4}(X) \to \cdots .
  \]
  Moreover, there is a canonical lax symmetric monoidal structure on this functor, so that it induces a functor $\tau_{\ge 2\star} : \CAlg \to \FilCAlg$. It follows from \cref{ev--def--accessible} that the restriction $\tau_{\ge 2\star} : \CAlg^\ev \to \FilCAlg$ admits a right Kan extension along the inclusion $\CAlg^\ev \subseteq \CAlg$; we denote this right Kan extension by
  \[
    \fil^\star_\ev : \CAlg \to \FilCAlg
  \]
  and refer to this construction as the \emph{even filtration}.
\end{construction}

\begin{remark}
  \label{ev--def--fil-ev-formula}
  For any $\E_\infty$-ring $A$, the even filtration is given by the following limit expression:
  \[
    \fil^{\star}_\ev(A) \iso
    \lim_{A \to B, B \in \CAlg^{\ev}} \tau_{\ge 2\star}(B)
  \]
  (\cref{ev--def--accessible} implies that this limit is equivalent to a small limit and hence exists in the category $\FilCAlg$).
\end{remark}

\begin{remark}
  \label{ev--def--complete}
  For any spectrum $X$, the double speed Postnikov filtration $\tau_{\ge 2\star}(X)$ is complete (that is, $\lim_{n \to \infty} \tau_{\ge 2n}(X) \iso 0$). The collection of complete filtered spectra is closed under limits, so for any $\E_{\infty}$-ring $A$, the even filtration $\fil^\star_\ev(A)$ is complete.
\end{remark}

\begin{remark}
  \label{ev--def--exhaustive}
  By definition of $\fil^\star_\ev$, for an $\E_\infty$-ring $A$, there is a natural map of filtered $\E_\infty$-rings $\tau_{\ge 2\star}(A) \to \fil^\star_\ev(A)$. Passing to underlying objects, we find a natural map of $\E_\infty$-rings $A \to \colim(\fil^\star_\ev(A))$. Beware that this is not an equivalence in general, though it is in many cases of interest (see \cref{ev--exh}).
\end{remark}

We now define a variant of the even filtration in the setting where our $\E_\infty$-rings are required to be $p$-complete.

\begin{definition}
  \label{ev--def--bdd-torsion}
  As in \cite{BMS,BL}, we say that an abelian group $M$ has \emph{bounded $p$-power torsion} if the sequence of subgroups of $p$-power torsion
  \[
    M[p] \subseteq M[p^2] \subseteq M[p^3] \subseteq \cdots
  \]
  eventually stabilizes. We similarly say that a graded abelian group $M_*$ has bounded $p$-power torsion if the sequence of graded subgroups of $p$-power torsion
  \[
    M_*[p] \subseteq M_*[p^2] \subseteq M_*[p^3] \subseteq \cdots
  \]
  eventually stabilizes, or equivalently if the underlying abelian group $\bigoplus_{n \in \Z} M_n$ has bounded $p$-power torsion. Finally, we say that a spectrum $X$ has bounded $p$-power torsion if the graded abelian group $\pi_*(X)$ does.
\end{definition}

\begin{lemma}
  \label{ev--def--bdd-torsion-accessible}
  The abelian groups (resp. graded abelian groups) of bounded $p$-power torsion span an $\aleph_1$-accessible subcategory of $\Ab$ (resp. $\GrAb$).
\end{lemma}

\begin{proof}
  The graded statement can be deduced easily from the ungraded statement, so let us just address the latter.
  
  Suppose given an $\aleph_1$-filtered diagram of abelian groups $\{M_i\}_{i \in I}$ where each $M_i$ has bounded $p$-power torsion. Let $M$ denote the colimit of this diagram in $\Ab$. We claim that $M$ also has bounded $p$-power torsion. Indeed, suppose this were not the case: then for each positive integer $n$, we could find $i_n \in I$ and $x_n \in M_{i_n}[p^n]$ whose image in $M$ lies in $M[p^n] \setminus M[p^{n-1}]$. Since $I$ is $\aleph_1$-filtered, there would then exist some $i \in I$ with maps $i_n \to i$ in $I$ for each $n$, and the image of $x_n$ in the induced map $M_{i_n} \to M_i$ would necessarily be contained in $M_i[p^n] \setminus M_i[p^{n-1}]$, contradicting the assumption that $M_i$ have bounded $p$-power torsion.

  Having shown that the subcategory is closed under $\aleph_1$-filtered colimits in $\Ab$, it suffices to show that any object in it is an $\aleph_1$-filtered colimit of objects in it that are $\aleph_1$-compact in $\Ab$. This follows from the facts that the subcategory is closed under taking subgroups and that any abelian group is the colimit of its countably generated subgroups.
\end{proof}

\begin{notation}
  \label{ev--def--ev-p}
  Let $\CAlg_p$ denote the full subcategory of $\CAlg$ spanned by the $\E_\infty$-rings that are $p$-complete, and let $\CAlg_p^\ev$ denote the full subcategory of $\CAlg^\ev$ spanned by the even $\E_\infty$-rings that are $p$-complete and have bounded $p$-power torsion. 
\end{notation}

\begin{proposition}
  \label{ev--def--p-accessible}
  $\CAlg_p^\ev$ is an accessible subcategory of $\CAlg_p$.
\end{proposition}

\begin{proof}
  This follows from an argument similar to the one used in the proof of \cref{ev--def--accessible}, with the additional input of \cref{ev--def--bdd-torsion-accessible}.
\end{proof}

\begin{construction}
  \label{ev--def--fil-ev-p}
  We define 
  \begin{align*}
    \fil^\star_{\ev,p} &: \CAlg_p \to \FilCAlg
  \end{align*}
  to be the right Kan extension of
  \begin{align*}
    \tau_{\ge 2\star} &: \CAlg_p^\ev \to \FilCAlg
  \end{align*}
  along the inclusion $\CAlg_p^{\ev} \subseteq \CAlg_p$; this right Kan extension exists by \cref{ev--def--p-accessible}. We refer to this construction as the \emph{$p$-complete even filtration}.
\end{construction}

We next define variants of the even filtration in the setting of $\E_\infty$-rings with $\cir$-action.

\begin{notation}
  \label{ev--def--ev-cir}
  Say that an $\E_\infty$-ring with $\cir$-action is \emph{even} if its underlying $\E_\infty$-ring is even. Then $(\CAlg^\ev)^{\Bcir}$ is the full subcategory of $\CAlg^\Bcir$ spanned by the even $\E_\infty$-rings with $\cir$-action, and we similarly have a full subcategory $(\CAlg_p^\ev)^{\Bcir} \subseteq \CAlg_p^\Bcir$.
\end{notation}

\begin{remark}
  \label{ev--def--ht-ev}
  For $A$ an even $\E_\infty$-ring with $\cir$-action, the homotopy fixed points $A^{\h\cir}$ and Tate construction $A^{\tate\cir}$ are even $\E_\infty$-rings (see \cref{ev--desc--orientation}\cref{ev--desc--orientation--class} below).
\end{remark}

\begin{construction}
  \label{ev--def--fil-ev-cir}
  We define
  \begin{align*}
    &\fil^\star_{\ev,\h\cir} : \CAlg^\Bcir \to \FilCAlg,
    &&\fil^{\filledsquare}_{+}\fil^\star_{\ev,\h\cir} : \CAlg^\Bcir \to \BiFilCAlg, \\
    &\fil^\star_{\ev,\tate\cir} : \CAlg^\Bcir \to \FilCAlg,
    &&\fil^{\filledsquare}_+\fil^\star_{\ev,\tate\cir} : \CAlg^\Bcir \to \BiFilCAlg
  \end{align*}
  to be the right Kan extensions of
  \begin{align*}
    &\tau_{\ge 2\star}((-)^{\h\cir}) : (\CAlg^\ev)^\Bcir \to \FilCAlg,
    &&\tau_{\ge 2\star}((\tau_{\ge \filledsquare}(-))^{\h\cir}) : (\CAlg^\ev)^\Bcir \to \BiFilCAlg,\\
    &\tau_{\ge 2\star}((-)^{\tate\cir}): (\CAlg^\ev)^\Bcir \to \FilCAlg,
    &&\tau_{\ge 2\star}((\tau_{\ge \filledsquare}(-))^{\tate\cir}): (\CAlg^\ev)^\Bcir \to \BiFilCAlg,
  \end{align*}
  respectively, along the inclusion $(\CAlg^\ev)^\Bcir \subseteq \CAlg^\Bcir$ (their existence following from \cref{ev--def--accessible}), and we define
  \begin{align*}
    &\fil^\star_{\ev,p,\h\cir} : \CAlg_p^\Bcir \to \FilCAlg,
    &&\fil^{\filledsquare}_+\fil^\star_{\ev,p,\h\cir} : \CAlg_p^\Bcir \to \BiFilCAlg,\\
    &\fil^\star_{\ev,p,\tate\cir} : \CAlg_p^\Bcir \to \FilCAlg, &&\fil^{\filledsquare}_+\fil^\star_{\ev,p,\tate\cir} : \CAlg_p^\Bcir \to \BiFilCAlg,
  \end{align*}
  to be the right Kan extensions of
  \begin{align*}
    &\tau_{\ge 2\star}((-)^{\h\cir}) : (\CAlg_p^\ev)^\Bcir \to \FilCAlg,
    &&\tau_{\ge 2\star}((\tau_{\ge \filledsquare}(-))^{\h\cir}) : (\CAlg_p^\ev)^\Bcir \to \BiFilCAlg,\\
    &\tau_{\ge 2\star}((-)^{\tate\cir}) : (\CAlg_p^\ev)^\Bcir \to \FilCAlg,
    &&\tau_{\ge 2\star}((\tau_{\ge \filledsquare}(-))^{\tate\cir}) : (\CAlg_p^\ev)^\Bcir \to \BiFilCAlg,
  \end{align*}
  respectively, along the inclusion $(\CAlg_p^\ev)^\Bcir \subseteq \CAlg_p^\Bcir$ (their existence following from \cref{ev--def--p-accessible}).
\end{construction}

Finally, we define a variant of the even filtration in the setting of cyclotomic $\E_\infty$-rings.

\begin{notation}
  \label{ev--def--ev-cyc}
  Let $\CycCAlg^\ev_p$ denote the full subcategory of $\CycCAlg_p$ spanned by the $p$-typical cyclotomic $\E_\infty$-rings whose underlying $p$-complete $\E_\infty$-ring lies in $\CAlg_p^\ev \subseteq \CAlg_p$, i.e. is even and has bounded $p$-power torsion.
\end{notation}

\begin{remark}
  \label{ev--def--cyc-rmk}
  We remind the reader of our convention to only consider bounded below, $p$-complete cyclotomic spectra. In particular, for $A \in \CycCAlg_p$, the Tate spectrum $A^{\tate\cir}$ is $p$-complete (see \cite[\textsection 2.3]{BMS}), and hence, by \cite{NikolausScholze}, there is a natural Frobenius map $\varphi : A^{\h\cir} \to A^{\tate\cir}$, together with a natural identification $\TC(A) \iso \fib(\varphi - \can : A^{\h\cir} \to A^{\tate\cir})$.
\end{remark}

\begin{construction}
  \label{ev--def--fil-ev-cyc}
  We define
  \[
    \fil^\star_{\ev,p,\TC}(-) : \CycCAlg_p \to \FilCAlg,
  \]
  to be the right Kan extension of
  \[
    \fib(\varphi-\mathrm{can} : \tau_{\ge 2\star}((-)^{\h\cir}) \to \tau_{\ge 2\star}((-)^{\tate\cir})) : \CycCAlg_p^\ev \to \FilCAlg,
  \]
  along the inclusion $\CycCAlg_p^\ev \subseteq \CycCAlg_p$ (its existence following from \cref{ev--def--p-accessible}).
\end{construction}


\subsection{Descent properties of the filtrations}
\label{ev--desc}

The key to computing these even filtrations in our cases of interest is a simple flat descent property, which we formulate and prove in this subsection. We first recall the relevant notions of flatness in $p$-complete and graded algebra.

\begin{definition}
  \label{ev--desc--pcpl-flat}
  Let $A$ be a commutative ring and let $M \in \D(A)$. Following \cite{BMS,BL}, we say that $M$ is \emph{$p$-completely (faithfully) flat} if $(A/pA) \otimes^{\L}_{A} M$ is a (faithfully) flat $A/pA$-module concentrated in homological degree $0$.
\end{definition}

\begin{remark}
  \label{ev--desc--pcpl-flat-alt}
  In the situation of \cref{ev--desc--pcpl-flat}, it follows from \cite[Proposition 2.7.3.2(c)]{sag} that $M$ is $p$-completely flat if and only if $M \otimes^{\L}_{\Z}\Z/p$ is flat over $A \otimes^{\L}_{\Z} \Z/p$ in the sense of \cite[Definition 7.2.2.10]{ha}. The reader may thus replace the underived construction $A/pA$ with the derived construction $A\otimes^{\L}_{\Z}\Z/p$ in the above definition. 
\end{remark}

\begin{remark}
  \label{ev--desc--flat-implies-pcpl-flat}
  Let $A$ be a commutative ring and let $M$ be a classical $A$-module. If $M$ is (faithfully) flat, then it is $p$-completely (faithfully) flat.
\end{remark}

\begin{definition}
  \label{ev--desc--graded-flat}
  Let $A_*$ be a graded commutative ring\footnote{Recall that we mean commutative in the literal sense (\cref{ConventionsSection}\cref{in--con--grab}).} and let $M_*$ be a classical graded $A_*$-module. We say that $M_*$ is \emph{(faithfully) flat} if the underlying module $M = \bigoplus_{n \in \Z} M_n$ over the underlying commutative ring $A = \bigoplus_{n \in \Z} A_n$ is (faithfully) flat.\footnote{One could alternatively define these notions by imitating the usual definitions internally to the graded setting; see for example \cite[Definition 3.6]{dirac-i}. It is not hard to show that this gives equivalent notions.} We extend the notion of $p$-complete (faithful) flatness from \cref{ev--desc--pcpl-flat} to the graded setting in the same manner.
\end{definition}

\begin{remark}
  \label{ev--desc--pcpl-flat-bdd-torsion}
  We recall from \cite[Lemma 4.7]{BMS} that, given a commutative ring $A$ with bounded $p$-power torsion and $M \in \D(A)$ that is $p$-complete and $p$-completely flat, $M$ is concentrated in homological degree $0$, where it is classically $p$-adically complete and has bounded $p$-power torsion. The same goes through when $A$ is a graded commutative ring (with bounded $p$-power torsion as in \cref{ev--def--bdd-torsion}) and $M$ is also graded, by the same argument.
\end{remark}

We now define the relevant notions of flatness in the setting of even $\E_\infty$-rings.

\begin{definition}
  \label{ev--desc--flat}
  We say that a map of even $\E_\infty$-rings $f : A \to B$ is \emph{faithfully flat} (resp. \emph{$p$-completely faithfully flat}) if the induced map of graded commutative rings $\pi_{2*}(f) : \pi_{2*}(A) \to \pi_{2*}(B)$ is faithfully flat (resp. $p$-completely faithfully flat) in the sense of \cref{ev--desc--graded-flat}.
\end{definition}

\begin{warning}
  \label{ev--desc--lurie-flat}
  The notion of (faithful) flatness in \cref{ev--desc--flat} is distinct from that found in \cite[Definition 7.2.2.10]{ha} and \cite[Definition D.4.4.1]{sag}, the notion found there being stronger. It is the notion defined above that is used throughout this paper (except where explicitly stated otherwise).
\end{warning}

\begin{proposition}
  \label{ev--desc--pcpl-flat-homotopy}
  Let $A \to B$ be a $p$-completely faithfully flat map in $\CAlg_p^\ev$. Then for any map $A \to C$ in $\CAlg_p^\ev$, the $p$-completed pushout of $\E_\infty$-rings $\cpl{(B \otimes_A C)}_p$ also lies in $\CAlg_p^\ev$, and has homotopy groups given by $\cpl{(\pi_*(B) \otimes^{\L}_{\pi_*(A)} \pi_*(C))}_p$.
\end{proposition}

\begin{proof}
  The filtered object $\tau_{\ge 2\star}(B) \otimes_{\tau_{\ge 2\star}(A)} \tau_{\ge 2\star}(C)$ is complete, with associated graded object $\Sigma^{2*}(\pi_{2*}(B) \otimes^{\L}_{\pi_{2*}(A)} \pi_{2*}(C))$, and underlying object $B \otimes_A C$. We deduce that the filtered object $\cpl{(\tau_{\ge 2\star}(B) \otimes_{\tau_{\ge 2\star}(A)} \tau_{\ge 2\star}(C))}_p$ is complete, with associated graded object $\Sigma^{2*}\cpl{(\pi_{2*}(B) \otimes^{\L}_{\pi_{2*}(A)} \pi_{2*}(C))}_p$, and underlying object carrying a canonical map to $\cpl{(B \otimes_A C)}_p$ which becomes an equivalence after $p$-completion. By stability of $p$-complete flatness under derived base change, $\cpl{(\pi_{2*}(B) \otimes^{\L}_{\pi_{2*}(A)} \pi_{2*}(C))}_p$ is $p$-completely flat over $\pi_{2*}(C)$, and $\pi_{2*}(C)$ is assumed to have bounded $p$-power torsion, so by \cref{ev--desc--pcpl-flat-bdd-torsion}, $\cpl{(\pi_{2*}(B) \otimes^{\L}_{\pi_{2*}(A)} \pi_{2*}(C))}_p$ is concentrated in homological degree $0$ and has bounded $p$-power torsion. It follows that the spectral sequence associated to the filtered object $\cpl{(\tau_{\ge 2\star}(B) \otimes_{\tau_{\ge 2\star}(A)} \tau_{\ge 2\star}(C))}_p$ collapses to give the desired result (note that the collapse implies that the underlying object is $p$-complete, and hence equivalent to $\cpl{(B \otimes_A C)}_p$).
\end{proof}

As usual, the above notions of flatness give rise to Grothendieck topologies.

\begin{definition}
  \label{ev--desc--flat-topology-def}
  We say that a sieve on $A \in (\CAlg^{\ev})^{\op}$ is a \emph{flat covering sieve} (resp. \emph{$p$-completely flat covering sieve}) if it contains a finite collection of maps $\{A \to B_i\}_{1\le i \le n}$ such that the map $A \to \prod_i B_i$ is faithfully flat (resp. $p$-completely faithfully flat).
\end{definition}

\begin{proposition}
  \label{ev--desc--flat-topology-prop}
  The flat covering sieves of \cref{ev--desc--flat-topology-def} define a Grothendieck topology on $(\CAlg^\ev)^\op$ and the $p$-completely flat covering sieves of \cref{ev--desc--flat-topology-def} define a Grothendieck topology on $(\CAlg^\ev_p)^\op$.
  
  For a category $\mathcal{C}$ admitting small limits, a functor $F: \CAlg^\ev \to \mathcal{C}$ is a sheaf for the former topology if and only if the following conditions are satisfied:
  \begin{enumerate}
  \item $F$ preserves finite products.
  \item For every faithfully flat map of even $\E_\infty$-rings $A \to B$,
    the canonical map
    \[
      F(A) \to \lim_{\Delta} F(B^{\otimes_A\bullet+1})
    \]
    is an equivalence.
  \end{enumerate}
  The analogous claim holds for the latter topology as well.
\end{proposition}

\begin{proof}
  The proofs of \cite[A.3.2.1, A.3.3.1]{sag} carry over verbatim (the only pushouts required to exist are those along faithfully flat or $p$-completely faithfully flat maps, which do indeed exist, using \cref{ev--desc--pcpl-flat-homotopy} in the $p$-complete case.)
\end{proof}

\begin{definition}
  \label{ev--desc--flat-topology-name}
  We refer to the Grothendieck topologies of \cref{ev--desc--flat-topology-prop} as the \emph{flat topology} on $\CAlg^\ev$ and the \emph{$p$-completely flat topology} on $\CAlg^\ev_p$.
  
  Since pushouts in $(\CAlg^\ev)^\Bcir$ (resp. $(\CAlg^\ev_p)^\Bcir$ and $\CycCAlg^\ev_p$) are
  computed in $\CAlg^\ev$ (resp. $\CAlg_p^\ev$), the above induce topologies on $\smash{(\CAlg^\ev)^\Bcir}$, $\smash{(\CAlg^\ev_p)^\Bcir}$, and $\CycCAlg^\ev_p$, which we call by the same names.
\end{definition}

We now turn to studying the descent properties of our various filtrations.
We will need the following well known result.

\begin{lemma}
  \label{ev--desc--orientation}
  Let $A$ be an even $\mathbb{E}_{\infty}$-ring with $\cir$-action. Then:
  \begin{enumerate}
  \item \label{ev--desc--orientation--class}
    There is a class $t \in \pi_{-2}(A^{\h\cir})$ which is a nonzerodivisor in $\pi_*(A^{\h\cir})$ and such that the map $\pi_*(A^{\h\cir}) \to \pi_*(A)$ is given by quotienting by $t$ and the map $\pi_*(A^{\h\cir}) \to \pi_*(A^{\tate\cir})$ is given by inverting $t$.
  \item \label{ev--desc--orientation--equivalences}
    For any $\cir$-equivariant $A$-module $M$, the canonical maps
    \[
      M^{\h\cir} \otimes_{A^{\h\cir}} A \to M, \qquad
      M^{\h\cir} \otimes_{A^{\h\cir}} A^{\tate\cir} \to M^{\tate\cir},
    \]
    are equivalences, and $M^{\h\cir}$ is $A$-complete as an $A^{\h\cir}$-module (equivalently, $t$-complete for any choice of $t$ as in \cref{ev--desc--orientation--class}).
  \end{enumerate}
\end{lemma}

\begin{proof}
  \begin{enumerate}[leftmargin=*]
  \item The evenness of $A$ guarantees the collapse of the homotopy fixed point spectral sequence for $A^{\h\cir}$ and the Tate spectral sequence for $A^{\tate\cir}$, from which the claim follows.

  \item We argue as in \cite[Lemma IV.4.12]{NikolausScholze}. We may replace $A$ by $\tau_{\ge 0}A$ and reduce to the case when $A$ is connective. Let $t$ be as in \cref{ev--desc--orientation--class}. Then we have an equivalence of $A^{\h\cir}$-modules $A \iso A^{\h\cir}/t$. In particular, $A$ is a perfect $A^{\h\cir}$-module and hence $(-)\otimes_{A^{\h\cir}}A$ commutes with all limits and colimits. Thus, using the equivalences
  \[
    M^{\h\cir} \iso \colim {(\tau_{\ge n}M)^{\h\cir}}, \quad
    M^{\h\cir} \iso \lim {(\tau_{\le m}M)^{\h\cir}},
  \]
  we may reduce the claim about the first map to the case where $M$ is discrete, which follows by direct calculation.

  To prove the claim about the second map, it suffices by \cref{ev--desc--orientation--class} and \cite[Theorem I.4.1(iii)]{NikolausScholze} to prove that the fiber of $M^{\h\cir} \to M^{\h\cir}[t^{-1}]$ commutes with all colimits in the variable $M$. The fiber is given, up to suspension, by the colimit of the functors $M \mapsto M^{\h\cir}/t^n$. For $n=1$, this functor commutes with colimits by what has been said above, and by induction we see that each functor commutes with colimits. The result follows.

  Finally, we show that $M^{\h\cir}$ is $A$-complete, or as is equivalent by \cref{ev--desc--orientation--class}, is $t$-complete. Since $M^{\h\cir} \iso \lim {(\tau_{\le n}M)^{\h\cir}}$, we may reduce to the case where $M$ is bounded above. Then the terms $\Sigma^{-2n}M^{\h\cir}$ become increasingly coconnective and hence 
  \[
    \lim {(\cdots \to \Sigma^{-2n}M^{\h\cir} \stackrel{t}{\to} \Sigma^{-2n+2}M^{\h\cir}
      \to \cdots \to M^{\h\cir})} \iso 0. \qedhere
  \]
  \end{enumerate}
\end{proof}

\begin{lemma}
  \label{ev--desc--desc}
  \begin{enumerate}[leftmargin=*]
  \item \label{ev--desc--desc--plain}
    The functor $\pi_{2*} : \CAlg^\ev \to \GrSpt$ is a sheaf for the flat topology, and it restricts to a sheaf for the $p$-completely flat topology on $\CAlg^\ev_p$.
  \item \label{ev--desc--desc--ht}
    The functors $\pi_{2*}((-)^{\h\cir}), \pi_{2*}((-)^{\tate\cir}): (\CAlg^\ev)^{\Bcir} \to \GrSpt$ are sheaves for the flat topology, and they restrict to sheaves for the $p$-completely flat topology on $(\CAlg^\ev_p)^{\Bcir}$.
  \item \label{ev--desc--desc--ht-stupid}
    The functors $\pi_{2*}(\Sigma^\filledsquare(\pi_{\filledsquare}(-))^{\h\cir}), \pi_{2*}(\Sigma^\filledsquare(\pi_\filledsquare(-))^{\tate\cir}) : (\CAlg^\ev)^{\Bcir} \to \BiGrSpt$
    are sheaves for the flat topology, and they restrict to sheaves for the $p$-completely flat topology on $(\CAlg^\ev_p)^{\Bcir}$.
  \end{enumerate}
\end{lemma}

\begin{proof}
  We will prove the $p$-complete statements; the integral statements can be addressed similarly. We begin by proving \cref{ev--desc--desc--plain}. Let $A \to B$ be a $p$-completely faithfully flat map in $\CAlg_p^\ev$. We need to prove that the canonical map
  \[
    \pi_{2*}(A) \to 
    \lim_\Delta \pi_{2*}({\cpl{(B^{\otimes_A\bullet+1})}_p}) \iso \lim_\Delta {\cpl{(\pi_{2*}(B)^{\otimes^{\L}_{\pi_{2*}(A)}\bullet+1})}_p}
  \]
  (where the limits are taken in $\GrSpt$ and the identification follows from \cref{ev--desc--pcpl-flat-homotopy}) is an equivalence. Both sides being $p$-complete, it suffices to show that it is an equivalence after derived base change along $\Z \to \Z/p$. But, by \cref{ev--desc--pcpl-flat-alt},
  \[
    \pi_{2*}(A) \otimes^\L_\Z \Z/p \to
    \pi_{2*}(B) \otimes^\L_\Z \Z/p
  \]
  is faithfully flat in the sense of \cite[Definition D.4.4.1]{sag} (after forgetting the gradings), so the claim follows from faithfully flat descent (\cite[Theorem D.6.3.5]{sag}).

  Let us now prove \cref{ev--desc--desc--ht}. Let $A \to B$ be a $p$-completely faithfully flat map in $(\CAlg_p^\ev)^{\Bcir}$. We need to prove that the canonical maps
  \[
    \pi_{2*}(A^{\h\cir}) \to  \lim_\Delta \pi_{2*}((\cpl{(B^{\otimes_A\bullet+1})}_p)^{\h\cir}),
    \quad \pi_{2*}(A^{\tate\cir}) \to  \lim_\Delta \pi_{2*}((\cpl{(B^{\otimes_A\bullet+1})}_p)^{\tate\cir})
  \]
  are equivalences (each limit taken in $\GrSpt$). Choose $t \in \pi_{-2}(A^{\h\cir})$ as in \cref{ev--desc--orientation}\cref{ev--desc--orientation--class}. For the $(-)^{\h\cir}$ case, by \cref{ev--desc--orientation}\cref{ev--desc--orientation--equivalences}, both sides are $t$-complete, so
  it suffices to prove the claim after taking the derived base change along $\Z[t] \to \Z$. By \cref{ev--desc--orientation}\cref{ev--desc--orientation--equivalences}, this recovers the map
  \[
    \pi_{2*}(A) \to 
    \lim_\Delta \pi_{2*}({\cpl{(B^{\otimes_A\bullet+1})}_p}).
  \]
  Thus, the $(-)^{\h\cir}$ case follows from \cref{ev--desc--desc--plain}, proved above. The $(-)^{\tate\cir}$ case then follows from the $(-)^{\h\cir}$ case: by \cref{ev--desc--orientation}\cref{ev--desc--orientation--equivalences}, the $(-)^{\tate\cir}$ diagram is obtained from the $(-)^{\h\cir}$ diagram by inverting $t$, and our cosimplicial diagram is one of discrete abelian groups, so taking its limit commutes with inverting $t$.

  Finally, \cref{ev--desc--desc--ht-stupid} can be proved by an argument very similar to the one just used to prove \cref{ev--desc--desc--ht}.
\end{proof}

\begin{theorem}
  \label{ev--desc--fil-desc}
  \begin{enumerate}[leftmargin=*]
  \item \label{ev--desc--fil-desc--plain}
    The functor $\tau_{\ge 2\star}(-) : \CAlg^\ev \to \FilCAlg$ is a sheaf for the flat topology, and it restricts to a sheaf for the $p$-completely flat topology on $\CAlg^\ev_p$.
  \item \label{ev--desc--fil-desc--ht}
    The functors $\tau_{\ge 2\star}((-)^{\h\cir}), \tau_{\ge 2\star}((-)^{\tate\cir}) : (\CAlg^\ev)^{\Bcir} \to \FilSpt$ are sheaves for the flat topology, and they restrict to sheaves for the $p$-completely flat topology on $(\CAlg^\ev_p)^{\Bcir}$.
  \item \label{ev--desc--fil-desc--nygaard}
    The functors $\tau_{\ge 2\star}((\tau_{\ge \filledsquare}(-))^{\h\cir}), \tau_{\ge 2\star}((\tau_{\ge \filledsquare}(-))^{\tate\cir}) : (\CAlg^\ev)^{\Bcir} \to \BiFilCAlg$ are sheaves for the flat topology, and they restrict to sheaves for the $p$-completely flat topology on $(\CAlg^\ev_p)^{\Bcir}$.
  \item \label{ev--desc--fil-desc--tc} The functor
    \[
      \fib(\varphi-\can : \tau_{\ge 2\star}((-)^{\h\cir}) \to \tau_{\ge 2\star}((-)^{\tate\cir})) : \CycCAlg_p^\ev \to \FilCAlg
    \]
    is a sheaf for the $p$-completely flat topology.
  \end{enumerate}
\end{theorem}

\begin{proof}
  For claims \cref{ev--desc--fil-desc--plain,ev--desc--fil-desc--ht,ev--desc--fil-desc--nygaard}, all the filtrations are complete, so it suffices to check after passage to associated graded (in both directions in the case of \cref{ev--desc--fil-desc--nygaard}). Then these claims follow from \cref{ev--desc--desc}. Claim \cref{ev--desc--fil-desc--tc} follows from claim \cref{ev--desc--fil-desc--ht}.
\end{proof}

The upshot of \cref{ev--desc--fil-desc} is the following collection of descent statements for the even filtration and its variants.

\begin{definition}
  \label{ev--desc--eff}
  A map of $\E_\infty$-rings $A \to B$ is:
  \begin{enumerate}
  \item \emph{eff} (\emph{evenly faithfully flat}) if for any even $\E_\infty$-ring $C$ and map of $\E_\infty$-rings $A \to C$, the pushout $B \otimes_A C$ is even and faithfully flat over $C$.
  \item \emph{$p$-completely eff} if for any even $\E_\infty$-ring $C$ that is $p$-complete and of bounded $p$-power torsion, and any map of $\E_\infty$-rings $A \to C$, the $p$-completed pushout $\cpl{(B \otimes_A C)}_p$ is even and $p$-completely faithfully flat over $C$.
  \item \emph{evenly free} if for any nonzero even $\E_\infty$-ring $C$ and map $A \to C$, the pushout $B \otimes_A C$ is equivalent as a $C$-module to a nonzero direct sum of even shifts of $C$.
  \item \emph{$p$-completely evenly free} if for any nonzero even $\E_\infty$-ring $C$ that is $p$-complete and of bounded $p$-power torsion, and map of $\E_\infty$-rings $A \to C$, the $p$-completed pushout $\cpl{(B \otimes_A C)}_p$ is equivalent as a $C$-module to the $p$-completion of a nonzero direct sum of even shifts of $C$.
  \end{enumerate}
  
  A map of $\E_\infty$-rings with $\cir$-action or a map of $p$-typical cyclotomic $\E_\infty$-rings is said to be \emph{eff} or \emph{$p$-completely eff} or \emph{evenly free} or \emph{$p$-completely evenly free} if the underlying map of $\E_\infty$-rings is.
\end{definition}

\begin{remark}
  \label{ev--desc--eff-implies-pcpl-eff}
  Let $f : A \to B$ be a map of $\E_\infty$-rings. If $f$ is ($p$-completely) evenly free, then $f$ is ($p$-completely) eff. If $f$ is eff (resp. evenly free), then it is $p$-completely eff (resp. $p$-completely evenly free).
\end{remark}

\begin{corollary}[Eff descent]
  \label{ev--desc--eff-desc}
  \begin{enumerate}[leftmargin=*]
  \item \label{ev--desc--eff-desc--plain}
    For $A \to B$ an eff map of $\E_\infty$-rings, the canonical map
    \[
      \fil^\star_{\ev}(A) \to \lim_{\Delta} \fil^\star_{\ev}(B^{\otimes_A\bullet+1})
    \]
    is an equivalence. For $A \to B$ a $p$-completely eff map of $p$-complete $\E_\infty$-rings, the canonical map
    \[
      \fil^\star_{\ev,p}(A) \to \lim_{\Delta} \fil^\star_{\ev,p}(\cpl{(B^{\otimes_A\bullet+1})}_p)
    \]
    is an equivalence.
  \item \label{ev--desc--eff-desc--ht}
    For $A \to B$ an eff map of $\E_\infty$-rings with $\cir$-action, the canonical maps
    \begin{align*}
      &\fil^\star_{\ev,\h\cir}(A) \to \lim_{\Delta} \fil^\star_{\ev,\h\cir} (B^{\otimes_A\bullet+1}), \\
      &\fil^\star_{\ev,\tate\cir}(A) \to \lim_{\Delta} \fil^\star_{\ev,\tate\cir}(B^{\otimes_A\bullet+1})
    \end{align*}
    are equivalences. For $A \to B$ a $p$-completely eff map of $p$-complete $\E_\infty$-rings with $\cir$-action, the canonical maps
    \begin{align*}
     &\fil^\star_{\ev,p,\h\cir}(A) \to \lim_{\Delta} \fil^\star_{\ev,p,\h\cir}(\cpl{(B^{\otimes_A\bullet+1})}_p),
       \\
      &\fil^\star_{\ev,p,\tate\cir}(A) \to \lim_{\Delta} \fil^\star_{\ev,p,\tate\cir}(\cpl{(B^{\otimes_A\bullet+1})}_p)
      \end{align*}
    are equivalences.
  \item \label{ev--desc--eff-desc--tc}
    For $A \to B$ a $p$-completely eff map of $p$-typical cyclotomic $\E_\infty$-rings, the canonical map
    \[
      \fil^\star_{\ev,p,\TC}(A) \to \lim_{\Delta} \fil^\star_{\ev,p,\TC}(\cpl{(B^{\otimes_A\bullet+1})}_p)
    \]
    is an equivalence.
  \end{enumerate}
\end{corollary}

Eff descent is our main tool for controlling the even filtration. For example, we obtain the following further corollary, which will be applied later in the paper to prove \cref{filteredFrob}; see also the results in \cref{ev--exh} below, which will be used to prove \cref{motConv}.

\begin{corollary}
  \label{ev--desc--compute-tc-as-fiber}
  Let $A$ be a $p$-typical cyclotomic $\mathbb{E}_{\infty}$-ring such that there exist $B \in \CycCAlg_p^\ev$ and a $p$-completely eff map $A \to B$. Then the cyclotomic Frobenius and canonical maps $\varphi, \can : A^{\h\cir} \to A^{\tate\cir}$ refine to maps $\varphi,\can: \fil^{\star}_{\ev,p,\h\cir}(A) \to \fil^{\star}_{\ev,p,\tate\cir}(A)$, naturally in $A$, and there is a canonical equivalence
  \[
    \fil^\star_{\ev,p,\TC}(A) \simeq
    \fib(\varphi - \mathrm{can}: \fil^{\star}_{\ev,p,\h\cir}(A) \to 
    \fil^{\star}_{\ev,p,\tate\cir}(A)).
  \]
\end{corollary}

\begin{proof}
  Let us temporarily denote by $G$ and $H$ the right Kan extension of the functors
  \[
    \tau_{\ge 2\star}((-)^{\h\cir}), \tau_{\ge 2\star}((-)^{\tate\cir}) : \CycCAlg_p^\ev \to \FilCAlg,
  \]
  respectively, along the inclusion $\CycCAlg_p^{\ev} \subseteq \CycCAlg_p$. The maps $\tau_{\ge 2\star}(\varphi)$ and $\tau_{\ge 2\star}(\mathrm{can})$ between these functors right Kan extend to maps $\varphi,\can : G \to H$, and we by definition have a fiber sequence
  \[
    \fil^\star_{\ev,p,\TC}(M) \simeq \fib(\varphi - \mathrm{can}: G \to H).
  \]
  It remains to identify $G(A)$ with $\fil^{\star}_{\ev,p,\h\cir}(A)$ and $H(A)$ with $\fil^{\star}_{\ev,p,\tate\cir}(A)$. In each case, there is by definition a natural map from the latter to the former, and the fact that it is an equivalence follows from \cref{ev--desc--eff-desc}\cref{ev--desc--eff-desc--tc} (which applies also to $G$ and $H$, by the same reasoning) together with the fact that, for $A \to B$ as in the statement, $B^{\otimes_A m+1}$ is even and has bounded $p$-power torsion for each $m \ge 0$.
\end{proof}

\begin{remark}
  \label{rmk-geometric-stack}
  In general, if $\mathcal{T}$ is a subcanonical site, then one says that a sheaf $X$ on $\mathcal{T}$ is a ``geometric stack'' if it admits an effective epimorphism $h_T \to X$ from a representable sheaf and the diagonal $X \to X \times X$ is representable, i.e. whenever $h_{S}, h_{S'} \to X$ are maps from representables, then $h_{S}\times_Xh_{S'}$ is representable. (This notion is heavily dependent on the choice of site presenting the topos $\mathrm{Shv}(\mathcal{T})$.) Those $\mathbb{E}_{\infty}$-rings $A$ with an eff map to an even $\mathbb{E}_{\infty}$-ring give examples of geometric stacks on the site  $(\CAlg^\ev)^{\op}$, and similarly in the equivariant and cyclotomic examples. The good behavior guaranteed in the preceding \cref{ev--desc--compute-tc-as-fiber} (as well as in \cref{ev--exh--main} below) is part of a general paradigm where properties of geometric stacks more closely resemble those of ``affines''.
\end{remark}

We end this subsection with the following key example of an eff map.

\begin{proposition}
  \label{ev--desc--novikov-eff}
  The unit map $\S \to \MU$ is evenly free.
\end{proposition}

\begin{proof}
  For $C$ a nonzero, even $\mathbb{E}_{\infty}$-ring, $C$ is complex orientable and hence $\pi_*(C \otimes_{\mathbb{S}}\mathrm{MU})$ is isomorphic to a polynomial ring over $\pi_*(C)$ on a single generator in each positive, even degree \cite[Lemma 4.5(ii)]{AdamsBlue}. In particular, this is a nonzero free module over $\pi_*(C)$ with generators in even degrees.
\end{proof}

\begin{corollary}[Novikov descent]
  \label{ev--desc--novikov}
  The following statements hold:
  \begin{enumerate}[leftmargin=*]
    \item For $A$ an $\E_\infty$-ring, the canonical map
    \[
      \fil^\star_{\ev}(A) \to \lim_{\Delta} \fil^\star_{\ev}(A \otimes \MU^{\otimes \bullet+1})
    \]
    is an equivalence. For $A$ a $p$-complete $\E_\infty$-ring, the canonical map
    \[
      \fil^\star_{\ev,p}(A) \to \lim_{\Delta} \fil^\star_{\ev,p}(\cpl{(A \otimes \MU^{\otimes \bullet+1})}_p)
    \]
    is an equivalence.
    \item For $A$ an $\E_\infty$-ring with $\cir$-action, the canonical maps
    \begin{align*}
      &\fil^\star_{\ev,\h\cir}(A) \to \lim_{\Delta} \fil^\star_{\ev,\h\cir}(A \otimes \MU^{\otimes \bullet+1}),\\
      &\fil^\star_{\ev,\tate\cir}(A) \to \lim_{\Delta} \fil^\star_{\ev,\tate\cir}(A \otimes \MU^{\otimes \bullet+1})
    \end{align*}
    are equivalences. For $A$ a $p$-complete $\E_\infty$-ring with $\cir$-action, the canonical maps
    \begin{align*}
      &\fil^\star_{\ev,p,\h\cir}(A) \to \lim_{\Delta} \fil^\star_{\ev,p,\h\cir}(\cpl{(A \otimes \MU^{\otimes \bullet+1})}_p),\\
      &\fil^\star_{\ev,p,\tate\cir}(A) \to \lim_{\Delta} \fil^\star_{\ev,p,\tate\cir}(\cpl{(A \otimes \MU^{\otimes \bullet+1})}_p)
    \end{align*}
    are equivalences, where $\MU$ is considered to have trivial $\cir$-action.
    \item For $A$ a $p$-typical cyclotomic $\mathbb{E}_{\infty}$-ring, the canonical map
    \[
      \fil^\star_{\ev,p,\TC}(A) \to \lim_{\Delta} \fil^\star_{\ev,p,\TC}(\cpl{(A \otimes \MU^{\otimes \bullet+1})}_p)
    \]
    is an equivalence, where $\MU$ is considered to have trivial cyclotomic structure.
  \end{enumerate}
\end{corollary}

\begin{proof}
  As evenly free maps are closed under pushouts along arbitrary maps, it follows from \cref{ev--desc--novikov-eff} that the map $A \to A \otimes \MU$ is evenly free, hence eff and $p$-completely eff. The assertions thus follow from \cref{ev--desc--eff-desc}.
\end{proof}


\subsection{The underlying objects of the filtrations}
\label{ev--exh}

Recall from \cref{ev--def--exhaustive} that for any $\E_\infty$-ring $A$, there is a natural map $A \to \colim(\fil^\star_\ev(A))$. Our goal in this subsection is to establish some tools for showing that this map (or its analogue for the variants of the even filtration introduced above) is an equivalence.

\begin{remark}
  \label{ev--exh--underlying}
  For any even $\E_\infty$-ring $B$, the fiber of the map $\tau_{\ge 2n}(B) \to B$ is $(2n-3)$-truncated. As truncatedness is preserved under limits, it follows that for any small diagram of even $\E_\infty$-rings $\{B_i\}_{i \in I}$, the fiber of the map
  \[
    \lim_{i \in I} \tau_{\ge 2n}(B_i) \to \lim_{i \in I} B_i
  \]
  is $(2n-3)$-truncated, and we deduce from this that the map
  \[
    \colim_{n \to -\infty}\left(\lim_{i \in I} \tau_{\ge 2n}(B_i)\right) \to \lim_{i \in I} B_i
  \]
  is an equivalence.

  For example, for any $\E_\infty$-ring $A$, we have an equivalence
  \[
    \colim(\fil^\star_\ev(A)) \iso \lim_{A \to B, B \in \CAlg^\ev} B,
  \]
  and the fiber of the map $\fil^n_\ev(A) \to \colim(\fil^\star_\ev(A))$ is $(2n-3)$-truncated; under this equivalence, the natural map $A \to \colim(\fil^\star_\ev(A))$ identifies with the canonical map
  \[
    A \to \lim_{A \to B, B \in \CAlg^\ev} B.
  \]
\end{remark}

\begin{definition}
  \label{ev--exh--descent}
  We say that a map of $\E_\infty$-rings $A \to B$ \emph{satisfies descent} (resp. \emph{satisfies $p$-complete descent}) if the canonical map $A \to \lim_\Delta B^{\otimes_A \bullet+1}$ (resp. the canonical map $\smash{\cpl{A}_p \to \lim_\Delta \cpl{(B^{\otimes_A \bullet+1})}_p}$) is an equivalence.

  We say that a map of $\cir$-equivariant $\E_\infty$-rings $A \to B$ \emph{satisfies descent} (resp. \emph{satisfies $p$-complete descent}) if the underlying map of $\E_\infty$-rings does, and we say that it \emph{satisfies Tate descent} (resp. \emph{satisfies $p$-complete Tate descent}) if the canonical map $A^{\tate\cir} \to \lim_\Delta (B^{\otimes_A\bullet+1})^{\tate\cir}$ (resp. the canonical map $(\cpl{A}_p)^{\tate\cir} \to \lim_\Delta (\cpl{(B^{\otimes_A\bullet+1})}_p)^{\tate\cir}$) is an equivalence.
\end{definition}

\begin{remark}
  \label{ev--exh--descent-implies-pcpl-descent}
  Since $p$-completion preserves limits, a map of $\E_\infty$-rings $A \to B$ that satisfies descent also satisfies $p$-complete descent. Similarly, a map of connective $\cir$-equivariant $\E_\infty$-rings $A \to B$ that satisfies Tate descent also satisfies $p$-complete Tate descent; here we additionally use the fact that $\cpl{(X^{\tate\cir})}_p \iso (\cpl{X}_p)^{\tate\cir}$ for $X$ a connective $\cir$-equivariant spectrum \cite[\textsection 2.3]{BMS}.
\end{remark}

\begin{proposition}
  \label{ev--exh--main}
  \begin{enumerate}[leftmargin=*]
  \item \label{ev--desc--exhaustivity--plain}
    Let $A$ be an $\E_\infty$-ring. Suppose that there exists a map of $\E_\infty$-rings $A \to B$ that is eff and satisfies descent (resp. is $p$-completely eff and satisfies $p$-complete descent), where $B$ is even (resp. $\cpl{B}_p$ is even and has bounded $p$-power torsion). Then the map $A \to \colim(\fil^\star_\ev(A))$ (resp. the map $\cpl{A}_p \to \colim(\fil^\star_{\ev,p}(\cpl{A}_p))$) is an equivalence.
  \item \label{ev--desc--exhaustivity--h}
    Let $A$ be an $\cir$-equivariant $\E_\infty$-ring. Suppose that there exists a map of $\cir$-equivariant $\E_\infty$-rings $A \to B$ that is eff and satisfies descent (resp. is $p$-completely eff and satisfies $p$-complete descent), where $B$ is even (resp. $\cpl{B}_p$ is even and has bounded $p$-power torsion). Then the map $\smash{A^{\h\cir} \to \colim(\fil^\star_{\ev,\h\cir}(A))}$ (resp. the map $\smash{(\cpl{A}_p)^{\h\cir} \to \colim(\fil^\star_{\ev,p,\h\cir}(\cpl{A}_p))}$) is an equivalence.
  \item \label{ev--desc--exhaustivity--t}
    Let $A$ be an $\cir$-equivariant $\E_\infty$-ring. Suppose that there exists a map of $\cir$-equivariant $\E_\infty$-rings $A \to B$ that is eff and satisfies Tate descent (resp. is $p$-completely eff and satisfies $p$-complete Tate descent), where $B$ is even (resp. $\cpl{B}_p$ is even and has bounded $p$-power torsion). Then the map $\smash{A^{\tate\cir} \to \colim(\fil^\star_{\ev,\tate\cir}(A))}$ (resp. the map $\smash{(\cpl{A}_p)^{\tate\cir} \to \colim(\fil^\star_{\ev,p,\tate\cir}(\cpl{A}_p))}$) is an equivalence.
  \end{enumerate}
\end{proposition}

\begin{proof}
  We will prove the integral statements; the $p$-complete statements can be proved in the same way. 
  \begin{enumerate}[leftmargin=*]
  \item Let $A \to B$ be a map as in the statement. By \cref{ev--desc--eff-desc}\cref{ev--desc--eff-desc--plain},
    \[
      \fil^\star_\ev(A) \iso \lim_\Delta \tau_{\ge 2\star}(B^{\otimes_A\bullet+1}),
    \]
    so by \cref{ev--exh--underlying}, the map $A \to \colim(\fil^\star_\ev(A))$ identifies with the canonical map $A \to \lim_\Delta B^{\otimes_A\bullet+1}$, which is an equivalence by the hypothesis that $A \to B$ satisfies descent.
  \item Let $A \to B$ be a map as in the statement. By \cref{ev--desc--eff-desc}\cref{ev--desc--eff-desc--ht},
    \[
      \fil^\star_{\ev,\h\cir}(A) \iso \lim_\Delta \tau_{\ge 2\star}((B^{\otimes_A\bullet+1})^{\h\cir}),
    \]
    so by \cref{ev--exh--underlying}, the map $A^{\h\cir} \to \colim(\fil^\star_{\ev,\h\cir}(A))$ identifies with the canonical map
    \[
      A^{\h\cir} \to \lim_\Delta {(B^{\otimes_A\bullet+1})^{\h\cir}} \iso (\lim_\Delta B^{\otimes_A\bullet+1})^{\h\cir},
    \]
    which is an equivalence by the hypothesis that $A \to B$ satisfies descent.
  \item Let $A \to B$ be a map as in the statement. By \cref{ev--desc--eff-desc}\cref{ev--desc--eff-desc--ht},
    \[
      \fil^\star_{\ev,\tate\cir}(A) \iso \lim_\Delta \tau_{\ge 2\star}((B^{\otimes_A\bullet+1})^{\tate\cir}),
    \]
    so by \cref{ev--exh--underlying}, the map $A^{\tate\cir} \to \colim(\fil^\star_{\ev,\tate\cir}(A))$ identifies with the canonical map
    \[
      A^{\tate\cir} \to \lim_\Delta {(B^{\otimes_A\bullet+1})^{\tate\cir}},
    \]
    which is an equivalence by the hypothesis that $A \to B$ satisfies Tate descent. \qedhere
  \end{enumerate}
\end{proof}

\begin{proposition}
  \label{ev--exh--connective-descent}
  \begin{enumerate}[leftmargin=*]
  \item \label{ev--exh--connective-descent--plain}
    Let $A \to B$ be a $1$-connective map of connective $\E_\infty$-rings. Then $A \to B$ satisfies descent and satisfies $p$-complete descent.
  \item \label{ev--exh--connective-descent--tate}
    Let $A \to B$ be a $1$-connective map of connective $\cir$-equivariant $\E_\infty$-rings. Then $A \to B$ satisfies Tate descent and satisfies $p$-complete Tate descent.
  \end{enumerate}
\end{proposition}

\begin{proof}
  In both cases, $p$-complete descent follows from descent by \cref{ev--exh--descent-implies-pcpl-descent}, so we need only prove the latter:
  \begin{enumerate}[leftmargin=*]
  \item Let $I := \fib(A \to B)$. For each $n \ge 0$, the fiber of the map $A \to \lim_{\Delta_{\le n}} B^{\otimes_A \bullet+1}$ is equivalent to $I^{\otimes_A n+1}$ (see \cite[Proposition 2.14]{MNN}), which is $(n+1)$-connective since $I$ is $1$-connective. It follows that the fiber of the map $A \to \lim_\Delta B^{\otimes_A\bullet+1} \iso \lim_{n \to \infty} \lim_{\Delta_{\le n}} B^{\otimes_A\bullet+1}$ vanishes.
  \item By \cref{ev--exh--connective-descent--plain}, the map $A \to \lim_{\Delta} B^{\otimes_A \bullet+1}$ is an equivalence. It follows that the map
    \[
      A^{\h\cir} \to \lim_\Delta {(B^{\otimes_A\bullet+1})^{\h\cir}} \iso (\lim_\Delta B^{\otimes_A\bullet+1})^{\h\cir}
    \]
    is an equivalence. Thus, to prove the claim, it suffices to show that the map
    \[
      A_{\h\cir} \to \lim_\Delta {(B^{\otimes_A\bullet+1})_{\h\cir}}
    \]
    is an equivalence. This follows from the same argument as in \cref{ev--exh--connective-descent--plain}, using that $(-)_{\h\cir}$ preserves connectivity. \qedhere
  \end{enumerate}
\end{proof}

\begin{proposition}
  \label{ev--exh--ff-descent}
  \begin{enumerate}[leftmargin=*]
  \item \label{ev--exh--ff-descent--plain}
    Let $A \to B$ be a map of $\E_\infty$-rings that is faithfully flat in the sense of \cite[Definition D.4.4.1]{sag}. Then $A \to B$ satisfies descent and satisfies $p$-complete descent.
  \item \label{ev--exh--ff-descent--tate}
    Let $A \to B$ be a map of connective $\cir$-equivariant $\E_\infty$-rings such that the underlying map of $\E_\infty$-rings is faithfully flat in the sense of \cite[Definition D.4.4.1]{sag}. Then $A \to B$ satisfies Tate descent and satisfies $p$-complete Tate descent.
  \end{enumerate}
\end{proposition}

\begin{proof}
  In both cases, $p$-complete descent follows from descent by \cref{ev--exh--descent-implies-pcpl-descent}, so we need only prove the latter. For \cref{ev--exh--ff-descent--plain}, see \cite[Theorem D.6.3.5]{sag}. Let us prove \cref{ev--exh--ff-descent--tate}. As $A$ and $B$ are connective, it suffices to show that the canonical map of filtered $\E_\infty$-rings
  \[
    (\tau_{\ge\star}(A))^{\tate\cir} \to \lim_\Delta {(\tau_{\ge\star}(B^{\otimes_A \bullet+1}))^{\tate\cir}}
  \]
  is an equivalence. These filtered objects are complete, so it further suffices to show that the map of graded objects
  \[
    (\pi_*(A))^{\tate\cir} \to \lim_\Delta {(\pi_*(B^{\otimes_A \bullet+1}))^{\tate\cir}}
  \]
  is an equivalence. In fact, a stronger statement holds: the augmented cosimplicial object $(\pi_*(A))^{\tate\cir} \to (\pi_*(B^{\otimes_A \bullet+1}))^{\tate\cir}$ is weakly $0$-rapidly converging in the sense of \cite[Definition 2.33]{clausen-mathew}; that is, letting $X_n$ denote the fiber of the map
  \[
    (\pi_*(A))^{\tate\cir} \to \lim_{\Delta_{\le n}} {(\pi_*(B^{\otimes_A \bullet+1}))^{\tate\cir}},
  \]
  we will show that for each $n \ge 0$, the canonical map $X_{n+1} \to X_n$ induces the zero map on homotopy groups (in each grading).

  Note that $(-)^{\tate\cir}$ commutes with the (finite) limit over $\Delta_{\le n}$ and that the flatenss of $A \to B$ implies that we have $\pi_*(B^{\otimes_A\bullet+1}) \iso \pi_*(B)^{\otimes_{\pi_*(A)}\bullet+1}$. Letting $I$ denote the fiber of the map $\pi_*(A) \to \pi_*(B)$, it follows from \cite[Proposition 2.14]{MNN} that
  \[
    X_n \iso (I^{\otimes_{\pi_*(A)} n+1})^{\tate\cir},
  \]
  with the canonical map $X_{n+1} \to X_n$ being induced by the map $I \to \pi_*(A)$. The faithful flatness of $A \to B$ further implies that $\Sigma I \iso \pi_*(B)/\pi_*(A)$ is a flat discrete $\pi_*(A)$-module, and hence $I^{\otimes_{\pi_*(A)} n+1}$ is homologically concentrated in degree $-(n+1)$. It follows that the map $I^{\otimes_{\pi_*(A)} n+2} \to I^{\otimes_{\pi_*(A)} n+1}$ is zero on homotopy groups. Using that the $\cir$-actions on $\pi_*(A)$ and $\pi_*(B^{\otimes_A \bullet+1})$ are trivial, we then conclude that the map
  \[
    X_{n+1} \iso (I^{\otimes_{\pi_*(A)} n+2})^{\tate\cir} \to (I^{\otimes_{\pi_*(A)} n+1})^{\tate\cir} \iso X_n
  \]
  is also zero on homotopy groups.
\end{proof}

\begin{remark}
  \label{ev--exh--universal}
  Consider the following property of a map of connective $\E_\infty$-rings $A \to B$: for any map of connective $\E_\infty$-rings $A \to C$, the induced map $C \to B \otimes_A C$ satisfies descent. We make two observations about this property:
  \begin{enumerate}
  \item It is closed under composition \cite[Lemma 3.1.2]{liu-zheng--six-functors}.
  \item It holds for maps $A \to B$ that are $1$-connective as well as for those that are faithfully flat in the sense of \cite[Definition D.4.4.1]{sag}, by virtue of \cref{ev--exh--connective-descent,ev--exh--ff-descent} and the fact that $1$-connectivity and faithful flatness are stable under the relevant base changes.
  \end{enumerate}
  Similar comments apply for $p$-complete descent, Tate descent, and $p$-complete Tate descent.
\end{remark}


\subsection{$p$-Completion and rationalization}
\label{ev--arith}

In this subsection, we record some statements concerning the interaction of the even filtration with $p$-completion and rationalization.

\begin{proposition}
  \label{ev--arith--p-completion}
  Let $A$ be an $\E_\infty$-ring. Suppose that there exists an eff map of $\E_\infty$-rings $A \to B$ where $B$ is even and $\cpl{B}_p$ is even with bounded $p$-power torsion. Then the canonical map
  \[
    \cpl{(\fil^\star_\ev(A))}_p \to \fil^\star_{\ev,p}(\cpl{A}_p)
  \]
  is an equivalence.
\end{proposition}

\begin{proof}
  Fix a map $A \to B$ as in the statement. By \cref{ev--desc--eff-desc}, we have
  \[
    \fil^\star_\ev(A) \iso \lim_{\Delta} \tau_{\ge 2\star}(B^{\otimes_A\bullet+1}), \quad
    \fil^\star_{\ev,p}(\cpl{A}_p) \iso \lim_{\Delta} \tau_{\ge 2\star}(\cpl{(B^{\otimes_A\bullet+1})}_p),
  \]
  with $B^{\otimes_A m+1}$ and $\cpl{(B^{\otimes_A m+1})}_p$ even for each $m \ge 0$. For any even spectrum $E$ whose $p$-completion is also even, the canonical map $\cpl{(\tau_{\ge 2\star}(E))}_p \to \tau_{\ge 2\star}(\cpl{E}_p)$ is an equivalence, and the claim follows.
\end{proof}

\begin{proposition}
  \label{ev--arith--rationalization}
  Let $A$ be an $\E_\infty$-ring. Suppose that there exists an even $\E_\infty$-ring $B$ and an eff map $A \to B$. Then the canonical map
  \[
    \fil^\star_\ev(A) \otimes \Q \to \fil^\star_\ev(A \otimes \Q)
  \]
  is an equivalence after completion with respect to filtration.
\end{proposition}

\begin{proof}
  Fix a map $A \to B$ as in the statement. Then $B \otimes \Q$ is also even and $A \otimes \Q \to B \otimes \Q$ is also eff. Thus, by \cref{ev--desc--eff-desc}, we have
  \[
    \fil^\star_\ev(A) \iso \lim_{\Delta} \tau_{\ge 2\star}(B^{\otimes_A\bullet+1}), \quad
    \fil^\star_\ev(A \otimes \Q) \iso \lim_{\Delta} \tau_{\ge 2\star}(B^{\otimes_A\bullet+1} \otimes \Q),
  \]
  so it suffices to show that the canonical map
  \[
    \left(\lim_{\Delta}\tau_{\ge 2\star}(B^{\otimes_A\bullet+1})\right) \otimes \Q \to
    \lim_{\Delta} \tau_{\ge 2\star}(B^{\otimes_A\bullet+1} \otimes \Q)
  \]
  is an equivalence after completion, i.e. that the induced map on each associated graded piece
  \[
    \left(\lim_{\Delta} \pi_{2n}(B^{\otimes_A \bullet+1})\right) \otimes \Q \to
    \lim_{\Delta} \pi_{2n}(B^{\otimes_A\bullet+1} \otimes \Q) \iso \lim_{\Delta} {(\pi_{2n}(B^{\otimes_A\bullet+1}) \otimes \Q)}
  \]
  is an equivalence. This follows from the fact that, for any cosimplicial abelian group $X^\bullet$, the canonical map $\pi^i(X^\bullet) \otimes \Q \to \pi^i(X^\bullet \otimes \Q)$ is an isomorphism for each $i$.
\end{proof}

\begin{definition}
  \label{ev--arith--profinite-completion}
  For a spectrum $M$, we let $\cpl{M}$ denote its profinite completion, so $\cpl{M} \iso \prod_p \cpl{M}_p$. We say that a spectrum is \emph{profinitely complete} if the canonical map $M \to \cpl{M}$ is an equivalence.
\end{definition}

\begin{lemma}
  \label{ev--arith--profinite-torsion}
  Let $M$ be a spectrum. Then the following are equivalent:
  \begin{enumerate}
  \item $M$ is torsion (i.e. $M \otimes \Q \iso 0$) and profinitely complete;
  \item for each integer $n$, there exists an integer $d$ such that $\pi_n(M)$ is $d$-torsion.
  \end{enumerate}
\end{lemma}

\begin{proof}
  A spectrum $M$ is profinitely complete (resp. torsion) if and only if each of its homotopy groups is profinitely complete (resp. torsion); indeed, for profinite completeness, one can argue similarly to the proof of \cite[Theorem 7.3.4.1]{sag}, and for torsionness this follows from the isomorphism $\pi_*(M \otimes \Q) \iso \pi_*(M) \otimes \Q$. Thus, we may reduce to the case that $M$ is discrete. It is then easy to prove that the second condition implies the first. Let us prove that the first implies the second.

  Assume that $M$ is discrete, torsion, and profinitely complete. For each prime $p$, the $p$-completion $\cpl{M}_p$ is is a retract of $M$ (since $M \iso \prod_p \cpl{M}_p$), hence is discrete, torsion, and $p$-complete. By \cite[Lemma 0.2]{bhatt--torsion-completion}, it follows that $\cpl{M}_p$ is $p^k$-torsion for some positive integer $k$. To finish, it suffices to show that $\cpl{M}_p \iso 0$ for all but finitely many primes $p$. Let $S$ be the set of primes $p$ such that $\cpl{M}_p$ is nonzero. Then we may find an injection $\prod_{p \in S} \Z/p \inj M$. If $S$ is infinite, then the canonical map $\Z \to \prod_{p \in S} \Z/p$ is an injection, so that we obtain an injection $\Z \inj M$; but this contradicts that $M$ is torsion.
\end{proof}

\begin{lemma}
  \label{ev--arith--profinite-tensor}
  \begin{enumerate}[leftmargin=*]
  \item \label{ev--arith--profinite-tensor--module-vanishing}
    Let $A$ be a connective $\E_\infty$-ring, let $M$ and $N$ be connective $A$-modules, and assume that $\cpl{M} \otimes \Q \iso 0$. Then the canonical map $\cpl{M} \otimes_A \cpl{N} \to \cpl{(M \otimes_A N)}$ is an equivalence.
  \item \label{ev--arith--profinite-tensor--ring-equiv}
    Let $A \to A'$ be a map of connective $\E_\infty$-rings such that the induced map $\cpl{A} \otimes \Q \to \cpl{(A')} \otimes \Q$ is an equivalence. Then, for any connective $A$-module $N$, the canonical map $\cpl{(A')} \otimes_{\cpl{A}} \cpl{N} \to \cpl{(A' \otimes_A N)}$ is an equivalence.
  \end{enumerate}
\end{lemma}

\begin{proof}
  \begin{enumerate}[leftmargin=*]
  \item The map is an equivalence after profinite completion, so it suffices to show that $\cpl{M} \otimes_A \cpl{N}$ is profinitely complete. By \cref{ev--arith--profinite-torsion}, for each integer $n$, there exists an integer $d$ such that $\pi_n(\cpl{M})$ is $d$-torsion. Considering the Tor spectral sequence computing $\pi_*(\cpl{M} \otimes_A \cpl{N})$, we find that the same must be true about these homotopy groups. It then follows from the converse direction of \cref{ev--arith--profinite-torsion} that $\cpl{M} \otimes_A \cpl{N}$ is profinitely complete.
  \item We may replace $A$, $A'$, and $N$ by their profinite completions. The claim can then be deduced from \cref{ev--arith--profinite-tensor--module-vanishing}, applied in the case where $M = \fib(A \to A')$. \qedhere
  \end{enumerate}
\end{proof}

\begin{proposition}
  \label{ev--arith--profinite-rationalization}
  Let $A \to A'$ be a map of connective $\E_\infty$-rings such that the induced map $\cpl{A} \otimes \Q \to \cpl{(A')} \otimes \Q$ is an equivalence. Suppose that there exists a connective $\E_\infty$-ring $B$ and an eff map $A \to B$ such that both $\cpl{B}$ and $\cpl{(A' \otimes_A B)}$ are even and have bounded $p$-power torsion for all primes $p$. Then the canonical map
  \[
    \left(\prod_p\fil^\star_{\ev,p}(\cpl{A}_p)\right) \otimes \Q \to
    \left(\prod_p\fil^\star_{\ev,p}(\cpl{(A')}_p)\right) \otimes \Q
  \]
  is an equivalence after completion with respect to filtration.
\end{proposition}

\begin{proof}
  Fix a map $A \to B$ as in the statement. By \cref{ev--desc--eff-desc}, we have
  \[
    \prod_p \fil^\star_{\ev,p}(\cpl{A}_p) \iso \prod_p \lim_{\Delta} \tau_{\ge 2\star}(\cpl{(B^{\otimes_A\bullet+1})}_p) \iso \lim_{\Delta} \tau_{\ge 2\star}(\cpl{(B^{\otimes_A\bullet+1})}),
  \]
  and as in the proof of \cref{ev--arith--rationalization}, we find that the canonical map
  \[
    \left(\prod_p \fil^\star_{\ev,p}(\cpl{A}_p)\right) \otimes \Q \to \lim_{\Delta} {\tau_{\ge 2\star}(\cpl{(B^{\otimes_A\bullet+1})} \otimes \Q)}
  \]
  is an equivalence after completion with respect to filtration. Letting $B' := A' \otimes_A B$, the above also holds when $A,B$ are replaced by $A',B'$. We may now conclude using the fact that $\cpl{A} \otimes \Q \to \cpl{(A')} \otimes \Q$ being an equivalence implies, by \cref{ev--arith--profinite-tensor}, that $\cpl{(B^{\otimes_A\bullet+1})} \otimes \Q \to \cpl{((B')^{\otimes_{(A')}\bullet+1})} \otimes \Q$ is a (levelwise) equivalence.
\end{proof}

\section{Wilson spaces} \label{WilsonAppendix}
The purpose of this section is to produce cyclotomic bases with
excellent evenness properties. These will be used in the
next section to show that, for certain $\mathbb{E}_{\infty}$-rings,
syntomic cohomology is computable from prismatic cohomology.

\subsection{Preliminaries}

We begin by recording a few useful facts about the following homotopy types, which were first studied by Steve Wilson in his PhD thesis \cite{WilsonThesis}:

\begin{definition}
For each integer $i \ge 0$, the $(2i)$th \emph{Wilson space} is defined to be 
\[W_{2i}=\Omega^{\infty} \Sigma^{2i} \mathrm{MU}.\]
We furthermore define $W_0'$ to be $\Omega^{\infty} \tau_{>0} \mathrm{MU}$, so that $W_0'$ is a path component inside of $W_0$.
\end{definition}

\begin{theorem}[Wilson] \label{WilsonThesisTheorem}
Suppose $R$ is an even $\mathbb{E}_{\infty}$-ring. Then, for each integer $i>0$, $R_*W_{2i}$ is a polynomial $R_*$ algebra, with polynomial generators in even degrees.  Furthermore, $R_*W'_0$ is a polynomial $R_*$ algebra, with polynomial generators in even degrees.
\end{theorem}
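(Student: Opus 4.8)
The plan is to bootstrap from the case $R=\MU$, where the statement is essentially Wilson's original computation, to an arbitrary even $\E_\infty$-ring by a change-of-rings argument along a complex orientation.

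First I would dispose of the case $R=\MU$. By Wilson's calculations \cite{WilsonThesis}, the spaces in the $\Omega$-spectrum of $\BP$ have polynomial $\BP$-homology on generators in even degrees; assembling $\MU_{(p)}$ from the Quillen splitting into suspensions of $\BP$, together with the elementary rational computation (where $W_{2i}$ becomes a product of rational Eilenberg--MacLane spaces), shows that $\MU_*W_{2i}$ is a polynomial $\MU_*$-algebra with generators in even degrees, and in particular a \emph{free} $\MU_*$-module, for every $i>0$. For $i=0$ one needs a small extra observation: $W_0$ is the disjoint union of the path components of $\Omega^\infty\MU$ labelled by $\mathbb{N}\subseteq\Z=\pi_0\Omega^\infty\MU$, and translation by the class of a point in the component of $1$ identifies all of these with the basepoint component $W_0'=\Omega^\infty\tau_{\ge 2}\MU$; hence $\MU_*W_0\iso(\MU_*W_0')[t]$ with $t$ in degree $0$, reducing to the evenness and polynomiality of $\MU_*W_0'$, which again lies in Wilson's range.

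Next, for a general even $\E_\infty$-ring $R$: since $\pi_*R$ is concentrated in even degrees, $R$ is complex orientable, so a choice of orientation furnishes a unital ring map $\MU\to R$. Because $\MU_*W_{2i}$ is a free $\MU_*$-module, the classical change-of-rings isomorphism applies and yields an isomorphism of graded rings $R_*W_{2i}\iso R_*\otimes_{\MU_*}\MU_*W_{2i}$; equivalently, the $\MU_*$-based universal coefficient spectral sequence for $R$ is concentrated on its $0$-line and collapses with no extension problems. Base-changing a polynomial $\MU_*$-algebra on even generators along the degree-preserving map $\MU_*\to R_*$ produces a polynomial $R_*$-algebra on even generators, which is exactly the assertion. (If one prefers to avoid invoking a bare homotopy-ring map, one can instead compute $(R\otimes\MU)_*W_{2i}$ using the evident $\E_\infty$-$\MU$-algebra structure on $R\otimes\MU$ and then descend along the faithfully flat map $R_*\to R_*[b_1,b_2,\dots]=\pi_*(R\otimes\MU)$, at the cost of a short faithfully-flat-descent argument for polynomiality.)

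The only substantive input is Wilson's computation of $\MU_*W_{2i}$ — so the ``hard part'' is really a citation, plus the mild bookkeeping needed to pass from $\BP$ to $\MU$ and to treat $i=0$. Once that is in hand the passage to an arbitrary even $R$ is formal, the sole point requiring any care being that ``polynomial on even generators'' is stable under the flat base change $\MU_*\to R_*$, which it manifestly is.
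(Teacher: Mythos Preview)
Your proof is correct but follows a different path than the paper. The paper cites Wilson's computation of the \emph{integral} homology $H_*(W_{2i};\mathbb{Z})$ as an even polynomial ring, then runs the Atiyah--Hirzebruch spectral sequence for $R$ (which collapses since both $R_*$ and $H_*(W_{2i};\mathbb{Z})$ are even) to obtain $R_*W_{2i}$ directly. You instead pass through $\MU$: first establish $\MU_*W_{2i}$ by assembling from Wilson's $\BP$-computations via the Quillen splitting plus a rational check, then change rings along a complex orientation $\MU\to R$. The paper's route is shorter---a single AHSS rather than a prime-by-prime assembly followed by a change-of-rings; your parenthetical about $R\otimes\MU$ and faithfully flat descent is a clean way to sidestep the issue that a complex orientation is only a homotopy ring map, but it is more work than the paper's one spectral sequence. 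For $i=0$, the paper constructs an explicit $\mathbb{E}_2$-splitting $W_0 \simeq \mathbb{N}\times\Omega^\infty_0\MU$ by exhibiting a double-loop section $\mathbb{Z}\to\Omega^\infty\MU$ (namely $\Omega^2$ of the canonical orientation $\mathbb{CP}^\infty\to\Omega^\infty\Sigma^2\MU$); your translation argument reaches the same homology conclusion with less structural input, which suffices for the statement at hand.
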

\begin{proof}
In \cite[Theorem 3.3, Corollary 3.4]{WilsonThesis}, Steve Wilson proved for every $i>0$ that $\mathrm{H}_*(W_{2i};\mathbb{Z})$ is a polynomial $\mathbb{Z}$-algebra, with polynomial generators in even degrees.  He also proved that $\mathrm{H}_*(W_0';\mathbb{Z})$ is a polynomial $\mathbb{Z}$-algebra with polynomial generators in even degrees.  It follows by the Atiyah–-Hirzebruch spectral sequence that the same is true of $R_*W_{2i}$ and $R_*W'_{0}$.
\end{proof}

As explained in the following proposition, $R_*W_0$ is obtained from the polynomial algebra $R_*W'_0$ by freely adjoining an invertible degree $0$ class.

\begin{proposition} \label{WilsonZeroInvertibleGenerator}
Suppose $R$ is an even $\mathbb{E}_{\infty}$-ring. Then $R_*W_0 \cong (R_*W'_0)[x^{\pm 1}]$, where $|x|=0$.
\end{proposition}

\begin{proof}
This follows from the fact that the natural map
\[W_0 = \Omega^{\infty} \MU \to \Omega^{\infty} \pi_0\MU \cong \mathbb{Z}\]
is split as a map of $H$-spaces. A splitting map may be obtained as $\Omega^2$ of the canonical complex orientation $\mathbb{CP}^{\infty} \to \Omega^{\infty} \Sigma^2 \MU$.
\end{proof}

Before continuing, we recall the basic properties of the $\mathrm{gl}_1$ functor, as summarized in \cite[\textsection 5]{ABGHR}.

\begin{recollection} \label{rec:gl1}
There is an adjunction
\[\Sigma^{\infty}_+\Omega^{\infty}(\--):\Spt_{\ge 0} \leftrightarrows \CAlg:\mathrm{gl}_1(\--),\]
where $\Spt_{\ge 0} \subset \Spt$ denotes the full subcategory of connective spectra. Given an $\mathbb{E}_{\infty}$-ring $R$, $\Omega^{\infty} \mathrm{gl}_1(R)$ sits in a natural pullback square
\[
\begin{tikzcd}
\Omega^{\infty} \mathrm{gl}_1(R) \arrow{r} \arrow{d} & \Omega^{\infty} R \arrow{d} \\
\pi_0(R)^{\times} \arrow[hookrightarrow]{r} & \pi_0(R).
\end{tikzcd}
\]
In particular, $\pi_0(\mathrm{gl}_1(R)) \cong \pi_0(R)^{\times}$, while the higher homotopy groups of $\mathrm{gl}_1(R)$ match the higher homotopy groups of $R$.  Thus, if $\pi_*R$ is even, then $\pi_*\mathrm{gl}_1(R)$ is as well.
\end{recollection}

For each $i \ge 0$, the left adjoint of \cref{rec:gl1} equips $\Sigma^\infty_+ W_{2i}=\Sigma^{\infty}_+\Omega^{\infty} \Sigma^{2i} \MU$ with the structure of an $\mathbb{E}_{\infty}$-ring spectrum. We will need the following fact about these $\mathbb{E}_{\infty}$-rings:

\begin{proposition} \label{prop--Wilson-maps}
Suppose $R$ is a connective, even $\mathbb{E}_{\infty}$-ring, and that $x \in \pi_{2i} R$ is an element in the homotopy groups of $R$ with $i \ge 0$. If $i=0$, we further suppose that $x \in (\pi_0R)^{\times}$ (i.e., that $x$ is a multiplicative unit). Then there exists an $\mathbb{E}_{\infty}$-ring map 
\[f:\Sigma^{\infty}_+ W_{2i} \to R\] such that $x$ is in the image of $\pi_{2i} f$.
\end{proposition}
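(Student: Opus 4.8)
The plan is to pass through the adjunction between suspension spectra and $\E_\infty$-rings, and then build the required map of $\E_\infty$-spaces by extending over cells, using that $\MU$ has a cell structure with cells only in even degrees together with the hypothesis that $R$ is even.

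First, recall that $\Sigma^{\infty}_+\colon \Spc\to\Spt$ is symmetric monoidal for the Cartesian product, so it carries $\E_\infty$-spaces (commutative monoids in $\Spc$) to $\E_\infty$-rings, and the induced functor is left adjoint to $\Omega^\infty$, where $\Omega^\infty R$ is regarded as an $\E_\infty$-space via its \emph{multiplicative} structure. Thus an $\E_\infty$-ring map $\Sigma^{\infty}_+W_{2i}\to R$ is the same datum as a map of $\E_\infty$-spaces $g\colon W_{2i}\to\Omega^\infty R$ for the multiplicative structure, and to put $x$ in the image of $\pi_{2i}f$ it suffices to arrange that the composite of the bottom-cell map $S^{2i}\to W_{2i}$ with $g$ represents $x\in\pi_{2i}R=\pi_{2i}\Omega^\infty R$. (For $i\geq1$ this uses the class $u\in\pi_{2i}\Sigma^{\infty}_+W_{2i}$, the Hurewicz image of the bottom cell, which the composite-with-$g$ evaluates to; for $i=0$ one works with the class of $1\in\mathbb{N}\subseteq\pi_0 W_0$ instead, so that it suffices to send that class to $x$.)

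Now fix $i\geq1$. Since $W_{2i}=\Omega^\infty\Sigma^{2i}\MU$ is grouplike and connected, any map of $\E_\infty$-spaces out of it into a multiplicative monoid lands in the identity component of the units; hence such maps are precisely maps of connective spectra $\Sigma^{2i}\MU\to\mathrm{gl}_1 R$, and the bottom-cell condition becomes that the composite $\mathbb{S}^{2i}\to\Sigma^{2i}\MU\to\mathrm{gl}_1 R$ represent $x$, using $\pi_{2i}\mathrm{gl}_1 R\cong\pi_{2i}R$. So it suffices to produce a map of spectra $\Sigma^{2i}\MU\to\mathrm{gl}_1 R$ restricting on the bottom cell to $x$. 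This is where evenness enters: $\MU$ admits a CW structure with a single $0$-cell and all other cells in even degrees, so $\Sigma^{2i}\MU$ is built from $\mathbb{S}^{2i}$ by attaching cells only in even degrees $>2i$; and since $R$ is even, $\pi_k\mathrm{gl}_1 R=\pi_k R$ for $k\geq1$, so $\pi_{2k-1}\mathrm{gl}_1 R=0$ for all $k\geq1$. We build the map by extending $x\colon\mathbb{S}^{2i}\to\mathrm{gl}_1 R$ over the skeleta of $\Sigma^{2i}\MU$: extending across a cell attached along a map out of $S^{2k-1}$ is obstructed by an element of $\pi_{2k-1}\mathrm{gl}_1 R=0$, so the extension always exists. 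Assembling the compatible family of extensions into a map out of the colimit $\Sigma^{2i}\MU$ (via the Milnor sequence, whose $\lim^{1}$-term only affects uniqueness) and translating back through the two adjunctions produces the desired $f$.

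For $i=0$ the same strategy applies, but one must cope with the fact that $W_0=\mathbb{N}\times_{\Z}\Omega^\infty\MU$ is \emph{not} grouplike, so a map $W_0\to\Omega^\infty R$ for the multiplicative structure cannot simply be pushed into the units. Here one uses that $W_0\hookrightarrow\Omega^\infty\MU$ exhibits $\Omega^\infty\MU$ as the group completion of $W_0$, together with the pullback square defining $W_0$, to reduce the construction of the required $\E_\infty$-space map to that of a map of connective spectra $\MU\to Z$ extending a prescribed class in $\pi_0 Z$, where $Z$ is a connective spectrum with $\pi_k Z\cong\pi_k R$ for $k\geq1$; as $R$ is even, $Z$ is even in positive degrees, and the cell-by-cell argument applies verbatim — now the lowest attaching map, $\eta\in\pi_1\mathbb{S}$, has its obstruction in $\pi_1 Z\cong\pi_1 R=0$.

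The main obstacle is genuinely the case $i=0$: for $i\geq1$ the grouplikeness of $W_{2i}$ linearizes the problem immediately (it becomes a question about maps of spectra into $\mathrm{gl}_1 R$), whereas for $i=0$ one must engage the multiplicative $\E_\infty$-structure on $\Omega^\infty R$ head-on. That the $\Omega^\infty\MU$-factor of $W_0$ is essential can be seen from the failure of the naive analogue — for instance, there is no $\E_\infty$-ring map from the monoid algebra $\mathbb{S}[t]=\Sigma^{\infty}_+\mathbb{N}$ to $\mathrm{ku}$ sending $t$ to $2\in\pi_0\mathrm{ku}$, the obstruction being a nonzero power operation — so the even cells of $\MU$ must really be brought to bear. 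A secondary, purely bookkeeping, point is to track the canonical class faithfully through the composite of the $(\Sigma^{\infty}_+,\Omega^\infty)$-adjunction on $\E_\infty$-objects with the equivalence between connected grouplike $\E_\infty$-spaces and connective spectra.
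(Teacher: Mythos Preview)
Your argument for $i \ge 1$ is correct and is essentially the paper's proof seen through the $(\Sigma^\infty_+\Omega^\infty,\ \mathrm{gl}_1)$ adjunction. The paper first establishes an even $\E_\infty$-cell decomposition of $\Sigma^\infty_+ W_{2i}$ (obtained, for $i\ge 1$, precisely by applying $\Sigma^\infty_+\Omega^\infty\Sigma^{2i}(-)$ to an even cell structure on $\MU$) and then extends the map $\mathrm{Free}_{\E_\infty}(S^{2i})\to R$ adjoint to $x$ over those cells, with obstructions in odd $\pi_*R$. Your version carries out the identical obstruction theory on the adjoint side, building $\Sigma^{2i}\MU\to\mathrm{gl}_1 R$ cell by cell; the two are the same computation under the adjunction. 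Your formulation is perhaps a little slicker in that it avoids explicitly naming the $\E_\infty$-cell structure.

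The case $i=0$, however, has a real gap. You invoke that $W_0\hookrightarrow\Omega^\infty\MU$ is a group completion, but this universal property governs maps \emph{into grouplike} $\E_\infty$-spaces, whereas the multiplicative $\E_\infty$-space $\Omega^\infty R$ is not grouplike (indeed your own $x=2\in\pi_0\mathrm{ku}$ example is a non-unit). So an $\E_\infty$-map $W_0\to\Omega^\infty R$ need not, and generally does not, extend to $\Omega^\infty\MU$, and the reduction to ``a map of connective spectra $\MU\to Z$'' does not go through; you never say what $Z$ is, and I do not see a candidate that makes the sentence true. The pullback square defining $W_0$ does not help either, since pullbacks have a mapping-\emph{in} property, not a mapping-out one. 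The paper handles $i=0$ differently: it produces an even $\E_\infty$-cell decomposition of $\Sigma^\infty_+ W_0$ starting from $\mathrm{Free}_{\E_\infty}(S^0)$, using that the $\E_\infty$-cotangent complex $\L^{\E_\infty}_{\Sigma^\infty_+ W_0/\mathrm{Free}_{\E_\infty}(S^0)}$ admits an even cell structure, and then runs the same extension argument as for $i\ge 1$. You have correctly located the difficulty (and your $\mathbb{S}[t]\to\mathrm{ku}$ example shows it is genuine), but you have not supplied a proof in this case.
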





\begin{proof}
A map $\Sigma^{\infty}_+W_{2i} \to R$ is the same data as a map of spectra $\Sigma^{2i}\MU \to \mathrm{gl}_1(R)$.  We note that there is an extension
\[
\begin{tikzcd}
S^{2i} \arrow{r}{x} \arrow{d} & \mathrm{gl}_1(R) \\
\Sigma^{2i}\MU, \arrow[dashed]{ur}
\end{tikzcd}
\]
because $\MU$ admits an even cell decomposition in the category of spectra and $\pi_*\mathrm{gl}_1(R)$ is concentrated in even degrees.
\qedhere
\end{proof}

\begin{warning}
The map guaranteed to exist by \Cref{prop--Wilson-maps} is not canonical in any sense.  In particular, we do not claim any sort of functorial construction. The same warning applies to many of the other constructions in this section, all made via obstruction theory.
\end{warning}

Since each Wilson space $W_{2i}$ is an $\mathbb{E}_{\infty}$-space, any finite product of Wilson spaces has a canonical $\mathbb{E}_{\infty}$-space structure.  In the category of $\mathbb{E}_{\infty}$-spaces, a finite product is a finite coproduct, and we will more generally have occasion to consider infinite coproducts:

\begin{definition}
A \emph{weak product of Wilson spaces} is an infinite loop space of the form $\Omega^{\infty} (\bigoplus_j \Sigma^{2i_j} \MU)$, where $\{i_j\}$ is a collection of non-negative integers. 
\end{definition}



\begin{proposition} \label{WilsonSurjectivity}
Suppose that $R$ is a connective, even $\mathbb{E}_{\infty}$-ring such that $\pi_0(R)$ is generated as a ring by elements in $(\pi_0R)^{\times}$. Then there exists a weak product of Wilson spaces $W$ together with an $\mathbb{E}_{\infty}$-ring map 
\[f:\Sigma^\infty_+ W \to R\]
such that $\pi_*f$ is surjective.
\end{proposition}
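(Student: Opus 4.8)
The plan is to build $W$ as a coproduct of Wilson spaces indexed by a generating set for $\pi_* R$ and to assemble $f$ from the maps furnished by \Cref{prop--Wilson-maps}. First I would observe that, since $R$ is connective and even, $\pi_* R$ is concentrated in non-negative even degrees; so I may choose a set $\{x_j\}_{j \in J}$ of elements of $\pi_* R$ generating $\pi_* R$ as a graded ring — for instance simply taking $J = \pi_* R$ with $x_j = j$, which is a small set because $\pi_* R$ is small — and write $x_j \in \pi_{2 i_j} R$ with $i_j \ge 0$ (the value $i_j = 0$ being exactly why $W_0$ was defined). Then, for each $j$, I would apply \Cref{prop--Wilson-maps} to produce an $\mathbb{E}_\infty$-ring map $f_j : \Sigma^\infty_+ W_{2 i_j} \to R$ with $x_j$ in the image of $\pi_{2 i_j} f_j$.

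Next I would set $W := \coprod_{j \in J} W_{2 i_j}$, the coproduct taken in $\mathbb{E}_\infty$-spaces, which is a weak product of Wilson spaces by definition. Since $\Sigma^\infty_+$ is symmetric monoidal and preserves colimits, it carries this coproduct to the coproduct $\bigotimes_{j \in J} \Sigma^\infty_+ W_{2 i_j}$ in $\mathbb{E}_\infty$-rings — the same identification already used in the proof of \Cref{WilsonEvenPolynomial}. Consequently the family $\{f_j\}_{j \in J}$ determines a single $\mathbb{E}_\infty$-ring map $f : \Sigma^\infty_+ W \to R$, whose composite with each coproduct inclusion $\Sigma^\infty_+ W_{2 i_j} \hookrightarrow \Sigma^\infty_+ W$ recovers $f_j$. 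For surjectivity I would argue that each $x_j$ lies in the image of $\pi_* f_j$, hence in the image of $\pi_* f$; since $f$ is a ring map, the image of $\pi_* f$ is a graded subring of $\pi_* R$, and as it contains the generating set $\{x_j\}$ it must be all of $\pi_* R$.

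I do not expect a genuine obstacle here: the proposition is essentially pure assembly once \Cref{prop--Wilson-maps} is available. The only points that need a moment's care are set-theoretic — one wants a set, not a proper class, of generators, which is automatic since $\pi_* R$ is small — and the identification of $\Sigma^\infty_+$ applied to a (possibly infinite) coproduct of $\mathbb{E}_\infty$-spaces with the corresponding tensor product of $\mathbb{E}_\infty$-ring spectra, which is the fact already invoked for \Cref{WilsonEvenPolynomial}. If one prefers to avoid even these minor points, one can instead take $\{x_j\}$ to be a set of algebra generators of $\pi_*R$ to keep $W$ smaller, but this is not necessary for the statement.
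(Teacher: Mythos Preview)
Your proposal is correct and follows essentially the same approach as the paper: apply \Cref{prop--Wilson-maps} to each element of $\pi_*R$ and take the coproduct of the resulting maps. The paper's proof is more terse (it simply indexes over all elements of $\pi_*R$ without discussing the option of using a smaller generating set or the set-theoretic and coproduct identifications you spell out), but the argument is the same.
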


\begin{proof}
For each element $x \in \pi_{2i}\mathrm{gl}_1(R)$, \Cref{prop--Wilson-maps} allows us to choose an $\mathbb{E}_{\infty}$-ring map $\Sigma^{\infty}_+W_{2i} \to R$ whose image in $\pi_*$ contains $x$.  Taking the coproduct of such maps, over all elements $x$ in the homotopy groups of $\mathrm{gl}_1(R)$, yields the result.
\end{proof}

Finally, the following technical result is a key ingredient in our proof that motivic filtrations respect cyclotomic structure:

\begin{theorem} \label{WilsonS1unobstruct}
Let $R$ be an even $\mathbb{E}_{\infty}$-ring with $\cir$-action.   Then, for any $i \ge 0$ and any $\cir$-equivariant $\mathbb{E}_{\infty}$-ring map $\THH(\Sigma^{\infty}_+W_{2i}) \to R$, there exists a factorization
\[
\begin{tikzcd}
\THH(\Sigma^{\infty}_+W_{2i}) \arrow{r} \arrow{d}{\pi} & R \\
\Sigma^{\infty}_+ W_{2i}. \arrow{ur}
\end{tikzcd}
\]
Here, $\pi$ is the canonical $\cir$-equivariant projection $\THH(\Sigma^{\infty}_+W_{2i}) \to \Sigma^{\infty}_+W_{2i}$, where $\Sigma^{\infty}_+W_{2i}$ has trivial $\cir$-action.
\end{theorem}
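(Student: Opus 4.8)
The plan is to recast the factorization in the statement as a lifting problem against the canonical map $R^{\h\cir}\to R$ out of homotopy fixed points, and then solve that by obstruction theory using the $\E_\infty$-cell presentation of \Cref{Wilson-ring-cell-decomposition}. Write $A:=\Sigma^\infty_+W_{2i}$ and let $U:\CAlg^{\Bcir}\to\CAlg$ be the forgetful functor from $\cir$-equivariant $\E_\infty$-rings. Recall that $\THH:\CAlg\to\CAlg^{\Bcir}$ — the tensoring of an $\E_\infty$-ring with the circle carrying its rotation action — is left adjoint to $U$, and that the trivial-action functor $\mathrm{triv}:\CAlg\to\CAlg^{\Bcir}$ has right adjoint $(-)^{\h\cir}$; moreover the projection $\pi:\THH(A)\to\mathrm{triv}(A)$ of the statement (collapsing $\cir\to\mathrm{pt}$) is the map adjoint to $\id_A$ under the first adjunction. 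Under the induced identifications $\Map_{\CAlg^{\Bcir}}(\THH(A),R)\simeq\Map_{\CAlg}(A,U(R))$ and $\Map_{\CAlg^{\Bcir}}(\mathrm{triv}(A),R)\simeq\Map_{\CAlg}(A,R^{\h\cir})$, precomposition with $\pi$ becomes postcomposition with the canonical map $c:R^{\h\cir}\to U(R)$. Hence the given $\cir$-equivariant map $g:\THH(A)\to R$ factors $\cir$-equivariantly through $\pi$ if and only if the $\E_\infty$-ring map $U(g)\circ\iota:A\to U(R)$ — the restriction of $g$ along the unit $\iota:A\to\THH(A)$ (equivalently, along a basepoint of $\cir$), viewed as a plain map — lifts along $c$; and an actual lift determines such a factorization. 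So it remains to produce a lift of $U(g)\circ\iota$ along $c$.

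Here evenness of $R$ enters through two properties of $c$. Since $R$ is even, the homotopy fixed point spectral sequence for $R^{\h\cir}$ collapses (cf. \cite[Lemma IV.4.12]{NikolausScholze}, and the discussion in \Cref{SecEvenFiltration}); so $R^{\h\cir}$ is again even — in particular $\pi_j(R^{\h\cir})=0$ for $j$ odd — and $c$ is surjective on all homotopy groups (non-canonically $\pi_*(R^{\h\cir})\cong\pi_*(R)\llbracket t\rrbracket$ with $|t|=-2$ and $c$ the quotient by $t$). To lift $U(g)\circ\iota$ along $c$, I would use the presentation of $A=\Sigma^\infty_+W_{2i}$ from \Cref{Wilson-ring-cell-decomposition},
\[
  \mathrm{Free}_{\E_\infty}(S^{2i})=Y_{2i}\to Y_{2i+2}\to\cdots\to\colim_k Y_{2k}=\Sigma^\infty_+W_{2i},
\]
in which each $Y_{2k}\to Y_{2k+2}$ is a pushout of $\E_\infty$-rings along $\mathrm{Free}_{\E_\infty}(\bigoplus_j S^{2k+1})\to\mathbb{S}$, and build compatible lifts $\tilde g_k:Y_{2k}\to R^{\h\cir}$ of $U(g)\circ\iota$ inductively. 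On $Y_{2i}=\mathrm{Free}_{\E_\infty}(S^{2i})$ a lift is a choice of preimage in $\pi_{2i}(R^{\h\cir})$ of the class in $\pi_{2i}(U(R))$ classifying $U(g)\circ\iota|_{Y_{2i}}$, which exists as $c$ is surjective on $\pi_{2i}$. For the inductive step from $Y_{2k}$ to $Y_{2k+2}$: extending $\tilde g_k$ over the relation cell amounts to exhibiting a nullhomotopy of a class in $\prod_j\pi_{2k+1}(R^{\h\cir})$, and this group vanishes because $R^{\h\cir}$ is even; so extensions of $\tilde g_k$ exist, and the (nonempty) space of them maps onto the space of extensions of $U(g)\circ\iota|_{Y_{2k}}$ over $Y_{2k+2}$ — a torsor under $\prod_j\pi_{2k+2}(U(R))$ — along the surjection induced by $c$ on $\prod_j\pi_{2k+2}$, so $\tilde g_{k+1}$ can be chosen lying over $U(g)\circ\iota$. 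Passing to the limit $\Map_{\CAlg}(A,R^{\h\cir})=\lim_k\Map_{\CAlg}(Y_{2k},R^{\h\cir})$ yields the desired lift, whose adjoint is the sought factorization.

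The only genuinely non-formal ingredient is the shape of the presentation in \Cref{Wilson-ring-cell-decomposition}: its polynomial generators contribute $\E_\infty$-cells in \emph{even} degrees while its relations contribute cells in \emph{odd} degrees, and it is exactly the oddness of the relation cells, matched against the evenness of $R^{\h\cir}$, that forces the obstruction groups to vanish. Relative to the proof of \Cref{prop--Wilson-maps}, the point I expect to need the most care is that the lift must reduce to the prescribed map $U(g)\circ\iota$ into $U(R)$ and not merely exist; this is why the argument uses the full surjectivity of $\pi_*(R^{\h\cir})\to\pi_*(R)$ in even degrees — not only the vanishing in odd degrees — and why the inductive step matches torsors of extensions upstairs and downstairs.
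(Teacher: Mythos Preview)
Your proof is correct and follows essentially the same approach as the paper's: both induct along the even $\E_\infty$-cell decomposition $Y_{2k}$ of \Cref{Wilson-ring-cell-decomposition}, using that $R^{\h\cir}$ is even (so odd obstruction groups vanish) and that $\pi_*(R^{\h\cir})\to\pi_*(R)$ is surjective (so lifts can be chosen compatibly with the given map). Your presentation is slightly more explicit in that you spell out the adjunctions $(\THH,U)$ and $(\mathrm{triv},(-)^{\h\cir})$ to recast the factorization as a lifting problem along $c:R^{\h\cir}\to U(R)$, whereas the paper invokes ``the universal property of $\THH$'' and phrases the inductive step directly in terms of $\cir$-equivariant nullhomotopies; but these are the same argument.
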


\begin{proof}
By the universal property of $\THH$, this is asking whether every non-equivariant $\mathbb{E}_{\infty}$-ring map $\Sigma^{\infty}_+W_{2i} \to R$ lifts to a $\cir$-equivariant $\mathbb{E}_{\infty}$-ring map $\Sigma^{\infty}_+W_{2i} \to R$.  The space of non-equivariant $\mathbb{E}_{\infty}$-ring maps is equivalent to the space $\Map(\Sigma^{2i}\MU,\mathrm{gl}_1(R))$ of spectrum maps from $\Sigma^{2i}\MU$ to $\mathrm{gl}_1(R)$.  By the Atiyah--Hirzebruch spectral sequence, this mapping space has even homotopy groups.  It follows that the $\cir$ homotopy fixed point spectral sequence computing the space $\Map(\Sigma^{2i}\MU,\mathrm{gl}_1(R))^{\h\cir}$ of equivariant $\mathbb{E}_{\infty}$-ring maps collapses, and in particular that the natural map $\Map(\Sigma^{2i}\MU,\mathrm{gl}_1(R))^{\h\cir} \to \Map(\Sigma^{2i}\MU,\mathrm{gl}_1(R))$ is a surjection on path components.
\end{proof}

\begin{corollary} \label{WilsonUnobstructed}
Let $R$ be an even $\mathbb{E}_{\infty}$-ring with $\cir$-action and $W$ a weak product of Wilson spaces.  Then, for any $\mathbb{E}_{\infty}$-ring map $\THH(\Sigma^{\infty}_+W) \to R$, there exists a factorization
\[
\begin{tikzcd}
\THH(\Sigma^{\infty}_+W) \arrow{r} \arrow{d}{\pi} & R \\
\Sigma^{\infty}_+ W. \arrow{ur}
\end{tikzcd}
\]
Here, $\pi$ is the canonical $\cir$-equivariant projection $\THH(\Sigma^{\infty}_+W) \to \Sigma^{\infty}_+W$, where $\Sigma^{\infty}_+W$ has trivial $\cir$-action.
\end{corollary}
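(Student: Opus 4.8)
The plan is to deduce the general case from the single-space case of \Cref{WilsonS1unobstruct} by exploiting the universal property of coproducts. First, write $W \simeq \coprod_{j \in J} W_{2i_j}$ as a coproduct of Wilson spaces in $\mathbb{E}_\infty$-spaces. Since $\Sigma^\infty_+$ is symmetric monoidal and colimit-preserving, it sends this to a coproduct in $\CAlg$, so that $\Sigma^\infty_+ W \simeq \bigotimes_{j \in J} \Sigma^\infty_+ W_{2i_j}$; because the $\Sigma^\infty_+ W_{2i_j}$ carry the trivial $\cir$-action, this is also their coproduct in $\cir$-equivariant $\mathbb{E}_\infty$-rings. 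The functor $\THH$ preserves colimits: its underlying non-equivariant version is $(-) \otimes S^1$, a left adjoint, and colimits of $\cir$-equivariant $\mathbb{E}_\infty$-rings are computed on underlying objects. Hence it too sends the coproduct to a coproduct, $\THH(\Sigma^\infty_+ W) \simeq \bigotimes_{j \in J} \THH(\Sigma^\infty_+ W_{2i_j})$ in $\cir$-equivariant $\mathbb{E}_\infty$-rings. I would write $\iota_j : \THH(\Sigma^\infty_+ W_{2i_j}) \to \THH(\Sigma^\infty_+ W)$ and $\kappa_j : \Sigma^\infty_+ W_{2i_j} \to \Sigma^\infty_+ W$ for the coproduct inclusions, and observe that, since the projection $\pi$ is natural in its $\mathbb{E}_\infty$-ring argument, it is compatible with these decompositions: $\pi \circ \iota_j \simeq \kappa_j \circ \pi_j$, where $\pi_j : \THH(\Sigma^\infty_+ W_{2i_j}) \to \Sigma^\infty_+ W_{2i_j}$ is the projection for the single Wilson space $W_{2i_j}$.

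Given a $\cir$-equivariant $\mathbb{E}_\infty$-ring map $f : \THH(\Sigma^\infty_+ W) \to R$, I would then set $f_j := f \circ \iota_j$ for each $j \in J$, a $\cir$-equivariant $\mathbb{E}_\infty$-ring map $\THH(\Sigma^\infty_+ W_{2i_j}) \to R$, and apply \Cref{WilsonS1unobstruct} to obtain a $\cir$-equivariant $\mathbb{E}_\infty$-ring map $g_j : \Sigma^\infty_+ W_{2i_j} \to R$ (with $\Sigma^\infty_+ W_{2i_j}$ given trivial $\cir$-action) and an identification $g_j \circ \pi_j \simeq f_j$. The universal property of the coproduct $\Sigma^\infty_+ W \simeq \bigotimes_{j} \Sigma^\infty_+ W_{2i_j}$ in $\cir$-equivariant $\mathbb{E}_\infty$-rings --- valid for an arbitrary index set $J$, since mapping out of a coproduct computes the product of the mapping spaces --- then assembles the $g_j$ into a single $\cir$-equivariant $\mathbb{E}_\infty$-ring map $g : \Sigma^\infty_+ W \to R$ with $g \circ \kappa_j \simeq g_j$ for every $j$.

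Finally I would check that $g \circ \pi \simeq f$ as maps $\THH(\Sigma^\infty_+ W) \to R$. Since $\THH(\Sigma^\infty_+ W) \simeq \bigotimes_j \THH(\Sigma^\infty_+ W_{2i_j})$ is a coproduct, it suffices to verify this after precomposing with each $\iota_j$, and there one computes
\[
  g \circ \pi \circ \iota_j \simeq g \circ \kappa_j \circ \pi_j \simeq g_j \circ \pi_j \simeq f_j \simeq f \circ \iota_j,
\]
using in turn the compatibility of $\pi$ with the coproduct decompositions, the defining property of $g$, the output of \Cref{WilsonS1unobstruct}, and the definition of $f_j$. This gives the required factorization of $f$ through $\pi$.

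Since all of the substantive input is already packaged in \Cref{WilsonS1unobstruct}, I do not expect any genuine obstacle; the points demanding a little care are purely formal --- that $\THH$, in its $\cir$-equivariant incarnation, really sends $\Sigma^\infty_+ W \simeq \bigotimes_j \Sigma^\infty_+ W_{2i_j}$ to the corresponding coproduct of $\cir$-equivariant $\mathbb{E}_\infty$-rings, that the projection $\pi$ is natural and hence respects this decomposition, and that the coproduct universal properties used both to build $g$ and to check $g \circ \pi \simeq f$ remain available when the indexing set $J$ is infinite.
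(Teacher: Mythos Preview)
Your proposal is correct and follows exactly the same approach as the paper: reduce to the single-Wilson-space case of \Cref{WilsonS1unobstruct} using that both $\Sigma^\infty_+$ and $\THH(\Sigma^\infty_+(-))$ preserve coproducts of $\mathbb{E}_\infty$-spaces. The paper compresses all of this into one sentence, whereas you have carefully unpacked the coproduct bookkeeping; nothing is missing or different in substance.
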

\begin{proof}
Since $\THH(\Sigma^{\infty}_+\--)$ and $\Sigma^{\infty}_+$ both preserve coproducts of $\mathbb{E}_{\infty}$-spaces, this follows immediately from the preceding theorem.
\end{proof}

\subsection{A sufficient supply of cyclotomic bases} \label{cycbaseSection}

In order to understand cyclotomic structure on $\THH$ of discrete rings, it has proven extremely useful to understand cyclotomic structure on Hochschild homology relative to $\Sigma^{\infty}_+ \mathbb{N}$ \cite{LiuWang, Zpn}.  As noted for example in \cite{LawsonTate}, it is rarely the case that Hochschild homology relative to an $\mathbb{E}_{\infty}$-ring $A$ carries cyclotomic structure.  In such cases, we say that $A$ is a cyclotomic base:

\begin{definition} \label{cycbasedef}
A \emph{cyclotomic base} is an $\mathbb{E}_{\infty}$-ring $A$ together with the data of a commutative diagram of $\cir$-equivariant $\mathbb{E}_{\infty}$-rings
\[
\begin{tikzcd}
\THH(A) \arrow{r}{\varphi} \arrow{d}{\pi} &  \THH(A)^{\tate\Cp} \arrow{d}{\pi^{\tate\Cp}} \\
A \arrow{r} & A^{\tate\Cp},
\end{tikzcd}
\]
where $\pi:\THH(A) \to A$ is the canonical projection to $A$ with trivial action.
\end{definition}

We warn the reader that the above definition is not quite the same as the one adopted in \cite[Appendix A]{BeilinsonSquare}, but has been discussed in \cite[\textsection 11.1]{BMS} and \cite[Remark 12.1]{LawsonTate}.  If $A$ is a cyclotomic base, and $R$ is an $\mathbb{E}_{\infty}$-$A$-algebra, then we may use the formula
\[\THH(R/A) \simeq \THH(R) \otimes_{\THH(A)} A\]
to define the structure of a cyclotomic $\mathbb{E}_{\infty}$-ring on $\THH(R/A)$.  By definition, this cyclotomic $\mathbb{E}_{\infty}$-ring structure is compatible with the map $\THH(R) \to \THH(R/A)$.

To date, practically all known examples of cyclotomic bases have been built out of $A=\mathbb{S}$ and $A=\Sigma^{\infty}_+ \mathbb{N}$. In this section, our goal will be to use Wilson spaces to construct many additional examples of cyclotomic bases, by obstruction theory.

\begin{definition}
  \label{wi--strongly-even}
A connective $\mathbb{E}_{\infty}$-ring $A$ is said to be \emph{strongly even} if:
\begin{enumerate}
\item There exists a homotopy commutative ring map $\MU \to A$, such that, as an $\MU$-module, $A$ splits as a direct sum of even suspensions of $\MU$.
\item In the category of $\mathbb{E}_{\infty}$-ring spectra, $A$ admits an even cell decomposition.
\end{enumerate}
\end{definition}

For each $i>0$,  the $\mathbb{E}_{\infty}$-ring $\Sigma^{\infty}_+W_{2i}$ admits an even $\mathbb{E}_{\infty}$ cell decomposition, since $\Sigma^{2i}\MU$ admits an even cell decomposition as a spectrum.  However, for no $i$ is $\Sigma^{\infty}_+ W_{2i}$ strongly even.  Indeed, as a spectrum $\Sigma^{\infty}_+W_{2i}$ contains a unit summand equivalent to $\mathbb{S}$, and so $\Sigma^{\infty}_+W_{2i}$ cannot be a wedge of even suspensions of $\MU$.  At the end of this section, we will prove that at least one strongly even $\mathbb{E}_{\infty}$-ring exists, by direct construction.  First, we list some of the properties enjoyed by strongly even $\mathbb{E}_{\infty}$-rings:

\begin{proposition} \label{MWeff}
Suppose $A$ is strongly even.  Then the unit map $\mathbb{S} \to A$ is evenly free.
\end{proposition}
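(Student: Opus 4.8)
The plan is to deduce both assertions from the already-established \Cref{ev--evenly-free-eff}, which records that an evenly free map is eff and that its $p$-completion is discretely $p$-completely eff (the latter via \Cref{ev--pcpl-free}). So the whole task reduces to proving that the unit map $\mathbb{S} \to A$ is evenly free in the sense of \Cref{ev--evenly-free}; that is, that for every nonzero even $\E_\infty$-ring $C$, the relative tensor product $A \otimes C$ is, as a $C$-module, a nonzero direct sum of even shifts of $C$.

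To see this, I would invoke condition (1) of \Cref{wi--strongly-even}: $A$ carries an $\MU$-module structure refining a ring map $\MU \to A$, and as an $\MU$-module it splits as $A \simeq \bigoplus_{\alpha \in S}\Sigma^{2n_\alpha}\MU$ for some nonempty set $S$ (nonempty since $A \ne 0$; connectivity of $A$ and $\MU$ forces $n_\alpha \ge 0$, though the signs play no role in what follows). Tensoring this splitting with $C$ over $\mathbb{S}$ and using the identification $A \otimes_\MU(\MU \otimes C) \simeq A \otimes C$ gives
\[
  A \otimes C \simeq \bigoplus_{\alpha \in S}\Sigma^{2n_\alpha}(\MU \otimes C)
\]
as modules over $\MU \otimes C$, hence in particular as $C$-modules via restriction along $C \to \MU \otimes C$. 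Now \Cref{ev--MU-evenly-free} asserts that $C \to C \otimes \MU \simeq \MU \otimes C$ is itself evenly free, so, testing against the even ring $C$ itself, $\MU \otimes C$ is a nonzero direct sum of even shifts of $C$ as a $C$-module. Substituting into the display exhibits $A \otimes C$ as a direct sum of even shifts of $C$, nonzero since $S \ne \emptyset$. Hence $\mathbb{S} \to A$ is evenly free, and \Cref{ev--evenly-free-eff} finishes the proof.

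I do not expect a real obstacle here; the one point requiring care is the bookkeeping ensuring that the $\MU$-module splitting of $A$ base-changes to a genuine splitting of $C$-\emph{modules}, since ``evenly free'' constrains the $C$-module structure and not merely the homotopy groups. It is worth remarking that neither condition (2) of \Cref{wi--strongly-even} nor the $\BP$-polynomiality refinement in condition (1) is used for this proposition; those hypotheses enter only later, when constructing cyclotomic base structures on $\THH$ relative to strongly even rings.
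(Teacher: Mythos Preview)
Your argument is correct and follows the same route as the paper: both show that for any even $C$ the $C$-module $A\otimes C$ is a direct sum of even shifts of $C$, by combining the splitting of $A$ into even shifts of $\MU$ with the fact that $\MU\otimes C$ is free on even shifts of $C$.

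The one point the paper handles more carefully is your passage through $A\otimes_{\MU}(\MU\otimes C)$. Condition~(1) of \Cref{wi--strongly-even} only posits a \emph{homotopy commutative} ring map $\MU\to A$, which does not by itself furnish a coherent $\MU$-module structure on $A$ suitable for forming relative tensor products. The paper closes this gap by invoking \cite[Theorem~1.2]{ChadwickMandell} to upgrade the map to an $\E_2$-ring map $\MU\to A$, after which $(B\otimes\MU)\otimes_{\MU}A$ is well-defined (even as an $\E_1$-ring). You could cite the same result, but in fact you do not need it: the hypothesis gives at least a spectrum-level equivalence $A\simeq\bigoplus_{\alpha}\Sigma^{2n_\alpha}\MU$, and applying the colimit-preserving functor $(-)\otimes C:\Spt\to\Mod_C$ directly yields $A\otimes C\simeq\bigoplus_{\alpha}\Sigma^{2n_\alpha}(\MU\otimes C)$ as $C$-modules, without ever forming $\otimes_{\MU}$ or needing the $\MU\otimes C$-module refinement. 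With that small rephrasing your proof is complete and slightly more elementary than the paper's.
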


\begin{proof}
By \cite[Theorem 1.2]{ChadwickMandell}, we may find an $\mathbb{E}_2$-ring map $\MU \to A$ which makes $A$ into a free $\MU$-module.  Now, suppose that $B$ is any even $\mathbb{E}_{\infty}$-ring.  We calculate $B \otimes A \simeq (B \otimes \MU) \otimes_{\MU} A$, where the equivalence is as $\mathbb{E}_1$-ring spectra. Since $B \otimes \MU$ is a free $B$-module, and $A$ is a free $\MU$ module, we learn that $B \otimes A$ is a free $B$-module, on even suspensions of $B$.
\end{proof}



\begin{proposition}
  \label{StronglyEvenUnobstructed}
  Suppose that $A$ is strongly even, and that $R$ is an even $\mathbb{E}_{\infty}$-ring with $\cir$-action.  Then any $\cir$-equivariant $\mathbb{E}_{\infty}$-ring map $\THH(A) \to R$ factors through the projection $\pi:\THH(A) \to A$.
\end{proposition}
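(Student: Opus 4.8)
The plan is to imitate the proof of \Cref{WilsonS1unobstruct}, replacing the even $\E_\infty$-cell decomposition of $\Sigma^\infty_+W_{2i}$ from \Cref{Wilson-ring-cell-decomposition} by an even $\E_\infty$-cell decomposition of $A$, which exists by part~(2) of the definition of strongly even (part~(1) plays no role here). Thus I would write $A$ as a (possibly transfinite) filtered colimit $A \simeq \colim_{\alpha} A_{(\alpha)}$ of $\E_\infty$-rings, starting from $A_{(-1)} = \S$, taking colimits at limit ordinals, and with each successor map $A_{(\alpha)} \to A_{(\alpha+1)}$ one of the two basic even cell attachments: a pushout of $\E_\infty$-rings along $\mathrm{Free}_{\E_\infty}(0) \to \mathrm{Free}_{\E_\infty}\big(\bigoplus_j S^{2i_j}\big)$, which freely adjoins even-degree generators, or a pushout along $\mathrm{Free}_{\E_\infty}\big(\bigoplus_j S^{2k_j+1}\big) \to \mathrm{Free}_{\E_\infty}(0) = \S$, which cones off a family of odd-degree classes. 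Because $\THH(-)$ preserves all colimits of $\E_\infty$-rings and is symmetric monoidal, $\THH(A) \simeq \colim_{\alpha} \THH(A_{(\alpha)})$ as $\cir$-equivariant $\E_\infty$-rings, compatibly with the projections to $A$ and $A_{(\alpha)}$; in particular the given $\cir$-equivariant $\E_\infty$-ring map $g\colon \THH(A) \to R$ restricts to maps $g_{\alpha}\colon \THH(A_{(\alpha)}) \to R$. I would then construct, by transfinite induction, a compatible system of $\cir$-equivariant $\E_\infty$-ring maps $h_{\alpha}\colon A_{(\alpha)} \to R$ equipped with homotopies $h_{\alpha}\circ\pi \simeq g_{\alpha}$, and pass to the colimit to obtain the desired factorization of $g$ through $\pi\colon \THH(A) \to A$.

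The two kinds of successor step are handled exactly as in \Cref{WilsonS1unobstruct}, using that $R$ even forces the homotopy fixed point spectral sequence for $R^{\h\cir}$ to collapse. For a free-generator cell, extending $h_{\alpha}$ over $A_{(\alpha)} \to A_{(\alpha)} \otimes \mathrm{Free}_{\E_\infty}(\bigoplus_j S^{2i_j})$ amounts to choosing classes in $\pi_{2i_j}(R^{\h\cir})$ whose images in $\pi_{2i_j}(R)$ are the classes prescribed by $g_{\alpha+1}$; such lifts exist since $\pi_*(R^{\h\cir}) \to \pi_*(R)$ is surjective — the same input used in the base case of \Cref{WilsonS1unobstruct}. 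For a relation cell, extending $h_{\alpha}$ over $A_{(\alpha)} \to A_{(\alpha)} \otimes_{\mathrm{Free}_{\E_\infty}(\bigoplus_j S^{2k_j+1})} \S$ amounts to producing a $\cir$-equivariant nullhomotopy of a map $\bigoplus_j S^{2k_j+1} \to R^{\h\cir}$ refining the nullhomotopy of its underlying non-equivariant map supplied by $g_{\alpha+1}$; since $\pi_{\mathrm{odd}}(R^{\h\cir}) = 0$, the relevant space of $\cir$-equivariant nullhomotopies is nonempty and connected, so the required compatible choice exists, exactly as in the inductive step of \Cref{WilsonS1unobstruct}.

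The part that I expect to require real care — and which is glossed in \Cref{WilsonS1unobstruct} — is the coherence of the construction: one must assemble the $h_{\alpha}$ and the witnessing homotopies across the whole filtration, not merely stagewise. The clean way to organize this is to let $\mathcal{F}_{\alpha}$ be the space of factorizations of $g_{\alpha}$ through $\pi$, i.e. the fiber over $g_{\alpha}$ of $\pi^{*}\colon \Map^{\cir}_{\CAlg}(A_{(\alpha)}, R) \to \Map^{\cir}_{\CAlg}(\THH(A_{(\alpha)}), R)$. Using $\THH(\mathrm{Free}_{\E_\infty}(X)) \simeq \mathrm{Free}_{\E_\infty}(X\otimes\cir_{+})$ together with the identifications $\Map^{\cir}_{\CAlg}(\mathrm{Free}_{\E_\infty}(X), R) \simeq \Map_{\Spt}(X, R^{\h\cir})$ and $\Map^{\cir}_{\CAlg}(\mathrm{Free}_{\E_\infty}(X\otimes\cir_{+}), R) \simeq \Map_{\Spt}(X, R)$, and the expression of each $\Map^{\cir}_{\CAlg}(A_{(\alpha)},R)$ as an iterated pullback along the cell structure, one checks that every $\mathcal{F}_{\alpha}$ is nonempty, that each transition $\mathcal{F}_{\alpha+1} \to \mathcal{F}_{\alpha}$ is surjective on path components with nonempty homotopy fibers, and that $\mathcal{F}_{\lambda} \simeq \lim_{\alpha<\lambda} \mathcal{F}_{\alpha}$ at limit ordinals. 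A standard Mittag--Leffler / $\lim^{1}$ argument then gives $\mathcal{F}_{A} = \lim_{\alpha} \mathcal{F}_{\alpha} \neq \emptyset$, which is the assertion. All of the nonemptiness and connectivity checks reduce to the vanishing of the odd homotopy of $R$ and of $R^{\h\cir}$, so the only genuine labor is the bookkeeping needed to make the transfinite induction precise.
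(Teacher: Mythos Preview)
Your proposal is correct and follows essentially the same approach as the paper, which simply says the result follows from the assumed even $\E_\infty$-cell decomposition of $A$ ``exactly as in the proof of \Cref{WilsonS1unobstruct}.'' You have supplied considerably more detail on the transfinite induction and coherence bookkeeping than the paper does, but the underlying idea is identical.
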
 

\begin{proof}
  This follows from the assumed even cell decomposition, by arguments analogous to the proof of \cref{WilsonS1unobstruct}. More precisely, the even cell decomposition ensures that the space of non-equivariant $\mathbb{E}_{\infty}$-ring maps from $A$ to $R$ has even homotopy groups.  It follows that the homotopy fixed point spectral sequence computing the space of $\cir$-equivariant $\mathbb{E}_{\infty}$-ring maps collapses, and in particular every non-equivariant $\mathbb{E}_{\infty}$-ring map from $A$ to $R$ admits an $\cir$-equivariant refinement. By the universal property of $\THH$, non-equivariant $\mathbb{E}_{\infty}$-ring maps from $A$ to $R$ correspond to $\cir$-equivariant $\mathbb{E}_{\infty}$-ring maps from $\THH(A) \to R$. Under this correspondence, a $\cir$-equivariant refinement corresponds to a $\cir$-equivariant factorization through the projection $\THH(A) \to A$.  
\end{proof}

\begin{lemma} \label{p-torsion-tate-even}
Suppose that $A$ is an even $\mathbb{E}_{\infty}$-ring such that $\pi_*A$ has no $p$-torsion.  Then $A^{\tate\Cp}$ is even, where $A^{\tate\Cp}$ denotes the Tate construction for the trivial $\Cp$-action on $A$.
\end{lemma}

\begin{proof}
The $\Cp$ Tate spectral sequence has even $\mathrm{E}_2$-page.
\end{proof}

\begin{theorem}
  \label{wi--cyc--strongly-even-base}
  Let $A$ be a strongly even $\E_\infty$-ring. Then $A$ admits the structure of a cyclotomic base.
\end{theorem}

\begin{proof}
  We need to produce a commutative diagram of $\cir$-equivariant $\mathbb{E}_{\infty}$-rings
  \[
    \begin{tikzcd}
      \THH(A) \arrow{d} \arrow{r} & \THH(A)^{\tate\Cp} \arrow{d} \\
      A \arrow[dashed]{r} & A^{\tate\Cp}.
    \end{tikzcd}
  \]
  Since $A$ splits as a direct sum of even suspensions of $\MU$, $A^{\tate\Cp}$ is even by \Cref{p-torsion-tate-even}. The claim then follows from \Cref{StronglyEvenUnobstructed}.
\end{proof}

\begin{theorem} \label{finalAppThm}
  Let $A$ be a cyclotomic base such that $A$ is even and $\pi_*(A)$ has no $p$-torsion, and let $W$ be a weak product of Wilson spaces. Then $A \otimes \Sigma^{\infty}_+ W$ admits the structure of a cyclotomic base, with a map of cyclotomic bases $A \to A \otimes \Sigma^{\infty}_+ W$.
\end{theorem}

\begin{proof}
  We need to produce a commutative diagram of $\cir$-equivariant $\mathbb{E}_{\infty}$-rings
  \[
    \begin{tikzcd}
      \THH(A \otimes \Sigma^{\infty}_+ W) \arrow{d} \arrow{r} & \THH(A \otimes \Sigma^{\infty}_+ W)^{\tate\Cp} \arrow{d} \\
      A \otimes \Sigma^{\infty}_+ W \arrow[dashed]{r} & (A \otimes \Sigma^{\infty}_+ W)^{\tate\Cp},
    \end{tikzcd}
  \]
  compatible with the given such diagram for $A$ itself. Using the equivalence $\THH(A \otimes \Sigma^\infty_+ W) \iso \THH(A) \otimes \THH(\Sigma^\infty_+ W)$, we see that it suffices to produce a commutative diagram of $\cir$-equivariant $\E_\infty$-rings
    \[
    \begin{tikzcd}
      \THH(\Sigma^{\infty}_+ W) \arrow{d} \arrow{r} & \THH(A \otimes \Sigma^{\infty}_+ W)^{\tate\Cp} \arrow{d} \\
      \Sigma^{\infty}_+ W \arrow[dashed]{r} & (A \otimes \Sigma^{\infty}_+ W)^{\tate\Cp}.
    \end{tikzcd}
  \]
  Now, $A \otimes \Sigma^{\infty}_+W$ splits as a direct sum of even suspensions of $A$ by \Cref{WilsonThesisTheorem} and \Cref{WilsonZeroInvertibleGenerator}. Hence, $\pi_*(A \otimes \Sigma^{\infty}_+W)$ is even and has no $p$-torsion. It follows that $(A \otimes \Sigma^{\infty}_+ W)^{\tate\Cp}$ is even, so we may finish by applying \Cref{WilsonUnobstructed}.
\end{proof}

\begin{remark}
  \label{wi--cyc--wilson-base-addendum}
  \begin{enumerate}[leftmargin=*]
  \item Let $A$ be a strongly even $\E_\infty$-ring. Then $A$ is even and has no $p$-power torsion as it splits as a direct sum of even suspensions of $\MU$. That is, it satisfies the hypotheses of \cref{finalAppThm} (for any choice of cyclotomic base structure on $A$, guaranteed to exist by \cref{wi--cyc--strongly-even-base}).
  \item Let $A$ and $W$ be as in \cref{finalAppThm} and let $A \to B$ be any map of cyclotomic bases. Then $B \otimes \Sigma^\infty_+ W$ admits the structure of a cyclotomic base, with a map of cyclotomic bases $B \to B \otimes \Sigma^\infty_+ W$: indeed, we may choose a cyclotomic base structure on $A \otimes \Sigma^\infty_+ W$ as in \cref{finalAppThm} and then take the induced cyclotomic base structure on its pushout along the map $A \to B$.
  \end{enumerate}
\end{remark}

Finally, we prove that at least one strongly even $\mathbb{E}_{\infty}$-ring exists, by direct construction:

\begin{definition}
  \label{wi--cyc--MW}
Fix any map of spectra $\gamma:\MU \to \mathrm{ku}$ that is a surjection on $\pi_*$.
We denote by $\MW$ the $\mathbb{E}_{\infty}$-ring that is the Thom spectrum associated to the map 
\[\Omega^{\infty} \Sigma^2 \gamma:W_2 \to \mathrm{BU}\]
\end{definition}

\begin{theorem}
The $\mathbb{E}_{\infty}$-ring $\MW$ is strongly even.
\end{theorem}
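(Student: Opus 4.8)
The goal is to verify the two conditions of \Cref{wi--strongly-even} for $A=\MW$. Regard $\MW$ as the Thom $\mathbb{E}_{\infty}$-ring of the $\mathbb{E}_{\infty}$-map $\phi\colon W_2\xrightarrow{\ g\ }\mathrm{BU}\xrightarrow{\ J\ }B\mathrm{GL}_1(\mathbb{S})$ with $g=\Omega^{\infty}\Sigma^2\gamma$, i.e.\ as the Thom spectrum of a virtual complex vector bundle on $W_2$ pulled back from $\mathrm{BU}$. Because that bundle is complex it is canonically oriented by every complex-orientable $\mathbb{E}_{\infty}$-ring $R$, so the Thom isomorphism gives $R\wedge\MW\simeq R\wedge\Sigma^{\infty}_+W_2$; when $R$ is moreover even, Wilson's theorem (\Cref{WilsonThesisTheorem}, applied to $R$) identifies $R_*W_2$ as a polynomial $R_*$-algebra on even generators, hence so is $R_*\MW$. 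I will use this for $R=H\mathbb{Z}$, $H\mathbb{F}_p$, $\MU$ and $\BP$. I would take $\gamma$ to be the canonical complex orientation $\MU\to\mathrm{ku}$, which in addition is surjective on $\pi_*$ and on $H_*(-;\mathbb{F}_p)$; then $\MW$ is connective with $\pi_0\MW=\mathbb{Z}$.

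For condition (1), first produce the ring map. The composite $\mathbb{CP}^{\infty}\to W_2\xrightarrow{\phi}B\mathrm{GL}_1(\mathbb{S})$ (using the canonical map $\mathbb{CP}^{\infty}\to\Omega^{\infty}\Sigma^2\MU$) restricts on $\mathbb{CP}^1$ to the standard class, since $\gamma$ is an isomorphism on $\pi_0$; hence the induced map $\Sigma^{-2}\Sigma^{\infty}\mathbb{CP}^{\infty}\to\MW$ is a complex orientation of $\MW$, realized (classically, and via \cite[Theorem 1.2]{ChadwickMandell} for the $\mathbb{E}_2$-structure) by a homotopy-commutative ring map $u_0\colon\MU\to\MW$. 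Thom-functoriality applied to the $\mathbb{E}_{\infty}$-map $g$ gives a ring map $q=M(g)\colon\MW\to\MU$, and $q\circ u_0$ is a homotopy-commutative ring endomorphism of $\MU$; by Quillen's description such endomorphisms form a group under reparametrization of the universal formal group law, and $q\circ u_0$ has leading term $\pm x$ (it is an isomorphism on $\pi_2$, as one checks on $H_2$), so it is invertible. Replacing $u_0$ by $u:=u_0\circ(q\circ u_0)^{-1}$, we may assume $q\circ u=\mathrm{id}_{\MU}$; thus $u$ exhibits $\MU$ as a retract of $\MW$ in $\mathbb{E}_{\infty}$-rings.

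Now I claim $\MW$, viewed as an $\MU$-module via $u$, is a direct sum of even suspensions of $\MU$. By base change along $\MU\to H\mathbb{F}_p$, $\pi_*(H\mathbb{F}_p\wedge_{\MU}\MW)\cong\Tor^{H_*(\MU;\mathbb{F}_p)}_{*}\big(\mathbb{F}_p,H_*(\MW;\mathbb{F}_p)\big)$; the algebra map $H_*(u;\mathbb{F}_p)\colon H_*(\MU;\mathbb{F}_p)\to H_*(\MW;\mathbb{F}_p)$ is split injective with splitting $H_*(q;\mathbb{F}_p)=g_*$, a surjection of polynomial $\mathbb{F}_p$-algebras, so $H_*(\MW;\mathbb{F}_p)$ is free over $H_*(\MU;\mathbb{F}_p)$, the higher $\Tor$ vanishes, and $\pi_*(H\mathbb{F}_p\wedge_{\MU}\MW)$ is concentrated in even degrees — likewise with $\mathbb{Q}$. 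A standard argument for connective modules over the connective ring $\MU$ (lift a basis of $\pi_*(H\mathbb{Z}\wedge_{\MU}\MW)$ to a map from a free $\MU$-module on even generators and check it is an equivalence) then gives $\MW\simeq\bigoplus_{\alpha}\Sigma^{2n_{\alpha}}\MU$ as $\MU$-modules. In particular $\pi_*\MW$ is a free $\pi_*\MU$-module; since $q\circ u=\mathrm{id}$ and $\pi_*(H\mathbb{Z}\wedge_{\MU}\MW)=\mathbb{Z}\otimes_{\pi_*\MU}\pi_*\MW$ is a polynomial $\mathbb{Z}$-algebra on even generators (Thom isomorphism computation), a graded Nakayama argument upgrades this to: $\pi_*\MW$ is a polynomial $\pi_*\MU$-algebra, hence a polynomial $\mathbb{Z}$-algebra on even generators. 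Localizing, $\pi_*\MW_{(p)}$ is polynomial over $\pi_*\MU_{(p)}$, which is polynomial over $\pi_*\BP$; so $\pi_*\MW_{(p)}$ is a polynomial $\pi_*\BP$-algebra, completing condition (1). For condition (2), I would run the cotangent-complex argument of the case $i=0$ of \Cref{Wilson-ring-cell-decomposition}: the unit $\mathbb{S}\to\MW$ has $1$-connective cofiber, and since $\MW$ is a Thom $\mathbb{E}_{\infty}$-ring over the $1$-connected space $W_2$ whose cells are all even, $L^{\mathbb{E}_{\infty}}_{\MW/\mathbb{S}}$ admits a cell structure over $\MW$ with cells $\MW\otimes Z_j$, each $Z_j$ a sum of even spheres; hence $\MW$ is built from $\mathbb{S}$ by attaching free $\mathbb{E}_{\infty}$-cells on even classes and coning off odd-degree classes, an even $\mathbb{E}_{\infty}$-cell decomposition.

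The main obstacle is the interplay, in condition (1), between the orientation $u$ and the Thom-functoriality retraction $q$: one must first normalize $q\circ u=\mathrm{id}_{\MU}$, and then leverage it to pass from the easy external Thom-isomorphism computation of $\MU_*\MW$ (where $\MU$ acts through a smash factor) to the genuinely $\MU$-linear assertion that $\MW$ is a free $\MU$-module via $u$ — the naive section $\MW\to\MU\wedge\MW$ being only a map of spectra, one is forced through the relative homology of $\MW$ over $\MU$ and the minimal $\MU$-module cell structure.
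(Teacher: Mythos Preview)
Your approach is correct but genuinely different from the paper's, and the comparison is instructive.

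For condition (1), the paper produces an explicit $\mathbb{E}_2$-section of $g=\Omega^{\infty}\Sigma^2\gamma\colon W_2\to\mathrm{BU}$: one observes that any map $\mathrm{BSU}\to\mathrm{BSU}$ lifts through $\Omega^{\infty}\Sigma^4\gamma\colon W_4\to\mathrm{BSU}$ (the obstruction lies in maps to a space with only odd homotopy), takes $\Omega^2$, and obtains a double-loop splitting $W_2\simeq\mathrm{BU}\times F$ with $F=\Omega^{\infty}\mathrm{fib}(\Sigma^2\gamma)$. Thomifying gives $\MW\simeq_{\mathbb{E}_2}\MU\otimes\Sigma^{\infty}_+F$ directly, so one only needs $\MU_*F$ free even (a submodule of $H_*(W_2;\mathbb{Z})$) and $\BP_*F$ polynomial (by Wilson's \cite[Theorem~6.2]{WilsonThesis}, since $F$ is a retract of $W_2$). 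Your route instead builds the orientation $u$ abstractly, normalizes $q\circ u=\mathrm{id}$, and deduces freeness through a Tor computation and a minimal-cell argument; this avoids constructing the section but trades geometry for homological algebra. Your intermediate claim that $\pi_*\MW$ is polynomial over $\pi_*\MU$ is actually slightly stronger than what the paper proves. For condition (2), the paper simply Thomifies $\Omega^{\infty}\Sigma^2(-)$ applied to an even cell decomposition of the spectrum $\MU$, which is more direct than your cotangent-complex route (though both are valid).

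Two small points to tighten: the phrase ``retract of $\MW$ in $\mathbb{E}_{\infty}$-rings'' is an overclaim, since your normalized $u$ is only homotopy commutative (which is all the definition requires); and the step ``split injection of polynomial $\mathbb{F}_p$-algebras $\Rightarrow$ free'' deserves a word---it follows because $Q(u_*)$ is split injective on indecomposables, so the images $u_*(a_i)$ of polynomial generators remain indecomposable and can be extended to a full polynomial generating set for $H_*(\MW;\mathbb{F}_p)$.
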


\begin{proof}
By Thomifying $\Omega^{\infty} \Sigma^2(\--)$, applied to an even cell decomposition of the spectrum $\MU$, we see that $\MW$ has an even cell decomposition as an $\mathbb{E}_{\infty}$-ring spectrum. It remains to check that there exists a homotopy commutative ring homomorphism $\MU \to \MW$ that makes $\MW$ a free $\MU$-module concentrated in even degrees.

First, we claim that the infinite loop map $\Omega^{\infty} \Sigma^2 \gamma:W_2 \to \mathrm{BU}$ has a double loop map section.  To construct this section, we may take $\Omega^2$ of a section of the pointed space map $\Omega^{\infty}\Sigma^4\gamma:W_4 \to \mathrm{BSU}$.  To see that the latter section exists, we note that \emph{any} map $\mathrm{BSU} \to \mathrm{BSU}$ lifts through $\Omega^{\infty} \Sigma^4 \gamma$.  This is because the obstruction to such a lift is a map from $\mathrm{BSU}$ to a space that, by assumption, has only odd homotopy groups.

Now, the above section induces a splitting of $W_2$, as a double loop space, into the product of $\mathrm{BU}$ and $\mathrm{fib}(\Omega^{\infty} \Sigma^2 \gamma)$.  It follows that $\MW$ is, as an $\mathbb{E}_2$-ring, the tensor product of $\MU$ and $ \Sigma^{\infty}_+\mathrm{fib}(\Omega^{\infty}\Sigma^2 \gamma)$. It therefore suffices to prove that $\MU_*\mathrm{fib}(\Omega^{\infty}\Sigma^2 \gamma)$ is a free $\MU_*$-module concentrated in even degrees.  By the Atiyah-–Hirzebruch spectral sequence, it suffices to check that $\mathrm{H}_*\left(\mathrm{fib}(\Omega^{\infty}\Sigma^2 \gamma);\mathbb{Z}\right)$ is a free $\mathbb{Z}$-module concentrated in even degrees.  This is a submodule of $H_*\left(W_2;\mathbb{Z}\right)$, by the above double loop space splitting, and we finish by noting that submodules of free $\mathbb{Z}$-modules are free $\mathbb{Z}$-modules. \qedhere

\end{proof}

\begin{remark}
Though we will not need it, we note that results of Steve Wilson imply that $\pi_*\MW_{(p)}$ is a polynomial $\BP_*$-algebra. To see this, first observe that $\mathrm{fib}(\Omega^{\infty} \Sigma^2 \gamma) \simeq \Omega^2\mathrm{fib}(\Omega^{\infty} \Sigma^4 \gamma)$. Also, $\mathrm{fib}(\Omega^{\infty} \Sigma^4\gamma)$ has finitely generated, free, and even homotopy and $\mathbb{Z}_{(p)}$-homology groups (because is a pointed space retract of $W_4=\Omega^{\infty} \Sigma^4 \MU$).  Reading \cite[Theorem 6.2]{WilsonII}, we learn that $\mathrm{fib}(\Omega^{\infty} \Sigma^4 \gamma) \simeq \prod_{i} Y_{2k_i}$ is a product of spaces that Wilson calls $Y_{2k_i}$.  Furthermore, for each integer $j>0$, only finitely many $i\le j$ appear in the product, and all other terms in the product are $2j$-connective. It then suffices to observe that each $\mathrm{BP}_*\Omega^2 Y_{k_i}$ is a polynomial $\mathrm{BP}_*$-algebra, which follows by combining \cite[Corollary 5.2]{WilsonII} and \cite[Corollary 6.8]{WilsonII}.
\end{remark}

\section{The motivic filtration}

In this section, we define and prove basic properties of the motivic filtrations on $\THH(R)$, $\TC^{-}(R)$, $\TP(R)$, and $\cpl{\TC(R)}_p$, whenever $R$ is a well-behaved commutative ring spectrum.  We begin by specifying that well-behaved means \emph{chromatically quasisyntomic}, which is meant to be a generalization of the main definition of \cite[\textsection 4]{BMS} and \cite[Appendix C]{BL} to the setting of not necessarily discrete $\E_\infty$-ring spectra.


\subsection{Chromatically quasisyntomic $\E_\infty$-rings}
\label{mot--xq}

Let us first recall the notion of quasisyntomicity in ordinary algebra, introduced by Bhatt--Morrow--Scholze \cite[\textsection 4]{BMS} (though we will use some terminology slightly differently, due to our interest in the integral setting, in contrast to their focus on the $p$-complete setting).

\begin{definition}
  \label{mot--xq--classical-quasilci}
  We say that a map of commutative rings $A \to B$ is:
  \begin{enumerate}
  \item \emph{quasismooth} (resp. \emph{$p$-quasismooth}) if it is flat (resp. $p$-completely flat) and $\smash{\L^\alg_{B/A}}$ has Tor-amplitude in $[0,0]$ (resp. $p$-complete Tor-amplitude in $[0,0]$) over $B$;
  \item \emph{quasi-lci} (resp. \emph{$p$-quasi-lci}) if $\smash{\L^\alg_{B/A}}$ has Tor-amplitude in $[0,1]$ (resp. $p$-complete Tor-amplitude in $[0,1]$) over $B$.
  \item a \emph{quasiregular quotient} (resp. \emph{$p$-quasiregular quotient}) if it is surjective and quasi-lci (resp. $p$-quasi-lci).
  \end{enumerate}
\end{definition}

\begin{definition}
  \label{mot--xq--classical-quasisyntomic}
  We say that a commutative ring $R$ is \emph{$p$-quasisyntomic} if $R$ has bounded $p$-power torsion and the unit map $\Z \to R$ is $p$-quasi-lci, and is \emph{integrally quasisyntomic} if $R$ has bounded $p$-power torsion for all primes $p$ and the unit map $\Z \to R$ is quasi-lci.
\end{definition}

\begin{proposition}
  \label{mot--xq--classical-quasiregular-equivalence}
  Let $S \to R$ be a quasiregular quotient (resp. $p$-quasiregular quotient) of commutative rings. Then $\smash{\L^\alg_{R/S}}$ has Tor-amplitude in $[1,1]$ (resp. $p$-complete Tor-amplitude in $[1,1]$) over $R$.
\end{proposition}

\begin{proof}
  By definition, $\L^\alg_{R/S}$ has Tor-amplitude in $[0,1]$ (resp. $p$-complete Tor-amplitude in $[0,1]$). Moreover, $\pi_0(\smash{\L^\alg_{R/S}}) \iso \Omega^1_{R/S} \iso 0$ by the surjectivity of $S \to R$, so $\smash{\L^\alg_{R/S}}$ is also $1$-connective. The claim follows.
\end{proof}

\begin{proposition}
  \label{mot--xq--classical-quasilci-permanence}
  Suppose given a commutative diagram of commutative rings
  \[
    \begin{tikzcd}
      &
      S \ar[dr, "h"] &
      \\
      k \ar[ur, "g"] \ar[rr, "f"] &
      &
      R
    \end{tikzcd}
  \]
  in which $g$ is ($p$-)quasismooth. Then $f$ is ($p$-)quasi-lci if and only if $h$ is ($p$-)quasi-lci.
\end{proposition}

\begin{proof}
  This follows from considering the transitivity fiber sequence
  \[
    R \otimes^\L_S \L^\alg_{S/k} \to \L^\alg_{R/k} \to \L^\alg_{R/S}. \qedhere
  \]
\end{proof}

\begin{remark}
  \label{mot--xq--classical-quasilci-factorization}
  Any map of commutative rings $k \to R$ may be factored by maps of commutative rings $k \to S \to R$ where $S$ is a polynomial $k$-algebra and $S \to R$ is surjective. By \cref{mot--xq--classical-quasilci-permanence}, if $k \to R$ is ($p$-)quasi-lci, then the map $S \to R$ is moreover a ($p$-)quasiregular quotient.
\end{remark}

We now move on to extend the above notions to the setting of even $\E_\infty$-rings.

\begin{definition}
  \label{mot--xq--even-quasilci}
  We say that a map of graded commutative rings\footnote{Recall that we mean commutative in the literal sense (\cref{ConventionsSection}\cref{in--con--grab}).} $A_* \to B_*$ is \emph{($p$-)quasismooth} or \emph{($p$-)quasi-lci} or a \emph{($p$-)quasiregular quotient}  if the map of underlying commutative rings $\bigoplus_{n \in \Z} A_n \to \bigoplus_{n \in \Z} B_n$ is so. We say that a map of even $\E_\infty$-rings $A \to B$ is \emph{($p$-)quasismooth} or \emph{($p$-)quasi-lci} or a \emph{($p$-)quasiregular quotient} if the induced map of graded commutative rings $\pi_{2*}(A) \to \pi_{2*}(B)$ is so.
\end{definition}

\begin{definition}
  \label{mot--xq--even-quasisyntomic}
  We say that an even $\E_\infty$-ring $R$ is \emph{$p$-quasisyntomic} if $R$ has bounded $p$-power torsion (\cref{ev--def--bdd-torsion}) and the unit map $\Z \to \pi_{2*}(R)$ is $p$-quasi-lci, and is \emph{integrally quasisyntomic} if $R$ has bounded $p$-power torsion for all primes $p$ and the unit map $\Z \to \pi_{2*}(R)$ is quasi-lci.
\end{definition}

Though we have defined the notion of ``quasi-lci'' for maps of even $\E_\infty$-rings purely at the level of homotopy groups, it has (at least in the connective setting) the following stronger characterization (\cref{mot--xq--even-polynomial-surjection}), almost parallel to the situation in classical algebra (\cref{mot--xq--classical-quasilci-factorization}).

\begin{definition}
  \label{mot--xq--ind-zariski}
  We say that a map of $\E_\infty$-rings $A \to B$ is a \emph{Zariski localization} if it isomorphic to one of the form $A \to \prod_{i=1}^n A[f_i^{-1}]$ for some $f_1,\ldots,f_n \in \pi_0(A)$, and we say that it is an \emph{ind--Zariski localization} if it can be obtained as a filtered colimit of Zariski localizations.
\end{definition}

\begin{lemma}
  \label{mot--xq--unit-generators}
  Let $A$ be an $\E_\infty$-ring. Then there exists a faithful ind--Zariski localization of $\E_\infty$-rings $A \to B$ such that $\pi_0(B)$ is generated by units as a commutative ring.
\end{lemma}

\begin{proof}
  For each finite subset $S \subseteq \pi_0(A)$ and each function $\sigma : S \to \{0,1\}$, define
  \[
    s_\sigma := \prod_{s \in S}(s - \sigma(s)) \in \pi_0(A).
  \]
  Then we may take
  \[
    B := \colim_{S \subseteq \pi_0(A)} \prod_{\sigma \in \{0,1\}^S} A[s_\sigma^{-1}]. \qedhere
  \]
\end{proof}

\begin{proposition}
  \label{mot--xq--even-polynomial-surjection}
  Any map of connective, even $\E_{\infty}$-rings $k \to R$ extends to a commutative diagram of $\E_\infty$-rings
  \[
    \begin{tikzcd}
      k \ar[r] \ar[d] &
      S \ar[d] \\
      R \ar[r] &
      R'
    \end{tikzcd}
  \]
  where $R \to R'$ is a faithful ind--Zariski localization, $S$ is connective and even, $\pi_*(S)$ is a localization of a polynomial $\pi_*(k)$-algebra, and $\pi_*(S) \to \pi_*(R')$ is surjective. Moreover:
  \begin{enumerate}
  \item \label{mot--xq--even-polynomial-surjection--quasilci}
    If the map $k \to R$ is ($p$-)quasi-lci, then the map $S \to R'$ is necessarily a ($p$-)quasiregular quotient, and conversely, if there is such a diagram where $S \to R'$ is a ($p$-)quasiregular quotient, then $k \to R$ is ($p$-)quasi-lci.
  \item \label{mot--xq--even-polynomial-surjection--cyclotomic}
    Suppose that $k$ is equipped with a cyclotomic base structure and that there exists a map of cyclotomic bases $A \to k$ where $A$ is even and $\pi_*(A)$ has no $p$-torsion. Then the map $k \to S$ may be taken to be a map of cyclotomic bases.
  \end{enumerate}
\end{proposition}

\begin{proof}
  By \cref{mot--xq--unit-generators}, we may choose $R \to R'$ to be a faithful ind--Zariski localization such that $\pi_0(R')$ is generated by units as a commutative ring. The existence of the remainder of the commutative diagram follows from results of \Cref{WilsonAppendix}: by \Cref{WilsonSurjectivity}, we may choose an $\E_\infty$-ring map $\Sigma^{\infty}_+W \to R'$ from the suspension spectrum of a weak product of Wilson spaces which is surjective on homotopy groups, and by \Cref{WilsonThesisTheorem,WilsonZeroInvertibleGenerator}, $S:=k\otimes \Sigma^{\infty}_+W$ is connective and even with homotopy groups a localization of a polynomial algebra over $\pi_*(k)$. In addition, if $k$ is a cyclotomic base as in \cref{mot--xq--even-polynomial-surjection--cyclotomic}, then there is a compatible cyclotomic base structure on $S$ (as just defined) by \cref{finalAppThm,wi--cyc--wilson-base-addendum}, proving \cref{mot--xq--even-polynomial-surjection--cyclotomic}. Finally, assertion \cref{mot--xq--even-polynomial-surjection--quasilci} follows from \cref{mot--xq--classical-quasilci-permanence}.
\end{proof}

We now extend the above notions to the setting of $\E_\infty$-rings that are not necessarily even.

\begin{definition}
  \label{mot--xq--xquasilci}
  We say that a map of $\E_\infty$-rings $k \to R$ is \emph{chromatically quasi-lci} (resp. \emph{chromatically $p$-quasi-lci}) if both $k \otimes \MU$ and $R \otimes \MU$ are even and the induced map $k \otimes \MU \to R \otimes \MU$ is quasi-lci (resp. $p$-quasi-lci).
\end{definition}

\begin{definition}
  \label{mot--xq--xquasisyntomic}
  We say that an $\E_\infty$-ring $R$ is \emph{chromatically $p$-quasisyntomic} (resp. \emph{chromatically quasisyntomic}) if $R \otimes \MU$ is even and $p$-quasisyntomic (resp. integrally quasisyntomic).
\end{definition}

\begin{example}
  \label{mot--xq--xquasisyntomic-examples}
  If $R$ is any of $\mathbb{S}$, $\mathrm{ko}$, or $\mathrm{tmf}$, then $\MU_*(R)$ is a polynomial $\mathbb{Z}$-algebra on even generators.  Thus, all three of $\mathbb{S}$, $\mathrm{ko}$, and $\mathrm{tmf}$ are chromatically quasisyntomic $\mathbb{E}_{\infty}$-rings.
\end{example}

\begin{remark}
  \label{mot--xq--xquasisyntomic-unit-map}
  Let $R$ be a chromatically ($p$-)quasisyntomic $\E_\infty$-ring. Then the unit map $\S \to R$ is chromatically ($p$-)quasi-lci. This follows from the fact that $\pi_*(\MU)$ is a polynomial ring on even generators, together with \cref{mot--xq--classical-quasilci-permanence}.
\end{remark}

Back in the setting of even $\mathbb{E}_{\infty}$-rings, the chromatic definitions collapse to the preceding ones:

\begin{proposition}
  \label{mot--xq--xq-even}
  A map of even $\mathbb{E}_{\infty}$-rings $k \to R$ is ($p$-)quasi-lci if and only if it is chromatically ($p$-)quasi-lci.
\end{proposition}

\begin{proof}
  By complex orientation theory, there are compatible isomorphisms
  \[
    \MU_*(k) \iso \pi_*(k)[b_1,b_2,\ldots], \quad \MU_*(R) \iso \pi_*(R)[b_1,b_2,\ldots],
  \]
  where $b_i$ has degree $2i$. In particular, $k \otimes \MU$ and $R \otimes \MU$ are also even, and we calculate that 
  \[
    \L^{\alg}_{\MU_{2*}(R)/\MU_{2*}(k)} \iso \Z[b_1,b_2,\cdots] \otimes^{\L}_{\mathbb{Z}} \L^{\alg}_{\pi_{2*}(R)/\pi_{2*}(k)} \iso \MU_{2*}(R) \otimes_{\pi_{2*}(R)} \L^{\alg}_{\pi_{2*}(R)/\pi_{2*}(k)},
  \]
  since the algebraic cotangent complex is compatible with derived base change. It is now easy to see that the Tor amplitude conditions on $\L^{\alg}_{\MU_{2*}(R)/\MU_{2*}(k)}$ and $\L^{\alg}_{\pi_{2*}(R)/\pi_{2*}(k)}$ are equivalent (one direction using that $\MU_{2*}(R)$ is a free $\pi_{2*}(R)$-module).
\end{proof}

\begin{proposition}
  \label{mot--xq--xqsyn-even}
  Let $R$ be an even $\E_\infty$-ring. Then $R$ is $p$-quasisyntomic (resp. integrally quasisyntomic) if and only if it is chromatically $p$-quasisyntomic (resp. chromatically quasisyntomic).
\end{proposition}

\begin{proof}
  Similar to the proof of \cref{mot--xq--xq-even}, this follows from the isomorphism $\MU_*(R) \iso \pi_*(R)[b_1,b_2,\ldots]$, which is a free $\pi_*(R)$-module.
\end{proof}

Finally, the following result will be used in the proof of \cref{mot--fil--compute-tc-as-fiber}.

\begin{proposition}
  \label{mot--xq--strongly-even}
  Let $A$ be a strongly even $\E_\infty$-ring (\cref{wi--strongly-even}). Then the following statements hold.
  \begin{enumerate}
  \item \label{mot--xq--strongly-even--even}
    Let $R$ be an $\E_\infty$-ring such that $R \otimes \MU$ is even. Then $R \otimes A$ is even.
  \item \label{mot--xq--strongly-even--bounded}
    Let $R$ be an $\E_\infty$-ring such that $R \otimes \MU$ has bounded $p$-power torsion. Then $R \otimes A$ has bounded $p$-power torsion.
  \item \label{mot--xq--strongly-even--quasilci}
    Let $k \to R$ be a chromatically ($p$-)quasi-lci map of $\E_\infty$-rings. Then $k \otimes A \to R \otimes A$ is a ($p$-)quasi-lci map of even $\E_\infty$-rings.
  \end{enumerate}
\end{proposition}

\begin{proof}
  Statements \cref{mot--xq--strongly-even--even} and \cref{mot--xq--strongly-even--bounded} are immediate from $A$ splitting as a direct sum of even suspensions of $\MU$. Let us prove \cref{mot--xq--strongly-even--quasilci}. By \cref{mot--xq--strongly-even--even}, $k \otimes A$ and $R \otimes A$ are even. The map $\pi_{2*}(k \otimes A \otimes \MU) \to \pi_{2*}(R \otimes A \otimes \MU)$ thus identifies with the tensor product of the map $\pi_{2*}(k \otimes A) \to  \pi_{2*}(R \otimes A)$ with the polynomial ring $\Z[b_1,b_2,\ldots]$ over $\Z$, and it follows that it is enough to show that the map $\pi_{2*}(k \otimes A \otimes \MU) \to \pi_{2*}(R \otimes A \otimes \MU)$ is ($p$-)quasi-lci. We now conclude by observing that this map identifies with the base change of the map $\pi_{2*}(k \otimes \MU) \to \pi_{2*}(R \otimes \MU)$, which is ($p$-)quasi-lci by hypothesis, along the map $\pi_{2*}(\MU) \to \pi_{2*}(A \otimes \MU)$, which is flat; these observations follow from the formula $(-) \otimes A \otimes \MU \iso ((-) \otimes \MU) \otimes_\MU (A \otimes \MU)$ and the fact that $\pi_{2*}(A \otimes \MU)$ is a free module over $\pi_{2*}(\MU)$, as $A$ splits as a direct sum of suspensions of $\MU$.
\end{proof}


\subsection{Motivic filtrations}
\label{mot--fil}

With the definitions and constructions of \cref{SecEvenFiltration} and \cref{mot--xq} in place, we now define our motivic filtrations on Hochschild homology and its associated invariants.

\begin{definition}
  \label{mot--fil--motivic-fil}
  For a chromatically quasi-lci map of connective $\E_\infty$-rings $k \to R$, we define
  \begin{align*}
    &\fil^\star_\mot\THH(R/k) := \fil^\star_\ev\THH(R/k), \\
    &\fil^\star_\mot\TC^-(R/k) := \fil^\star_{\ev,\h\cir}\THH(R/k), \\
    &\fil^\star_\mot\TP(R/k) := \fil^\star_{\ev,\tate\cir}\THH(R/k).
  \end{align*}
  For a chromatically $p$-quasi-lci map of connective $\E_\infty$-rings $k \to R$ such that $R \otimes \MU$ has bounded $p$-power torsion, we define
  \begin{align*}
    &\fil^\star_\mot\cpl{\THH(R/k)}_p := \fil^\star_{\ev,p}\cpl{\THH(R/k)}_p, \\
    &\fil^\star_\mot\cpl{\TC^-(R/k)}_p := \fil^\star_{\ev,p,\h\cir}\cpl{\THH(R/k)}_p, \\
    &\fil^\star_\mot\cpl{\TP(R/k)}_p := \fil^\star_{\ev,p,\tate\cir}\cpl{\THH(R/k)}_p,
  \end{align*}
  and if $k$ is furthermore a cyclotomic base (\cref{cycbasedef}), we define
  \[
    \fil^\star_\mot\cpl{\TC(R/k)}_p := \fil^\star_{\ev,p,\TC}\cpl{\THH(R/k)}_p.
  \]
\end{definition}

\begin{remark}
  \label{mot--fil--absolute-example}
  \cref{mot--fil--motivic-fil} applies in particular in the case that $k=\S$ and $R$ is a connective, chromatically ($p$-)quasisyntomic $\E_\infty$-ring (see \cref{mot--xq--xquasisyntomic-unit-map}). In this case, we omit the base $k=\S$ from the notation as usual.
\end{remark}

\begin{example} \label{ell-example}
  Let $\ell$ denote the connective Adams summand of $\mathrm{ku}_{(p)}$. We check that 
  \[
    \fil^{\star}_{\mot} \THH(\ell) \iso \lim_{\Delta}\tau_{\ge 2\star}\THH(\ell/\MU^{\otimes \bullet+1}),
  \]
  which is the descent studied by the first and third authors in \cite[\textsection 6.1]{HahnWilson}. Since
  \[
    \THH(\ell/\MU^{\otimes \bullet+1}) \iso \THH(\ell) \otimes_{\THH(\MU)} (\MU)^{\otimes_{\THH(\MU)} \bullet+1},
  \]
  it suffices to check that:
  \begin{enumerate}
  \item $\THH(\ell/\MU)$ is even.
  \item $\THH(\MU) \to \MU$ is evenly free.
  \end{enumerate}
  The first of these points is \cite[Theorem E]{HahnWilson}.  To prove the second point, given any map of $\mathbb{E}_{\infty}$-rings $\THH(\MU) \to A$ where $A$ is even and nonzero, we must show that $A \otimes_{\THH(\MU)} \MU$ is equivalent to a nonzero direct sum of even suspensions of $A$.

  Recall that $\THH_*(\MU) \iso \Lambda_{\MU_*}(\sigma b_1,\sigma b_2,\cdots)$ is an exterior algebra over $\MU_*$ generated by classes $\sigma b_i$ in odd degrees \cite{BCS,RognesMU}.  Because $A$ is even, each of the classes $\sigma b_i$ must map to zero along the map $\THH_*(\MU) \to \pi_*(A)$.  The K\"unneth spectral sequence for $\pi_*\left(A \otimes_{\THH(\MU)} \MU\right)$ therefore has $\mathrm{E}_2$-term given by
  $\pi_*(A) \otimes_{\MU_*} \Gamma_{\MU_*}(\sigma^2b_i),$
  where $\Gamma_{\MU_*}(\sigma^2b_i)$ denotes a divided power algebra on even degree generators.  The spectral sequence collapses for degree reasons, and the result is a nonzero, free $\pi_*(A)$-module.
\end{example}


In the remainder of the section we set up some basic theory, culminating in proofs of \Cref{motConv} and \Cref{filteredFrob} from the introduction.  
Roughly speaking, these theorems state that the motivic filtrations converge and that the motivic filtration on $\cpl{\TC}_p$ is compatible with the motivic filtrations on $\TC^{-}$ and $\TP$.

\begin{proposition}
  \label{mot--fil--quasismooth-THH}
  Let $k \to S$ be a quasismooth (resp. $p$-quasismooth) map of connective, even $\E_\infty$-rings. Then the map $\THH(S/k) \to S$ is eff (resp. $p$-completely eff). If $\pi_*(S)$ is in fact a polynomial $\pi_*(k)$-algebra, or more generally if $\Omega^1_{\pi_*(S)/\pi_*(k)}$ is free over $\pi_*(S)$, then the map $\THH(S/k) \to S$ is in fact evenly free.
\end{proposition}

\begin{proof}
  We will prove the integral statements; the $p$-complete statement can be proved similarly. We begin by calculating the homotopy groups of $\THH(S/k)$. Consider the filtered $\E_\infty$-ring $\THH(\tau_{\ge 2\star}(S)/\tau_{\ge 2\star}(k))$,
  which is complete, has underlying $\E_\infty$-ring $\THH(S/k)$, and has associated graded $\E_\infty$-ring $\THH(\Sigma^{2*}\pi_{2*}(S)/\Sigma^{2*}\pi_{2*}(k)) \iso \Sigma^{2*}\THH(\pi_{2*}(S)/\pi_{2*}(k))$. This determines a convergent multiplicative spectral sequence
  \begin{equation}
    \label{mot--fil--quasismooth-THH--postnikov-sseq}
    E_1^{s,t} \iso \pi_t(\THH(\pi_{2*}(S)/\pi_{2*}(k))_{\frac{s-t}{2}}) \Rightarrow \pi_s\THH(S/k)
  \end{equation}
  (with the convention that $E_1^{s,t} = 0$ when $s-t$ is odd) with differentials of signature $d_r : E_r^{s,t} \to E_r^{s-1,t-2r-1}$.

  Let us first analyze $\THH(\pi_{2*}(S)/\pi_{2*}(k))$. On this we have the HKR filtration, which is complete and has associated graded $\LSym^\filledsquare(\Sigma\smash{\L^\alg_{\pi_{2*}(S)/\pi_{2*}(k)}})$, where $\LSym^\filledsquare$ denotes derived symmetric powers (over $\pi_{2*}(S)$). A theorem of Illusie (see \cite[Proposition 25.2.4.2]{sag}) gives
  \begin{equation}
    \label{mot--fil--quasismooth-THH--lsym}
    \L\mathrm{Sym}^n (\Sigma\L^{\alg}_{\pi_{2*}(S)/\pi_{2*}(k)})
    \iso
    \Sigma^n\L\Lambda^n(\L^{\alg}_{\pi_{2*}(S)/\pi_{2*}(k)}),
  \end{equation}
  where $\L\Lambda^\filledsquare$ denotes derived exterior powers. As $k \to S$ is quasismooth,
  \[
    \L^\alg_{\pi_{2*}(S)/\pi_{2*}(k)} \iso \Omega^1_{\pi_{2*}(S)/\pi_{2*}(k)}
  \]
  and this is a flat $\pi_{2*}(S)$-module, so its $n$-th derived exterior powers agree with the classical $n$-exterior power $\Omega^n_{\pi_{2*}(S)/\pi_{2*}(k)}$ (\cite[Proposition 25.2.3.4]{sag}). Thus, the object \cref{mot--fil--quasismooth-THH--lsym} is concentrated in homological degree $n$, from which it follows that the HKR spectral sequence collapses to give a ring isomorphism
  \[
    \pi_\filledsquare \THH(\pi_{2*}(S)/\pi_{2*}(k)) \iso \Omega^\filledsquare_{\pi_{2*}(S)/\pi_{2*}(k)}.
  \]

  We now analyze the spectral sequence \cref{mot--fil--quasismooth-THH--postnikov-sseq}; our argument will be an adaptation of \cite[Proof of Proposition 3.6]{angeltveit}. For degree reasons, the classes in $E_1^{*,0}$ and $E_1^{*,1}$ cannot support differentials in the spectral sequence, and as they are multiplicative generators by the calculation above, this implies that the spectral sequence must collapse. Furthermore, there are no multiplicative extensions; this boils down to the following assertion: if $x \in \pi_s\THH(S/k)$ is a class detected by $\overline{x} \in \Omega^1_{\pi_{2*}(S)/\pi_{2*}(k)}$, then $x^2 = 0$. We see this as follows.

  Let us say that a class $x \in \pi_s \THH(S/k)$ has \emph{filtration $\le t$} if it is in the image of the map from the $(s-t)/2$-th stage of the (decreasing) filtration $\THH(\tau_{\ge2\star}(S)/\tau_{\ge2\star}(k))$. Then, if $x$ is detected by $\overline{x} \in \Omega^1_{\pi_{2*}(S)/\pi_{2*}(k)}$, it has filtration $\le 1$, and so $x^2$ has filtration $\le 2$. But since $\overline{x}^2 = 0$, in fact $x^2$ must have filtration $\le 1$, and since $x^2$ has even degree, it must then have filtration $\le 0$. Finally, the canonical map $\THH(S/k) \to S$ induces an isomorphism from the filtration $\le 0$ piece of $\pi_*\THH(S/k)$ to $\pi_*(S)$, so it suffices to show that this map sends $x^2$ to $0$. This follows from the fact that the map sends $x$ to $0$, $x$ being of odd degree and $\pi_*(S)$ being concentrated in even degrees.

  The conclusion of the above discussion is an identification
  \begin{equation}
    \label{mot--fil--quasismooth-THH--homotopy}
    \THH_*(S/k) \iso \Lambda_{\pi_*(S)}(\Omega^1_{\pi_{*}(S)/\pi_{*}(k)}[1]).
  \end{equation}
  (where here $[1]$ denotes a shift in grading). With this calculation in hand, we may prove the claim. Consider a pushout square of the form
  \[
    \begin{tikzcd}
      \THH(S/k) \arrow{d} \arrow{r} & S \arrow{d} \\
      C \arrow{r} & C \otimes_{\THH(S/k)} S,
    \end{tikzcd}
  \]
  where $C$ is an even $\mathbb{E}_{\infty}$-ring. Under the identification \cref{mot--fil--quasismooth-THH--homotopy}, the map $\THH_*(S/k) \to \pi_*(C)$ must send each exterior generator $x \in \Omega^1_{\pi_*(S)/\pi_*(k)}$ to $0$, because these classes have odd degree. It follows from this and $\Omega^1_{\pi_*(S)/\pi_*(k)}$ being a flat $\pi_*(S)$-module that the Tor spectral sequence calculating $\pi_*(C \otimes_{\THH(S/k)} S)$ has $\mathrm{E}_2$-page given by
  \[
    \Tor^{\THH_*(S/k)}_\filledsquare(\pi_*(C),\pi_*(S)) \iso \pi_*(C) \otimes_{\pi_*(S)} \Gamma^\filledsquare_{\pi_*(S)}(\Omega^1_{\pi_*(S)/\pi_*(k)}[1]),
  \]
  where $\Gamma^\filledsquare$ denotes divided powers. We see that this spectral sequence must degenerate for degree reasons, and the result is a faithfully flat $\pi_*(C)$-module in even degrees. Moreover, if $\pi_*(S)$ is polynomial over $\pi_*(k)$, or more generally if $\Omega^1_{\pi_*(S)/\pi_*(k)}$ is free over $\pi_*(S)$, the result is in fact a free $\pi_*(C)$-module.
\end{proof}

\begin{proposition}
  \label{mot--fil--quasiregular-quotient-THH}
  \begin{enumerate}[leftmargin=*]
  \item Let $S \to R$ be a quasiregular quotient of connective, even $\E_\infty$-rings. Then $\THH(R/S)$ is even.
  \item Let $S \to R$ be a $p$-quasiregular quotient of connective, even $\E_\infty$-rings and suppose that $R$ has bounded $p$-power torsion. Then $\cpl{\THH(R/S)}_p$ is even and has bounded $p$-power torsion.
  \end{enumerate}
\end{proposition}

\begin{proof}
  We prove the second statement, following the same general strategy as in the proof of \cref{mot--fil--quasismooth-THH}; the first statement can be proved similarly. We have the filtered spectrum $\cpl{\THH(\tau_{\ge 2\star}(R)/\tau_{\ge 2\star}(S))}_p$, which is complete, has underlying spectrum $\cpl{\THH(R/S)}_p$, and has associated graded spectrum $\cpl{\THH(\Sigma^{2*}\pi_{2*}(R)/\Sigma^{2*}\pi_{2*}(S))}_p \iso \Sigma^{2*}\cpl{\THH(\pi_{2*}(R)/\pi_{2*}(S))}_p$. This determines a convergent spectral sequence
  \[
    \pi_\filledsquare(\cpl{\THH(\pi_{2*}(R)/\pi_{2*}(S))}_p) \Rightarrow \pi_{2*+\filledsquare}(\cpl{\THH(R/S)}_p).
  \]
  We will show in the next paragraph that $\pi_\filledsquare(\cpl{\THH(\pi_{2*}(R)/\pi_{2*}(S))}_p)$ is concentrated in even $\filledsquare$-degrees, and that in each even $\filledsquare$-degree it is $p$-completely flat over $\pi_{2*}(R)$. The evenness property implies degeneration of the spectral sequence, which gives that $\cpl{\THH(R/S)}_p$ is even, and moreover that $\pi_{2*}(\cpl{\THH(R/S)}_p)$ admits an exhaustive increasing filtration by $\pi_{2*}(R)$-modules whose associated graded is $\pi_\filledsquare(\cpl{\THH(\pi_{2*}(R)/\pi_{2*}(S))}_p)$. Then the $p$-complete flatness property gives that $\pi_{2*}(\cpl{\THH(R/S)}_p)$ is $p$-completely flat over $\pi_{2*}(R)$, as $p$-complete flatness is closed under extensions and filtered colimits. We conclude that $\cpl{\THH(R/S)}_p$ has bounded $p$-power torsion, using that $R$ has bounded $p$-power torsion and \cref{ev--desc--pcpl-flat-bdd-torsion}.

  We now justify the above assertions about $\cpl{\THH(\pi_{2*}(R)/\pi_{2*}(S))}_p$. This carries the $p$-completed HKR filtration, which is complete and has associated graded
  \[
    \cpl{\L\mathrm{Sym}^\filledsquare(\Sigma \smash{\L^{\alg}_{\pi_{2*}(R)/\pi_{2*}(S)}})}_p,
  \]
  where $\L\mathrm{Sym}^\filledsquare$ denotes derived symmetric powers (over $\pi_{2*}(R)$). We will complete the proof by showing that each graded piece of $\cpl{\L\mathrm{Sym}^n(\Sigma \smash{\L^{\alg}_{\pi_{2*}(R)/\pi_{2*}(S)}})}_p$ is homologically concentrated in degree $2n$, and there is a $p$-completely flat module over $\pi_{2*}(R)$. By hypothesis, $\pi_{2*}(S) \to \pi_{2*}(R)$ is surjective, so $\smash{\L^{\alg}_{\pi_{2*}(R)/\pi_{2*}(S)}}$ is $1$-connective. By a theorem of Illusie (see \cite[Proposition 25.2.4.2]{sag}), 
  \[
    \L\mathrm{Sym}^n (\Sigma\L^{\alg}_{\pi_{2*}(R)/\pi_{2*}(S)})
    \iso
    \Sigma^{2n}\L\Gamma^n(\Sigma^{-1}\L^{\alg}_{\pi_{2*}(R)/\pi_{2*}(S)}),
  \]
  where $\L\Gamma^\filledsquare$ denotes derived divided powers. By \cref{mot--xq--classical-quasiregular-equivalence}, since $S \to R$ is a $p$-quasiregular quotient, $\Sigma^{-1}\L^{\alg}_{\pi_{2*}(R)/\pi_{2*}(S)}$ is $p$-completely flat over $\pi_{2*}(R)$. The formation of derived divided powers commutes with base change and derived divided powers of a flat module are flat (\cite[Proposition 25.2.3.4]{sag}), so we deduce that $\L\Gamma^n(\Sigma^{-1}\smash{\L^{\alg}_{\pi_{2*}(R)/\pi_{2*}(S)}})$ is also $p$-completely flat, which also implies that it is discrete after $p$-completion (again using that $\pi_{2*}(R)$ has bounded $p$-power torsion and \cref{ev--desc--pcpl-flat-bdd-torsion}).
\end{proof}

\begin{corollary}
  \label{mot--fil--quasilci-THH}
  Let $k \to S \to R$ be maps of connective, even $\E_\infty$-rings such that $k \to S$ is quasismooth (resp. $p$-quasismooth). Then:
  \begin{enumerate}
  \item \label{mot--fil--quasilci-THH--eff}
    The map $\THH(R/k) \to \THH(R/S)$ is eff (resp. $p$-completely eff). If $\pi_*(S)$ is in fact a polynomial $\pi_*(k)$-algebra, or more generally if $\Omega^1_{\pi_*(S)/\pi_*(k)}$ is free over $\pi_*(S)$, then the map $\THH(R/k) \to \THH(R/S)$ is in fact evenly free.
  \item \label{mot--fil--quasilci-THH--even}
    If the map $S \to R$ is surjective on homotopy groups and the composite $k \to R$ is quasi-lci (resp. $p$-quasi-lci, with $R$ having bounded $p$-power torsion), then $\THH(R/S)$ is even (resp. $\cpl{\THH(R/S)}_p$ is even and has bounded $p$-power torsion).
  \end{enumerate}
\end{corollary}

\begin{proof}
  The first statement follows from \cref{mot--fil--quasismooth-THH}, as the map $\THH(R/k) \to \THH(R/S)$ is obtained by pushing out the map $\THH(S/k) \to S$ along the map $\THH(S/k) \to \THH(R/k)$. The second statement follows from \cref{mot--fil--quasiregular-quotient-THH}, as $S \to R$ being surjective on homotopy groups and $k \to R$ being ($p$-)quasi-lci together imply that $S \to R$ is a ($p$-)quasiregular quotient (\cref{mot--xq--even-polynomial-surjection}\cref{mot--xq--even-polynomial-surjection--quasilci}).
\end{proof}

This last result, together with \cref{mot--xq--even-polynomial-surjection}, gives us a good supply of eff covers by even objects in the context of our motivic filtrations. Let us illustrate this by deducing the following good behavior of these filtrations, with \cref{mot--fil--exhaustivity,mot--fil--compute-tc-as-fiber} below generalizing \cref{motConv,filteredFrob} from the introduction.

\begin{proposition}
  \label{mot--fil--p-compatibility}
  Let $k \to R$ be a chromatically quasi-lci map of connective $\E_\infty$-rings such that $R \otimes \MU$ has bounded $p$-power torsion. Then
  \[
    \cpl{(\fil^\star_\mot\THH(R/k))}_p \iso \fil^\star_\mot\cpl{\THH(R/k)}_p.
  \]
\end{proposition}

\begin{proof}
  Choose a commutative square
  \[
    \begin{tikzcd}
      k \otimes \MU \ar[r] \ar[d] &
      S \ar[d] \\
      R \otimes \MU \ar[r] &
      {R'}
    \end{tikzcd}
  \]
  as in \cref{mot--xq--even-polynomial-surjection}, and consider the composition of maps of $\cir$-equivariant $\E_\infty$-rings
  \begin{align*}
    \THH(R/k) &\to \THH(R/k) \otimes \MU \iso \THH(R \otimes \MU/k \otimes \MU) \\
              &\to \THH(R'/k \otimes \MU) \\
              &\to \THH(R'/S).
  \end{align*}
  The first map is evenly free by \cref{ev--desc--novikov-eff}. The canonical map
  \[
    R' \otimes_{R \otimes \MU} \THH(R \otimes \MU/k \otimes \MU) \to \THH(R'/k \otimes \MU)
  \]
  is an equivalence because $R \otimes \MU \to R'$ is an ind--Zariski localization (see \cite[Lemma 5.7]{mccarthy-minasian--HKR} or \cite[Theorem 1.3]{mathew--THH-base-change}), and since it is also assumed to be faithful, we deduce that the second map in the composition above is faithfully flat in the sense of \cite[Definition D.4.4.1]{sag}, hence eff. The third map is evenly free by \cref{mot--fil--quasilci-THH}\cref{mot--fil--quasilci-THH--eff}. Thus, the composite map is eff. The map $k \otimes \MU \to R'$ is quasi-lci (since $k \to R$ is chromatically quasi-lci and $R'$ is an ind--Zariski localization of $R \otimes \MU$), so by \cref{mot--fil--quasilci-THH}\cref{mot--fil--quasilci-THH--even}, $\THH(R'/S)$ is even and $\cpl{\THH(R'/S)}_p$ is even and has bounded $p$-power torsion. The claim now follows from \cref{ev--arith--p-completion}.
\end{proof}

\begin{theorem}
  \label{mot--fil--exhaustivity}
  \begin{enumerate}[leftmargin=*]
  \item \label{mot--fil--exhaustivity--integral}
    Let $k \to R$ be a chromatically quasi-lci map of connective $\E_{\infty}$-rings. Then the canonical maps
    \begin{align*}
      &\THH(R/k) \to \colim(\fil^\star_\mot \THH(R/k)),\\
      &\TC^-(R/k) \to \colim(\fil^\star_\mot \TC^-(R/k)),\\
      &\TP(R/k) \to \colim(\fil^\star_\mot \TP(R/k))
    \end{align*}
    are equivalences.
  \item \label{mot--fil--exhaustivity--p}
    Let $k \to R$ be a chromatically $p$-quasi-lci map of connective $\E_{\infty}$-rings such that $R \otimes \MU$ has bounded $p$-power torsion. Then the canonical maps
    \begin{align*}
      &\cpl{\THH(R/k)}_p \to \colim(\fil^\star_\mot \cpl{\THH(R/k)}_p),\\
      &\cpl{\TC^-(R/k)}_p \to \colim(\fil^\star_\mot \cpl{\TC^-(R/k)}_p),\\
      &\cpl{\TP(R/k)}_p \to \colim(\fil^\star_\mot \cpl{\TP(R/k)}_p)
    \end{align*}
    are equivalences.
  \end{enumerate}
\end{theorem}

\begin{proof}
 For either statement, choose a commutative square
  \[
    \begin{tikzcd}
      k \otimes \MU \ar[r] \ar[d] &
      S \ar[d] \\
      R \otimes \MU \ar[r] &
      {R'}
    \end{tikzcd}
  \]
  as in \cref{mot--xq--even-polynomial-surjection}, and consider the composition of maps of $\cir$-equivariant $\E_\infty$-rings
  \begin{align*}
    \THH(R/k) &\to \THH(R/k) \otimes \MU \iso \THH(R \otimes \MU/k \otimes \MU) \\
              &\to \THH(R'/k \otimes \MU) \\
              &\to \THH(R'/S).
  \end{align*}
  Arguing as in the proof of \cref{mot--fil--p-compatibility}, we find that the composite is eff, that $\THH(R'/S)$ is even in the case of \cref{mot--fil--exhaustivity--integral} and $\cpl{\THH(R'/S)}_p$ is even and has bounded $p$-power torsion in the case of in the case of \cref{mot--fil--exhaustivity--p}, and that the second map in the composition is  is faithfully flat in the sense of \cite[Definition D.4.4.1]{sag}. Moreover, the first and third maps in the composite are $1$-connective, so by \cref{ev--exh--universal}, we deduce that the composite map satisfies descent, $p$-complete descent, Tate descent, and $p$-complete Tate descent. The claim now follows from \cref{ev--exh--main}.
\end{proof}

\begin{theorem}
  \label{mot--fil--compute-tc-as-fiber}
  For $k$ a connective cyclotomic base and $k \to R$ a chromatically $p$-quasi-lci map of connective $\E_{\infty}$-rings such that $R \otimes \MU$ has bounded $p$-power torsion, there are natural maps
  \[
    \varphi,\can : \fil^{\star}_\mot \cpl{\TC^{-}(R/k)}_p \to \fil^{\star}_\mot \cpl{\TP(R/k)}_p,
  \]
  converging to $\varphi,\can : \cpl{\TC^{-}(R/k)}_p \to \cpl{\TP(R/k)}_p$, respectively, together with a natural equivalence
  \[
    \fil^{\star}_{\mot} \cpl{\TC(R/k)}_p \iso \mathrm{fib}\left(\varphi-\mathrm{can} : \fil^{\star}_\mot \cpl{\TC^{-}(R/k)}_p \to \fil^{\star}_\mot \cpl{\TP(R/k)}_p \right).
  \]
\end{theorem}

\begin{proof}
  Let $A$ be a strongly even $\E_\infty$-ring (\cref{wi--strongly-even}). By \cref{wi--cyc--strongly-even-base}, we may choose a cyclotomic base structure on $A$. By \cref{mot--xq--strongly-even}, the induced map $k \otimes A \to R \otimes A$ is a $p$-quasi-lci map of even $\E_\infty$-rings and $R \otimes A$ has bounded $p$-power torsion. Choose a commutative square
  \[
    \begin{tikzcd}
      k \otimes A \ar[r] \ar[d] &
      S \ar[d] \\
      R \otimes A \ar[r] &
      {R'},
    \end{tikzcd}
  \]
  with $k \otimes A \to S$ a map of cyclotomic bases, as in \cref{mot--xq--even-polynomial-surjection}. Now, consider the composition
  \begin{align*}
    \THH(R/k) &\to \THH(R/k) \otimes A \iso \THH(R \otimes A/k \otimes A) \\
              &\to \THH(R'/k \otimes A) \\
              &\to \THH(R'/S)
  \end{align*}
  (each map being a map of $p$-typical cyclotomic $\E_\infty$-rings upon $p$-completion). Arguing as in the proof of \cref{mot--fil--p-compatibility}, we find that the composite is eff and that $\cpl{\THH(R'/S)}_p$ is even and has bounded $p$-power torsion. The claim now follows from \cref{ev--desc--compute-tc-as-fiber}.
\end{proof}


\begin{remark}
  It follows from the proof of the previous theorem that for $R$ a connective, chromatically quasisyntomic $\E_\infty$-ring, $\THH(R)$ admits an eff map to an even cyclotomic $\E_\infty$-ring. As indicated in \cref{rmk-geometric-stack}, the convenience of the motivic filtration should hold under this weaker requirement alone. It would be very interesting to have a checkable characterization of those $R$ such that $\THH(R)$ admits an eff map to an even cyclotomic ring. 
\end{remark}
 \label{SecMotivicFiltration}

\section{Comparison theorems}

In this section, we will compare the motivic filtrations defined above with filtrations defined previously in special cases.

\begin{notation}
  \label{cmp--notation}
  \begin{enumerate}[leftmargin=*]
  \item For $k \to R$ any map of commutative rings, we have the Hochschild--Kostant--Rosenberg (HKR) filtration on Hochschild homology, $\fil^\star_\HKR\HH(R/k)$ (see, for example, \cite[\textsection 2]{BMS}). We denote the $p$-completion of the HKR filtration by $\fil^\star_\HKR\cpl{\HH(R/k)}_p$.
  \item For $k \to R$ a quasi-lci map of commutative rings, we have Antieau's Beilinson filtrations on negative and periodic cyclic homology, $\fil^\star_\Beil\HC^-(R/k)$ and $\fil^\star_\Beil\HP(R/k)$ \cite{AntieauHP}.
  \item For $k \to R$ a $p$-quasi-lci map of $p$-quasisyntomic, $p$-complete commutative rings, we have the Bhatt--Morrow--Scholze (BMS) filtrations on $p$-completed negative and periodic cyclic homology, $\fil^\star_\BMS\cpl{\HC^-(R/k)}_p$ and $\fil^\star_\BMS\cpl{\HP(R/k)}_p$ \cite[\textsection 5]{BMS}.
  \item For $R$ a $p$-quasisyntomic, $p$-complete commutative ring, we have the BMS filtrations on $p$-completed topological Hochschild, topological negative cyclic, topological periodic cyclic, and topological cyclic homology \cite[\textsection 7]{BMS},
    \[
      \fil^\star_\BMS\cpl{\THH(R)}_p, \quad
      \fil^\star_\BMS\cpl{\TC^-(R)}_p, \quad
      \fil^\star_\BMS\cpl{\TP(R/k)}_p, \quad
      \fil^\star_\BMS\cpl{\TC(R)}_p.
    \]
  \item For $R$ any commutative ring, we have the Morin/Bhatt--Lurie filtration on topological Hochschild, topological negative cyclic, and topological periodic cyclic homology (\cite{Morin} and \cite[\textsection 6.4]{BL}),
    \[
      \fil^\star_\MBL\THH(R), \quad
      \fil^\star_\MBL\TC^-(R), \quad
      \fil^\star_\MBL\TP(R).
    \]
  \end{enumerate}
\end{notation}

\begin{theorem}
  \label{cmp--hkr}
  For $k \to R$ a quasi-lci map of discrete commutative rings, there are natural identifications
  \begin{align*}
    &\fil^\star_\mot\HH(R/k) \iso \fil^\star_\HKR\HH(R/k), \\
    &\fil^\star_\mot\HC^-(R/k) \iso \fil^\star_\Beil\HC^-(R/k), \\
    &\fil^\star_\mot\HP(R/k) \iso \fil^\star_\Beil\HP(R/k).
  \end{align*}
  For $k \to R$ a $p$-quasi-lci map of $p$-quasisyntomic, $p$-complete commutative rings, there are natural identifications
  \begin{align*}
    &\fil^\star_\mot\cpl{\HH(R/k)}_p \iso \fil^\star_\HKR\cpl{\HH(R/k)}_p, \\
    &\fil^\star_\mot\cpl{\HC^-(R/k)}_p \iso \fil^\star_\BMS\cpl{\HC^-(R/k)}_p, \\
    &\fil^\star_\mot\cpl{\HP(R/k)}_p \iso \fil^\star_\BMS\cpl{\HP(R/k)}_p.
  \end{align*}
\end{theorem}

\begin{proof}
  We will establish the first identification; the rest can be established similarly. Let $k \to R$ be a quasi-lci map of commutative rings. Let $S$ be the polynomial $k$-algebra with generators indexed by the elements of $R$, which comes equipped with a canonical surjection $S \to R$. Then we have
  \[
    \fil^\star_\mot\HH(R/k) \iso \lim_{\Delta} \tau_{\ge 2\star}(\HH(R/S^{\otimes_k \bullet+1})) \iso \lim_{\Delta} \fil^\star_\HKR \HH(R/S^{\otimes_k \bullet+1}),
  \]
  where the first equivalence follows from \cref{mot--fil--quasilci-THH} and the second equivalence follows from the identifications
  \[
    \gr^i_\HKR \HH(R/S^{\otimes_k \bullet+1}) \iso \Sigma^i\L\Lambda^i_R(\L^\alg_{R/S^{\otimes_k \bullet+1}})
  \]
  and the fact that $\L^\alg_{R/S^{\otimes_k n+1}}$ has Tor-amplitude concentrated in degree $1$ (cf. the proof of \cref{mot--fil--quasiregular-quotient-THH}). It thus suffices to show that the canonical map
  \[
    \fil^\star_\HKR\HH(R/k) \to \lim_{\Delta} \fil^\star_\HKR \HH(R/S^{\otimes_k \bullet+1})
  \]
  is an equivalence. We can check this after passing to associated graded objects, so it is enough to show that the canonical maps
  \[
    \L\Lambda^i_R(\L^\alg_{R/k}) \to \lim_{\Delta} \L\Lambda^i_R(\L^\alg_{R/S^{\otimes_k \bullet+1}})
  \]
  are equivalences.

  Consider the commutative diagram of cosimplicial $R$-modules
  \[
    \begin{tikzcd}
      R \otimes_S \L^\alg_{S/k} \ar[r] \ar[d] &
      \L^\alg_{R/k} \ar[r] \ar[d] &
      \L^\alg_{R/S} \ar[d] \\
      R \otimes_S \L^\alg_{S/S^{\otimes_k\bullet+1}} \ar[r] & \L^\alg_{R/S^{\otimes_k\bullet+1}} \ar[r] &
      \L^\alg_{R/S}
    \end{tikzcd}
  \]
  in which the top row consists of constant cosimplicial objects, each row is a transitivity cofiber sequence, and the map between the two rows is induced by the map of cosimplicial commutative rings $k \to S^{\otimes_k\bullet+1}$. In \cite[Proof of Corollary 2.7]{BhattDR}, it is shown that the map $\L^\alg_{S/k} \to \L^\alg_{S/S^{\otimes_k\bullet+1}}$ is a homotopy equivalence of cosimplicial $S$-modules. Thus, the left-hand vertical map is a homotopy equivalence of cosimplicial $R$-modules, which implies that the same is true of the middle vertical map, since the right-hand vertical map is the identity map. It follows that the map $\L\Lambda^i_R(\L^\alg_{R/k}) \to \L\Lambda^i_R(\L^\alg_{R/S^{\otimes_k \bullet+1}})$ is a homotopy equivalence of cosimplicial $R$-modules for all $i$, implying the desired claim.
\end{proof}

\begin{theorem}
  \label{cmp--bms}
  For $R$ a $p$-quasisyntomic, $p$-complete commutative ring, there are natural identifications
  \begin{align*}
    &\fil^\star_\mot\cpl{\THH(R)}_p \iso \fil^\star_\BMS\cpl{\THH(R)}_p, \\
    &\fil^\star_\mot\cpl{\TC^-(R)}_p \iso \fil^\star_\BMS\cpl{\TC^-(R)}_p, \\
    &\fil^\star_\mot\cpl{\TP(R)}_p \iso \fil^\star_\BMS\cpl{\TP(R)}_p, \\
    &\fil^\star_\mot\cpl{\TC(R)}_p \iso \fil^\star_\BMS\cpl{\TC(R)}_p.
  \end{align*}
\end{theorem}

\begin{proof}
  \footnote{We thank Bhargav Bhatt for suggesting the argument written here (though any error is our own responsibility). Our original argument used \cref{cmp--hkr} to establish quasisyntomic descent for our motivic filtrations.} Let $R'$ be the polynomial ring over $\Z$ on generators indexed by the set underlying $R$, so that we have a natural surjection $R' \surj R$. Let $S'$ be the ring obtained by adjoining all $p$-power roots of the polynomial generators of $R'$, and form the $p$-completed pushout $S := \cpl{(R \otimes_{R'} S')}_p$. Then $S$ is quasiregular semiperfectoid and the map $R \to S$ is a cover in the quasisyntomic topology of \cite[\textsection 4]{BMS}, so from \cite[\textsection 7]{BMS} we have that $\cpl{\THH(S)}_p$ is even and a natural equivalence
  \[
    \fil^\star_\BMS \cpl{\THH(R)}_p \iso \lim_\Delta(\tau_{\ge 2\star}(\cpl{\THH(S^{\otimes_R\bullet+1})}_p)),
  \]
  and similarly for $\TC^-$, $\TP$, and $\TC$. It now follows from \cref{ev--desc--eff-desc} that to prove the claim, it suffices to show that the map $\cpl{\THH(R)}_p \to \cpl{\THH(S)}_p$ is $p$-completely eff.

  Let $\S_{R'}$ be the polynomial $\E_\infty$-ring over $\S$ on generators indexed by the set underlying $R$ (i.e. the tensor product over this set of copies of the monoid ring $\S[\mathbb{N}]$) and let $\S_{S'}$ be the $\E_\infty$-ring obtained by adjoining all $p$-power roots of the polynomial generators of $\S_{R'}$ (i.e. a tensor product of copies of the monoid ring $\S[\mathbb{N}[1/p]]$). Consider the commutative diagram
  \[
    \begin{tikzcd}
      \THH(\S_{R'}) \ar[r] \ar[d] &
      \S_{R'} \ar[r] \ar[d] &
      \S_{S'} \ar[d] \\
      \THH(R) \ar[r] &
      \THH(R/\S_{R'}) \ar[r] &
      \THH(S/\S_{S'}).
    \end{tikzcd}
  \]
  By \cite[Proposition 11.7]{BMS}, the map $\THH(S) \to \THH(S/\S_{S'})$ is an equivalence after $p$-completion. It is thus enough to show that each of the bottom horizonal maps is $p$-completely eff. Since each square in the diagram is in fact a pushout square, it furthermore suffices to show that each of the top horizontal maps is $p$-completely eff. In fact, they are evenly free: this is clear for the right-hand map, as $\S_{S'}$ is free as a module over $\S_{R'}$; for the left-hand map, we may check after tensoring with $\MU$ (since $\S \to \MU$ is evenly free), and then we may use \cref{mot--fil--quasismooth-THH}.
\end{proof}

\begin{theorem}
  \label{mo--bl-comparison}
  For $R$ an integrally quasisyntomic commutative ring (i.e. having bounded $p$-power torsion for all primes $p$ and with algebraic cotangent complex $\L^{\mathrm{alg}}_R$ having Tor amplitude contained in $[0,1]$), there are natural identifications
  \begin{align*}
    &\fil^\star_\mot\THH(R) \iso \fil^\star_\MBL\THH(R), \\
    &\fil^\star_\mot\TC^-(R) \iso \fil^\star_\MBL\TC^-(R), \\
    &\fil^\star_\mot\TP(R) \iso \fil^\star_\MBL\TP(R).
  \end{align*}
\end{theorem}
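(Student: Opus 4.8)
The plan is to realize both $\fil^\star_\BL$ and $\fil^\star_\mot$ as the limit of one and the same arithmetic fracture square. By construction, the Bhatt--Lurie filtration is glued from its $p$-complete and rational parts \cite[\textsection 6.4]{BL}: the non-terminal vertices of the fracture square computing $\fil^\star_\BL\THH(R)$ are $\prod_p\fil^\star_\BMS\cpl{\THH(R)}_p$, the rational piece $\fil^\star_\HKR\HH(R\otimes\Q/\Q)$, and the rationalization of the former, and similarly for $\fil^\star_\BL\TC^-(R)$ and $\fil^\star_\BL\TP(R)$ with $\fil^\star_\Beil\HC^-(R\otimes\Q/\Q)$, $\fil^\star_\Beil\HP(R\otimes\Q/\Q)$ in place of the HKR piece. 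So it is enough to prove (i) that $\fil^\star_\mot\THH(R)$, $\fil^\star_\mot\TC^-(R)$, $\fil^\star_\mot\TP(R)$ each sit in the corresponding arithmetic fracture square, and (ii) that the non-terminal vertices agree, compatibly with the gluing maps, with the Bhatt--Lurie ones.

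For (ii): the hypotheses on $R$ guarantee that $R$ is discrete, that $\Z\to R$ is chromatically quasi-lci (since $\MU_*R\cong R[b_1,b_2,\dots]$ is a polynomial $R$-algebra), and that $\cpl{R}_p$ is a $p$-complete $p$-quasisyntomic ring for each prime $p$ (the Tor-amplitude hypothesis makes $\Z_p\to\cpl{R}_p$ $p$-quasi-lci, and bounded $p$-power torsion is assumed). Thus \Cref{mo--bms-comparison} identifies $\fil^\star_\BMS\cpl{\THH(R)}_p$ with $\fil^\star_\mot\cpl{\THH(R)}_p$, which is the $p$-completion of $\fil^\star_\mot\THH(R)$ by \Cref{mot--p-compatibility}, and likewise for $\TC^-$ and $\TP$; and \Cref{mo--hkr-comparison} identifies $\fil^\star_\HKR\HH(R\otimes\Q/\Q)$, $\fil^\star_\Beil\HC^-(R\otimes\Q/\Q)$, $\fil^\star_\Beil\HP(R\otimes\Q/\Q)$ with $\fil^\star_\mot$ of the respective rationalizations, using $\THH(R)\otimes\Q\simeq\HH(R\otimes\Q/\Q)$ and the fact that $R\otimes\Q$ has cotangent complex of Tor-amplitude $[0,1]$ over $\Q$. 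All these identifications are natural, so the gluing maps match.

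For (i): using Novikov descent (\Cref{ev--novikov}) and \Cref{factor-quasi-lci}, I would choose a polynomial $\Z$-algebra $S$ surjecting onto $R$ with $S\to R$ a quasiregular quotient, so that \Cref{hkr--quasilci-HH-atlas} and \Cref{ev--magic-criterion} give $\fil^\star_\mot\THH(R)\simeq\lim_\Delta\tau_{\ge 2\star}\THH(R/S^{\otimes_\Z\bullet+1})$ with all terms even (and the same for the $\h\cir$- and $\tate\cir$-variants and after $p$-completion). As $\THH(R)\to\THH(R/S)$ is eff with $1$-connective cofiber, \Cref{ev--rationalization}, \Cref{ev--profinite-rationalization}, and their evident $\h\cir$- and $\tate\cir$-analogues show that rationalizing $\fil^\star_\mot\THH(R)$ and $\prod_p\cpl{(\fil^\star_\mot\THH(R))}_p$ commutes with these cobar limits. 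This reduces the arithmetic fracture square for $\fil^\star_\mot\THH(R)$, termwise in the cobar complex, to the arithmetic fracture square for each even spectrum $\THH(R/S^{\otimes_\Z n+1})$; as $R$, hence each such term, has bounded $p$-power torsion in every degree, the latter holds, and taking $\lim_\Delta$ gives (i).

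The step I expect to be the main obstacle is precisely this last reduction: commuting the cobar limit with rationalization --- and, for $\TP$, with the Tate construction $(-)^{\tate\cir}$ as well --- is a comparison of a limit with a colimit, false in general and valid here only because the quasi-lci hypothesis on $R$ supplies the connectivity estimates packaged into \Cref{ev--rationalization} and \Cref{ev--profinite-rationalization}. Granting (i) and (ii), uniqueness of the limit of the fracture diagram identifies $\fil^\star_\mot$ with $\fil^\star_\BL$ for $\THH$, and the $\TC^-$ and $\TP$ cases follow verbatim with the $\h\cir$- and $\tate\cir$-filtered variants in place of the plain one.
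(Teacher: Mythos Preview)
Your overall strategy matches the paper's: both reduce to an arithmetic fracture comparison, identifying the $p$-complete corners via \cref{mo--bms-comparison} together with \cref{mot--p-compatibility}, the rational corner via \cref{mo--hkr-comparison}, and invoking \cref{ev--rationalization} and \cref{ev--profinite-rationalization} to commute rationalization past the cobar limit. The paper organizes this slightly differently---its version of the Bhatt--Lurie defining square has $\fil^\star_\HKR\HH(R)$ and $\prod_p\fil^\star_\HKR\cpl{\HH(R)}_p$ (integral Hochschild homology) as the bottom row rather than your rational arithmetic square, and only after matching that square with the analogous one for $\fil^\star_\mot$ does it pass to rationalizations---but the two organizations are equivalent and use the same ingredients.

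There is one genuine technical slip in your step (i). Taking $S$ a polynomial $\Z$-algebra surjecting onto $R$, the map $\THH(R)\to\THH(R/S)$ is \emph{not} eff: it is a pushout of $\THH(S)\to S$, and $\pi_*(S)$ is not polynomial over $\pi_*(\S)$, so \cref{hkr--quasismooth-HH-atlas} does not apply (concretely, already $\THH(\Z)\to\Z$ fails to be eff). Thus the cobar complex $\tau_{\ge 2\star}\THH(R/S^{\otimes_\Z\bullet+1})$ you wrote does not compute $\fil^\star_\mot\THH(R)$. The paper's fix, which your mention of Novikov descent gestures toward, is to choose instead a connective even $\E_\infty$-$\MU$-algebra $S$ with $\pi_*(S)$ polynomial over $\pi_*(\MU)$ and a quasiregular quotient $S\to R\otimes\MU$; then $\THH(R)\to\THH(R\otimes\MU/S)$ is $1$-connective and evenly free, and this is the map to which \cref{ev--rationalization} and \cref{ev--profinite-rationalization} are applied. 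With that correction your argument goes through.
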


\begin{proof}
  Let us just establish the identification for $\THH$. Let $R$ be an integrally quasisyntomic commutative ring. In the following argument, we will use that $R$ is chromatically quasisyntomic (\cref{mot--xq--xqsyn-even}), so the unit map $\S \to R$ is chromatically quasi-lci (\cref{mot--xq--xquasisyntomic-unit-map}), and that the unit map $\Z \to R$ is also chromatically quasi-lci (\cref{mot--xq--xq-even}).

  We have a commutative square
  \[
    \begin{tikzcd}
      \fil^\star_\mot \THH(R) \ar[r] \ar[d] &
      \prod_p \cpl{(\fil^\star_\mot \THH(R))}_p \ar[d] \\
      \fil^\star_\mot \HH(R) \ar[r] &
      \prod_p \cpl{(\fil^\star_\mot \HH(R))}_p,
    \end{tikzcd}
  \]
  and we have a defining pullback square
  \[
    \begin{tikzcd}
      \fil^\star_\MBL \THH(R) \ar[r] \ar[d] &
      \prod_p \fil^\star_\BMS \cpl{\THH(R)}_p \ar[d] \\
      \fil^\star_\HKR \HH(R) \ar[r] &
      \prod_p \fil^\star_\HKR \cpl{\HH(R)}_p
    \end{tikzcd}
  \]
  From \cref{cmp--hkr,mot--fil--p-compatibility}, we obtain an identification between the lower arrows of the two squares. Noting that the $p$-completion of $R$ is a $p$-quasisyntomic discrete commutative ring and that $\cpl{\THH(R)}_p \iso \cpl{\THH(\cpl{R}_p)}_p$, \cref{cmp--bms,mot--fil--p-compatibility} give an identification between the upper right objects of the two squares. The natural transformation $\fil^\star_\BMS \cpl{\THH(-)}_p \to \fil^\star_\HKR \cpl{\HH(-)}_p$ on $p$-quasisyntomic, $p$-complete commutative rings is the unique natural map of filtered objects compatible with the canonical map $\cpl{\THH(-)}_p \to \cpl{\HH(-)}_p$, since the filtrations are descended from the double-speed Postnikov filtration for quasiregular semiperfectoid rings. It follows that the right-hand arrows of the squares identify as well. Thus, to finish the proof, it suffices to show that the first square is a pullback diagram.

  Consider the extended diagram
  \[
    \begin{tikzcd}
      \fil^\star_\mot \THH(R) \ar[r] \ar[d] &
      \prod_p \cpl{(\fil^\star_\mot \THH(R))}_p \ar[d] \\
      \fil^\star_\mot \HH(R) \ar[r] \ar[d] &
      \prod_p \cpl{(\fil^\star_\mot \HH(R))}_p \ar[d] \\
      (\fil^\star_\mot \HH(R)) \otimes \Q \ar[r] &
      (\prod_p \cpl{(\fil^\star_\mot \HH(R))}_p) \otimes \Q
    \end{tikzcd}
  \]
  The lower square is an arithmetic square, hence a pullback square, and the upper square consists of complete filtered objects, so it suffices to show that the outer square is a pullback square after filtration completion. In fact, the outer square is also an arithmetic square after filtration completion; this is a consequence of \cref{ev--arith--rationalization,ev--arith--profinite-rationalization}, using the following observations:
  \begin{itemize}
  \item The canonical maps $\THH(R) \otimes \Q \to \HH(R) \otimes \Q$ and $\cpl{\THH(R)} \otimes \Q$ and $\cpl{\HH(R)} \otimes \Q$ are equivalences (where $\cpl{(-)}$ denotes profinite completion). This follows from the equivalence $\HH(R) \iso \THH(R) \otimes_{\THH(\Z)} \Z$ and the fact that each homotopy group of $\fib(\THH(\Z) \to \Z)$ is finite, noting that the latter fact means that we also have an equivalence $\cpl{\THH(R)} \otimes_{\THH(\Z)} \Z \to \cpl{\HH(R)}$, by \cref{ev--arith--profinite-tensor} (cf. \cite[Lemma 2.5]{BMS}).
  \item Choose a commutative square
    \[
      \begin{tikzcd}
        \MU \ar[r] \ar[d] &
        S \ar[d] \\
        R \otimes \MU \ar[r] &
        {R'}
      \end{tikzcd}
    \]
    as in \cref{mot--xq--even-polynomial-surjection}, and note that the induced square
    \[
      \begin{tikzcd}
        \Z \otimes \MU \ar[r] \ar[d] &
        \Z \otimes S \ar[d] \\
        R \otimes \MU \ar[r] &
        {R'}
      \end{tikzcd}
    \]
    is also such a square. Then, as in the proof of \cref{mot--fil--p-compatibility}, the map $\THH(R) \to \THH(R'/S)$ is eff by \cref{ev--desc--novikov-eff,mot--fil--quasismooth-THH}, and both $\THH(R'/S)$ and its base change $\HH(R') \otimes_{\THH(R')} \THH(R'/S) \iso \THH(R'/\Z \otimes S)$ are even and have even profinite completions with bounded $p$-power torsion for all primes $p$ by \cref{mot--fil--quasiregular-quotient-THH}. \qedhere
  \end{itemize}
\end{proof}

 \label{SecComparison}

\section{The mod $(p,v_1)$ syntomic cohomology of the Adams summand} \label{SecAdamsSummand}
In this section we compute the mod $(p,v_1)$ syntomic cohomology of the connective Adams summand $\ell$ at a fixed prime $p$, where $\ell$ is given its canonical $\mathbb{E}_{\infty}$-ring structure constructed in \cite[Proposition 6.4]{BakerRichter}.  In other words, we compute $\pi_*\left(\gr^*_{\mot}\TC(\ell)  / (p,v_1) \right)$.  We allow $p$ to be any prime, including the primes $2$ and $3$ where the Smith Toda complex $V(1)=\mathbb{S}/(p,v_1)$ does not exist as a homotopy ring spectrum.  At primes $p \ge 5$, where $V(1)$ does exist as a homotopy ring spectrum, this gives an independent proof of the seminal calculation of $V(1)_*\mathrm{TC}(\ell)$ by Ausoni--Rognes \cite{AusoniRognes}. Before stating our main result, we precisely define the mod $(p,v_1,\cdots,v_k)$ reductions of prismatic and syntomic cohomology, even in the absence of a Smith--Toda complex: 


\begin{recollection}
The category of modules over $\fil^\star_{\ev} \mathbb{S}$ agrees with Pstragowski's category of even $\MU$-synthetic spectra \cite{Pstragowski}. Up to $p$-completion, this is also the category of cellular $\mathbb{C}$-motivic spectra. Proofs of these equivalences may be found in \cite[Theorem 7.34]{Pstragowski}, \cite[Theorem 6.12]{Cmot}, and \cite[Example C.16 and Proposition C.22]{Rmot}.

As a result of these equivalences, there is a symmetric monoidal inclusion of even $\MU_*\MU$-comodules into $\gr^*_{\ev} \mathbb{S}$-modules. We will only need this inclusion after $p$-completion, where it has been explored in detail in \cite{SpecialFiber}. Specifically, after $p$-completion, $\gr^*_{\ev} \mathbb{S}$ corresponds to the $p$-completed motivic cofiber of $\tau$.  Under the equivalence of \cite[Theorem 1.13(2)]{SpecialFiber}, which is symmetric monoidal by \cite[Remark 4.15]{SpecialFiber}, it follows that any $p$-torsion even commutative $\MU_*\MU$-comodule algebra gives rise to an $\mathbb{E}_{\infty}$-$\gr^*_{\ev} \mathbb{S}$-algebra.
\end{recollection}

\begin{definition} \label{dfn:reduced-greven}
For each $k \ge 0$, there is an even commutative $\MU_*\MU$ comodule algebra $\MU_* / (p,v_1,\cdots,v_k)$, obtained as an iterated cofiber of $\MU_*$, and we denote
the corresponding $\gr^*_\ev \mathbb{S}$-algebra by 
$\gr^*_\ev \mathbb{S}/(p, \cdots, v_k)$. More generally,
for any $\gr^*_\ev \mathbb{S}$-module $M$, we define
	\[
	M/(p, ..., v_k) := M\otimes_{\gr^*_\ev \mathbb{S}} \left(\gr^*_\ev \mathbb{S}/(p, \cdots , v_k) \right).
	\]
\end{definition}

\begin{remark}
If $R$ is any $\mathbb{E}_{\infty}$-ring, the unit map $\mathbb{S} \to R$ endows $\gr^*_{\ev} R$ with the natural structure of a $\gr^*_{\ev} \mathbb{S}$-algebra.  The same is true for the equivariant, $p$-complete, and cyclotomic variants of the even filtration, so for example $\gr^*_{\ev,\mathrm{TC},p} \THH(R)$ is a $\gr^*_{\ev} \mathbb{S}$-module.  In particular, if $R$ is chromatically quasisyntomic $\mathbb{E}_{\infty}$-ring, we may speak of its mod $(p,\cdots,v_k)$ syntomic cohomology $\pi_*\gr^*_{\mot} \TC(R) / (p,v_1,\cdots,v_k)=\pi_* \left( \gr^*_{\mot} \TC(R) / (p,v_1,\cdots,v_k) \right)$.
\end{remark}


Our main result is as follows
(where we recall \cref{convention-adams-grading} for the notion of
Adams weight):

\begin{theorem} \label{thm:ellsyntomic}
The mod $(p,v_1,v_2)$ syntomic cohomology of $\ell$ is a finite $\mathbb{F}_p$-vector space. As a vector space, it is isomorphic to
\begin{enumerate}
\item $\mathbb{F}_p \{1\}$, in Adams weight $0$ and degree $0$.
\item $\mathbb{F}_p\{\partial,t^{d}\lambda_1,t^{dp}\lambda_2\text{ }|\text{ }0 \le d < p\}$, in Adams weight $1$.  Here, $|\partial|=-1$, $|t^d\lambda_1|=2p-2d-1$, and $|t^{dp}\lambda_2|=2p^2-2dp-1$.
\item $\mathbb{F}_p\{t^d\lambda_1\lambda_2, t^{dp} \lambda_1 \lambda_2, \partial \lambda_1,\partial\lambda_2\text{ }|\text{ }0 \le d < p\}$, in Adams weight $2$.  Here, $|t^d \lambda_1\lambda_2|=2p^2-2p-2d-2$, $|t^{dp} \lambda_1\lambda_2|=2p^2-2p-2dp-2$, $|\partial \lambda_1|=2p-2$, and $|\partial\lambda_2|=2p^2-2$.
\item $\mathbb{F}_p\{\partial\lambda_1\lambda_2\}$, in Adams weight $3$ and degree $2p^2+2p-3$.
\end{enumerate}
The $v_2$-Bockstein spectral sequence (converging to the mod $(p,v_1)$ syntomic cohomology of $\ell$ as an $\mathbb{F}_p[v_2]$-module) collapses with no differentials.
\end{theorem}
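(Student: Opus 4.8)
The plan is to compute $\mathrm{Syn}(\ell)/(p,v_1,v_2):=(\gr^\star_{\mot}\TC(\ell))/(p,v_1,v_2)$ by presenting syntomic cohomology as the fiber of $\varphi-\can$ on prismatic cohomology, and then to read off the $v_2$-Bockstein collapse from the resulting bigrading. Since $\ell$ is chromatically $p$-quasisyntomic (it is chromatically quasisyntomic, with $\MU_*\ell$ a polynomial $\Z_{(p)}$-algebra), \Cref{pmotconv} applied with $k=\S$ gives a filtered equivalence $\fil^\star_\mot\cpl{\TC(\ell)}_p\simeq\fib(\varphi-\can:\fil^\star_\mot\cpl{\TC^{-}(\ell)}_p\to\fil^\star_\mot\cpl{\TP(\ell)}_p)$. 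Passing to associated graded (which is exact) and then applying the mod $(p,v_1,v_2)$ reduction functor (defined via $\MU_*\MU$-comodules on $\gr^\star_\ev\S$-modules, hence also exact), we obtain
\[
\mathrm{Syn}(\ell)/(p,v_1,v_2)\simeq\fib\big(\varphi-\can:(\gr^\star_\mot\TC^{-}(\ell))/(p,v_1,v_2)\to(\gr^\star_\mot\TP(\ell))/(p,v_1,v_2)\big).
\]
Thus it suffices to compute the mod $(p,v_1,v_2)$ prismatic cohomology of $\ell$, together with its Nygaard filtration (the input for $\TC^-$) and the Frobenius.

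Next I would compute the associated graded of the motivic filtration on $\THH(\ell)$ — the Hodge--Tate cohomology of $\ell$ — and its mod $(p,v_1,v_2)$ reduction. Using the identification $\fil^\star_\mot\THH(\ell)\simeq\lim_\Delta\tau_{\ge2\star}\THH(\ell/\MU^{\otimes\bullet+1})$ recorded earlier in the paper (which relies on $\THH(\ell/\MU)$ being even and $\THH(\MU)\to\MU$ being eff), the associated graded is the cohomology of an explicit cosimplicial even ring; after reducing mod $(p,v_1,v_2)$ this collapses to an exterior algebra on the classes $\lambda_1,\lambda_2$ over a truncation of the base, placed in the Adams weights dictated by \Cref{convention-adams-grading}. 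Feeding this through the homotopy-fixed-point and Tate filtrations — which on associated gradeds amounts to adjoining a Bott-type class $t$ in Adams weight one, inverted in the $\TP$ case — produces $(\gr^\star_\mot\TC^{-}(\ell))/(p,v_1,v_2)$ and $(\gr^\star_\mot\TP(\ell))/(p,v_1,v_2)$. I would then evaluate $\varphi-\can$ on these explicit objects: $\can$ is the inclusion of the Nygaard-complete part into the $t$-inverted part, while $\varphi$ is the standard prismatic/Segal-conjecture Frobenius, whose effect on $t$ and on $\lambda_1,\lambda_2$ is governed by the familiar divided-power formulas, so that precisely the classes $t^d\lambda_1$ and $t^{dp}\lambda_2$ with $0\le d<p$ (together with their products and the boundary classes $\partial(-)$ of degree one less) survive in the fiber. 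Assembling kernel and cokernel of $\varphi-\can$ degree by degree yields exactly items (1)--(4), and in particular shows the answer is a finite $\mathbb{F}_p$-vector space. I expect this to be the main obstacle: it is the $E_2$-page analogue of the Ausoni--Rognes computation, and although it is lighter than theirs, getting the Frobenius on the mod $(p,v_1,v_2)$ prismatic cohomology exactly right and bookkeeping the fiber requires care; the upshot should be uniform in $p$, including $p=2,3$.

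Finally, for the $v_2$-Bockstein collapse: the computed $\mathrm{Syn}(\ell)/(p,v_1,v_2)$ is concentrated in Adams weights $0,1,2,3$, with even-degree classes on the weight-$0$ and weight-$2$ lines and odd-degree classes on the weight-$1$ and weight-$3$ lines. A $v_2$-Bockstein differential $d_r$ lowers total degree by one and is $v_2$-linear, and $v_2$ has Adams weight $0$ and even degree $2p^2-2$; comparing the (degree, Adams weight) bidegrees of all generators — and their $v_2$-power translates, which shift degree by $2p^2-2$ and preserve Adams weight — one checks directly that no $d_r$ can match a nonzero source to a nonzero target (each candidate connects lines of incompatible degree parity, or falls outside the range of surviving $t$-powers). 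Hence the spectral sequence degenerates, so $\mathrm{Syn}(\ell)/(p,v_1)$ is $v_2$-torsion-free; being finitely generated over the PID $\mathbb{F}_p[v_2]$, it is therefore free on generators lifting the list above, which is \Cref{thm:introSynell}. (For $p\ge5$ one could instead invoke freeness of $V(1)_*\TC(\ell)$ from \cite{AusoniRognes} together with collapse of the motivic spectral sequence, but the bidegree count is uniform and handles $p=2,3$ as well.)
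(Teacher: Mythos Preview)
Your overall strategy—compute $(\gr^\star_\mot\TC^-(\ell))/(p,v_1,v_2)$ and $(\gr^\star_\mot\TP(\ell))/(p,v_1,v_2)$ from $(\gr^\star_\mot\THH(\ell))/(p,v_1)$ via a $t$-Bockstein, identify $\varphi$ and $\can$, and take the fiber—is exactly the paper's. But two of the steps you wave through are precisely where the content lies.

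First, the passage from $\THH$ to $\TC^-$ and $\TP$ is \emph{not} just ``adjoining a Bott-type class $t$''. The algebraic $t$-Bockstein spectral sequence (coming from the $\fil^\filledsquare_+$ filtration) has $E_1$-page $\Lambda(\lambda_1,\lambda_2)\otimes\mathbb{F}_p[\mu,t]/(t\mu)$ (resp.\ with $t$ inverted), but it supports nontrivial differentials $d_p(t)=t^{p+1}\lambda_1$ and $d_{p^2}(t^p)=t^{p^2+p}\lambda_2$, computed in the cobar complex via $\eta_R(t)\equiv t+t_1t^p$. Only after running these does one obtain $(\gr^\star_\mot\TP(\ell))/(p,v_1,v_2)\cong\mathbb{F}_p[t^{\pm p^2}]\otimes\Lambda(\lambda_1,\lambda_2)$ and the $\TC^-$ analogue with its extra surviving classes $t^d\lambda_1$, $t^{dp}\lambda_2$, $t^d\lambda_1\lambda_2$, $t^{dp}\lambda_1\lambda_2$ for $0<d<p$. (Incidentally, $t$ has Adams weight $0$, not $1$.) These differentials are what produce the specific ranges $0\le d<p$ in the answer; without them you cannot see why exactly those $t$-powers survive.

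Second, and more seriously, your description of $\varphi$ is a placeholder. There are no ``familiar divided-power formulas'' for the cyclotomic Frobenius on $\gr^\star_\mot\TP(\ell)$; the paper's key maneuver is \cref{thm:HodgeTateIso}, an isomorphism
\[
(\gr^\star_\mot\TP(\ell))/(p,v_1,v_2)\;\cong\;(\gr^\star_\mot\THH(\ell)^{\tate\Cp})/(p,v_1),
\]
arising from the identity $v_2\doteq[p](t)$ in the cobar complex. This bridge is what allows the Segal-conjecture input (\cref{thm:ellSegal}, about $\THH(\ell)\otimes_\ell\mathbb{F}_p\to\THH(\ell)^{\tate\Cp}\otimes_\ell\mathbb{F}_p$ inverting $\mu$) to be read as a statement about $\varphi:\TC^-\to\TP$ mod $(p,v_1,v_2)$: namely, $\varphi$ kills every $t$-tower class and sends $\mu^k\lambda_1^{\epsilon_1}\lambda_2^{\epsilon_2}$ to a unit times $t^{-kp^2}\lambda_1^{\epsilon_1}\lambda_2^{\epsilon_2}$. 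Without that isomorphism you have no mechanism to compute $\varphi$ on the $\TP$ side, and you correctly identify this as the main obstacle but do not supply the missing idea.

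A minor point on the $v_2$-Bockstein: your parity claim is inverted. A $d_r$ raises Adams weight by $1$ and changes degree by an odd amount, which is \emph{compatible} with the pattern (even degrees on lines $0,2$; odd on $1,3$). The paper's collapse argument is a finer bidegree count using that $|v_2|=2p^2-2$, not parity.
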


\includegraphics[scale=1,trim={4cm 18.5cm 3.5cm 2.7cm},clip]{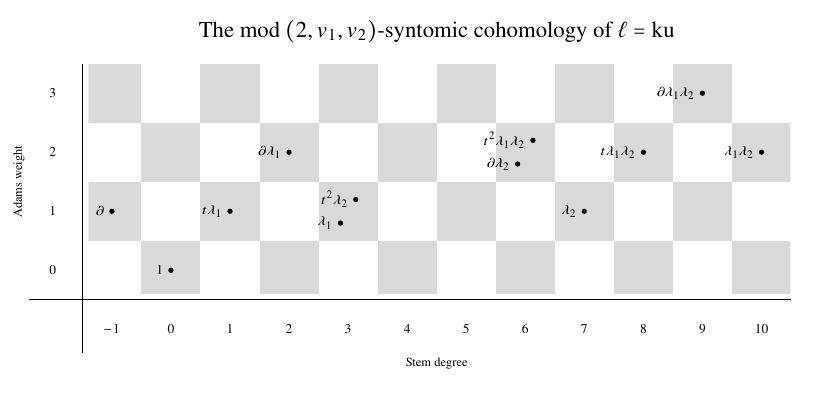}

We will spend most of the remainder of this section proving the first part of \Cref{thm:ellsyntomic}.  The statement about the $v_2$-Bockstein spectral sequence is comparatively straightforward.  Indeed, differentials in the $v_2$-Bockstein spectral sequence increase Adams weight by $1$.  Since the degree of $v_2$ is $2p^2-2$, no differentials are possible for bidegree reasons.  There are no $\mathbb{F}_p[v_2]$-module extension problems to solve, and we conclude \Cref{thm:introSynell}.

\begin{remark} \label{rmk:GammaStar}
Suppose that $p \ge 3$, so that the Smith--Toda complex $V(1)=\mathbb{S}/(p,v_1)$ exists as a spectrum.  Then the mod $(p,v_1)$ syntomic cohomology of $\ell$ is the $\mathrm{E}_2$-page of a spectral sequence converging to $V(1)_*\TC(R)$.  This is the spectral sequence associated to the filtered spectrum \[\fil^{\star}_{\mot}\TC(\ell) \otimes_{\fil^{\star}_\ev \mathbb{S}} \fil^\star_{\ev/\mathbb{S}}(V(1)).\] Here, $\fil^\star_{\ev/\mathbb{S}}$ refers to the even filtration for $\mathbb{S}$-modules developed in \Cref{mod} (or, equivalently, the $\Gamma_\star$ functor from \cite{Cmot}).
\end{remark}

\subsection{Recollections from previous work}

Our computations rely upon a few fundamental facts about $\THH(\ell)$ that can be proven by non-motivic means.  The first fact, due originally to McClure--Staffeldt at odd primes  \cite{McClureStaffeldt} and Angeltveit--Rognes at the prime $2$ \cite{AngeltveitRognes}, is the calculation of $\THH_*(\ell)/(p,v_1)$:

\begin{theorem} \label{thm:THHellwithFpcoeffs}
There is an isomorphism of algebras
\[\pi_{*} \left(\THH(\ell) \otimes_{\ell} \mathbb{F}_p\right) \cong \Lambda(\lambda_1,\lambda_2) \otimes \mathbb{F}_p[\mu],\] 
for classes $\lambda_1$ in degree $2p-1$, $\lambda_2$ in degree $2p^2-1$, and $\mu$ in degree $2p^2$.
\end{theorem}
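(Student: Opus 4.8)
The plan is to recall this as a classical computation --- due to McClure--Staffeldt \cite{McClureStaffeldt} at odd primes and Angeltveit--Rognes \cite{AngeltveitRognes} at $p=2$ --- and to indicate how one recovers it. The first observation is that, since $p,v_1$ is a regular sequence in $\pi_*\ell=\mathbb{Z}_{(p)}[v_1]$ and $\ell/(p,v_1)\simeq H\mathbb{F}_p$, there is an equivalence $\THH(\ell)\otimes_\ell\mathbb{F}_p\simeq\THH(\ell)/(p,v_1)$, which is naturally an $H\mathbb{F}_p$-algebra. So it suffices to compute its homotopy, and the strategy I would follow has two steps: first compute its mod $p$ homology, then read off the homotopy from the $H\mathbb{F}_p$-module structure.

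For the first step, I would compute $H_*(\THH(\ell);\mathbb{F}_p)$ via the B\"okstedt spectral sequence $E^2=\HH^{\mathbb{F}_p}_*(H_*(\ell;\mathbb{F}_p))\Rightarrow H_*(\THH(\ell);\mathbb{F}_p)$. The input $H_*(\ell;\mathbb{F}_p)$ is an explicit quotient of the dual Steenrod algebra $\mathcal{A}_*$ --- a tensor product of a polynomial algebra on the $\xi_i$ with an exterior algebra on the $\bar\tau_j$ ($j\ge 2$) --- so $E^2$ is explicit, of the form $H_*(\ell;\mathbb{F}_p)\otimes\Lambda(\sigma\xi_i)\otimes\Gamma(\sigma\bar\tau_j)$. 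The differentials, determined in the cited works, together with the usual passage from divided-power to polynomial algebras via multiplicative extensions, yield an isomorphism of algebras $H_*(\THH(\ell);\mathbb{F}_p)\cong H_*(\ell;\mathbb{F}_p)\otimes\Lambda(\lambda_1,\lambda_2)\otimes\mathbb{F}_p[\mu]$, with $\lambda_1,\lambda_2$ detected by $\sigma\xi_1,\sigma\xi_2$ in degrees $2p-1$ and $2p^2-1$ and $\mu$ detected by $\sigma\bar\tau_2$ in degree $2p^2$. Base-changing along $\ell\to H\mathbb{F}_p$ and using that $\mathcal{A}_*=H_*(H\mathbb{F}_p;\mathbb{F}_p)$ is a free module over $H_*(\ell;\mathbb{F}_p)$ (so the relevant Tor spectral sequence degenerates), one obtains
\[
  H_*(\THH(\ell)/(p,v_1);\mathbb{F}_p)\cong H_*(\THH(\ell);\mathbb{F}_p)\otimes_{H_*(\ell;\mathbb{F}_p)}\mathcal{A}_*\cong\mathcal{A}_*\otimes\Lambda(\lambda_1,\lambda_2)\otimes\mathbb{F}_p[\mu].
\]

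For the second step, since $\THH(\ell)/(p,v_1)$ is an $H\mathbb{F}_p$-algebra, its mod $p$ homology is $\mathcal{A}_*\otimes_{\mathbb{F}_p}\pi_*(\THH(\ell)/(p,v_1))$ via a comparison compatible with the algebra structures; comparing with the isomorphism just obtained forces $\pi_*(\THH(\ell)\otimes_\ell\mathbb{F}_p)\cong\Lambda(\lambda_1,\lambda_2)\otimes\mathbb{F}_p[\mu]$ as algebras, with the asserted degrees. The main obstacle is step one: identifying the B\"okstedt differentials for $\ell$ and resolving the multiplicative extensions --- this is precisely the content of \cite{McClureStaffeldt,AngeltveitRognes}, and for our purposes we simply cite it. (An alternative to running the B\"okstedt spectral sequence by hand is to exploit the ring map $\ell\to H\mathbb{Z}_p$ and the known structure of $\THH_*(H\mathbb{Z}_p)/p$, together with the $v_1$-Bockstein, to pin down the surviving classes $\lambda_1$ and $\mu$, and then use the algebra structure to handle $\lambda_2$.)
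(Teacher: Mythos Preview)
Your proposal is correct and aligns with the paper's treatment: the paper does not prove this theorem at all but simply cites it as a known result due to McClure--Staffeldt (odd primes) and Angeltveit--Rognes ($p=2$). You go further than the paper by sketching the B\"okstedt spectral sequence argument and the passage from mod $p$ homology to homotopy via the $H\mathbb{F}_p$-module structure, which is a faithful outline of how those references proceed; this is a nice addition but not something the paper itself supplies.
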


\begin{remark}
$\THH(\ell) \otimes_{\ell} \mathbb{F}_p$ can be interpreted as the $\THH$ of $\ell$ with coefficients in the $\ell$-bimodule $\mathbb{F}_p$, and so is often denoted $\THH(\ell;\mathbb{F}_p)$.
\end{remark}

\begin{remark} \label{rmk:algebra-generators-ill-defined}
As stated, \Cref{thm:THHellwithFpcoeffs} does not precisely pin down the algebra generators $\mu$, $\lambda_1$, and $\lambda_2$; for example, we could always multiply a generator by an element of $\mathbb{F}_p^{\times}$ to obtain a different generator.  Using the motivic cobar complex, we will in \Cref{thm:THH-mot-collapse} precisely pin down correspondingly named elements of $\pi_* \gr^*_{\mot} \THH(\ell) / (p,v_1)$.
\end{remark}

To state the second fact, note that the sequence of $\mathbb{E}_{\infty}$-ring maps
$$\ell \to \THH(\ell) \xrightarrow{\varphi} \THH(\ell)^{\tate \Cp}$$
allows us to view the cyclotomic Frobenius $\varphi$ as a map of $\ell$-algebras.  We then have the following result, which is a version of the Segal conjecture:

\begin{theorem} \label{thm:ellSegal}
The mod $(p,v_1)$ Frobenius map $\pi_{*} \left(\THH(\ell) \otimes_{\ell} \mathbb{F}_p\right) \xrightarrow{\varphi} \pi_{*}\left(\THH(\ell)^{\tate \Cp} \otimes_{\ell} \mathbb{F}_p\right)$ is identified under the isomorphism of \Cref{thm:THHellwithFpcoeffs} with the ring map
\[\Lambda(\lambda_1,\lambda_2) \otimes \mathbb{F}_p[\mu] \to \Lambda(\lambda_1,\lambda_2) \otimes \mathbb{F}_p[\mu^{\pm 1}]\] 
that inverts the class $\mu$.
\end{theorem}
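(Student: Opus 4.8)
This statement is a form of the \emph{Segal conjecture} for the connective Adams summand $\ell \simeq \mathrm{BP}\langle 1 \rangle$; its essential content is the case $n = 1$ of the Segal conjecture for truncated Brown--Peterson spectra proved by the first and third authors in \cite{HahnWilson}, and the plan is to recall why that input applies and then carry out the bookkeeping identifying the resulting map with the localization inverting $\mu$. The first step is a base-change reduction. Because $\pi_*(\ell/(p,v_1))$ is $\mathbb{F}_p$ concentrated in degree $0$, the $\ell$-module $\ell/(p,v_1)$ is the Eilenberg--MacLane spectrum $\mathbb{F}_p$, which we regard as an $\mathbb{E}_\infty$-$\ell$-algebra via $\ell \to \tau_{\le 0}\ell = \mathbb{Z}_{(p)} \to \mathbb{F}_p$; in particular $\mathbb{F}_p$ is a perfect (finite cell) $\ell$-module, so $(-)^{\tate\Cp}$ commutes with $(-) \otimes_\ell \mathbb{F}_p$. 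Thus $\THH(\ell)^{\tate\Cp} \otimes_\ell \mathbb{F}_p \simeq (\THH(\ell) \otimes_\ell \mathbb{F}_p)^{\tate\Cp}$, and under this identification the base-changed Frobenius $\varphi \otimes_\ell \mathbb{F}_p$ becomes the $\mathbb{E}_\infty$-ring map $\THH(\ell) \otimes_\ell \mathbb{F}_p \to (\THH(\ell) \otimes_\ell \mathbb{F}_p)^{\tate\Cp}$ induced by the cyclotomic Frobenius (a ring map, by \cite{NikolausScholze}).

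Now feed in the Segal conjecture. By \cite{HahnWilson}, the cofiber of $\varphi \colon \THH(\ell) \to \THH(\ell)^{\tate\Cp}$ is bounded above after $p$-completion; since $\ell/(p,v_1)$ is a finite, $p$-power-torsion complex, smashing over $\ell$ with it shows $\cofib(\varphi \otimes_\ell \mathbb{F}_p)$ is bounded above, i.e.\ $\pi_*(\varphi \otimes_\ell \mathbb{F}_p)$ is an isomorphism in all sufficiently large degrees. Combine this with two structural facts. First, by \Cref{thm:THHellwithFpcoeffs} the source is the graded ring $P := \Lambda(\lambda_1,\lambda_2) \otimes \mathbb{F}_p[\mu]$, in which $\mu$ is a non-zero-divisor of degree $2p^2$ and every homogeneous element of degree larger than $2p^2 + 2p - 2$ is $\mu$-divisible; consequently multiplication by $\mu$ is bijective from $P_n$ to $P_{n + 2p^2}$ for all large $n$. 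Second, the target $R := \pi_*\big((\THH(\ell) \otimes_\ell \mathbb{F}_p)^{\tate\Cp}\big)$ is a module over $\pi_*(\mathbb{F}_p^{\tate\Cp})$ --- $\THH(\ell) \otimes_\ell \mathbb{F}_p$ being a module over $\mathbb{F}_p$ with trivial $\Cp$-action --- and in $\pi_*(\mathbb{F}_p^{\tate\Cp})$ the Tate class $t$ (of degree $-2$, or $-1$ if $p = 2$) is a unit, so every power of $t$ acts invertibly on $R$. Let $\tau$ denote the power of $t$ of degree $-2p^2$. Then the degree-$0$ operator $\varphi(\mu) \cdot \tau$ on $R$ (where $\varphi(\mu) \in R$ is the image of $\mu$) commutes with the invertible action of $t$; since $\pi_*(\varphi \otimes_\ell \mathbb{F}_p)$ is an isomorphism in large degrees, the map $\varphi(\mu) \colon R_n \to R_{n + 2p^2}$ is bijective for $n \gg 0$, hence so is $\varphi(\mu)\cdot\tau$ on $R_n$, and then periodicity under $t$ forces $\varphi(\mu)\cdot\tau$ (hence $\varphi(\mu)$) to act invertibly on all of $R$. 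Therefore $\pi_*(\varphi \otimes_\ell \mathbb{F}_p)$ factors through a ring map $P[\mu^{-1}] \to R$ that is still an isomorphism in large degrees, and comparing any fixed degree with a large one via a suitable power of $\mu$ (invertible on both sides) upgrades it to an isomorphism in every degree. This exhibits $\pi_*(\varphi \otimes_\ell \mathbb{F}_p)$ as the localization $\Lambda(\lambda_1,\lambda_2) \otimes \mathbb{F}_p[\mu] \to \Lambda(\lambda_1,\lambda_2) \otimes \mathbb{F}_p[\mu^{\pm 1}]$, as claimed.

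The one genuinely non-formal ingredient is the Segal conjecture input --- the boundedness of $\cofib(\varphi)$ after $p$-completion --- which is the technical heart of \cite{HahnWilson} and is the $\ell$-analogue of the classical fact that $\varphi \colon \THH(\mathbb{F}_p) \to \THH(\mathbb{F}_p)^{\tate\Cp}$ inverts the Bökstedt periodicity class; everything afterwards is the routine bookkeeping above. As an alternative that does not invoke \cite{HahnWilson}, one can compute $(\THH(\ell) \otimes_\ell \mathbb{F}_p)^{\tate\Cp}$ directly from the $\Cp$-Tate spectral sequence with $E_2$-page $\hat{H}^{-*}(\Cp; \Lambda(\lambda_1,\lambda_2) \otimes \mathbb{F}_p[\mu])$; the differentials and multiplicative relations needed are, up to reindexing, those worked out by McClure--Staffeldt \cite{McClureStaffeldt} at odd primes and Angeltveit--Rognes \cite{AngeltveitRognes} at the prime $2$, and once the target is identified the same reasoning pins down the map.
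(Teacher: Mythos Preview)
Your proof is correct and rests on the same key input as the paper's---the Segal conjecture for $\ell = \mathrm{BP}\langle 1\rangle$ established in \cite{HahnWilson}---but the deduction is organized differently. The paper does not invoke the bounded-above-cofiber statement as a black box; instead it recalls the specific argument from \cite[\S4]{HahnWilson}: filter $\ell$ by its $\mathbb{F}_p$-Adams filtration to obtain a map of spectral sequences converging to the mod $(p,v_1)$ Frobenius, identify the map of $\mathrm{E}_2$-pages with the localization $\mathbb{F}_p[x]\otimes\Lambda(\sigma v_0,\sigma v_1)\to\mathbb{F}_p[x^{\pm 1}]\otimes\Lambda(\sigma v_0,\sigma v_1)$ inverting the B\"okstedt generator $x$, note that $\mu$ is detected by $x^{p^2}$, and then check that $\varphi(\mu)$ is genuinely invertible (not merely on $\mathrm{E}_2$) because $x^{-p^2}$ is a permanent cycle by the Leibniz rule, so $\varphi(\mu)$ has an inverse up to higher filtration and completeness finishes.

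Your route trades the Adams-filtration analysis for the observation that the target is a module over $\pi_*(\mathbb{F}_p^{\tate\Cp})$, whose degree-$(-2)$ periodicity lets you propagate the large-degree isomorphism (coming from the bounded cofiber) to all degrees. This is arguably cleaner and more conceptual once one is willing to quote the Segal conjecture wholesale. The paper's route, by contrast, is closer to being self-contained---it essentially reproves the relevant piece of \cite{HahnWilson} inline---and makes visible the detection $\mu \leadsto x^{p^2}$, which connects the computation to the classical B\"okstedt periodicity picture.
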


\Cref{thm:ellSegal} was first proved for primes $p \ge 5$ by Ausoni--Rognes \cite[Theorem 5.5]{AusoniRognes}.
An argument at $p=2$ is given in the thesis of Sverre {L}un{\o}e{-}{N}ielsen \cite{sverre-thesis}.
The result can be deduced at all primes from the discussion of the first and third authors in \cite[\textsection 4]{HahnWilson}.
To recall that argument, note that filtering $\ell$ by its $\mathbb{F}_p$-Adams filtration gives a map of spectral sequences, converging to the mod $(p,v_1)$ Frobenius, which on $\mathrm{E}_2$-pages is $\pi_{*}$ of
\[\THH(\mathbb{F}_p[v_0,v_1]) \otimes_{\mathbb{F}_p[v_0,v_1]} \mathbb{F}_p \xrightarrow{\varphi} \THH(\mathbb{F}_p[v_0,v_1])^{\tate \Cp} \otimes_{\mathbb{F}_p[v_0,v_1]} \mathbb{F}_p.\]
This map of $E_2$-pages can be identified with the ring map 
\[\mathbb{F}_p[x] \otimes \Lambda(\sigma v_0,\sigma v_1) \to \mathbb{F}_p[x^{\pm 1}] \otimes \Lambda(\sigma v_0,\sigma v_1)\]
that inverts the degree $2$ B\"okstedt generator $x \in \THH_{*}(\mathbb{F}_p)$.
In both spectral sequences, the class $\mu$ is detected by $x^{p^2}$.
Upon checking that $\varphi(\mu)$ is invertible, and not merely invertible on the $\mathrm{E}_2$-page, we can deduce \Cref{thm:ellSegal} by the fact that it is true on associated graded.

It follows from the Leibniz rule that $x^{-p^2}$ is a permanent cycle in the codomain spectral sequence, and therefore that $\varphi(\mu)$ has an inverse up to higher filtration.
By completeness of the filtration on homotopy groups,
$\varphi(\mu)$ is itself invertible.

\subsection{Naming conventions in the cobar complex}
In the next sections we begin our explicit computations of the prismatic and syntomic cohomologies of $\ell$. We make most of the former computation via a convenient, specific resolution, which was also used in \cite[Section 6]{HahnWilson}.

To begin, we note the following folklore proposition:\footnote{Since we are interested only in mod $p$ syntomic cohomology, it would also suffice to find an $\MU$-orientation of the $p$-completion of $\ell$.  This may be slightly simpler, since $\mathrm{gl}_1(\ell_p^{\wedge})$ is a retract of $\mathrm{gl}_1(\mathrm{ku}_p^{\wedge})$ via $\mathbb{E}_{\infty}$ Adams operations. There are several recorded proofs of $\mathbb{E}_{\infty}$-$\MU$-orientations of $\mathrm{ku}_{(p)}$ and $\mathrm{ku}_p^{\wedge}$, but we were unable to find a reference for an $\mathbb{E}_{\infty}$-orientation of the $p$-local ring $\ell$.  As a final remark, an alternative construction of an $\mathbb{E}_{\infty}$-orientation of $\ell$ can be extracted from \cite[Theorem 32]{HopkinsLawson}, using that $\ell$ is the connective cover of its $L_1$ localization, and checking the Ando criterion by means of the injection from $\ell$ to $\mathrm{ku}_{(p)}$.}

\begin{proposition}
There exists an $\mathbb{E}_{\infty}$-ring map
\[\MU \to \ell,\]
such that the induced map on $p$-localizations $\MU_{(p)} \to \ell$ is a quasiregular quotient.
\end{proposition}

\begin{proof}
First, let us prove that there exists some $\mathbb{E}_{\infty}$-ring map $\MU \to \ell$, without yet worrying about whether the map is a quasiregular quotient.

This is equivalent to asking that
\[\Sigma^2 \mathrm{ku} \xrightarrow{J} \Sigma \mathrm{gl}_1(\mathbb{S}) \to \Sigma \mathrm{gl}_1(\ell)\]
be nullhomotopic, and we will in fact observe that \emph{any} spectrum map $\Sigma^2 \mathrm{ku} \to \Sigma \mathrm{gl}_1(\ell)$ must be nullhomotopic.
The basic strategy will be that of \cite[Section 4]{HahnYuan}, building on \cite{AdamsPriddy} and \cite{LeeBPn}.

We first show that there is an equivalence of spectra
\begin{equation}
  \label{gl1ell}
  \tau_{\ge 2} \Sigma \mathrm{gl}_1(\ell) \simeq \Sigma^{2p-1}\ell,
\end{equation}
by an argument analogous to that in \cite[Lemma 4.5]{HahnYuan}. To begin, the equivalences of spaces
\[
  \Omega^{\infty} \tau_{\ge 2} \Sigma \mathrm{gl}_1(\ell) \simeq \Omega^{\infty} \tau_{\ge 2} \Sigma \ell \simeq \Omega^{\infty} \Sigma^{2p-1} \ell
\]
let us identify the homotopy groups of the two sides of \cref{gl1ell}. We next claim that, under this identification, the primary $k$-invariants of the two sides of \cref{gl1ell} induce the same maps in $\F_p$-cohomology. To check that two spectrum maps $\Z \to \Sigma^{2p-1}\Z$ induce the same map on $\F_p$-cohomology, it suffices to check that the same is true of their compositions with the reduction map $\Sigma^{2p-1}\Z \to \Sigma^{2p-1}\F_p$, as this latter map is surjective on $\F_p$-cohomology. Furthermore, the canonical map
\[
  \H^{2p-1}(\Z;\F_p) \to \H^{n+2p-1}(\mathrm{K}(\Z,n);\F_p)
\]
is injective for $n \ge 2p-1$. Thus, the claim about the $k$-invariants follows again from the above space-level equivalence.

Now, the spectral sequence
\[
  \H^*(\pi_*(\Sigma^{2p-1}\ell);\F_p) \implies \H^*(\Sigma^{2p-1}\ell;\F_p),
\]
determined by the Postnikov filtration of $\Sigma^{2p-1}\ell$, collapses after the first page onto the zero line (cf. the proof of \cite[Proposition 2.1]{AdamsPriddy}). The preceding paragraph implies that the same must be true of the analogous spectral sequence
\begin{align*}
  &\H^*(\pi_*(\tau_{\ge 2} \Sigma \mathrm{gl}_1(\ell));\F_p) \implies \H^*(\tau_{\ge 2} \Sigma \mathrm{gl}_1(\ell);\F_p)
\end{align*}
and that we have an isomorphism
\[
  \H^*(\tau_{\ge 2} \Sigma \mathrm{gl}_1(\ell);\F_p) \iso \H^*(\Sigma^{2p-1}\ell;\mathbb{F}_p)
\]
as modules over the mod $p$ Steenrod algebra. Since $\tau_{\ge 2} \Sigma \mathrm{gl}_1(\ell)$ is a finite type spectrum, it then follows from \cite[Theorems 1.1-1.2]{LeeBPn} that there is an equivalence \cref{gl1ell}.

Now, since $\ell$ is $p$-local, spectrum maps $\Sigma^2 \mathrm{ku} \to \Sigma^{2p-1}\ell$ factor through spectrum maps $\Sigma^2 \mathrm{ku}_{(p)} \to \Sigma^{2p-1} \ell$, and $\Sigma^2 \mathrm{ku}_{(p)} \simeq \bigoplus_{1 \le q \le p-1} \Sigma^{2q} \ell$.  By \cite[Proposition 1.6]{LeeBPn}, it follows that all spectrum maps $\Sigma^2 \mathrm{ku} \to \Sigma^{2p-1} \ell$ are nullhomotopic.

Finally, we claim that any $\mathbb{E}_{\infty}$-ring map $\MU_{(p)} \to \ell$ is a quasiregular quotient.  Indeed, we may choose polynomial generators $\pi_*\MU_{(p)} \cong \mathbb{Z}_{(p)}[x_1,x_2,\cdots]$ where $|x_i|=2p^i-2$, and such that the Quillen idempotent $\BP_* \to (\MU_{(p)})_*$ sends $v_1$ to $x_{p-1}$ modulo $p$.  For degree reasons, $x_1,\cdots,x_{p-2}$ map to zero in $\pi_*\ell$.  Comparing the first spectrum $k$-invariants of $\BP$ and $\ell$, we learn that $x_{p-1}$ maps to a polynomial generator of $\pi_*\ell$.  Possibly renaming $x_i$ for $i>p-1$, by subtracting multiples of powers of $x_{p-1}$, we may assume that all $x_i$ with $i \ne p-1$ map to zero in $\pi_*\ell$.
\end{proof}

In light of the above proposition, we make the following definition:

\begin{definition} \label{dfn:ellcobar}
The \emph{cobar complex} computing $\gr^*_{\mot}\THH(\ell)$ is the $E_1$-page of the descent spectral sequence for
\[\THH(\ell) \to \THH(\ell/\MU)=\THH(\ell/\MU_{(p)}).\]
This cobar complex has $s$th term given by $\pi_*\THH(\ell/\MU^{\otimes s+1})$.  Cocycles in the $s$th term represent elements in the $s$th Adams weight of $\gr^*_{\mot} \THH(\ell)$.

Similarly, we refer to $\pi_*(\THH(\ell/\MU^{\otimes \bullet+1})^{\h\cir})$ and $\pi_* (\THH(\ell/\MU^{\otimes \bullet+1})^{\tate\cir})$ as the cobar complexes computing $\gr^*_{\mot} \TC^{-}(\ell)$ and $\gr^*_{\mot} \TP(\ell)$, respectively.  There are canonical maps from the cobar complex computing $\gr^*_{\mot} \TC^{-}(\ell)$ to the cobar complexes computing $\gr^*_{\mot} \TP(\ell)$ and $\gr^*_{\mot}\THH(\ell)$, respectively.
\end{definition}

We emphasize again that the cobar complexes of \Cref{dfn:ellcobar} are merely our preferred presentations for the much more canonical $\gr^*_{\mot}\THH(\ell)$, $\gr^*_{\mot}\TC^{-}(\ell)$, and $\gr^*_{\mot}\TP(\ell)$.  In particular, these presentations were already studied in \cite[Section 6]{HahnWilson}.

Using the Quillen map $\mathrm{BP} \to \MU_{(p)}$, as well as various unit maps such as $\mathrm{MU}_{(p)} \to \TC^{-}(\ell/\MU_{(p)}) = \TC^{-}(\ell/\MU),$
we obtain a map from the Adams--Novikov $\mathrm{E}_1$-page to the cobar complex computing $\gr^*_{\mot} \TC^{-}(\ell)$.  
In particular, any Adams--Novikov cocycle names a cocycle in the cobar complex for $\gr^*_{\mot}\TC^{-}(\ell)$, and, via the canonical maps, also names cocycles in the cobar complexes for $\gr^*_{\mot} \TP(\ell)$ and $\gr^*_{\mot} \THH(\ell)$.  

When referring to elements of the Adams--Novikov $\mathrm{E}_1$-page, we will use the (standard) conventions of \cite[\textsection 3]{WilsonSampler}.  For example, $t_1^2+v_1t_1$, which is a cocycle at $p=2$ representing $\nu \in \pi_*(\mathbb{S})$, also represents a class of Adams weight $1$ in $\pi_* \gr^*_{\mot}\TC^{-}(\ell)$.

\begin{remark}
We will need the fact that $t_1$ is a cocycle on the Adams--Novikov $E_1$-page, or in other words that $d(t_1)=0$.  This furthermore implies that $d(t_1^p) \equiv 0$ modulo $p$.
\end{remark}

\begin{definition}
Using the unit map $\mathrm{BP}^{\h\cir} \to \MU_{(p)}^{\h\cir} \to \mathrm{TC}^{-}(\ell/\MU)$, we send the standard complex orientation of $\mathrm{BP}$ from \cite[Section 3]{WilsonSampler} to a class in the cobar complex for $\gr^*_{\mot} \TC^{-}(\ell)$ that we name $t$.
\end{definition}

\begin{remark}
We have the standard formula from \cite[Lemma 3.14]{WilsonSampler}:
$$\eta_R(t)=c(t+_{\mathbb{G}} t_1t^p+_{\mathbb{G}}t_2t^{p^2}+\cdots),$$
where $+_{\mathbb{G}}$ denotes addition using the $\mathrm{BP}_*$ formal group law and $c$ denotes the conjugation action on $\mathrm{BP}_*\mathrm{BP}$, extended by the rule that $c(t)=t$.  All we really need from this formula is the fact that $\eta_R(t) \equiv t-t_1t^p$ modulo $p,v_1$, and $t^{p+2}$.  This implies that $\eta_R(t^p) \equiv t^p-t_1^pt^{p^2}$ modulo $p,v_1,$ and $t^{p^2+2p}$.
\end{remark}

\begin{remark}
Each term $\THH_*(\ell/\MU^{\otimes s+1})$ in the cobar complex for $\gr^*_{\mot}\THH(\ell)$ is a free $\pi_* \ell$-module, and so has $(p,v_1)$ as a regular sequence.
By the mod $(p,v_1)$ cobar complex for $\gr^*_{\mot} \THH(\ell)$ we will mean the complex obtained by modding out both $p$ and $v_1$ levelwise.
This complex computes $\left(\gr^*_{\mot} \THH(\ell)\right) / (p,v_1)$, which is the $E_2$-term of the motivic spectral sequence for $\pi_*\left(\THH(\ell) \otimes_{\ell} \mathbb{F}_p\right)$.

We may also speak of the mod $(p,v_1)$ cobar complex for $\gr^*_{\mot} \TC^{-}(\ell)$, which has $s$th term isomorphic to $\THH_\star(\ell/\MU^{\otimes s+1})\llbracket t \rrbracket / (p,v_1)$, and analogously the mod $(p,v_1)$ cobar complex for $\gr^*_{\mot} \TP(\ell)$.  At the prime $2$, where $V(1)=\mathbb{S}/(2,v_1)$  does not exist, these cobar complexes do not necessarily compute the $\mathrm{E}_2$-page of any topologically relevant spectral sequence.
\end{remark}

\subsection{Hochschild homology}

The motivic spectral sequence for $\THH(\ell) \otimes_{\ell} \mathbb{F}_p$ was computed in \cite[\textsection 6.1]{HahnWilson}, and we recall the results below.

\begin{lemma}
In the mod $(p,v_1)$ cobar complex for $\gr^*_{\mot}\TC^{-}(\ell)$, the elements $v_2$ and $t_1$ are both divisible by $t$.
\end{lemma}

\begin{proof}
It suffices to check that $v_2$ is sent to zero under the composite non-equivariant ring map
\[\BP \to \MU_{(p)} \to \THH(\ell/\MU) / (p,v_1),\] 
and that $t_1$ is sent to zero under the composite
\[\BP \otimes \BP \to \MU_{(p)} \otimes \MU_{(p)} \to \THH(\ell/\MU \otimes \MU) / (p,v_1).\]
Non-equivariantly, the unit map $\MU_{(p)} \to \THH(\ell/\MU)$ factors through the unit map $\MU_{(p)} \to \ell$.  Thus, for the first statement it suffices to note that $v_2$ is sent to zero under the composite
\[\BP \to \MU_{(p)} \to \ell / (p,v_1) \iso \mathbb{F}_p,\]
and similarly $t_1$ is sent to zero under the composite
\[\BP \otimes \BP \to \MU_{(p)} \otimes \MU_{(p)} \to \ell / (p,v_1). \qedhere\]
\end{proof}

\begin{definition}
  Following \cite{HahnWilson}, we write $\sigma^2v_2$ and $\sigma^2t_1$ for the elements in the mod $(p,v_1)$ cobar complex for $\gr^*_{\mot}\TC^{-}(\ell)$ defined by the relations $t\sigma^2v_2=v_2$ and $t\sigma^2t_1=t_1,$ respectively.  Using the canonical map between the cobar complex for $\gr^*_{\mot}\TC^{-}(\ell)$ and the cobar complex for $\gr^*_{\mot}\THH(\ell)$, we may also speak of classes $\sigma^2v_2$ and $\sigma^2t_1$ in the mod $(p,v_1)$ cobar complex for $\gr^*_{\mot}\THH(\ell)$
\end{definition}

The following result is essentially the case $n=1$ of \cite[Proposition 6.1.6]{HahnWilson}:

\begin{theorem} \label{thm:THH-mot-collapse}
The motivic spectral sequence for $\pi_*\left(\THH(\ell) \otimes_{\ell} \mathbb{F}_p\right)$ collapses at the $\mathrm{E}_2$-page.  In the mod $(p,v_1)$ cobar complex for $\gr^*_{\mot}\THH(\ell)$, the elements $\sigma^2v_2,\sigma^2t_1,$ and $(\sigma^2t_1)^{p}$ are cocycles representing $\mu,\lambda_1,$ and $\lambda_2$, respectively. Here, $\mu$ has degree $2p^2$ and Adams weight $0$, $\lambda_1$ has degree $2p-1$ and Adams weight $1$, and $\lambda_2$ has degree $2p^2-1$ and Adams weight $1$.
\end{theorem}

\begin{proof}
The case $n=1$ of \cite[Proposition 6.1.6]{HahnWilson} exactly states that this motivic spectral sequence collapses, with $\mu$ detected in Adams weight $0$ and $\lambda_1,\lambda_2$ detected in Adams weight $1$. We also recall from \cite[Construction 6.1.5]{HahnWilson} that $\mu$ is detected by $\sigma^2v_2 \in \THH(\ell/\MU;\mathbb{F}_p)$ and that we have an identification
\[
  \THH(\ell) \otimes_{\ell} \mathbb{F}_p \iso \mathbb{F}_p \otimes_{\mathbb{F}_p \otimes \ell} \mathbb{F}_p
\] 
under which $\lambda_1 \doteq \sigma t_1$ and $\lambda_2 \doteq \sigma t_2$. Unravelling the identification
\[
\THH(\ell/\MU^{\otimes 2};\F_p) \iso \THH(\ell/\MU;\mathbb{F}_p) \otimes_{\THH(\ell;\mathbb{F}_p)} \THH(\ell/\MU;\mathbb{F}_p),
\]
it follows that $\lambda_1,\lambda_2$ are detected, respectively, by $\sigma^2 t_1,\sigma^2 t_2 \in \THH(\ell/\MU^{\otimes 2};\mathbb{F}_p)$. To finish, we need to show that the element $(\sigma^2t_1)^p \in \THH(\ell/\MU^{\otimes_2};\mathbb{F}_p)$ also detects $\lambda_2$, i.e. that $(\sigma^2t_1)^p \doteq \sigma^2 t_2$.



This last claim is equivalent to checking that the appropriate Dyer--Lashof operation in the $\E_\infty$-$\F_p$-algebra $\THH(\ell;\F_p)$ relates $\lambda_1$ and $\lambda_2$. This we may do by following the proof of \cite[Lemma 2.4.1]{HahnWilson}, i.e. by noting that $\sigma$ is compatible with Dyer--Lashof operations and that there is a Dyer--Lashof operation relating $t_1$ and $t_2$ in 
$(\mathbb{F}_p)_*(\ell)$ (as proved using the maps $(\mathbb{F}_p)_*\mathrm{BU} \cong (\mathbb{F}_p)_*\MU \to (\mathbb{F}_p)_*\ell)$. (We also note that this operation played an important role in the calculations of Ausoni--Rognes \cite[\textsection 2]{AusoniRognes}.)
\end{proof}

\includegraphics[scale=1,trim={4.0cm 20.0cm 2.5cm 2.5cm},clip]{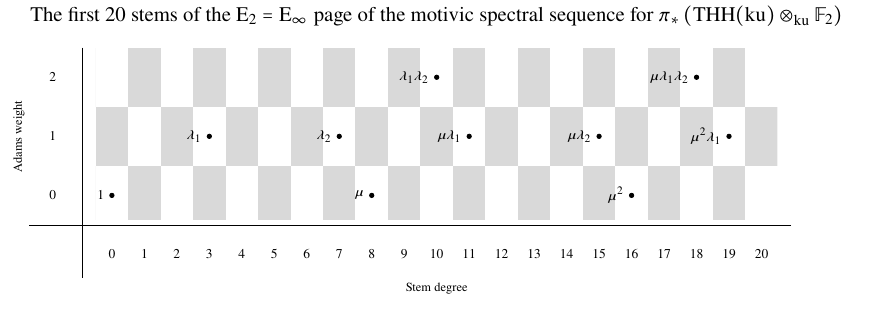}


\begin{remark}
Each term $\THH_*(\ell/\MU^{\otimes s+1})/(p,v_1)$ in the mod $(p,v_1)$ cobar complex for $\gr^*_{\mot} \THH(\ell)$ is a free $\mathbb{F}_p[\sigma^2v_2]$-module.  This means in particular that $(p,v_1,v_2=t \sigma^2v_2)$ is a regular sequence in $\TC^{-}(\ell/\MU^{\otimes s+1})$ and in $\TP(\ell/\MU^{\otimes s+1})$, so we may profitably speak of e.g. the mod $(p,v_1,v_2)$ cobar complex for $\gr^*_{\mot} \TP(\ell)$.
\end{remark}

\subsection{A useful isomorphism}

In order to calculate the mod $(p,v_1,v_2)$ syntomic cohomology of $\ell$ we will first need to compute the mod $(p,v_1,v_2)$ prismatic cohomology of $\ell$, or in other words $\pi_*\gr^*_{\mot} \TP(\ell)/(p,v_1,v_2)$.  
A useful and perhaps surprising fact is that $\gr^*_{\mot} \TP(\ell) / (p,v_1,v_2)$ has a second interpretation: as we explain in this brief section, it is canonically isomorphic to $\left(\gr^*_{\mot}\THH(\ell)^{\tate \Cp}\right)/(p,v_1)$ (cf. \cite[Construction 3.3]{BhattMathew}). For the purposes of this section we are defining
$\fil^{\star}_{\mot}\THH(\ell)^{\tate \Cp}$ via the pushout in 
filtered $\mathbb{E}_\infty$-rings:
	\[
	\begin{tikzcd}
	\fil^\star_\mot \TC^{-}(\ell)\arrow{r}{\varphi}\ar[d] & 
	\fil^\star_\mot \TP(\ell) \ar[d]\\
	\fil^\star_\mot \THH(\ell)\ar[r] &
	\fil^\star_\mot \THH(\ell)^{\tate \Cp}.
	\end{tikzcd}
	\]
To define the isomorphism, note that $v_2=0$ in $\pi_*\gr^*_{\ev} \ell/(p,v_1)$, so the sequence of algebra maps
\[\gr^*_{\ev} \ell \to \gr^*_{\mot} \THH(\ell) \xrightarrow{\varphi} \gr^*_{\mot} \THH(\ell)^{\tate \Cp}\]
imply that $v_2=0$ in $\pi_*\gr^*_{\mot} \THH(\ell)^{\tate \Cp}/(p,v_1)$.  Thus, the natural map
\[\gr^*_{\mot} \TP(\ell)/(p,v_1) \to \gr^*_{\mot} \THH(\ell)^{\tate \Cp} / (p,v_1) \]
factors over a map 
\[g:\gr^*_{\mot} \TP(\ell)/(p,v_1,v_2) \to \gr^*_{\mot} \THH(\ell)^{\tate \Cp} / (p,v_1) \]

\begin{theorem} \label{thm:HodgeTateIso}
The map $g$ above is an equivalence
\[\left(\gr^*_{\mot} \TP(\ell)\right)/(p,v_1,v_2) \isoto \left(\gr^*_{\mot} \THH(\ell)^{\tate \Cp}\right) / (p,v_1).\]
\end{theorem}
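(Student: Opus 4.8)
The plan is to unwind the defining pushout, reduce mod $(p,v_1)$, and match the result against $\bigl(\gr^\star_\mot\TP(\ell)\bigr)/(p,v_1,v_2)$, in the spirit of \cite[Construction 3.3]{BhattMathew}.

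First, since $\gr^\star$ is symmetric monoidal and preserves colimits, and since $(-)/(p,v_1)$ is base change along the colimit-preserving map $\gr^\star_\ev\S\to\gr^\star_\ev\S/(p,v_1)$, applying $\gr^\star$ to the square defining $\fil^\star_\mot\THH(\ell)^{\tate \Cp}$ and then reducing mod $(p,v_1)$ yields a pushout of graded $\E_\infty$-rings
\[
  \gr^\star_\mot\THH(\ell)^{\tate \Cp}/(p,v_1)\ \simeq\ \bigl(\gr^\star_\mot\THH(\ell)/(p,v_1)\bigr)\otimes_{\gr^\star_\mot\TC^{-}(\ell)/(p,v_1)}\bigl(\gr^\star_\mot\TP(\ell)/(p,v_1)\bigr),
\]
with the two legs induced by the filtered Frobenius $\varphi\colon\fil^\star_\mot\TC^{-}(\ell)\to\fil^\star_\mot\TP(\ell)$ of \cref{pmotconv} and by the canonical map $\fil^\star_\mot\TC^{-}(\ell)\to\fil^\star_\mot\THH(\ell)$.

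Second, I would identify the canonical leg as a base change. The unit $\ell\to\THH(\ell)$ is a map of $\cir$-equivariant $\E_\infty$-rings with $\ell$ carrying the trivial action, and $\ell$ is even, so the lemma on even $\E_\infty$-rings with $\cir$-action (with $A=\ell$, $M=\THH(\ell)$) gives $\THH(\ell)\simeq\THH(\ell)^{\h\cir}\otimes_{\ell^{\h\cir}}\ell=\TC^{-}(\ell)\otimes_{\ell^{\h\cir}}\ell$, where $\ell^{\h\cir}$ denotes the $\cir$-homotopy fixed points of $\ell$ with trivial action, augmented to $\ell$. Working in the descent presentation of the motivic filtration along an even cyclotomic base $B$ (so $\fil^\star_\mot\THH(\ell)\simeq\lim_\Delta\tau_{\ge2\star}(\cpl{(B^{\otimes\bullet+1})}_p)$, and similarly for $\TC^{-}$, $\TP$, by \cref{ev--magic-criterion}), one checks level-wise — exactly as in \cref{ev--pcpl-flat-homotopy}, using that the class $t\in\pi_{-2}(\ell^{\h\cir})$ is a non–zero-divisor, so that $\tau_{\ge2\star}(\ell)$ is a perfect $\tau_{\ge2\star}(\ell^{\h\cir})$-module — that this identity lifts to the even filtration: $\fil^\star_\mot\THH(\ell)\simeq\fil^\star_\mot\TC^{-}(\ell)\otimes_{\tau_{\ge2\star}(\ell^{\h\cir})}\tau_{\ge2\star}(\ell)$, the canonical leg being base change along $\tau_{\ge2\star}(\ell^{\h\cir})\to\tau_{\ge2\star}(\ell)$ (killing $t$). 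Substituting this into the pushout collapses it to $\gr^\star_\mot\THH(\ell)^{\tate \Cp}/(p,v_1)\simeq\bigl(\gr^\star_\mot\TP(\ell)/(p,v_1)\bigr)\,/\!/\,\varphi(\bar t)$, the derived quotient by the self-map $\varphi(\bar t)$ given by the image of $\bar t=[t]$ under $\tau_{\ge2\star}(\ell^{\h\cir})\to\fil^\star_\mot\TC^{-}(\ell)\xrightarrow{\varphi}\fil^\star_\mot\TP(\ell)$; here $\bar t$ has weight $-1$ and degree $-2$, and under this identification $g$ is precisely the natural map $\bigl(\gr^\star_\mot\TP(\ell)\bigr)/(p,v_1,v_2)=\bigl(\gr^\star_\mot\TP(\ell)/(p,v_1)\bigr)\,/\!/\,v_2\to\bigl(\gr^\star_\mot\TP(\ell)/(p,v_1)\bigr)\,/\!/\,\varphi(\bar t)$ (well defined because $v_2=0$ in $\gr^\star_\ev\ell/(p,v_1)$).

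It remains to prove this natural map is an equivalence, and this is the step I expect to be the main obstacle. Writing $N:=\gr^\star_\mot\TP(\ell)/(p,v_1)$, the equivalence amounts to showing that multiplication by $v_2$ on $N$ factors through multiplication by $\varphi(\bar t)$ via an \emph{invertible} class — concretely, that $v_2=\varphi(\bar t)\cdot\mu$ up to a unit, with $\mu\in\pi_{2p^2}N$ (the class of bidegree $(p^2,2p^2)$, i.e. the image of the B\"okstedt-type generator $\mu$ of \cref{thm:THHellwithFpcoeffs}) invertible in the relevant range. This is exactly the Segal conjecture for $\ell$: \cref{thm:ellSegal} identifies the mod $(p,v_1)$ Frobenius $\THH_*(\ell)\to\THH_*(\ell)^{\tate \Cp}$ with the localization inverting $\mu$, and transporting this through the filtered Frobenius of \cref{pmotconv} — together with the explicit mod $(p,v_1)$ structure of $\TC^{-}(\ell)$ and $\TP(\ell)$, recalled from \cite{AusoniRognes} or re-derived by the methods of \cref{cycbaseSection} — shows that $\varphi(\bar t)$ and $v_2$ differ by the unit $\mu$, so that the two derived quotients coincide and $g$ is an isomorphism. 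Alternatively, one can run the entire argument through the descent presentation from the start, using the even-ring identity $\cpl{C^{\tate \Cp}}_p\simeq C\otimes_{C^{\h\cir}}C^{\tate\cir}$ for even cyclotomic $C$ (itself a consequence of the same circle lemma together with the Tate orbit lemma), after which the only non-formal input is again the identification of $\varphi(\bar t)$ with $v_2$ modulo $(p,v_1)$.
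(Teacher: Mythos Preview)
Your overall architecture---rewrite the target as a pushout, collapse the canonical leg to ``kill $t$,'' and then compare the resulting self-map with $v_2$---is reasonable, and your ``alternative'' via descent is in fact close to what the paper does. But the paper's proof differs in two essential ways, and your main argument has a gap at the final step.

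The paper does not work globally with the pushout at all. It passes immediately to the cobar complex for the cyclotomic cover $\THH(\ell)\to\THH(\ell/\MW^{\otimes\bullet+1})$ and proves the levelwise statement
\[
g_s:\bigl(\pi_*\TP(\ell/\MW^{\otimes s+1})\bigr)/(p,v_1,v_2)\ \isoto\ \bigl(\pi_*\THH(\ell/\MW^{\otimes s+1})^{\tate\Cp}\bigr)/(p,v_1)
\]
for each $s$. The key formal input is the general $\MU$-module identity $M^{\tate\cir}/[p](t)\simeq M^{\tate\Cp}$ (proved as a short lemma right after the theorem), so that the target of $g_s$ is $\TP(\ell/\MW^{\otimes s+1})_*/(p,v_1,[p](t))$. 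The element to compare with $v_2$ is thus the $p$-series $[p](t)$, not $\varphi(\bar t)$. The comparison is then the one-line formal group law computation
\[
[p](t)\equiv v_2t^{p^2}+\mathcal{O}(t^{p^2+1})\pmod{p,v_1},\qquad\text{hence}\qquad v_2=t^{-p^2}[p](t)\,(1+\mathcal{O}(t)),
\]
so $v_2$ is a unit multiple of $[p](t)$ in the levelwise ring. No appeal to the Segal conjecture is made or needed.

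Your gap is exactly at the step you flag as the obstacle. You want to show that multiplication by $v_2$ and by $\varphi(\bar t)$ on $N=\gr^\star_\mot\TP(\ell)/(p,v_1)$ agree up to a unit, and you propose to extract this from \cref{thm:ellSegal}. But that theorem concerns the underlying Frobenius $\THH(\ell)\to\THH(\ell)^{\tate\Cp}$ on $\pi_*$, not the map $\varphi:\TC^{-}(\ell)\to\TP(\ell)$ on the class $t$; the class $t$ dies under $\TC^{-}\to\THH$, so the Segal conjecture gives you no direct information about $\varphi(t)$. Moreover, your claim that $\mu$ is ``invertible in the relevant range'' is unjustified in $N$: invertibility of $\mu$ is a statement about $\THH(\ell)^{\tate\Cp}/(p,v_1)$, i.e.\ the target of $g$, and cannot be used to compare the two derived quotients of $N$ without circularity. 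The paper sidesteps all of this by replacing $\varphi(\bar t)$ with $[p](t)$ via the $\MU$-module lemma and then reading off the unit relation to $v_2$ from the formal group law.
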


\begin{proof}
As usual, we may compute $\pi_*\gr^*_{\mot} \TP(\ell)/(p,v_1)$ via the mod $(p,v_1)$ cobar complex, which has $s$th term given by $\pi_*\left(\TP(\ell/\MU^{\otimes s+1})/(p,v_1) \right)$.  By the lemma below, it will then suffice to prove for each integer $s$ that the ideals generated by $[p](t)$ and $v_2$ are the same inside of the ring $\pi_*\TP(\ell/\MU^{\otimes s+1}) / (p,v_1)$.  In fact, since $\MU$ maps into $\MU^{\otimes s+1}$, it suffices to prove this when $s=0$.

In a moment, we shall prove that $v_2=0$ in $\pi_*(\THH(\ell/\MU)^{\tate\Cp} / (p,v_1))$.  Assuming this, it follows that we may write $v_2$ as a product
\[v_2=([p](t))(f)\]
for some $f \in \pi_*\TP(\ell/\MU) / (p,v_1)$.
Since $[p](t)=v_2t^{p^2}+\mathcal{O}(t^{p^2+2})$ in $\pi_*\BP^{\h\cir}/(p,v_1)$, and $v_2=t \sigma^2v_2$, we may rewrite this equation as
\[t\sigma^2v_2 = (t^{p^2+1} \sigma^2v_2 + \mathcal{O}(t^{p^2+2}) )(f).\]
Note that $\sigma^2v_2$ is a non-zerodivisor in $\pi_*\left(\THH(\ell/\MU) / (p,v_1) \right)$, by \cite[Theorem 2.5.4]{HahnWilson}.  Thus, $f$ must be represented in the $\cir$ Tate fixed point spectral sequence by $t^{-p^2}$, and so in particular $f$ is a unit.

It remains to prove that $v_2=0$ in $\pi_*(\THH(\ell/\MU)^{\tate\Cp} / (p,v_1))$.  To do so, observe that there is an $\mathbb{E}_{\infty}$-ring map $\MW \to \MU$, since $\MU$ is even.  This induces an $\cir$-equivariant ring map $\alpha:\THH(\ell/\MW) \to \THH(\ell/\MU)$.  Since $\MW$ is a cyclotomic base, we may form a sequence of maps
\[\ell \to \THH(\ell/\MW) \xrightarrow{\varphi} \THH(\ell/\MW)^{\tate\Cp} \xrightarrow{\alpha^{\tate\Cp}} \THH(\ell/\MU)^{\tate\Cp},\]
and so it suffices to note that $v_2=0$ in $\pi_* \ell / (p,v_1)$.  Note here that, even if a ring spectrum $X$ is given two different $\MU$-algebra structures, the image of $v_2$ in $\pi_*X$ is well-defined modulo $(p,v_1)$. \qedhere

\end{proof}

\begin{lemma} Let $M \in \mathrm{Mod}_{\mathrm{MU}}^{\Bcir}$
be an $\cir$-equivariant $\mathrm{MU}$-module. Then the map
	\[
	M^{\tate\cir}/[p](t) = M^{\tate\cir}\otimes_{\mathrm{MU}^{\tate\cir}}
	\mathrm{MU}^{\tate \Cp} \to M^{\tate \Cp}
	\]
is an equivalence.
\end{lemma}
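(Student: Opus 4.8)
The plan is to reduce to the case $M=\MU$ and then bootstrap to general $M$ via a Gysin sequence. For the case $M=\MU$ (with trivial $\cir$-action): here $\pi_*\MU^{\tate\cir}\iso\MU_*((t))$ and $\pi_*\MU^{\tate\Cp}\iso\MU_*((t))/[p](t)$, where $t$ has degree $-2$ and $[p](t)\in\pi_{-2}\MU^{\tate\cir}$ is the $p$-series of the formal group of $\MU$; the second isomorphism is the classical computation of the $\MU$-cohomology of the infinite lens space $B\Cp$. This refines to an equivalence of $\MU^{\tate\cir}$-modules
\[
  \MU^{\tate\Cp}\iso\cofib\bigl(\Sigma^{-2}\MU^{\tate\cir}\xrightarrow{[p](t)}\MU^{\tate\cir}\bigr),
\]
obtained by applying $(-)^{\tate\cir}$ — equivalently, by inverting $t$ — to the Gysin cofiber sequence of the circle bundle $B\Cp\simeq S(L^{\otimes p})$ over $\Bcir=\mathbb{CP}^\infty$ (with $L$ the tautological line bundle), after using the Thom isomorphism $\mathrm{Th}(L^{\otimes p})\otimes\MU\iso\Sigma^2\Sigma^\infty_+\Bcir\otimes\MU$ to trivialize the twist and identify the connecting map with multiplication by the Euler class $e_\MU(L^{\otimes p})=[p](t)$. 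In particular $\MU^{\tate\Cp}$ is a perfect (two-term) $\MU^{\tate\cir}$-module, so the canonical map $N\otimes_{\MU^{\tate\cir}}\MU^{\tate\Cp}\to\cofib(\Sigma^{-2}N\xrightarrow{[p](t)}N)$ is an equivalence for every $\MU^{\tate\cir}$-module $N$.

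For general $M$, write $\mathcal{M}$ for the local system of $\MU$-modules on $\Bcir$ corresponding to $M$, and let $\iota\colon B\Cp\to\Bcir$ be the above circle bundle, with fiber $S^1$. The Gysin sequence for $\iota$, now at the level of local systems on $\Bcir$, exhibits $\iota_*\iota^*\mathcal{M}$ as the cofiber of a map $\Sigma^{-2}\mathcal{M}\to\mathcal{M}$ which — $\mathcal{M}$ being an $\MU$-module, so that the parametrized Thom isomorphism applies — is multiplication by $[p](t)$. Taking global sections over $\Bcir$ gives $(\iota_*\iota^*\mathcal{M})^{\h\cir}\iso M^{\h\Cp}$; applying instead the exact functor $(-)^{\tate\cir}$, and using that the oriented $S^1$-bundle $\iota$ satisfies fiberwise Atiyah duality $\iota_!\iota^*\iso\Sigma\,\iota_*\iota^*$ (the suspension cancelling the one in the $\cir$-norm, the $\Cp$-norm being unshifted), gives $(\iota_*\iota^*\mathcal{M})^{\tate\cir}\iso M^{\tate\Cp}$. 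Combining, $M^{\tate\Cp}\iso\cofib(\Sigma^{-2}M^{\tate\cir}\xrightarrow{[p](t)}M^{\tate\cir})$, naturally in $M$; by the $N=M^{\tate\cir}$ instance of the previous paragraph this cofiber is the base change $M^{\tate\cir}\otimes_{\MU^{\tate\cir}}\MU^{\tate\Cp}$, and unwinding the identifications shows the comparison map is the canonical one.

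The main obstacle is homotopy-coherence bookkeeping: one must check that the Gysin sequence and the Thom trivialization are natural in $M$ as local systems (so every arrow above is the canonical one), and that under fiberwise Atiyah duality the $\cir$- and $\Cp$-norm maps are genuinely matched. An alternative route that avoids the duality step is to first establish the cleaner statement $M^{\h\Cp}\iso\cofib(\Sigma^{-2}M^{\h\cir}\xrightarrow{[p](t)}M^{\h\cir})$ — for which $(\iota_*\iota^*\mathcal{M})^{\h\cir}=M^{\h\Cp}$ requires nothing beyond global sections — and then invert $t$; for $M$ bounded below, which is the only case needed in the proof of \Cref{thm:HodgeTateIso}, this immediately yields the Tate statement, since $M^{\tate\Cp}$ is then obtained from $M^{\h\Cp}$ by inverting $t$.
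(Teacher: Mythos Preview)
Your Gysin-sequence argument is essentially correct but takes a genuinely different route from the paper's. The paper (following \cite[IV.4.12]{NikolausScholze}) argues as follows: since $\MU^{\tate\Cp}\simeq\MU^{\tate\cir}/[p](t)$ is perfect over $\MU^{\tate\cir}$, the functor $(-)\otimes_{\MU^{\tate\cir}}\MU^{\tate\Cp}$ commutes with all limits and colimits; one then uses the equivalences $M^{\tate G}\simeq\colim_n(\tau_{\ge n}M)^{\tate G}$ and $M^{\tate G}\simeq\lim_n(\tau_{\le n}M)^{\tate G}$ (valid for both $G=\cir$ and $G=\Cp$) to reduce first to bounded $M$ and then to discrete $M$. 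For discrete $M$ the $\cir$-action, and hence the $\Cp$-action, is necessarily trivial since $\cir$ is connected, and the claim becomes a direct computation with $\MU$ itself.

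Your geometric approach has the virtue of explaining \emph{why} $[p](t)$ appears---it is the Euler class of the line bundle whose sphere bundle is $B\Cp$---and it works once the coherence checks you flag are carried out (naturality of the Thom trivialization, matching of the $\cir$- and $\Cp$-norm maps under fiberwise Atiyah duality, and identifying the resulting map with the canonical one). None of these are deep, but the paper's Postnikov reduction sidesteps them entirely by reducing to the trivial-action case, where there is nothing to check. Your alternative route via the $\h$-level Gysin sequence followed by inverting $t$ is the cleanest form of your idea, but as you note it only immediately handles bounded-below $M$; the paper's argument treats all $M$ uniformly.
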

\begin{proof} We argue exactly as in \cite[IV.4.12]{NikolausScholze}.
 Since $\mathrm{MU}^{\tate \Cp} = 
\mathrm{MU}^{\tate\cir}/[p](t)$ is a perfect $\mathrm{MU}^{\tate\cir}$-module,
the functor $(-)\otimes_{\mathrm{MU}^{\tate\cir}}\mathrm{MU}^{\tate \Cp}$
commutes with all limits and colimits. 
Using the equivalences $M^{\tate G}=\colim (\tau_{\ge n}M)^{\tate G}$ and $M^{\tate G}=\lim (\tau_{\le n}M)^{\tate G}$ we are reduced
to the case when $M$ is bounded, and further to the case where
$M$ is discrete. In this case the $\cir$-action (and hence
the $\Cp$-action) must be trivial, and then one concludes by
direct observation.
\end{proof}

\begin{remark} \label{rmk:isE2tate}
We will study and compute the homotopy groups of $\gr^*_{\mot} \TP(\ell)/(p,v_1,v_2)$ at any prime $p$.  However, if the Smith--Toda complex $V(2)=\mathbb{S}/(p,v_1,v_2)$ does not exist, we cannot say that $\pi_* \gr^*_{\mot} \TP(\ell)/(p,v_1,v_2)$ is the $\mathrm{E}_2$-page of a spectral sequence converging to $V(2)_* \TP(\ell)$.  In contrast, it is always the case that $\pi_*\gr^*_{\mot} \THH(\ell)^{\tate \Cp} / (p,v_1)$ is the $\mathrm{E}_2$-page of a spectral sequence converging to $\THH(\ell)^{\tate \Cp} \otimes_{\ell} \mathbb{F}_p$.  This spectral sequence is the one associated to the cosimplicial object $\THH(\ell/\MW^{\otimes \bullet+1})^{\tate \Cp} \otimes_{\ell} \mathbb{F}_p$, where the map from $\ell$ into $\THH(\ell/\MW^{\otimes s+1})^{\tate \Cp}$ is given by the composite of the map $\ell \to \THH(\ell/\MW^{\otimes s+1})$ with the relative cyclotomic Frobenius.
\end{remark}

\subsection{Prismatic cohomology}

In this section we will calculate $\left(\gr^*_{\mot} \TP(\ell) \right)/ (p,v_1,v_2)$ and $\left(\gr^*_{\mot} \TC^{-}(\ell)\right)/(p,v_1,v_2)$.  Our strategy will be to use the second filtration, $\fil^{\bullet}_+$, introduced in \cref{SecEvenFiltration}.
We will call the resulting spectral sequences the \emph{algebraic $t$-Bockstein
spectral sequences}, and they have signature:
	\[
	\pi_*(\gr^*_\mot \THH(\ell))[t] \Rightarrow 
	\pi_*(\gr^*_\mot \TC^{-}(\ell))
	\]
	\[
	\pi_*(\gr^*_\mot \THH(\ell))[t^{\pm 1}] \Rightarrow 
	\pi_*(\gr^*_\mot \TP(\ell))
	\]
The map between the first and second spectral sequences which inverts
$t$ converges to the canonical map
$\gr^*_\mot \TC^{-} \to \gr^*_\mot \TP$.  Explicitly, these spectral sequences may be obtained by filtering the cobar complexes by powers of the ideal generated by $t$.

The elements $p$ and $v_1$ are detected by the likewise named elements
in $\pi_*(\gr^*_\mot \THH(\ell))$, and after killing these elements $v_2$ is detected by $t\mu$. We may therefore choose
appropriately
filtered lifts of $p, v_1,$ and $v_2$ and take the cofiber by each
element in turn to obtain spectral sequences with signatures:
	\[
	\mathbb{F}_p[t,\mu]/(t\mu) \otimes \Lambda(\lambda_1,\lambda_2)
	\Rightarrow 
	\pi_*(\gr^*_\mot \TC^{-}(\ell)/(p,v_1,v_2))
	\]
	\[
	\mathbb{F}_p[t^{\pm 1}] \otimes \Lambda(\lambda_1,\lambda_2) \Rightarrow 
	\pi_*(\gr^*_\mot \TP(\ell)/(p,v_1,v_2))
	\]
	
We explain the behavior of the second spectral sequence in the
following theorem.  The main tool used to deduce differentials is naturality from the algebraic $t$-Bockstein spectral sequence computing $\pi_*(\gr^*_{\mathrm{mot}} \mathrm{TP}(\mathbb{S}) / (p,v_1,v_2))$.

\begin{theorem}
The algebraic $t$-Bockstein spectral sequence for $\left(\mathrm{gr}^*_{\mathrm{mot}}\mathrm{TP}(\ell) \right) / (p,v_1,v_2)$ has $\mathrm{E}_1$-page given by $\mathbb{F}_p[t^{\pm 1}] \otimes \Lambda(\lambda_1,\lambda_2)$. The spectral sequence is determined by multiplicative structure together with the following facts:
\begin{enumerate}
\item The classes $t^{p^2},\lambda_1,$ and $\lambda_2$ are permanent cycles.
\item There is a $d_p$ differential $d_{p}(t)\doteq t^{p+1}\lambda_1$.
\item There is a $d_{p^2}$ differential $d_{p^2}(t^p) \doteq t^{p^2+p} \lambda_2$.
\end{enumerate}
The $\mathrm{E}_{\infty}$-page is $\mathbb{F}_p[t^{\pm p^2}] \otimes \Lambda(\lambda_1,\lambda_2)$.
\end{theorem}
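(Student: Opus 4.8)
The plan is to identify the $\mathrm{E}_1$-page, establish facts (1)--(3), and then run the spectral sequence formally. The $\mathrm{E}_1$-page is immediate: the algebraic $t$-Bockstein spectral sequence for $\gr^{\star}_{\mot}\TP(\ell)$ has $\mathrm{E}_1$-page $\pi_{*}(\gr^{\star}_{\mot}\THH(\ell))[t^{\pm 1}]$, and taking cofibers by the suitably filtered lifts of $p$, $v_1$, $v_2$ described above --- noting that $v_2$ is detected by $t\mu$, hence becomes zero once $t$ is inverted --- leaves $\mathbb{F}_p[t^{\pm 1}] \otimes \Lambda(\lambda_1,\lambda_2)$ by the computation of $\gr^{\star}_{\mot}\THH(\ell)/(p,v_1)$ recalled above. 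The spectral sequence is multiplicative, so once $t^{p^2}$ is known to be a permanent cycle (it is a unit) every page becomes a module over $\mathbb{F}_p[t^{\pm p^2}]$, and it then suffices to compute the differentials on $t$ and $t^p$ and to check that $\lambda_1,\lambda_2$ support no differentials.

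For the differentials, I would work inside the mod $(p,v_1)$ cobar complex computing $\gr^{\star}_{\mot}\TP(\ell)$, where the $t$-Bockstein is the filtration of each cobar term by powers of $t$ and the total differential is the cobar differential. The class $t$ is the chosen complex orientation, so its cobar differential is $\eta_R(t) - t$; by the formula recalled above, modulo $(p,v_1)$ one has $\eta_R(t) = t +_{\mathbb{G}} t_1 t^p +_{\mathbb{G}} t_2 t^{p^2} +_{\mathbb{G}} \cdots$ up to conjugation, and since $t_1 = t\,\sigma^2 t_1$ the term $t_1 t^p = t^{p+1}\sigma^2 t_1$ lies in $t$-filtration $p+1$, while every formal-group or conjugation correction --- and each $t_i t^{p^i}$ for $i \ge 2$ --- lies in strictly higher $t$-filtration. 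As $\sigma^2 t_1$ represents $\lambda_1$, this yields $d(t) \equiv t^{p+1}\lambda_1 \pmod{t^{p+2}}$, so the first differential on $t$ is $d_p(t) = t^{p+1}\lambda_1$, which is fact (2); a routine Leibniz computation (using $\lambda_1^2 = \lambda_1\lambda_2\cdot\lambda_1 = 0$, so that $\lambda_1,\lambda_2$ are $d_p$-cycles) then gives $\mathrm{E}_{p+1} = \mathbb{F}_p[t^{\pm p}] \otimes \Lambda(\lambda_1,\lambda_2)$. Applying the same analysis to $t^p$: since $\eta_R$ is a ring map and we are mod $p$, $\eta_R(t^p) = \eta_R(t)^p \equiv t^p + t^{p^2+p}(\sigma^2 t_1)^p + \cdots$, where the omitted terms --- the $p$th powers of the filtration-$(\ge p+2)$ monomials of $\eta_R(t)$ --- have $t$-filtration $\ge p^2+2p$. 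Since $(\sigma^2 t_1)^p$ represents $\lambda_2$, it follows that $t^p$ survives to $\mathrm{E}_{p^2}$ with $d_{p^2}(t^p) = t^{p^2+p}\lambda_2$ (fact (3)), and running this differential gives $\mathrm{E}_{p^2+1} = \mathbb{F}_p[t^{\pm p^2}] \otimes \Lambda(\lambda_1,\lambda_2)$.

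It remains to establish fact (1), after which every remaining multiplicative generator --- $t^{\pm p^2}$, $\lambda_1$, $\lambda_2$ --- is a permanent cycle, so the spectral sequence collapses at $\mathrm{E}_{p^2+1}$ with $\mathrm{E}_\infty = \mathbb{F}_p[t^{\pm p^2}] \otimes \Lambda(\lambda_1,\lambda_2)$. The classes $\lambda_1,\lambda_2$ are detected by the filtration-zero cochains $\sigma^2 t_1$ and $(\sigma^2 t_1)^p$, and I would see that they are permanent cycles by exhibiting them as honest classes of the abutment: under \Cref{thm:HodgeTateIso} the abutment is $\gr^{\star}_{\mot}\THH(\ell)^{\tate\Cp}/(p,v_1)$, and by the Segal conjecture (\Cref{thm:ellSegal}) the cyclotomic Frobenius carries $\lambda_1,\lambda_2 \in \gr^{\star}_{\mot}\THH(\ell)/(p,v_1)$ to honest classes there. (For $p$ odd one can argue more cheaply: $(\sigma^2 t_1)^p$ is a genuine cobar cocycle since $d(t_1^p) = 0$ forces $t^p\,d((\sigma^2 t_1)^p) = 0$ and $t$ is not a zero divisor, and $\sigma^2 t_1$ is a cocycle because $(\sigma^2 t_1)^2 = 0$ for degree reasons.) As for $t^{p^2}$: under the same identification it is invertible --- it corresponds to a power of the Frobenius image of $\mu$, which is a unit by \Cref{thm:ellSegal} --- and a unit lifting to the abutment is automatically a permanent cycle.

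The step I expect to be the main obstacle is the filtration estimate in the second paragraph: verifying that the leading terms of the cobar differentials of $t$ and $t^p$ really are $t^{p+1}\lambda_1$ and $t^{p^2+p}\lambda_2$ with nothing of lower $t$-filtration intervening. This requires a careful, if routine, accounting of the $t$-adic orders of all formal-group-law and conjugation corrections to $\eta_R(t)$ modulo $(p,v_1)$, together with the divisibility of the $t_i$ by $t$ in the cobar complex, and then a clean translation of the resulting cochain-level congruences into differentials on the correct pages of the $t$-Bockstein. The permanent-cycle claims are comparatively soft, the only delicate point --- most subtle at $p=2$ --- being $\lambda_1^2 = 0$, which is already part of \Cref{thm:THHellwithFpcoeffs}.
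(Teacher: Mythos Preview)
Your treatment of the differentials is essentially the paper's: compute $\eta_R(t)$ and $\eta_R(t^p)$ in the cobar complex modulo suitable powers of $t$ and read off $d_p$ and $d_{p^2}$. The identification of the $\mathrm{E}_1$-page and the formal running of the spectral sequence also match.

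Where you diverge is in establishing fact (1). The paper argues directly in the cobar complex for $\lambda_2$ and $t^{p^2}$: it shows $d((\sigma^2 t_1)^p) \equiv 0 \pmod{t^{p^2}}$ (not that it vanishes outright) and $\eta_R(t^{p^2}) \equiv t^{p^2} \pmod{t^{p^3+p^2}}$, so these classes survive to $\mathrm{E}_{p^2}$ and $\mathrm{E}_{p^3}$ respectively, after which sparsity finishes. Only for $\lambda_1$ does the paper invoke the abutment: the unique possible differential is $d_{p^2}(\lambda_1) \doteq t^{p^2}\lambda_1\lambda_2$, and if it were nonzero the abutment would vanish in total degree $2p-1$, contradicting the Segal conjecture via \Cref{rmk:isE2tate}.

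Your approach of using the abutment for all three classes can be made to work but is incomplete as stated. Knowing that $\varphi(\lambda_i)$ is nonzero in $\gr^{\star}_{\mot}\THH(\ell)^{\tate\Cp}/(p,v_1)$ tells you the abutment is nonzero in that bidegree; it does \emph{not} tell you that $\lambda_i$ (rather than, say, $t^{p^2-p}\lambda_2$ in the bidegree of $\lambda_1$, or $t^{p-p^2}\lambda_1$ in the bidegree of $\lambda_2$) is what detects it. You would need either to track the $\fil_+$-filtration of $\varphi(\lambda_i)$---which is not obvious, since the cyclotomic Frobenius does not visibly respect $\fil_+$---or to carry out the sparsity count the paper does. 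The same issue applies to $t^{p^2}$: the existence of a unit in that bidegree of the abutment does not by itself force $t^{p^2}$, rather than some higher-filtration class, to survive.

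Your parenthetical alternative for $p$ odd contains an actual error. The Leibniz rule gives
\[
0 = d(t_1^p) = d\bigl(t^p \cdot (\sigma^2 t_1)^p\bigr) = d(t^p)\cdot(\sigma^2 t_1)^p \pm t^p\, d\bigl((\sigma^2 t_1)^p\bigr),
\]
and $d(t^p) = \eta_R(t^p) - t^p \equiv t^{p^2+p}(\sigma^2 t_1)^p \pmod{t^{p^2+2p}}$ is \emph{nonzero}, so you cannot conclude $t^p\, d((\sigma^2 t_1)^p) = 0$. What you actually obtain is $d((\sigma^2 t_1)^p) \equiv 0 \pmod{t^{p^2}}$, which is exactly what the paper uses---enough for $\lambda_2$ to reach $\mathrm{E}_{p^2}$, but not a genuine cobar cocycle. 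The claim ``$\sigma^2 t_1$ is a cocycle because $(\sigma^2 t_1)^2 = 0$ for degree reasons'' is also unclear: $\sigma^2 t_1$ has even internal degree $2p$ in the cobar $1$-line, and its square need not vanish there.
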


\begin{proof}
In the mod $(p,v_1,v_2)$ cobar complex, we compute
$$\eta_R(t) \equiv t-t^{p}t_1 \text{ modulo }t^{p+2},$$
and then note that $t^{p}t_1=t^{p+1}\sigma^2t_1$.  Since $\lambda_1$ is represented by $\sigma^2 t_1$ in the mod $(p,v_1)$ cobar complex for $\THH(\ell)$, we conclude the claimed $d_p$ differential. Taking $p$th powers, we compute
$$\eta_R(t^p) \equiv t^p-t^{p^2}t_1^{p}\text{ modulo }t^{p^2+2p},$$
and then note that $t^{p^2}t_1^p = t^{p^2+p}(\sigma^2t_1)^p$.  Since $\lambda_2$ is represented by $(\sigma^2 t_1)^p$ in the mod $(p,v_1)$ cobar complex for $\THH(\ell)$, we conclude the claimed $d_{p^2}$ differential.

It remains to see that $\lambda_1$, $\lambda_2$, and $t^{p^2}$ are permanent cycles. First, we note that
$$\eta_R(t^{p^2}) \equiv t^{p^2} -t^{p^3}t_1^{p^2}=t^{p^2}-t^{p^3+p^2} (\sigma^2t_1)^{p^2} \text{ modulo }t^{p^3+2p^2}.$$ 
This is the same as $t^{p^2}$ modulo $t^{p^3+p^2}$, and so $t^{p^2}$ must survive to the $\mathrm{E}_{p^3}$ page of the spectral sequence.  For sparsity reasons, it follows that $t^{p^2}$ is a permanent cycle.

The only way $\lambda_1$ could fail to be a permanent cycle is via a differential $d_{p^2}(\lambda_1) \doteq \lambda_1\lambda_2t^{p^2}$.  If such a differential occurred, we would learn that $\left(\mathrm{gr}^*_{\mathrm{mot}}\mathrm{TP}(\ell) \right) / (p,v_1,v_2)$ is trivial in degree $2p-1$.  However, it is also the $\mathrm{E}_2$-page of a spectral sequence converging to $\THH(\ell)^{\tate \Cp} \otimes_{\ell} \mathbb{F}_p$, by \Cref{rmk:isE2tate}, and the latter object is nontrivial in degree $2p-1$.

By the previous paragraph, we may choose some element $x \in \pi_*\left( \TP(\ell/\MU^{\otimes 2}) / (p,v_1,v_2) \right)$ that represents $\lambda_1$ in the cobar complex.
Then $x^p \in \pi_*\left(\TP(\ell/\MU^{\otimes 2}) / (p,v_1,v_2) \right)$ will represent $\lambda_2$ (since $(\sigma^2t_1)^p$ is the $p$th power of $\sigma^2t_1$), so $\lambda_2$ is a permanent cycle.
\end{proof}

\includegraphics[scale=1,trim={4cm 18.5cm 3.5cm 2.5cm},clip]{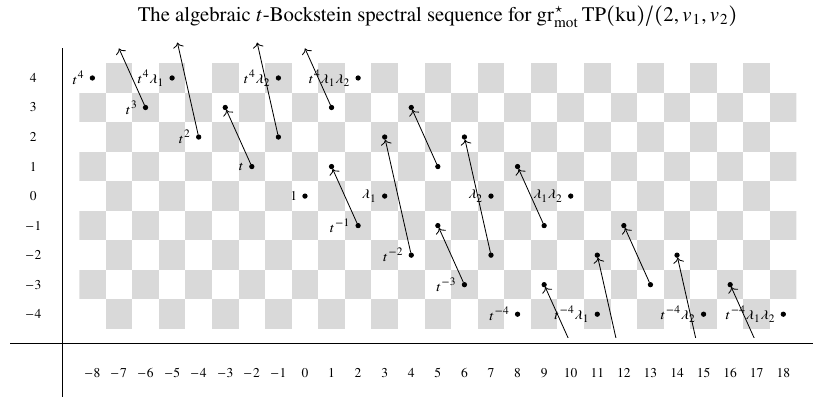}

\begin{corollary} \label{cor:TCminusBockstein}
The algebraic $t$-Bockstein spectral sequence for $\left(\mathrm{gr}^*_{\mathrm{mot}}\mathrm{TC}^{-}(\ell) \right) / (p,v_1,v_2)$ has $\mathrm{E}_1$-page given by $\mathbb{F}_p[t,\mu]/(t\mu) \otimes \Lambda(\lambda_1,\lambda_2)$.  The spectral sequence is determined by multiplicative structure together with the following facts:
\begin{enumerate}
\item The classes $t^{p^2},\lambda_1,$ $\lambda_2$, and $\mu$ are permanent cycles.
\item There is a $d_p$ differential $d_{p}(t) \doteq t^{p+1}\lambda_1$.
\item There is a $d_{p^2}$ differential $d_{p^2}(t^p) \doteq t^{p^2+p} \lambda_2$.
\end{enumerate}
The $\mathrm{E}_{\infty}$-page is $\mathbb{F}_p[t^{p^2}, \mu]/(t^{p^2} \mu) \otimes \Lambda(\lambda_1,\lambda_2) \oplus \mathbb{F}_p\{t^d \lambda_1, t^{pd} \lambda_2, t^d \lambda_1 \lambda_2, t^{pd} \lambda_1 \lambda_2 \text{ }|\text{ } 0 < d <p\}$.
\end{corollary}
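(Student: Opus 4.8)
The plan is to deduce the corollary from the preceding theorem (the $\TP$ case) by means of the canonical map of algebraic $t$-Bockstein spectral sequences that inverts $t$. First I would identify the $E_1$-page: by construction this spectral sequence has $E_1$-page $\pi_*(\gr^{\star}_{\mot}\THH(\ell))[t]$, which modulo $(p,v_1)$ is $\mathbb{F}_p[t,\mu]\otimes\Lambda(\lambda_1,\lambda_2)$ by the recollection on $\THH(\ell)$ recorded above; since $v_2$ is detected by $t\mu$, taking the cofiber of an appropriately filtered lift of $v_2$ replaces the $E_1$-page by $\mathbb{F}_p[t,\mu]/(t\mu)\otimes\Lambda(\lambda_1,\lambda_2)$, as asserted.

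Next, the canonical map $\gr^{\star}_{\mot}\TC^{-}(\ell)\to\gr^{\star}_{\mot}\TP(\ell)$ induces a multiplicative map from this spectral sequence to the one of the preceding theorem, which on $E_1$-pages is the localization $\mathbb{F}_p[t,\mu]/(t\mu)\otimes\Lambda\to\mathbb{F}_p[t^{\pm1}]\otimes\Lambda$ inverting $t$; in particular it is injective on the sub-$\mathbb{F}_p$-algebra $\mathbb{F}_p[t]\otimes\Lambda(\lambda_1,\lambda_2)$ of elements not divisible by $\mu$. Since every $d_r$ in a $t$-Bockstein spectral sequence strictly raises $t$-filtration, the image of any differential lies in positive $t$-filtration, and because $t\mu=0$ those classes are not divisible by $\mu$; combined with the injectivity just noted, this lets me transport from the preceding theorem the assertions that $t^{p^2}$, $\lambda_1$, $\lambda_2$ are permanent cycles, that $d_p(t)=t^{p+1}\lambda_1$, and that $d_{p^2}(t^p)=t^{p^2+p}\lambda_2$ (the stated targets are nonzero in our $E_1$-page precisely because they avoid $\mu$). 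What the map to $\TP$ cannot see is the $\mu$-tower, which must therefore be treated separately: using $|t|=-2$, $|\mu|=2p^2$, $|\lambda_1|=2p-1$, $|\lambda_2|=2p^2-1$, a short degree count shows that any differential out of $\mu$ must land in a trivial group, so $\mu$ — and hence, by the Leibniz rule, every element of $\mathbb{F}_p[\mu]\otimes\Lambda(\lambda_1,\lambda_2)$ — is a permanent cycle; moreover no $\mu$-multiple can be a boundary, since boundaries have positive $t$-filtration while $\mu$-multiples sit in $t$-filtration $0$. The same degree and injectivity considerations rule out all differentials $d_r$ with $r\notin\{p,p^2\}$, so the spectral sequence is determined by multiplicativity and the facts (1)--(3).

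Finally I would compute $E_\infty$ by passing to homology for $d_p$ and then $d_{p^2}$, both derivations. The relation $d_p(t^k)=k\,t^{k+p}\lambda_1$ removes $t^k$, $t^{k+p}\lambda_1$, $t^{k+p}\lambda_1\lambda_2$ whenever $k$ is prime to $p$; the relation $d_{p^2}(t^{pj})=j\,t^{pj+p^2}\lambda_2$ then removes $t^{pj}$ and $t^{pj}\lambda_1$ whenever $j$ is prime to $p$. On the classes involving a positive power of $t$ the survivors are $t^{p^2j}\cdot\{1,\lambda_1,\lambda_2,\lambda_1\lambda_2\}$ for $j\ge1$ together with $t^d\lambda_1$, $t^{pd}\lambda_2$, $t^d\lambda_1\lambda_2$, $t^{pd}\lambda_1\lambda_2$ for $0<d<p$; assembling these with the untouched $\mu$-tower $\mathbb{F}_p[\mu]\otimes\Lambda(\lambda_1,\lambda_2)$ (glued along the unit, with $t^{p^2}\mu=0$) yields $\mathbb{F}_p[t^{\pm p^2}\!,\mu]/(t^{p^2}\mu)\otimes\Lambda(\lambda_1,\lambda_2)\oplus\mathbb{F}_p\{t^d\lambda_1, t^{pd}\lambda_2, t^d\lambda_1\lambda_2, t^{pd}\lambda_1\lambda_2 \mid 0<d<p\}$, the stated $E_\infty$-page. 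The main obstacle is not any single deep step but the combination of two bookkeeping points: (i) verifying that the $t$-inverting map really transports every differential and permanent cycle one needs, which rests on the injectivity statement and on the absence of room for extra differentials, and (ii) the care required with the relation $t\mu=0$ when extracting the homology and recognizing the direct-sum decomposition; neither is hard, but both must be done carefully.
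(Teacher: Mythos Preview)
Your approach is essentially the same as the paper's: use the canonical map of $t$-Bockstein spectral sequences to the $\TP$ case to import the differentials and the permanent cycles $t^{p^2},\lambda_1,\lambda_2$, and then observe that sparsity forbids all other differentials (in particular any on $\mu$). The paper compresses everything after the comparison into the single phrase ``no other differentials are possible, by sparsity,'' whereas you spell out the injectivity argument and the degree count for $\mu$; note only the typo $t^{\pm p^2}$ (should be $t^{p^2}$) in your final $E_\infty$ description, and that your list of classes ``removed'' by $d_p$ and $d_{p^2}$ omits a few sources/targets (e.g.\ $t^k\lambda_2$ and $t^{pj+p^2}\lambda_2$), though the end result you state is correct.
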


\begin{proof}
The canonical map from the cobar complex for $\TC^{-}(\ell)$ to the cobar complex for $\TP(\ell)$ induces a map of $t$-Bockstein spectral sequences, from which we can read off the claimed differentials and the facts that $t^{p^2}$, $\lambda_1$, and $\lambda_2$ are permanent cycles.
\end{proof}

\includegraphics[scale=1,trim={4cm 18.5cm 3.5cm 2.5cm},clip]{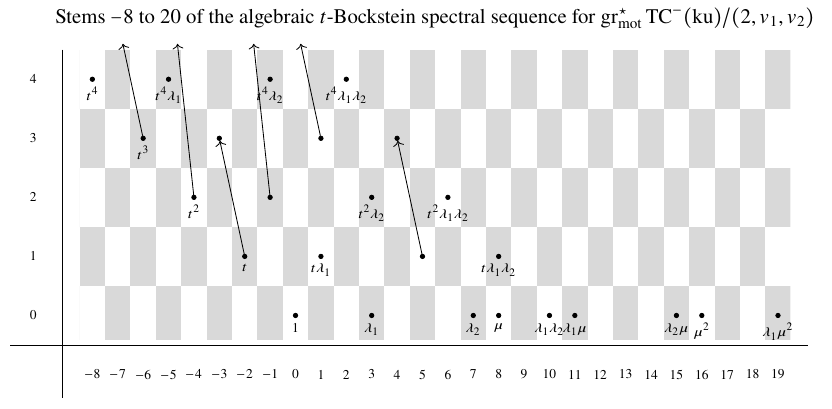}

\begin{corollary} \label{cor:ellCanMap}
There are isomorphisms of $\mathbb{F}_p$ vector spaces
\begin{align*}
\left(\mathrm{gr}^*_{\mathrm{mot}}\mathrm{TC}^{-}(\ell) \right) / (p,v_1,v_2) &\cong \mathbb{F}_p[t^{p^2}, \mu]/(t^{p^2}\mu) \otimes \Lambda(\lambda_1,\lambda_2)  \\ &\qquad\oplus  \mathbb{F}_p\{t^d \lambda_1, t^{pd} \lambda_2, t^d \lambda_1 \lambda_2, t^{pd} \lambda_1 \lambda_2 \text{ }|\text{ } 0 < d <p\},
\end{align*}
\[\left(\mathrm{gr}^*_{\mathrm{mot}}\mathrm{TP}(\ell) \right) / (p,v_1,v_2) \cong \mathbb{F}_p[t^{\pm p^2}] \otimes \Lambda(\lambda_1,\lambda_2)\]
The canonical map 
\[\left(\mathrm{gr}^*_{\mathrm{mot}}\mathrm{TC}^{-}(\ell) \right) / (p,v_1,v_2) \to \left(\mathrm{gr}^*_{\mathrm{mot}}\mathrm{TP}(\ell) \right) / (p,v_1,v_2)\]
sends each class of the form $\lambda_1^{\epsilon_1}\lambda_2^{\epsilon_2}t^{kp^2}$ to the correspondingly named class in the target, where $\epsilon_1,\epsilon_2\in\{0,1\}$ and $k \ge 0$.  It is zero on all other classes.
\end{corollary}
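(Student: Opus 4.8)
The plan is to deduce the entire statement from the two algebraic $t$-Bockstein spectral sequences already in hand --- the one converging to $\left(\gr^\star_\mot\TP(\ell)\right)/(p,v_1,v_2)$ from the preceding theorem and the one converging to $\left(\gr^\star_\mot\TC^{-}(\ell)\right)/(p,v_1,v_2)$ from \Cref{cor:TCminusBockstein} --- together with their compatibility under the map that inverts $t$. First I would record that both spectral sequences converge strongly: modulo $(p,v_1,v_2)$ their $\mathrm{E}_1$-pages are $\mathbb{F}_p[t^{\pm1}]\otimes\Lambda(\lambda_1,\lambda_2)$ and $\mathbb{F}_p[t,\mu]/(t\mu)\otimes\Lambda(\lambda_1,\lambda_2)$, each finite-dimensional in any fixed degree, so the $\fil^\bullet_+$-filtration on the homotopy groups is finite --- in particular exhaustive and Hausdorff --- in each degree. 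The two displayed $\mathbb{F}_p$-vector-space isomorphisms then follow immediately, since a finite filtration of an $\mathbb{F}_p$-vector space splits, identifying the homotopy with the already-computed $\mathrm{E}_\infty$-pages.

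Next I would analyze the canonical map via the induced map of spectral sequences. By the preceding theorem this map is, on $\mathrm{E}_1$-pages and modulo $(p,v_1,v_2)$, the localization $\mathbb{F}_p[t,\mu]/(t\mu)\otimes\Lambda(\lambda_1,\lambda_2)\to\mathbb{F}_p[t^{\pm1}]\otimes\Lambda(\lambda_1,\lambda_2)$ inverting $t$; in particular $\mu\mapsto0$ already on $\mathrm{E}_1$, and the differentials on $t$, $t^p$, $\lambda_1$, $\lambda_2$ agree on the two sides. Running this through the pages, I would read off the induced map on $\mathrm{E}_\infty$: each $t^{kp^2}\lambda_1^{\epsilon_1}\lambda_2^{\epsilon_2}$ with $k\ge0$ goes to the correspondingly named permanent cycle, whereas (i) the classes $\mu^k\lambda_1^{\epsilon_1}\lambda_2^{\epsilon_2}$ with $k\ge1$ die because $\mu$ does, and (ii) each of the finitely many classes $t^d\lambda_1$, $t^{pd}\lambda_2$, $t^d\lambda_1\lambda_2$, $t^{pd}\lambda_1\lambda_2$ with $0<d<p$ --- which survives in the $\TC^{-}$ spectral sequence only because negative powers of $t$ are unavailable there --- becomes a boundary once $t$ is inverted (for instance $d_p(t^{d-p})\doteq t^d\lambda_1$), hence maps to zero in the $\mathrm{E}_\infty$-page of the $\TP$ spectral sequence.

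Finally I would upgrade this to a statement about the homotopy groups. Because the $\fil^\bullet_+$-filtration is Hausdorff in each degree, an element detected by $0$ on $\mathrm{E}_\infty$ must vanish, which forces the canonical map to annihilate every $\mu^k\lambda_1^{\epsilon_1}\lambda_2^{\epsilon_2}$ ($k\ge1$) and every class in the four finite families, exactly as asserted. For the classes $t^{kp^2}\lambda_1^{\epsilon_1}\lambda_2^{\epsilon_2}$, the target $\mathbb{F}_p[t^{\pm p^2}]\otimes\Lambda(\lambda_1,\lambda_2)$ is at most one-dimensional in each degree (since $|t|=-2$, and no two of the degrees $0$, $2p-1$, $2p^2-1$, $2p^2+2p-2$ of the four classes $\lambda_1^{\epsilon_1}\lambda_2^{\epsilon_2}$ differ by a multiple of $2p^2$), so there is no higher-filtration ambiguity and the image is exactly the correspondingly named class. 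I expect the main obstacle to be step (ii): carefully pushing the map of spectral sequences through the differentials --- which differ between the two sides --- to confirm that the ``extra'' classes $t^d\lambda_1$ and its companions, $\mathrm{E}_\infty$-classes on the $\TC^{-}$ side, are hit by differentials on the $\TP$ side and hence map to zero. Everything else is formal bookkeeping with the explicit $\mathrm{E}_\infty$-pages and the completeness of the filtration.
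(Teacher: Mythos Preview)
Your approach is the paper's: read the canonical map off the map of $t$-Bockstein spectral sequences. Your identification of the two $E_\infty$-pages and your analysis of the induced map on $E_\infty$ are correct.

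There is one imprecision in the final step. You write that ``an element detected by $0$ on $E_\infty$ must vanish'' by Hausdorffness, and use this to conclude that $\mu^k\lambda_1^{\epsilon_1}\lambda_2^{\epsilon_2}$ and the classes $t^d\lambda_1$, etc., map to zero. But the $E_\infty$-map killing the class detecting $x$ only tells you $\mathrm{can}(x)$ lies in \emph{strictly higher} $t$-adic filtration than $x$; it does not say $\mathrm{can}(x)$ is detected by $0$ (i.e., lies in arbitrarily high filtration). For example, $\mu$ sits in filtration $0$ and the target in degree $2p^2$ is nonzero, spanned by $t^{-p^2}$; Hausdorffness alone does not rule out $\mathrm{can}(\mu)\doteq t^{-p^2}$.

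What actually closes the argument is the observation you make in the next paragraph but apply only to the nonzero part of the map: in each relevant degree the target $\mathbb{F}_p[t^{\pm p^2}]\otimes\Lambda(\lambda_1,\lambda_2)$ has at most one nonzero class, and that class always sits in \emph{strictly lower} $t$-adic filtration than the source class in question (e.g.\ $t^{-p^2}$ in filtration $-p^2<0$ for $\mu$; $\lambda_1$ in filtration $0<p(p-1)$ for $t^{p(p-1)}\lambda_2$). Hence $\mathrm{can}(x)$, forced into higher filtration, must vanish. This is exactly the paper's one-sentence justification: ``all nonzero classes in $\left(\gr^\star_\mot\TP(\ell)\right)/(p,v_1,v_2)$ are in low enough $t$-adic filtration that no filtration jumps are possible.''
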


\begin{proof}
The only subtle point is to prove that classes not of the form $\lambda_1^{\epsilon_1} \lambda_2^{\epsilon_2} t^{kp^2}$ all map to zero.  The calculations above prove this to be the case after taking $t$-adic associated graded, and all non-zero classes in $\left(\mathrm{gr}^*_{\mathrm{mot}}\mathrm{TP}(\ell) \right) / (p,v_1,v_2)$ are in low enough $t$-adic filtrations that no filtration jumps are possible.
\end{proof}

\begin{remark} 
For $p \ge 5$, Ausoni and Rognes computed the homotopy fixed point spectral sequences for the $\mathbb{F}_p[v_2]$-module $V(1)_*\mathrm{TC}^{-}(\ell)$ \cite[\textsection 6]{AusoniRognes}.  As a corollary, one can straightforwardly deduce the $\mathbb{F}_p$-vector spaces $V(2)_*\mathrm{TC}^{-}(\ell)$ and $V(2)_*\mathrm{TP}(\ell)$, and observe a posteriori that the motivic spectral sequences for $V(2)_*\mathrm{TC}^{-}(\ell)$ and $V(2)_*\mathrm{TP}(\ell)$ degenerate at the $\mathrm{E}_2$-page.  The degeneration of these spectral sequences is in fact forced by bidegree reasons alone, since they are concentrated in a checkerboard pattern in Adams weights $0$, $1$, and $2$.
\end{remark}

\subsection{Syntomic cohomology} \label{subsec:finalsyntomic}

By \Cref{cor:ellCanMap}, we understand the canonical map 
\[\mathrm{can}:\left(\mathrm{gr}^*_{\mathrm{mot}}\mathrm{TC}^{-}(\ell) \right) / (p,v_1,v_2) \to \left(\mathrm{gr}^*_{\mathrm{mot}}\mathrm{TP}(\ell) \right) / (p,v_1,v_2).\]
To compute $\left(\mathrm{gr}^*_{\mathrm{mot}}\mathrm{TC}(\ell) \right) / (p,v_1,v_2)$, it remains to understand the Frobenius map 
\[\varphi:\left(\mathrm{gr}^*_{\mathrm{mot}}\mathrm{TC}^{-}(\ell) \right) / (p,v_1,v_2) \to \left(\mathrm{gr}^*_{\mathrm{mot}}\mathrm{TP}(\ell) \right) / (p,v_1,v_2).\]
For this, we contemplate the following diagram:
$$
\begin{tikzcd}
\left(\mathrm{gr}^*_{\mathrm{mot}}\mathrm{TC}^{-}(\ell) \right) / (p,v_1) \arrow{r}{\varphi} \arrow{d} & \left(\mathrm{gr}^*_{\mathrm{mot}}\mathrm{TP}(\ell) \right) / (p,v_1) \arrow{d}\\
\left(\mathrm{gr}^*_{\mathrm{mot}}\mathrm{THH}(\ell) \right) / (p,v_1) \arrow{r}{\varphi} & \left(\mathrm{gr}^*_{\mathrm{mot}}\mathrm{THH}(\ell)^{\tate \Cp} \right) / (p,v_1)
\end{tikzcd}
$$
Since $v_2=0$ in $\left(\mathrm{gr}^*_{\mathrm{mot}}\mathrm{THH}(\ell) \right) / (p,v_1)$, for example because $\mathrm{gr}^*_{\mathrm{mot}}\mathrm{THH}(\ell)$ is an algebra over $\gr^*_{\ev} \ell$, the diagram factors through a square of the form
$$
\begin{tikzcd}
\left(\gr^*_{\mathrm{mot}}\mathrm{TC}^{-}(\ell) \right) / (p,v_1,v_2) \arrow{r}{\varphi} \arrow{d}{f} & \left(\mathrm{gr}^*_{\mathrm{mot}}\mathrm{TP}(\ell) \right) / (p,v_1,v_2) \arrow{d}{g}\\
\left(\mathrm{gr}^*_{\mathrm{mot}}\mathrm{THH}(\ell) \right) / (p,v_1) \arrow{r}{\varphi} & \left(\mathrm{gr}^*_{\mathrm{mot}}\mathrm{THH}(\ell)^{\tate \Cp} \right) / (p,v_1)
\end{tikzcd}
$$
Here, the map $f$ is an isomorphism from the $0$-line of the spectral sequence of \Cref{cor:TCminusBockstein} onto $\left(\mathrm{gr}^*_{\mathrm{mot}}\mathrm{THH}(\ell) \right) / (p,v_1)$. It is trivial on classes above the $0$-line.  The map $g$ is the isomorphism of \Cref{thm:HodgeTateIso}.

\begin{corollary}  \label{cor:ellFrob}
In terms of the isomorphisms
\begin{align*}
\left(\mathrm{gr}^*_{\mathrm{mot}}\mathrm{TC}^{-}(\ell) \right) / (p,v_1,v_2) &\cong \mathbb{F}_p[t^{p^2}, \mu]/(t^{p^2}\mu) \otimes \Lambda(\lambda_1,\lambda_2)  \\ &\qquad\oplus  \mathbb{F}_p\{t^d \lambda_1, t^{pd} \lambda_2, t^d \lambda_1 \lambda_2, t^{pd} \lambda_1 \lambda_2 \text{ }|\text{ } 0 < d <p\},
\end{align*}
\[\left(\mathrm{gr}^*_{\mathrm{mot}}\mathrm{TP}(\ell) \right) / (p,v_1,v_2) \cong \mathbb{F}_p[t^{\pm p^2}] \otimes \Lambda(\lambda_1,\lambda_2)\]
of \Cref{cor:ellCanMap}, the Frobenius is trivial on classes not of the form $\lambda_1^{\epsilon_1} \lambda_2^{\epsilon_2} \mu^k$ where $k\ge 0$ and $\epsilon_1,\epsilon_2 \in \{0,1\}$.  On the other hand, the Frobenius sends each class of the form $\lambda_1^{\epsilon_1} \lambda_2^{\epsilon_2} \mu^k$ to an $\mathbb{F}_p^{\times}$ multiple of the class named $\lambda_1^{\epsilon_1}\lambda_2^{\epsilon_2} t^{-{p^2}k}$.
When $k=0$, this $\mathbb{F}_p^{\times}$ multiple can be determined, and in fact $\varphi(\lambda_1)=\lambda_1$ and $\varphi(\lambda_2)=\lambda_2$.
\end{corollary}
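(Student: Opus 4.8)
The plan is to read $\varphi$ directly off the commutative square displayed just before the statement, whose three other arrows are already understood. By the collapse of the motivic spectral sequences involved (for $\THH(\ell)$ as recalled in this section, for $\THH(\ell)^{\tate\Cp}$ as in \Cref{rmk:isE2tate}), the bottom horizontal arrow is the mod $(p,v_1)$ cyclotomic Frobenius computed in \Cref{thm:ellSegal}, i.e. the localization map $\Lambda(\lambda_1,\lambda_2)\otimes\mathbb{F}_p[\mu]\to\Lambda(\lambda_1,\lambda_2)\otimes\mathbb{F}_p[\mu^{\pm 1}]$ inverting $\mu$; the left vertical arrow $f$ is (as recorded there) an isomorphism from the $t$-adic filtration-$0$ part of the $E_\infty$-page of \Cref{cor:TCminusBockstein} onto $\left(\gr^{\star}_{\mot}\THH(\ell)\right)/(p,v_1)$ and is zero on everything of positive $t$-adic filtration; and the right vertical arrow $g$ is the isomorphism of \Cref{thm:HodgeTateIso}. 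Since $\varphi$ is a map of graded $\mathbb{F}_p$-vector spaces, it suffices to evaluate it on the basis supplied by \Cref{cor:ellCanMap}.

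First I would sort that basis by $t$-adic filtration. Reading off \Cref{cor:TCminusBockstein}, the filtration-$0$ summand of $\left(\gr^{\star}_{\mot}\TC^{-}(\ell)\right)/(p,v_1,v_2)$ is exactly $\mathbb{F}_p[\mu]\otimes\Lambda(\lambda_1,\lambda_2)$, namely the classes $\mu^{k}\lambda_1^{\epsilon_1}\lambda_2^{\epsilon_2}$ with $k\ge 0$ and $\epsilon_i\in\{0,1\}$; every other basis element — the $t^{p^2 j}\lambda_1^{\epsilon_1}\lambda_2^{\epsilon_2}$ with $j\ge 1$ and the $t^{d}\lambda_1, t^{pd}\lambda_2, t^{d}\lambda_1\lambda_2, t^{pd}\lambda_1\lambda_2$ with $0<d<p$ — lies in strictly positive $t$-adic filtration, hence $f$ kills it. For such an $x$, commutativity of the square gives $g(\varphi(x))=\varphi(f(x))=0$, and $g$ is injective, so $\varphi(x)=0$. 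This establishes the first assertion of the corollary.

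For the remaining classes $x=\mu^{k}\lambda_1^{\epsilon_1}\lambda_2^{\epsilon_2}$, the element $f(x)$ is the identically named class of $\left(\gr^{\star}_{\mot}\THH(\ell)\right)/(p,v_1)\cong\Lambda(\lambda_1,\lambda_2)\otimes\mathbb{F}_p[\mu]$, and by \Cref{thm:ellSegal} the bottom Frobenius carries it to the same monomial inside $\Lambda(\lambda_1,\lambda_2)\otimes\mathbb{F}_p[\mu^{\pm 1}]$; thus $g(\varphi(x))=\mu^{k}\lambda_1^{\epsilon_1}\lambda_2^{\epsilon_2}$. It then remains to apply $g^{-1}$. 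Combining \Cref{thm:HodgeTateIso} with \Cref{cor:ellCanMap}, $g$ is a graded ring isomorphism $\mathbb{F}_p[t^{\pm p^2}]\otimes\Lambda(\lambda_1,\lambda_2)\xrightarrow{\ \sim\ }\Lambda(\lambda_1,\lambda_2)\otimes\mathbb{F}_p[\mu^{\pm 1}]$, and since $|t^{p^2}|=-2p^2=-|\mu|$, $|\lambda_1|=2p-1$, and $|\lambda_2|=2p^2-1$, a one-dimensionality check in each of these degrees forces $g(t^{p^2})$ to be an $\mathbb{F}_p^{\times}$-multiple of $\mu^{-1}$ and each $g(\lambda_i)$ to be an $\mathbb{F}_p^{\times}$-multiple of $\lambda_i$. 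Hence $g^{-1}(\mu^{k}\lambda_1^{\epsilon_1}\lambda_2^{\epsilon_2})$ is an $\mathbb{F}_p^{\times}$-multiple of $t^{-p^2 k}\lambda_1^{\epsilon_1}\lambda_2^{\epsilon_2}$, which is precisely the claimed value of $\varphi(x)$.

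The only point requiring care — and the one I expect to be the main, if mild, obstacle — is the filtration bookkeeping in the second paragraph: one must verify that the explicit basis elements of \Cref{cor:ellCanMap} sit in exactly the $t$-adic filtrations their names suggest, so that the vanishing of $f$ above the $0$-line really annihilates every generator other than the $\mu^{k}\lambda_1^{\epsilon_1}\lambda_2^{\epsilon_2}$. This is of the same nature as, and no harder than, the filtration-jump argument already carried out in the proof of \Cref{cor:ellCanMap}; everything else is a formal consequence of the commutative square together with \Cref{thm:ellSegal} and \Cref{thm:HodgeTateIso}.
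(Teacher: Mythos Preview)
Your argument is correct and follows the same approach as the paper: use the commutative square, the vanishing of $f$ off the $0$-line to kill $\varphi$ on all classes not of the form $\lambda_1^{\epsilon_1}\lambda_2^{\epsilon_2}\mu^k$, and then \Cref{thm:ellSegal} together with $g$ being an isomorphism plus a degree count to pin down the image of the remaining classes. The paper's proof is slightly terser in the second step---rather than analyzing $g^{-1}$, it simply observes that the only nonzero class in the codomain of the correct bidegree is an $\mathbb{F}_p^{\times}$-multiple of $\lambda_1^{\epsilon_1}\lambda_2^{\epsilon_2}t^{-p^2 k}$---but this is the same one-dimensionality check you carry out.
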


\begin{proof}
The map $f$ is already trivial on every class not of the form $\lambda_1^{\epsilon_1}\lambda_2^{\epsilon_2} \mu^k$.  \Cref{thm:ellSegal}, together with the fact that $g$ is an isomorphism, implies that each class of the form $\lambda_1^{\epsilon_1}\lambda_2^{\epsilon_2} \mu^k$ has non-trivial Frobenius image.  The only non-trivial classes in the codomain, in the same degree as $\lambda_1^{\epsilon_1}\lambda_2^{\epsilon_2} \mu^k$, are $\mathbb{F}_p^{\times}$ multiples of the class named $\lambda_1^{\epsilon_1}\lambda_2^{\epsilon_2} t^{-{p^2}k}$.

It remains to prove that $\varphi(\lambda_1)=\lambda_1$
and $\varphi(\lambda_2)=\lambda_2$. There is nothing
to prove when $p=2$. When $p\ge 3$, this follows from
\cite[Proposition 2.8]{AusoniRognes}, which proves that there
are elements in these degrees which survive to
$\mathrm{TC}$. 
\end{proof}

We can now deduce the main theorem of this section:

\begin{proof}[Proof of \Cref{thm:ellsyntomic}]
We deduce the first part of \Cref{thm:ellsyntomic} as an immediate consequence of the combination of \Cref{cor:ellCanMap} and \Cref{cor:ellFrob}, with the symbol $\partial$ decorating classes in $\left(\gr^*_{\mot}\TC(\ell)\right)/(p,v_1,v_2)$ that come from the cokernel of $\varphi-\mathrm{can}$. The second part of \Cref{thm:ellsyntomic}, about the $v_2$-Bockstein spectral sequence, follows by the argument given immediately after the theorem statement, which relies on the elementary lemma below applied
to $R=\gr^*_{\ev}(\mathbb{S})/(p,v_1)$
and $M = \gr^*_{\mot}(\TC(\ell))/
(p,v_1)$. We remind the reader that $v_2$ lives in $\pi_{2p^2-2}$ of the $(p^2-1)$'st graded piece of
$\gr^*_{\ev}(\mathbb{S})/(p,v_1)$, and that our
convention for displaying spectral sequences is to draw
a term from $\pi_nL^a$ of a graded object $L$ in column
$n$ and row $2a-n$. 
\end{proof}

\begin{lemma} Let $R$ be a graded ring, $M$ a graded
$R$-module. If $L^*$ is graded, write $\pi_{n,a}L$ for
$\pi_n(L^a)$. Then the Bockstein spectral sequence for
$\pi_{*,*}\left(\cpl{M}_x\right)$ associated
to an element $x \in \pi_{n,a}(R)$, with $\mathrm{E}_1$-page $\pi_{*,*}(M/x)[x]$, has $d_r$ differentials that send elements of bidegree $(m,b)$ to elements of bidegree $(m-rn-1,b-ra)$.
\end{lemma}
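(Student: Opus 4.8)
The plan is to realize this Bockstein spectral sequence as the spectral sequence of a filtered spectrum and then obtain the bidegree of $d_r$ by pure bookkeeping of suspension and internal-grading shifts. Multiplication by $x\in\pi_{n,a}(R)$ is a map of graded spectra whose source is $M$ shifted up by $n$ in homotopy degree and by $a$ in internal degree; write $M/x$ for its cofiber. Filter $\cpl{M}_x$ by powers of $x$ (equivalently, present it as the limit of the tower $\cdots\to M/x^2\to M/x^1$). The $j$-th associated graded $\gr^j$ of this filtration is, up to the degree shift recorded by $x^j$, the cofiber $M/x$; hence $\pi_{*,*}(\gr^j)$ is a copy of $\pi_{*,*}(M/x)$ placed in filtration $j$, and $E_1\iso\pi_{*,*}(M/x)[x]$ as claimed, the polynomial generator $x$ simultaneously tracking the filtration and carrying the bidegree attached to it by the chosen indexing convention.

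Next I would invoke the standard description of the differentials of a filtered-spectrum spectral sequence: $d_r$ is the map $\gr^j\to\gr^{j+r}$ assembled from the iterated connecting homomorphisms of the cofiber sequences $\fil^{j+1}\to\fil^{j}\to\gr^{j}$ (a class survives to $E_r$ precisely when the appropriate partial lift through these cofiber sequences exists). Each connecting homomorphism shifts homotopy degree by $-1$ and preserves internal degree. Therefore, at the level of associated graded pieces, $d_r$ sends $\pi_{m',b'}(\gr^{j})$ to $\pi_{m'-1,\,b'}(\gr^{j+r})$.

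Finally I would combine the two identifications: untwisting the degree shifts that separate $\gr^{j}$ from $\gr^{j+r}$ — namely $r$ copies of the bidegree attached to $x$ — together with the $-1$ in homotopy degree coming from the connecting map, converts the ``relative'' differential $\pi_{m',b'}(\gr^j)\to\pi_{m'-1,b'}(\gr^{j+r})$ into the stated formula $(m,b)\mapsto(m+rn-1,\,b+ra)$ on intrinsic $\pi_{*,*}(M/x)$-bidegrees. I expect no real obstacle here: the argument is entirely formal. The only point requiring care is the sign bookkeeping in this last step — keeping straight which shift is absorbed into the filtration index and which is the genuine degree shift $-1$ from the connecting homomorphism. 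One small remark worth including: if $x$ is not a non-zero-divisor on $M$ then $\gr^j$ is only a subquotient of, not literally, a shift of $M/x$, but this affects neither the bidegree computation of $d_r$ nor the application to $R=\gr^{\star}_{\ev}(\mathbb{S})/(p,v_1)$ and $M=\gr^{\star}_{\mot}(\TC(\ell))/(p,v_1)$, where $M$ is free over $\mathbb{F}_p[v_2]$ and so is $v_2$-torsion-free.
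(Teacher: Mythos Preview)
The paper does not actually give a proof of this lemma; it is stated as an ``elementary'' fact and used immediately in the proof of Theorem~\ref{thm:ellsyntomic} without further argument. Your approach---realising the Bockstein spectral sequence as the spectral sequence of the $x$-adic filtration and reading off the bidegree of $d_r$ from the connecting homomorphisms together with the suspension/internal-grading shift carried by each power of $x$---is exactly the standard argument and is correct.

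Your explicit caution about the final sign bookkeeping is well placed: whether $\gr^j$ is identified with $\Sigma^{jn}(M/x)\{ja\}$ or $\Sigma^{-jn}(M/x)\{-ja\}$ depends on whether one thinks of $x$ as a map $\Sigma^nM\{a\}\to M$ or $M\to\Sigma^{-n}M\{-a\}$, and this governs the sign on the $rn$ and $ra$ terms in the formula. For the application in the paper this ambiguity is harmless, since the only fact used is that the $v_2$-Bockstein $d_r$ changes Adams weight by $+1$; as $v_2$ has Adams weight $0$, this is $2(\pm ra)-(\pm rn-1)=1$ under either sign convention. Your remark about the free-over-$\mathbb{F}_p[v_2]$ case ensuring the identification of the associated graded is clean is also to the point.
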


To finish the paper, we record proofs of the final two theorems mentioned in the introduction.

\begin{corollary} \label{cor:TCisfp}
For any prime number $p$ and $p$-local type $3$ complex $M$, $M_*\TC(\ell)$ is finite.
\end{corollary}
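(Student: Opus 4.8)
The plan is to reduce, via the thick subcategory theorem, to a single generalized Smith--Toda complex and then to run the motivic spectral sequence with coefficients in it, whose $\mathrm{E}_1$-page is the \emph{finite} group computed in \Cref{thm:ellsyntomic}.

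By the Hopkins--Smith periodicity theorem one may choose integers $i_0,i_1,i_2\ge 1$ for which the iterated cofiber $V:=\mathbb{S}/(p^{i_0},v_1^{i_1},v_2^{i_2})$ exists as a finite $p$-local complex of type $3$; since $p,v_1,v_2$ is a regular sequence of even degree on $\MU_*$, the complex $V$ has $\MU_*V\cong\MU_*/(p^{i_0},v_1^{i_1},v_2^{i_2})$ concentrated in even degrees, and in particular $V$ lies in the thick subcategory generated by $\mathbb{S}/p$. As the spectra with finite total homotopy form a thick subcategory of $\Spt$, and by the thick subcategory theorem every type $3$ complex $M$ lies in the thick subcategory generated by $V$, it suffices to prove that $V\otimes\TC(\ell)$ has finite homotopy.

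To this end I would form the filtered spectrum $Y^\star:=\fil^\star_\mot\cpl{\TC(\ell)}_p\otimes_{\fil^\star_\ev\mathbb{S}}\fil^\star_{\ev/\mathbb{S}}(V)$, as in the remark following \Cref{thm:ellsyntomic}. Its underlying object is $V\otimes\cpl{\TC(\ell)}_p\simeq V\otimes\TC(\ell)$, the equivalence because $V$ is built from $\mathbb{S}/p$ and $\TC(\ell)$ is bounded below. Since $\fil^\star_{\ev/\mathbb{S}}(V)\simeq\nu(V)$ is a perfect $\fil^\star_\ev\mathbb{S}$-module --- obtained from $\fil^\star_\ev\mathbb{S}$ by finitely many cofiber sequences and retracts, using that $\MU_*$ of each stage of a cell structure for $V$ is even --- and $\fil^\star_\mot\cpl{\TC(\ell)}_p$ is complete, the filtered spectrum $Y^\star$ is complete, with associated graded $\gr^\star_\mot\TC(\ell)\otimes_{\gr^\star_\ev\mathbb{S}}\gr^\star_{\ev/\mathbb{S}}(V)$. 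Here $\gr^\star_{\ev/\mathbb{S}}(V)$ is the object of the heart of $\Mod_{\gr^\star_\ev\mathbb{S}}$ attached to the even comodule $\MU_*V$, and $\MU_*V$ carries a finite filtration by subcomodules with graded pieces shifts of $\MU_*/(p,v_1,v_2)$ (filter $(p)$-adically, then $(v_1)$-adically modulo $p$, then $(v_2)$-adically modulo $(p,v_1)$, using that multiplication by $p$, by $v_1$ on $\MU_*/p$-comodules, and by $v_2$ on $\MU_*/(p,v_1)$-comodules are $\MU_*\MU$-comodule maps). Tensoring this filtration with $\gr^\star_\mot\TC(\ell)$ exhibits $\gr^\star Y$ as a finite iterated cofiber of shifts of $\gr^\star_\mot\TC(\ell)/(p,v_1,v_2)$, which is a finite $\mathbb{F}_p$-vector space by \Cref{thm:ellsyntomic}; hence the total homotopy $\pi_* \gr^\star Y$ is finite.

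Finally I would invoke the elementary fact that a complete filtered spectrum whose associated graded has finite total homotopy itself has finite homotopy: if $\gr^sY=0$ for $|s|>N$, then completeness forces $\fil^sY\simeq 0$ for $s>N$, while $\fil^sY\simeq\fil^{-N}Y$ for $s\le -N$, so the underlying object equals $\fil^{-N}Y$, a finite iterated extension of $\gr^{-N}Y,\dots,\gr^{N}Y$. Applied to $Y^\star$ this shows $V\otimes\TC(\ell)$ has finite homotopy, completing the argument. The step demanding the most care is the identification and finiteness of $\gr^\star Y$ --- above all exhibiting the finite filtration of $\fil^\star_{\ev/\mathbb{S}}(V)$ by copies of $\gr^\star_\ev\mathbb{S}/(p,v_1,v_2)$ --- after which the conclusion is a formal consequence of completeness of the motivic filtration together with \Cref{thm:ellsyntomic}.
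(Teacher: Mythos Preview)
Your proof is correct and follows essentially the same route as the paper: reduce via the thick subcategory theorem to a single generalized Moore spectrum $V=\mathbb{S}/(p^{i_0},v_1^{i_1},v_2^{i_2})$, run the motivic filtration with coefficients in $V$, and observe that the associated graded is built from finitely many copies of $\gr^\star_\mot\TC(\ell)/(p,v_1,v_2)$, which is finite by \Cref{thm:ellsyntomic}. You supply considerably more detail than the paper---the completeness argument, the explicit comodule filtration on $\MU_*V$, and the passage from finite associated graded to finite total homotopy---but the underlying strategy is the same.
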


\begin{proof}
By thick subcategory considerations, it suffices to prove this for $M$ equal to a generalized Moore spectrum of the form $\mathbb{S}/(p^i,v_1^j,v_2^k)$, where $j\gg i$ and $ k \gg j$ so that killing $(p^i,v_1^j,v_2^k)$ is a well-defined operation in $\MU_*\MU$-comodules.  There is then a spectral sequence converging to $M_*\TC(\ell)$ beginning with $\gr^*_{\mot}(\TC(\ell)) / (p^i,v_1^j,v_2^k)$.  The latter object may be resolved by finitely many copies of $\gr^*_{\mot}(\TC(\ell)) / (p,v_1,v_2)$, and so is finite.
\end{proof}

As explained in \cite[\textsection 3]{HahnWilson}, \Cref{cor:TCisfp} implies that the map
\[\TC(\ell)_{(p)} \to L_2^{f}\TC(\ell)_{(p)}\]
is a $\pi_*$-iso in degrees $* \gg 0$, which can be seen as a telescopic analog of the Lichtenbaum--Quillen conjecture. In fact, one can localize at a wedge of Morava $K$-theories rather than a wedge of telescopes, which we record as our final result.

\begin{theorem}
The telescope conjecture is true of $\TC(\ell)$.  In other words, the natural map
$$L_2^{f} \TC(\ell) \to L_2 \TC(\ell)$$
is an equivalence.
\end{theorem}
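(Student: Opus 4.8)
The plan is to show that the discrepancy spectrum $\mathrm{fib}\bigl(L_2^f\TC(\ell)\to L_2\TC(\ell)\bigr)$ vanishes by exhibiting it as a bounded‑above spectrum that is simultaneously $T(2)$‑local. First I would use that both $L_2^f$ and $L_2=L_{E(2)}$ are smashing (the latter by Hopkins--Ravenel), so that this fiber is canonically $C\otimes\TC(\ell)$, where $C:=\mathrm{fib}(L_2^f\mathbb{S}\to L_2\mathbb{S})$; thus the theorem is equivalent to $C\otimes\TC(\ell)\simeq 0$. Next I would record the standard fact that $C$ is $T(2)$‑local: it is $L_2^f$‑local because $L_2\mathbb{S}$ is $E(2)$‑local and hence orthogonal to the finite type‑$\ge 3$ complexes that generate the $L_2^f$‑acyclics, and it is $L_1^f$‑acyclic because the telescope conjecture holds at heights $0$ and $1$ (Mahowald, Miller), which gives $L_1^f L_2\mathbb{S}\simeq L_1^f\mathbb{S}\simeq L_1^f L_2^f\mathbb{S}$. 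Since $L_1^f$ and $L_2^f$ are smashing, it follows that $C\otimes\TC(\ell)$ is again $T(2)$‑local.

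The one elementary input needed is that \emph{a bounded‑above $T(2)$‑local spectrum $Z$ is zero}. To see this, smash $Z$ with a finite type‑$2$ complex $F$ carrying a $v_2$‑self‑map $v$ of positive degree: the cofiber of $v$ on $F\otimes Z$ is $(F/v)\otimes Z$, which vanishes because $F/v$ has type $3$ and $Z$ is $L_2^f$‑local, so $v$ acts invertibly on $F\otimes Z$; being both $v$‑periodic and bounded above, $F\otimes Z=0$. But then $Z$ is $L_1^f$‑local (the Bousfield class of $F$ being that of $\mathrm{fib}(\mathbb{S}\to L_1^f\mathbb{S})$), while $T(2)$‑locality forces $L_1^fZ=0$, so $Z=Z\simeq L_1^fZ=0$. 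It therefore suffices to prove that $C\otimes\TC(\ell)$ is bounded above. Applying the octahedral axiom to $\TC(\ell)\to L_2^f\TC(\ell)\to L_2\TC(\ell)$ yields a cofiber sequence
\[
  \mathrm{fib}\bigl(\TC(\ell)\to L_2^f\TC(\ell)\bigr)\longrightarrow
  \mathrm{fib}\bigl(\TC(\ell)\to L_2\TC(\ell)\bigr)\longrightarrow
  C\otimes\TC(\ell),
\]
so it is enough to bound the first two terms above.

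This is where \Cref{cor:TCisfp} enters: it shows that $\TC(\ell)$ is bounded below and that $M\otimes\TC(\ell)$ is $\pi_*$‑finite for every finite type‑$3$ complex $M$, i.e.\ that $\TC(\ell)$ has finite $\mathrm{fp}$‑type (at most $2$) in the sense of Mahowald--Rezk. The $\mathrm{fp}$‑type machinery developed in \cite[\textsection 3]{HahnWilson} (after Mahowald--Rezk) then shows that for such a spectrum the fibers of both $\TC(\ell)\to L_2^f\TC(\ell)$ and $\TC(\ell)\to L_2\TC(\ell)$ are bounded above; the first of these is exactly \Cref{thm:introLQell}. Combining the displayed cofiber sequence with the vanishing criterion above completes the proof. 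The main obstacle — and the only genuinely non‑formal ingredient — is the boundedness of $\mathrm{fib}(\TC(\ell)\to L_2\TC(\ell))$: \cite[\textsection 3]{HahnWilson} treats the $L_2^f$‑version directly, and I would either extract the $L_{E(2)}$‑version from the same $\mathrm{fp}$‑type analysis (Mahowald--Rezk's conclusions apply equally to $L_{E(n)}$) or, failing that, deduce it from the chromatic fracture square once the already‑known height‑$\le 1$ behaviour and the $L_2^f$‑Lichtenbaum--Quillen estimate are in hand.
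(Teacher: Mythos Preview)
There is a genuine circularity in your argument. The crucial step is bounding $\mathrm{fib}(\TC(\ell)\to L_2\TC(\ell))$ above, and neither of your proposed routes delivers this without already knowing the theorem. Mahowald--Rezk's fp-type analysis establishes that $\mathrm{fib}(X\to L_n^f X)$ is bounded above for $X$ of fp-type $\le n$; it says nothing about $\mathrm{fib}(X\to L_{E(n)}X)$. Indeed, your own cofiber sequence shows these two fibers differ exactly by $C\otimes X$, so asserting that the $E(n)$-version is bounded above is \emph{equivalent} (via your bounded-above-$T(2)$-local-implies-zero lemma) to the telescope conjecture for $X$. Your chromatic fracture alternative fails for a more elementary reason: to feed into the fracture square you would need $\mathrm{fib}(\TC(\ell)\to L_{E(1)}\TC(\ell))$ bounded above, but $\TC(\ell)$ has fp-type $2$, not $\le 1$, so this fiber is certainly not bounded above.

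The paper's proof avoids this trap by working instead with $\TC^-(\ell)$ and exploiting the motivic resolution directly. For a type-$2$ finite complex $F$ one writes $F\otimes\TC^-(\ell)$ as the totalization of $F\otimes\TC^-(\ell/\MU^{\otimes\bullet+1})$; the horizontal vanishing line in the motivic spectral sequence (a consequence of the syntomic computation) lets one commute $v_2^{-1}$ past the totalization. Each term in the resulting totalization is an $\MU$-module, hence its $T(2)$-localization is already $K(2)$-local (telescope is known for $\MU$-modules). This shows $L_2^f(F\otimes\TC^-(\ell))$ is $L_2$-local, whence the telescope conjecture for $\TC^-(\ell)$; the result for $\TP(\ell)$ and then $\TC(\ell)$ follows formally. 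The essential input your argument is missing is precisely this: some concrete mechanism---here, the $\MU$-module resolution with controlled convergence---that forces the $T(2)$-localization to land in the $K(2)$-local category.
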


\begin{proof}
We say that the height $2$ telescope conjecture holds for a spectrum $X$ if the natural map $L_2^{f}X \to L_2X$ is an equivalence.
First note that, since $L_2$ and $L_2^{f}$ are smashing localizations, if the height $2$ telescope conjecture holds for a ring $R$ then it also holds for every $R$-module.  We will prove the height $2$ telescope conjecture for $\TC^{-}(\ell)$.  Since $\TP(\ell)$ is a module over $\TC^{-}(\ell)$, we may conclude the height $2$ telescope conjecture for $\TP(\ell)$ and then, by the Nikolaus--Scholze fiber sequence, for $\TC(\ell)$.  

By the work of Mahowald and Miller \cite{MahowaldTelescope,MillerTelescope}, to prove the height $2$ telescope conjecture for $\TC^{-}(\ell)$ it suffices to prove it for $F \otimes \TC^{-}(\ell)$, where $F$ is a chosen $p$-local finite type $2$ complex.  To do this, consider the equivalence 
\[F \otimes\TC^{-}(\ell) \simeq F \otimes \left(\lim_{\Delta} \TC^{-}(\ell/\MU^{\otimes \bullet+1})\right) \simeq \lim_{\Delta} \left( F \otimes \TC^{-}(\ell/\MU^{\otimes \bullet+1})\right),\]
where we may pass $F$ inside of the totalization because it is finite. Now, the $L_2^{f}$ localization of $F \otimes\TC^{-}(\ell)$ is given by $v_2^{-1} F \otimes \TC^{-}(\ell)$.  We will prove in a moment that 
\[v_2^{-1}F \otimes\TC^{-}(\ell) \simeq \lim_{\Delta} \left(v_2^{-1}F \otimes \TC^{-}(\ell/\MU^{\otimes \bullet+1})\right).\]
Each term inside the totalization is an $L_2^{f}$-local $\MU$-module, and hence is $L_2$-local.  Thus, the totalization is also $L_2$-local.

It remains to check that 
\[v_2^{-1}F \otimes\TC^{-}(\ell) \simeq \lim_{\Delta} \left(v_2^{-1}F \otimes \TC^{-}(\ell/\MU^{\otimes \bullet+1})\right).\]
Combining e.g. \cite[Lemma 2.34]{clausen-mathew} and \cite[Proposition 3.12]{mathew-thick} (with $\mathcal{U}=\{\mathbb{S}\}$), this will hold if the descent spectral sequence computing $\pi_*(F \otimes \TC^{-}(\ell))$ from $\pi_*(F \otimes \TC^{-}(\ell/\MU^{\otimes \bullet+1}))$ admits a horizontal vanishing line at a finite page.

For simplicity, take $F$ to be a generalized Moore spectrum $\mathbb{S}/(p^{i},v_1^{j})$.  Then the motivic spectral sequence for $F_*\TC^{-}(\ell)$ is that associated to the filtered spectrum with \[\mathrm{fil}^n = \lim_{\Delta} (\tau_{\ge 2n} \left(\TC^{-}(\ell/\MU^{\otimes \bullet+1}) / (p^i,v_1^j) \right).\]  This is the d\'ecalage of the descent spectral sequence, and so agrees with the descent spectral sequence from the $\mathrm{E}_2$-page onward.  In other words, it suffices to prove that the motivic spectral sequence for $F_* \TC^{-}(\ell)$ admits a horizontal vanishing line. Now, $\pi_* \gr^*_{\mot} \TC^{-}(\ell) / (p^{i},v_1^{j})$ admits a finite filtration by copies of $\pi_*\gr^*_{\mot} \TC^{-}(\ell) / (p,v_1)$, and we have shown above that $\pi_* \gr^*_{\mot} \TC^{-}(\ell) / (p,v_1)$ has a horizontal vanishing line.
\end{proof}

\begin{appendix}
  \section{The even filtration for modules}
  \label{mod}
  In this appendix, we record an extension of the even filtration construction to the setting of modules over $\E_\infty$-rings, as opposed to the setting of $\E_\infty$-rings themselves that was treated in \cref{SecEvenFiltration}, and we establish descent results in this extended setting. Within the main body of the paper, this extension is only used in \cref{in--nu} and \cref{rmk:GammaStar}, in both cases regarding the functor denoted $\Gamma_\star$ in \cite{Cmot}.  We expect that the generality of this appendix will be useful in future applications (see, e.g., \cref{example:BPn}). 


\subsection{Defining the filtrations}
\label{mod--def}

\begin{notation}
  \label{mod--def--mod}
  Let $\Mod$ denote the category of pairs $(A,M)$ where $A$ is an $\E_\infty$-ring and $M$ is an $A$-module, and let $\Mod^{\ev}$ denote the full subcategory of $\Mod$ spanned by pairs $(A,M)$ where $A$ is even (but $M$ need not be). We denote by
  \begin{align*}
    U_{\Alg}&: \Mod \to \CAlg& (A,M)\mapsto A\\
    U_{\Mod}&: \Mod \to \Spt& (A,M) \mapsto M 
  \end{align*}
  the two forgetful functors.
\end{notation}

\begin{proposition}
  \label{mod--def--accessible}
  $\Mod^\ev$ is an accessible subcategory of $\Mod$.
\end{proposition}

\begin{proof}
  This follows from \cref{ev--def--accessible}, by virtue of \cite[Proposition 5.4.6.6]{htt} and the pullback square
  \[
    \begin{tikzcd}
      \Mod^\ev \ar[r, hook] \ar[d, "U_\Alg", swap] &
      \Mod \ar[d, "U_\Alg"] \\
      \CAlg^\ev \ar[r, hook] &
      \CAlg.
    \end{tikzcd}
    \qedhere
  \]
\end{proof}
 
\begin{construction}
  \label{mod--def--fil-ev}
  It follows from \cref{mod--def--accessible} that the composite functor
  \[
    \Mod^\ev \lblto{U_\Mod} \Spt \lblto{\tau_{\ge 2\star}} \FilSpt
  \]
  admits a right Kan extension along the inclusion $\Mod^\ev \subseteq \Mod$. We denote the resulting functor $\Mod \to \FilSpt$ by $(A,M) \mapsto \fil^{\star}_{\ev/A}(M)$, and refer to this construction as the \emph{even filtration}.
\end{construction}

\begin{remark}
  \label{mod--def--limit-formula}
  If $A \to B$ is a map of $\mathbb{E}_{\infty}$-rings and $M$ is an $A$-module, then $(B, M\otimes_AB)$ is initial among maps $(A,M) \to (B, N)$ in $\Mod$ lying over $A \to B$. It follows that the even filtration of \cref{mod--def--fil-ev} is given by the following limit expression:
  \[
    \fil^{\star}_{\ev/A}(M) \iso
    \lim_{A \to B, B \in \CAlg^{\ev}} \tau_{\ge 2\star}(M\otimes_AB);
  \]
  \cref{ev--def--accessible} implies that this limit is equivalent to a small limit and hence exists in the category $\FilSpt$.
\end{remark}

\begin{remark}
  \label{mod--def--complete-exhaustive}
  By the same reasoning as in \cref{ev--def--complete,ev--def--exhaustive,ev--exh--underlying}, for any $\E_\infty$-ring $A$ and $A$-module $M$:
  \begin{enumerate}
  \item the even filtration $\fil^\star_{\ev/A}(M)$ is complete;
  \item the fiber of the map $\fil^n_{\ev/A}(M) \to \colim(\fil^\star_{\ev/A}(M))$ is $(2n-3)$-truncated;
  \item there is a natural equivalence $\colim(\fil^\star_{\ev/A}(M)) \iso \lim_{A \to B,B\in\CAlg^\ev} M \otimes_A B$, in particular a natural map $M \to \colim(\fil^\star_{\ev/A}(M))$.
  \end{enumerate}
\end{remark}

\begin{remark}
  \label{mod--def--monoidal}
  The functor $\tau_{\ge 2\star} : \Spt \to \FilSpt$ has a canonical lax symmetric monoidal structure, from which the even filtration functor $\fil^\star_{\ev/(-)}$ obtains the same. It follows that, for $A$ an $\E_\infty$-ring and $A'$ an $\mathbb{E}_{\infty}$-$A$-algebra, $\fil^{\star}_{\ev/A}(A')$ is canonically a filtered $\mathbb{E}_{\infty}$-ring. For example, in the case $A'=A$, it follows from \cref{mod--def--limit-formula} that there is a canonical identification of filtered $\mathbb{E}_{\infty}$-rings
  \[
    \fil^\star_{\ev/A}(A) \iso \fil^\star_\ev(A).
  \]
  More generally, for $A$ an $\E_\infty$-ring, $\fil^{\star}_{\ev/A}$ lifts to a functor
  \[
    \fil^{\star}_{\ev/A}: \Mod_A \to \FilMod_{\fil^\star_\ev A}.
  \]
\end{remark}

\begin{variant}
  \label{mod--def--fil-ev-p}
  Let $\Mod_p$ denote the full subcategory of $\Mod$ spanned by the pairs $(A,M)$ where $A$ and $M$ are both $p$-complete, and let $\Mod_p^{\ev}$ denote the full subcategory of $\Mod^\ev$ spanned by the pairs $(A,M)$ where $A \in \CAlg_p^\ev$, i.e. $A$ is even and has bounded $p$-power torsion. We define  
  \begin{align*}
    \fil^\star_{\ev/(-),p} &: \Mod_p \to \FilSpt
  \end{align*}
  to be the right Kan extension of
  \begin{align*}
    \tau_{\ge 2\star}(U_{\Mod}) &: \Mod_p^\ev \to \FilSpt
  \end{align*}
  along the inclusion $\Mod_p^{\ev} \subseteq \Mod_p$ (its existence following from \cref{mod--def--accessible,ev--def--p-accessible}).
\end{variant}

\begin{variant}
  \label{mod--def--fil-ev-cir}
  We have a full subcategory $(\Mod^\ev)^{\Bcir}$ of $\Mod^{\Bcir}$ (resp. $(\Mod^\ev_p)^{\Bcir}$ of $\Mod_p^\Bcir = (\Mod_p)^{\Bcir}$) spanned by pairs $(A,M)$ where $A \in \CAlg^\ev$ (resp. $A \in \CAlg_p^\ev$). We define
  \begin{align*}
    &\fil^\star_{\ev/(-),\h\cir} : \Mod^\Bcir \to \FilSpt,
    &\fil^{\filledsquare}_{+}\fil^\star_{\ev/(-),\h\cir} : \Mod^\Bcir \to \BiFilSpt, \\
    &\fil^\star_{\ev/(-),\tate\cir} : \Mod^\Bcir \to \FilSpt, &\fil^{\filledsquare}_+\fil^\star_{\ev/(-),\tate\cir} : \Mod^\Bcir \to \BiFilSpt
  \end{align*}
  to be the right Kan extensions of
  \begin{align*}
    &\tau_{\ge 2\star}(U_{\Mod}^{\h\cir}) : (\Mod^\ev)^\Bcir \to \FilSpt,
    &\tau_{\ge 2\star}((\tau_{\ge \filledsquare}(U_{\Mod}))^{\h\cir}) : (\Mod^\ev)^\Bcir \to \BiFilSpt, \\
    &\tau_{\ge 2\star}(U_{\Mod}^{\tate\cir}): (\Mod^\ev)^\Bcir \to \FilSpt,
    &\tau_{\ge 2\star}((\tau_{\ge \filledsquare}(U_{\Mod}))^{\tate\cir}): (\Mod^\ev)^\Bcir \to \BiFilSpt
  \end{align*}
  along the inclusions $(\Mod^\ev)^\Bcir \subseteq \Mod^\Bcir$ (their existence following from \cref{mod--def--accessible}), and we define
  \begin{align*}
    &\fil^\star_{\ev/(-),p,\h\cir} : \Mod_p^\Bcir \to \FilSpt,
    &\fil^{\filledsquare}_+\fil^\star_{\ev/(-),p,\h\cir} : \Mod_p^\Bcir \to \BiFilSpt, \\ &\fil^\star_{\ev/(-),p,\tate\cir} : \Mod_p^\Bcir \to \FilSpt,
    &\fil^{\filledsquare}_+\fil^\star_{\ev/(-),p,\tate\cir} : \Mod_p^\Bcir \to \BiFilSpt,
  \end{align*}
  to be the right Kan extensions of
  \begin{align*}
    &\tau_{\ge 2\star}(U_{\Mod}^{\h\cir}) : (\Mod^\ev_p)^\Bcir \to \FilSpt,
    &\tau_{\ge 2\star}((\tau_{\ge \filledsquare}(U_{\Mod}))^{\h\cir}) : (\Mod^\ev_p)^\Bcir \to \BiFilSpt, \\
    &\tau_{\ge 2\star}(U_{\Mod}^{\tate\cir}) : (\Mod^\ev_p)^\Bcir \to \FilSpt,
    &\tau_{\ge 2\star}((\tau_{\ge \filledsquare}(U_{\Mod}))^{\tate\cir})) : (\Mod^\ev_p)^\Bcir \to \BiFilSpt
  \end{align*}
  along the inclusion $(\Mod^\ev_p)^\Bcir \subseteq \Mod_p^\Bcir$ (their existence following from \cref{mod--def--accessible,ev--def--p-accessible}).
\end{variant}

\begin{variant}
  \label{mod--def--fil-ev-cyc}
  Let $\CycMod_p$ denote the category of pairs $(A,M)$ where $A$ is a $p$-typical cyclotomic $\E_\infty$-ring and $M$ is an $A$-module in $\CycSpt_p$, and let $\CycMod^\ev_p$ denote the full subcategory of $\CycMod_p$ spanned by the pairs $(A,M)$ where $A \in \CAlg_p^\ev$. Recall that for $(A,M) \in \CycMod_p$, both $A$ and $M$ are bounded below and $p$-complete by convention. In particular, for $(A,M) \in \CycMod_p$, the Tate spectrum $\smash{M^{\tate\cir}}$ is $p$-complete (see \cite[\textsection 2.3]{BMS}), and hence, by \cite{NikolausScholze}, there is a natural Frobenius map $\smash{\varphi : M^{\h\cir} \to M^{\tate\cir}}$, together with a natural identification $\TC(M) \iso \fib(\varphi - \can : M^{\h\cir} \to M^{\tate\cir})$.

  We define
  \[
    \fil^\star_{\ev/(-),p,\TC}(-) : \CycMod_p \to \FilSpt,
  \]
  to be the right Kan extension of
  \[
    \fib(\varphi-\can : \tau_{\ge 2\star}(U_{\Mod}^{\h\cir}) \to \tau_{\ge 2\star}(U_{\Mod}^{\tate\cir})) : \CycMod_p^\ev \to \FilSpt,
  \]
  along the inclusion $\CycMod_p^\ev \subseteq \CycMod_p$ (its existence following from \cref{mod--def--accessible,ev--def--p-accessible}).
\end{variant}


\subsection{Descent properties of the filtrations}
\label{mod--desc}

\begin{definition}
  \label{mod--desc--free-topology-def}
  We say that a sieve on $(A,M) \in (\Mod^{\ev})^{\mathrm{op}}$ is a \emph{free covering sieve} if it contains a finite collection of maps $\{(A,M)\to (B_i,M_i)\}_{1\le i \le n}$ such that $\prod_i B_i$ is equivalent as an $A$-module to a direct sum of even shifts of $A$, which is nonzero if $A$ is nonzero, and each of the morphisms $(A,M) \to (B_i,M_i)$ induces an equivalence $M\otimes_AB_i \simeq M_i$.

  Similarly, we say that a sieve on $(A,M) \in (\Mod^{\ev}_p)^{\mathrm{op}}$ is a \emph{$p$-completely free covering sieve} if it contains a finite collection of maps $\{(A,M)\to (B_i,M_i)\}_{1\le i \le n}$ such that $\prod_i B_i$ is equivalent as an $A$-module to the $p$-completion of a direct sum of even shifts of $A$, which is nonzero if $A$ is nonzero, and each of the morphisms $(A,M) \to (B_i,M_i)$ induces an equivalence $\cpl{(M\otimes_A B_i)}_p \simeq M_i$.
\end{definition}

\begin{proposition}
  \label{mod--desc--free-topology-prop}
  The free covering sieves of \cref{mod--desc--free-topology-def} define a Grothendieck topology on $(\Mod^\ev)^\mathrm{op}$ and the $p$-completely free covering sieves define a Grothendieck topology on $(\Mod^\ev_p)^\mathrm{op}$.

  For a category $\mathcal{C}$ admitting small limits, a functor $F: \Mod^\ev \to \mathcal{C}$ is a sheaf for the former topology if and only if the following conditions are satisfied:
  \begin{enumerate}
  \item $F$ preserves finite products.
  \item For every nonzero, even $\E_\infty$-ring $A$ and every map of even $\E_\infty$-rings $A \to B$ that exhibits $B$ as a free $A$-module,
    the map
    \[
      F(A,M) \to \lim_{\Delta} F(B^{\otimes_A\bullet+1},M\otimes_AB^{\otimes_A\bullet+1})
    \]
    is an equivalence.
  \end{enumerate}
  The analogous claim holds for the latter topology as well.
\end{proposition}

\begin{proof}
  As with \cref{ev--desc--flat-topology-prop}, the proofs of \cite[A.3.2.1, A.3.3.1]{sag} go through.
\end{proof}

\begin{definition}
  \label{mod--desc--free-topology-name}
  We refer to the Grothendieck topologies of \cref{mod--desc--free-topology-prop} as the \emph{free topology} on $\Mod^\ev$ and the \emph{$p$-completely free topology} on $\Mod^\ev_p$.

  Since pushouts in $(\Mod^\ev)^\Bcir$ (resp. $(\Mod^\ev_p)^\Bcir$ and $\CycMod^\ev_p$) are
  computed in $\Mod^\ev$ (resp. $\Mod_p^\ev$), the above induce topologies on $\smash{(\Mod^\ev)^\Bcir}$, $\smash{(\Mod^\ev_p)^\Bcir}$, and $\CycMod^\ev_p$, which we call by the same names.
\end{definition}

\begin{lemma}
  \label{mod--desc--desc}
  \begin{enumerate}[leftmargin=*]
  \item \label{mod--desc--desc--plain}
    The functor $\pi_*(U_{\Mod}): \Mod^\ev \to \GrSpt$ is a sheaf for the free topology, and it restricts to a sheaf for the $p$-completely free topology on $\Mod^\ev_p$.
  \item \label{mod--desc--desc--ht}
    The functors $\pi_*(U_\Mod^{\h\cir}), \pi_*(U_\Mod^{\tate\cir}) : (\Mod^\ev)^{\Bcir} \to \GrSpt$ are sheaves for the free topology, and they restrict to sheaves for the $p$-completely free topology on $(\Mod^\ev_p)^{\Bcir}$.
  \item \label{mod--desc--desc--ht-stupid}
    The functors $\pi_{*}(\Sigma^\filledsquare(\pi_{\filledsquare}(U_\Mod))^{\h\cir}), \pi_{*}(\Sigma^\filledsquare(\pi_\filledsquare(U_\Mod))^{\tate\cir}) : (\Mod^\ev)^{\Bcir} \to \BiGrSpt$
    are sheaves for the free topology, and they restrict to sheaves for the $p$-completely free topology on $(\Mod^\ev_p)^{\Bcir}$.
  \end{enumerate}
\end{lemma}

\begin{proof}
  We will prove the $p$-complete statements; the integral statements can be addressed similarly. We begin by proving \cref{mod--desc--desc--plain}. Let $A \to B$ be a $p$-completely free cover in $\CAlg^\ev_p$ and let $M$ be a $p$-complete $A$-module. We need to prove that the canonical map
  \[
    \pi_*(M) \to 
    \lim_\Delta {\pi_*({\cpl{(M\otimes_A B^{\otimes_A\bullet+1})}_p})} \iso \lim_\Delta {\cpl{(\pi_*(M)\otimes^{\L}_{\pi_*(A)} (\pi_*(B))^{\otimes^{\L}_{\pi_*(A)}\bullet+1}))}_p}
  \]
  is an equivalence (where the limit is taken in $\GrSpt$ and the identification can be seen by arguing as in the proof of \cref{ev--desc--pcpl-flat-homotopy}). Both sides being $p$-complete, it suffices to show that it is a limit diagram after after derived base change along $\Z \to \Z/p$. The $p$-complete freeness hypothesis implies that
  \[
    \pi_{2*}(A) \otimes^\L_\Z \Z/p \to
    \pi_{2*}(B) \otimes^\L_\Z \Z/p
  \]
  is faithfully flat in the sense of \cite[Definition D.4.4.1]{sag} (after forgetting the gradings), so the claim follows from faithfully flat descent (\cite[Theorem D.6.3.5]{sag}).
  
  Let us now prove \cref{mod--desc--desc--ht}. Let $A \to B$ be a $p$-completely free cover in $(\CAlg_p^\ev)^{\Bcir}$ and let $M$ be a $p$-complete $\cir$-equivariant $A$-module. We need to prove that the canonical maps
  \[
    \pi_*(M^{\h\cir}) \to 
    \lim_{\Delta} {\pi_*((\cpl{(M\otimes_A B^{\otimes_A\bullet+1})}_p)^{\h\cir})},
    \ \ 
    \pi_*(M^{\tate\cir}) \to 
    \lim_{\Delta} {\pi_*((\cpl{(M\otimes_A B^{\otimes_A\bullet+1})}_p)^{\tate\cir})},
  \]
  are equivalences. We first treat the $(-)^{\h\cir}$ case. Choose $t \in \pi_{-2}(A^{\h\cir})$ as in \cref{ev--desc--orientation}\cref{ev--desc--orientation--class}. Using \cref{ev--desc--orientation}\cref{ev--desc--orientation--equivalences} and arguing as in \cref{ev--desc--pcpl-flat-homotopy}, we have that
  \begin{align*}
    \pi_*((\cpl{(M\otimes_A B^{\otimes_A\bullet+1})}_p)^{\h\cir})
    &\iso \pi_*(\cpl{(M^{\h\cir}\otimes_{A^{\h\cir}} (B^{\h\cir})^{\otimes_{(A^{\h\cir})}\bullet+1})}_{(p,t)}) \\
    &\iso \cpl{(\pi_*(M^{\h\cir})\otimes^{\L}_{\pi_*(A^{\h\cir})}
      \pi_*(B^{\h\cir})^{\otimes^{\L}_{\pi_*(A^{\h\cir})}\bullet+1})}_{(p,t)}.
  \end{align*}
  By $(p,t)$-completeness,
  it suffices to prove the claim after taking the derived base change along $\Z[t] \to \Z/p$, and then we may conclude by faithfully flat descent as in \cref{ev--desc--desc--plain} above. The $(-)^{\tate\cir}$ case then follows from the $(-)^{\h\cir}$ case by inverting $t$, as in the proof of \cref{ev--desc--desc}.

  Finally, \cref{ev--desc--desc--ht-stupid} can be proved by an argument very similar to the one just used to prove \cref{ev--desc--desc--ht}.
\end{proof}

\begin{theorem}
  \label{mod--desc--fil-desc}
  \begin{enumerate}[leftmargin=*]
  \item \label{mod--desc--fil-desc--plain}
    The functor $\tau_{\ge 2\star}(U_{\Mod}) : \Mod^\ev \to \FilSpt$ is a sheaf for the free topology, and it restricts to a sheaf for the $p$-completely free topology on $\Mod^\ev_p$.
  \item \label{mod--desc--fil-desc--ht}
    The functors $\tau_{\ge 2\star}(U_{\Mod}^{\h\cir}), \tau_{\ge 2\star}(U_{\Mod}^{\tate\cir}) : (\Mod^\ev)^{\Bcir} \to \FilSpt$ are sheaves for the free topology, and they restrict to sheaves for the $p$-completely free topology on $(\Mod^\ev_p)^{\Bcir}$.
  \item \label{mod--desc--fil-desc--nygaard}
    The functors $\tau_{\ge 2\star}((\tau_{\ge \filledsquare}(U_{\Mod}))^{\h\cir}), \tau_{\ge 2\star}((\tau_{\ge \filledsquare}(U_{\Mod}))^{\tate\cir}) : (\Mod^\ev)^{\Bcir} \to \BiFilSpt$ are sheaves for the free topology, and they restrict to sheaves for the $p$-completely free topology on $(\Mod^\ev_p)^{\Bcir}$.
  \item \label{mod--desc--fil-desc--tc}
    The functor
    \[
      \fib(\varphi-\can : \tau_{\ge 2\star}(U_{\Mod}^{\h\cir}) \to \tau_{\ge 2\star}(U_{\Mod}^{\tate\cir})) : \CycMod_p^\ev \to \FilSpt
    \]
    is a sheaf for the $p$-completely free topology.
  \end{enumerate}
\end{theorem}

\begin{proof}
  It suffices to prove these claims with $\tau_{\ge\star}$ in place of $\tau_{\ge 2\star}$, as doubling the speed of a filtration preserves limits. With this replacement made, we may proceed as in the proof of \cref{ev--desc--fil-desc}: we pass to associated graded objects, and then apply \cref{mod--desc--desc}.
\end{proof}

\begin{corollary}
  \label{mod--desc--eff-desc}
  \begin{enumerate}[leftmargin=*]
  \item \label{mod--desc--eff-desc--plain}
    For $A \to B$ an evenly free map of $\E_\infty$-rings and $M$ an $A$-module, the canonical map
    \[
      \fil^\star_{\ev/A}(M) \to \lim_{\Delta} {\fil^\star_{\ev/ B^{\otimes_A\bullet+1}}(M\otimes_AB^{\otimes_A\bullet+1})}
    \]
    is an equivalence. For $A \to B$ a $p$-completely evenly free map of $p$-complete $\E_\infty$-rings and $M$ a $p$-complete $A$-module, the canonical map
    \[
      \fil^\star_{\ev/A,p}(M) \to \lim_{\Delta} {\fil^\star_{\ev/
          \cpl{(B^{\otimes_A\bullet+1})}_p,p}(\cpl{(M\otimes_AB^{\otimes_A\bullet+1})}_p)}
    \]
    is an equivalence.
  \item \label{mod--desc--eff-desc--ht}
    For $A \to B$ an evenly free map of $\E_\infty$-rings with $\cir$-action and $M$ an $\cir$-equivariant module, the canonical maps
    \begin{align*}
     &\fil^\star_{\ev/A,\h\cir}(M) \to \lim_{\Delta} {\fil^\star_{\ev/
     B^{\otimes_A\bullet+1},\h\cir}(M\otimes_AB^{\otimes_A\bullet+1})}, \\
      &\fil^\star_{\ev/A,\tate\cir}(M) \to \lim_{\Delta} {\fil^\star_{\ev/
       B^{\otimes_A\bullet+1},\tate\cir}(M\otimes_AB^{\otimes_A\bullet+1})}
    \end{align*}
    are equivalences. For $A \to B$ a $p$-completely evenly free map of $p$-complete $\E_\infty$-rings with $\cir$-action and $M$ a $p$-complete $\cir$-equivariant module, the canonical maps
    \begin{align*}
     &\fil^\star_{\ev/A,p,\h\cir}(M) \to \lim_{\Delta} {\fil^\star_{\ev/
       \cpl{(B^{\otimes_A\bullet+1})}_p,p,\h\cir}(\cpl{(M\otimes_AB^{\otimes_A\bullet+1})}_p)},
       \\
      &\fil^\star_{\ev/A,p,\tate\cir}(M) \to \lim_{\Delta} {\fil^\star_{\ev/
        \cpl{(B^{\otimes_A\bullet+1})}_p,p,\tate\cir}(\cpl{(M\otimes_AB^{\otimes_A\bullet+1})}_p)},
      \end{align*}
    are equivalences.
  \item \label{mod--desc--eff-desc--tc}
    For $A \to B$ a $p$-completely evenly free map of $p$-typical cyclotomic $\E_\infty$-rings and $M$ a cyclotomic $A$-module, the canonical map
    \[
      \fil^\star_{\ev/A,p,\TC}(M) \to \lim_{\Delta} {\fil^\star_{\ev/
       \cpl{(B^{\otimes_A\bullet+1})}_p,p,\TC}(\cpl{(M\otimes_AB^{\otimes_A\bullet+1})}_p)}
    \]
    is an equivalence.
  \end{enumerate}
\end{corollary}

From \cref{mod--desc--eff-desc} we obtain \cref{mod--desc--exhaustivity,mod--desc--compute-tc-as-fiber,mod--desc--novikov} below, just as \cref{ev--exh--main,ev--desc--compute-tc-as-fiber,ev--desc--novikov} were obtained from \cref{ev--desc--eff-desc}.

\begin{definition}
  \label{mod--desc--descent}
  For $A \to B$ a map of $\E_\infty$-rings and $M$ an $A$-module, we say that $A \to B$ \emph{satisfies descent for $M$} (resp. \emph{satisfies $p$-complete descent for $M$}) if the canonical map $M \to \lim_\Delta M \otimes_A B^{\otimes_A \bullet+1}$ (resp. the canonical map $\smash{\cpl{M}_p \to \lim_\Delta \cpl{(M \otimes_A B^{\otimes_A \bullet+1})}_p}$) is an equivalence.

  For $A \to B$ a map of $\cir$-equivariant $\E_\infty$-rings and $M$ an $\cir$-equivariant $A$-module, we say that $A \to B$ \emph{satisfies descent for $M$} (resp. \emph{satisfies $p$-complete descent for $M$}) if the underlying map of $\E_\infty$-rings does, and we say that it \emph{satisfies Tate descent for $M$} (resp. \emph{satisfies $p$-complete Tate descent for $M$}) if the canonical map $M^{\tate\cir} \to \lim_\Delta (M \otimes_A B^{\otimes_A\bullet+1})^{\tate\cir}$ (resp. the canonical map $(\cpl{M}_p)^{\tate\cir} \to \lim_\Delta (\cpl{(M \otimes_A B^{\otimes_A\bullet+1})}_p)^{\tate\cir}$) is an equivalence.
\end{definition}

\begin{remark}
  \label{mod--desc--descent-examples}
  Let $A \to B$ be a map of connective $\E_\infty$-rings and let $M$ be a connective $A$-module. If $A \to B$ is $1$-connective, or faithfully flat in the sense of \cite[Definition D.4.4.1]{sag}, or a composition of such maps, then $A \to B$ satisfies descent for $M$ and $p$-complete descent for $M$; and the same can be said for Tate descent and $p$-complete Tate descent, when $A \to B$ and $M$ are $\cir$-equivariant. The arguments of \cref{ev--exh--connective-descent,ev--exh--ff-descent,ev--exh--universal} may be adapted straightforwardly to prove these statements.
\end{remark}

\begin{corollary}
  \label{mod--desc--exhaustivity}
  \begin{enumerate}[leftmargin=*]
  \item \label{mod--desc--exhaustivity--plain}
    Let $A$ be an $\E_\infty$-ring and let $M$ be an $A$-module. Suppose that there exists a map of $\E_\infty$-rings $A \to B$ that is eff and satisfies descent for $M$ (resp. $p$-completely eff and satisfies $p$-complete descent for $M$) and where $B$ is even (resp. $\cpl{B}_p$ is even and has bounded $p$-power torsion). Then the map $M \to \colim(\fil^\star_{\ev/A}(M))$ (resp. the map $\cpl{M}_p \to \colim(\fil^\star_{\ev/A,p}(\cpl{M}_p))$) is an equivalence.
  \item \label{mod--desc--exhaustivity--h}
    Let $A$ be an $\cir$-equivariant $\E_\infty$-ring and let $M$ be an $\cir$-equivariant $A$-module. Suppose that there exists a map of $\cir$-equivariant $\E_\infty$-rings $A \to B$ that is eff and satisfies descent for $M$ (resp. $p$-completely eff and satisfies $p$-complete descent for $M$) and where $B$ is even (resp. $\cpl{B}_p$ is even and has bounded $p$-power torsion). Then the map $\smash{M^{\h\cir} \to \colim(\fil^\star_{\ev/A,\h\cir}(M))}$ (resp. the map $\smash{(\cpl{M}_p)^{\h\cir} \to \colim(\fil^\star_{\ev/A,p,\h\cir}(\cpl{M}_p))}$) is an equivalence.
  \item \label{mod--desc--exhaustivity--t}
    Let $A$ be an $\cir$-equivariant $\E_\infty$-ring and let $M$ be an $\cir$-equivariant $A$-module. Suppose that there exists a map of $\cir$-equivariant $\E_\infty$-rings $A \to B$ that is eff and satisfies Tate descent for $M$ (resp. $p$-completely eff and satisfies $p$-complete Tate descent for $M$) and where $B$ is even (resp. $\cpl{B}_p$ is even and has bounded $p$-power torsion). Then the map $\smash{M^{\tate\cir} \to \colim(\fil^\star_{\ev/A,\tate\cir}(M))}$ (resp. the map $\smash{(\cpl{M}_p)^{\tate\cir} \to \colim(\fil^\star_{\ev/A,p,\tate\cir}(\cpl{M}_p))}$) is an equivalence.
  \end{enumerate}
\end{corollary}

\begin{corollary}
  \label{mod--desc--compute-tc-as-fiber}
  Let $A$ be a $p$-typical cyclotomic $\mathbb{E}_{\infty}$-ring, and suppose that there exists a $p$-completely evenly free map of such $A \to B$ where $B$ is even and has bounded $p$-power torsion. Let $M$ be an $A$-module in $\CycSpt_p$. Then the cyclotomic Frobenius and canonical maps $\varphi, \can : M^{\h\cir} \to M^{\tate\cir}$ refine to maps $\varphi,\can: \fil^{\star}_{\ev/A,p,\h\cir}(M) \to \fil^{\star}_{\ev/A,p,\tate\cir}(M)$, naturally in $(A,M)$, and there is a canonical equivalence
  \[
    \fil^\star_{\ev/A,p,\TC}(M) \simeq
    \fib(\varphi - \can: \fil^{\star}_{\ev/A,p,\h\cir}(M) \to 
    \fil^{\star}_{\ev/A,p,\tate\cir}(M)).
  \]
\end{corollary}

\begin{corollary}[Novikov descent]
  \label{mod--desc--novikov}
  The following statements hold:
  \begin{enumerate}[leftmargin=*]
    \item For $A$ an $\E_\infty$-ring and $M$ an $A$-module, the canonical map
    \[
      \fil^\star_{\ev/A}(M) \to \lim_{\Delta} {\fil^\star_{\ev/A \otimes \MU^{\bullet+1}}(M \otimes \MU^{\otimes \bullet+1})}
    \]
    is an equivalence. For $A$ a $p$-complete $\E_\infty$-ring and $M$ a $p$-complete $A$-module, the canonical map
    \[
      \fil^\star_{\ev/A,p}(M) \to \lim_{\Delta} {\fil^\star_{\ev/\cpl{(A \otimes \MU^{\bullet+1})}_p,p}(\cpl{(M \otimes \MU^{\otimes \bullet+1})}_p)}
    \]
    is an equivalence.
    \item For $A$ an $\E_\infty$-ring with $\cir$-action and $M$ an $\cir$-equivariant $A$-module, the canonical maps
    \begin{align*}
      &\fil^\star_{\ev/A,\h\cir}(M) \to \lim_{\Delta} {\fil^\star_{\ev/A \otimes \MU^{\otimes \bullet+1},\h\cir}(M \otimes \MU^{\otimes \bullet+1})}, \\
      &\fil^\star_{\ev/A,\tate\cir}(M) \to \lim_{\Delta} {\fil^\star_{\ev/A \otimes \MU^{\otimes \bullet+1},\tate\cir}(M \otimes \MU^{\otimes \bullet+1})}
    \end{align*}
    are equivalences. For $A$ a $p$-complete $\E_\infty$-ring with $\cir$-action and $M$ a $p$-complete $\cir$-equivariant $A$-module, the canonical maps
    \begin{align*}
      &\fil^\star_{\ev/A,p,\h\cir}(M) \to \lim_{\Delta} {\fil^\star_{\ev/\cpl{(A \otimes \MU^{\otimes \bullet+1})}_p,p,\h\cir}(\cpl{(M \otimes \MU^{\otimes \bullet+1})}_p)},\\
      &\fil^\star_{\ev/A,p,\tate\cir}(M) \to \lim_{\Delta} {\fil^\star_{\ev/\cpl{(A \otimes \MU^{\otimes \bullet+1})}_p,p,\tate\cir}(\cpl{(M \otimes \MU^{\otimes \bullet+1})}_p)}
    \end{align*}
    are equivalences, where $\MU$ is considered to have trivial $\cir$-action.
    \item For $A$ a $p$-typical cyclotomic $\mathbb{E}_{\infty}$-ring and $M$ a cyclotomic $A$-module, the canonical map
    \[
      \fil^\star_{\ev/A,p,\TC}(M) \to \lim_{\Delta} {\fil^\star_{\ev/\cpl{(A \otimes \MU^{\otimes \bullet+1})}_p,p,\TC}(\cpl{(M \otimes \MU^{\otimes \bullet+1})}_p)}
    \]
    is an equivalence, where $\MU$ is considered to have trivial cyclotomic structure.
  \end{enumerate}
\end{corollary}

 \begin{example} \label{example:BPn} Consider any $\mathbb{E}_1$-$\MU$-algebra $R$, and note that $\THH(R)$ is a $\THH(\MU)$-module.
In \cref{ell-example}, it is observed that $\THH(\MU) \to \MU$ is evenly free.  It follows that 
 \[\fil^{\star}_{\ev/\THH(\MU)}\THH(R) \iso \lim_{\Delta}\left( \tau_{\ge 2\star}\THH(R/\MU^{\otimes \bullet+1}) \right).\] 
When $R=\mathrm{BP}\langle n\rangle$, this filtration was studied by the first and third authors in \cite[\textsection 6.1]{HahnWilson}.
\end{example}

\end{appendix}

\printbibliography

@article{WilsonThesis,
    AUTHOR = {Wilson, W. Stephen},
     TITLE = {The {$\Omega $}-spectrum for {B}rown-{P}eterson cohomology.
              {I}},
   JOURNAL = {Comment. Math. Helv.},
  FJOURNAL = {Commentarii Mathematici Helvetici},
    VOLUME = {48},
      YEAR = {1973},
     PAGES = {45--55; corrigendum, ibid. 48 (1973), 194},
       DOI = {10.1007/BF02566110},

}

@article {WilsonII,
    AUTHOR = {Wilson, W. Stephen},
     TITLE = {The {$\Omega $}-spectrum for {B}rown-{P}eterson cohomology.
              {II}},
   JOURNAL = {Amer. J. Math.},
  FJOURNAL = {American Journal of Mathematics},
    VOLUME = {97},
      YEAR = {1975},
     PAGES = {101--123},
       DOI = {10.2307/2373662},
}

@article {AntieauHP,
    AUTHOR = {Antieau, Benjamin},
     TITLE = {Periodic cyclic homology and derived de {R}ham cohomology},
   JOURNAL = {Ann. K-Theory},
  FJOURNAL = {Annals of K-Theory},
    VOLUME = {4},
      YEAR = {2019},
    NUMBER = {3},
     PAGES = {505--519},
       DOI = {10.2140/akt.2019.4.505},
}

@article {BMS,
    AUTHOR = {Bhatt, Bhargav and Morrow, Matthew and Scholze, Peter},
     TITLE = {Topological {H}ochschild homology and integral {$p$}-adic
              {H}odge theory},
   JOURNAL = {Publ. Math. Inst. Hautes \'{E}tudes Sci.},
  FJOURNAL = {Publications Math\'{e}matiques. Institut de Hautes \'{E}tudes
              Scientifiques},
    VOLUME = {129},
      YEAR = {2019},
     PAGES = {199--310},
       DOI = {10.1007/s10240-019-00106-9},
}

@article {MillerTelescope,
    AUTHOR = {Miller, Haynes R.},
     TITLE = {On relations between {A}dams spectral sequences, with an
              application to the stable homotopy of a {M}oore space},
   JOURNAL = {J. Pure Appl. Algebra},
  FJOURNAL = {Journal of Pure and Applied Algebra},
    VOLUME = {20},
      YEAR = {1981},
    NUMBER = {3},
     PAGES = {287--312},
       DOI = {10.1016/0022-4049(81)90064-5},
}

@book {AdamsBlue,
    AUTHOR = {Adams, J. F.},
     TITLE = {Stable homotopy and generalised homology},
    SERIES = {Chicago Lectures in Mathematics},
 PUBLISHER = {University of Chicago Press, Chicago, Ill.-London},
      YEAR = {1974},
}

@article {MahowaldTelescope,
    AUTHOR = {Mahowald, Mark},
     TITLE = {The image of {$J$} in the {$EHP$} sequence},
   JOURNAL = {Ann. of Math. (2)},
  FJOURNAL = {Annals of Mathematics. Second Series},
    VOLUME = {116},
      YEAR = {1982},
    NUMBER = {1},
     PAGES = {65--112},
       DOI = {10.2307/2007048},
}

@article {HopkinsLawson,
    AUTHOR = {Hopkins, Michael J. and Lawson, Tyler},
     TITLE = {Strictly commutative complex orientation theory},
   JOURNAL = {Math. Z.},
  FJOURNAL = {Mathematische Zeitschrift},
    VOLUME = {290},
      YEAR = {2018},
    NUMBER = {1-2},
     PAGES = {83--101},
       DOI = {10.1007/s00209-017-2009-6},
}

@article {AdamsPriddy,
    AUTHOR = {Adams, J. F. and Priddy, S. B.},
     TITLE = {Uniqueness of {$B{\rm SO}$}},
   JOURNAL = {Math. Proc. Cambridge Philos. Soc.},
  FJOURNAL = {Mathematical Proceedings of the Cambridge Philosophical
              Society},
    VOLUME = {80},
      YEAR = {1976},
    NUMBER = {3},
     PAGES = {475--509},
       DOI = {10.1017/S0305004100053111},
}

@online {BL,
    title={Absolute prismatic cohomology},
    author={Bhargav Bhatt and Jacob Lurie},
    year={2022},
    eprinttype={arxiv},
    eprint={2201.06120},
    eprintclass={math.AG},
}

@incollection {BhattICM,
    AUTHOR = {Bhatt, Bhargav},
     TITLE = {Algebraic geometry in mixed characteristic},
 BOOKTITLE = {I{CM}---{I}nternational {C}ongress of {M}athematicians. {V}ol.
              2. {P}lenary lectures},
     PAGES = {712--748},
 PUBLISHER = {EMS Press, Berlin},
      YEAR = {2023},
}

@online {BhattDR,
    title={Completions and derived de Rham cohomology},
    author={Bhargav Bhatt},
    year={2012},
    eprinttype={arxiv},
    eprint={1207.6193},
    eprintclass={math.AG},
}

@inproceedings {RognesICM,
    AUTHOR = {Rognes, John},
     TITLE = {Algebraic {$K$}-theory of strict ring spectra},
 BOOKTITLE = {Proceedings of the {I}nternational {C}ongress of
              {M}athematicians---{S}eoul 2014. {V}ol. {II}},
     PAGES = {1259--1283},
 PUBLISHER = {Kyung Moon Sa, Seoul},
      YEAR = {2014},
}

@article {HahnWilson,
    AUTHOR = {Hahn, Jeremy and Wilson, Dylan},
     TITLE = {Redshift and multiplication for truncated {B}rown-{P}eterson
              spectra},
   JOURNAL = {Ann. of Math. (2)},
  FJOURNAL = {Annals of Mathematics. Second Series},
    VOLUME = {196},
      YEAR = {2022},
    NUMBER = {3},
     PAGES = {1277--1351},
       DOI = {10.4007/annals.2022.196.3.6},
}

@article {LiuWang,
    AUTHOR = {Liu, Ruochuan and Wang, Guozhen},
     TITLE = {Topological cyclic homology of local fields},
   JOURNAL = {Invent. Math.},
  FJOURNAL = {Inventiones Mathematicae},
    VOLUME = {230},
      YEAR = {2022},
    NUMBER = {2},
     PAGES = {851--932},
       DOI = {10.1007/s00222-022-01134-9},
}

@online{Zpn,
    title={On the K-theory of {$\mathbb{Z}/p^n$} -- announcement},
    author={Benjamin Antieau and Achim Krause and Thomas Nikolaus},
    year={2022},
    eprinttype={arxiv},
    eprint={2204.03420},
    eprintclass={math.KT},
}

@article {NikolausKrause,
    AUTHOR = {Krause, Achim and Nikolaus, Thomas},
     TITLE = {B\"okstedt periodicity and quotients of {DVR}s},
   JOURNAL = {Compos. Math.},
  FJOURNAL = {Compositio Mathematica},
    VOLUME = {158},
      YEAR = {2022},
    NUMBER = {8},
     PAGES = {1683--1712},
       DOI = {10.1112/s0010437x22007655},
}

@article {Cmot,
    AUTHOR = {Gheorghe, Bogdan and Isaksen, Daniel C. and Krause, Achim and
              Ricka, Nicolas},
     TITLE = {{$\mathbb{C}$}-motivic modular forms},
   JOURNAL = {J. Eur. Math. Soc. (JEMS)},
  FJOURNAL = {Journal of the European Mathematical Society (JEMS)},
    VOLUME = {24},
      YEAR = {2022},
    NUMBER = {10},
     PAGES = {3597--3628},
       DOI = {10.4171/jems/1171},
}

@online{Rmot,
    title={Galois reconstruction of Artin-Tate {$\mathbb{R}$}-motivic spectra},
    author={Robert Burklund and Jeremy Hahn and Andrew Senger},
    year={2022},
    eprinttype={arxiv},
    eprint={2010.10325},
    eprintclass={math.AT},
}

@article {HahnYuan,
    AUTHOR = {Hahn, Jeremy and Yuan, Allen},
     TITLE = {Exotic multiplications on periodic complex bordism},
   JOURNAL = {J. Topol.},
  FJOURNAL = {Journal of Topology},
    VOLUME = {13},
      YEAR = {2020},
    NUMBER = {4},
     PAGES = {1839--1852},
       DOI = {10.1112/topo.12169},
}

@article {Pstragowski,
    AUTHOR = {Pstr{\k{a}}gowski, Piotr},
     TITLE = {Synthetic spectra and the cellular motivic category},
   JOURNAL = {Invent. Math.},
  FJOURNAL = {Inventiones Mathematicae},
    VOLUME = {232},
      YEAR = {2023},
    NUMBER = {2},
     PAGES = {553--681},
       DOI = {10.1007/s00222-022-01173-2},
}

@article {BeilinsonSquare,
    AUTHOR = {Antieau, Benjamin and Mathew, Akhil and Morrow, Matthew and
              Nikolaus, Thomas},
     TITLE = {On the {B}eilinson fiber square},
   JOURNAL = {Duke Math. J.},
  FJOURNAL = {Duke Mathematical Journal},
    VOLUME = {171},
      YEAR = {2022},
    NUMBER = {18},
     PAGES = {3707--3806},
       DOI = {10.1215/00127094-2022-0037},
}

@online{EllipticTC,
    title={Algebraic {K}-theory of elliptic cohomology},
    author={Gabriel Angelini-Knoll and Christian Ausoni and Dominic Culver and Eva H{\"o}ning and John Rognes},
    year={2023},
    eprinttype={arxiv},
    eprint={2204.05890},
    eprintclass={math.AT},
}

@article {Sulyma,
    AUTHOR = {Sulyma, Yuri J. F.},
     TITLE = {Floor, ceiling, slopes, and {$K$}-theory},
   JOURNAL = {Ann. K-Theory},
  FJOURNAL = {Annals of K-Theory},
    VOLUME = {8},
      YEAR = {2023},
    NUMBER = {3},
     PAGES = {331--354},
       DOI = {10.2140/akt.2023.8.331},
}

@article {MoravaKTC,
    AUTHOR = {Ausoni, Christian and Rognes, John},
     TITLE = {Algebraic {$K$}-theory of the first {M}orava {$K$}-theory},
   JOURNAL = {J. Eur. Math. Soc. (JEMS)},
  FJOURNAL = {Journal of the European Mathematical Society (JEMS)},
    VOLUME = {14},
      YEAR = {2012},
    NUMBER = {4},
     PAGES = {1041--1079},
       DOI = {10.4171/JEMS/326},
}

@unpublished{sverre-thesis,
	AUTHOR= {Sverre {L}un{\o}e{-}{N}ielsen},
	TITLE = "{The Segal conjecture for topological Hochschild homology of commutative S-algebras}",
	YEAR = 2005,
	URL = {https://www.mn.uio.no/math/personer/vit/rognes/theses/lunoe-nielsen-thesis.pdf},
}

@article {ChadwickMandell,
    AUTHOR = {Chadwick, Steven Greg and Mandell, Michael A.},
     TITLE = {{$E_n$} genera},
   JOURNAL = {Geom. Topol.},
  FJOURNAL = {Geometry \& Topology},
    VOLUME = {19},
      YEAR = {2015},
    NUMBER = {6},
     PAGES = {3193--3232},
       DOI = {10.2140/gt.2015.19.3193},
}

@article {SpecialFiber,
    AUTHOR = {Gheorghe, Bogdan and Wang, Guozhen and Xu, Zhouli},
     TITLE = {The special fiber of the motivic deformation of the stable
              homotopy category is algebraic},
   JOURNAL = {Acta Math.},
  FJOURNAL = {Acta Mathematica},
    VOLUME = {226},
      YEAR = {2021},
    NUMBER = {2},
     PAGES = {319--407},
       DOI = {10.4310/acta.2021.v226.n2.a2},
}

@article {AusoniRognes,
    AUTHOR = {Ausoni, Christian and Rognes, John},
     TITLE = {Algebraic {$K$}-theory of topological {$K$}-theory},
   JOURNAL = {Acta Math.},
  FJOURNAL = {Acta Mathematica},
    VOLUME = {188},
      YEAR = {2002},
    NUMBER = {1},
     PAGES = {1--39},
       DOI = {10.1007/BF02392794},
}

@article {LawsonTate,
    AUTHOR = {Lawson, Tyler},
     TITLE = {Unwinding the relative {T}ate diagonal},
   JOURNAL = {J. Topol.},
  FJOURNAL = {Journal of Topology},
    VOLUME = {14},
      YEAR = {2021},
    NUMBER = {2},
     PAGES = {674--699},
       DOI = {10.1112/topo.12195},
}

@article {NikolausScholze,
    AUTHOR = {Nikolaus, Thomas and Scholze, Peter},
     TITLE = {On topological cyclic homology},
   JOURNAL = {Acta Math.},
  FJOURNAL = {Acta Mathematica},
    VOLUME = {221},
      YEAR = {2018},
    NUMBER = {2},
     PAGES = {203--409},
       DOI = {10.4310/ACTA.2018.v221.n2.a1},
}

@article {MorelVoevodsky,
    AUTHOR = {Morel, Fabien and Voevodsky, Vladimir},
     TITLE = {{${\bf A}^1$}-homotopy theory of schemes},
   JOURNAL = {Inst. Hautes \'{E}tudes Sci. Publ. Math.},
  FJOURNAL = {Institut des Hautes \'{E}tudes Scientifiques. Publications
              Math\'{e}matiques},
    NUMBER = {90},
      YEAR = {1999},
     PAGES = {45--143 (2001)},
       URL = {http://www.numdam.org/item?id=PMIHES_1999__90__45_0},
}

@incollection {v29exists,
    AUTHOR = {Behrens, Mark and Pemmaraju, Satya},
     TITLE = {On the existence of the self map {$v^9_2$} on the
              {S}mith-{T}oda complex {$V(1)$} at the prime 3},
 BOOKTITLE = {Homotopy theory: relations with algebraic geometry, group
              cohomology, and algebraic {$K$}-theory},
    SERIES = {Contemp. Math.},
    VOLUME = {346},
     PAGES = {9--49},
 PUBLISHER = {Amer. Math. Soc., Providence, RI},
      YEAR = {2004},
       DOI = {10.1090/conm/346/06284},
}

@article {AusoniTHH,
    AUTHOR = {Ausoni, Christian},
     TITLE = {Topological {H}ochschild homology of connective complex
              {$K$}-theory},
   JOURNAL = {Amer. J. Math.},
  FJOURNAL = {American Journal of Mathematics},
    VOLUME = {127},
      YEAR = {2005},
    NUMBER = {6},
     PAGES = {1261--1313},
       URL =
              {http://muse.jhu.edu/journals/american_journal_of_mathematics/v127/127.6ausoni.pdf},
}

@article {THHlko,
    AUTHOR = {Angeltveit, Vigleik and Hill, Michael A. and Lawson, Tyler},
     TITLE = {Topological {H}ochschild homology of {$\ell$} and {$ko$}},
   JOURNAL = {Amer. J. Math.},
  FJOURNAL = {American Journal of Mathematics},
    VOLUME = {132},
      YEAR = {2010},
    NUMBER = {2},
     PAGES = {297--330},
       DOI = {10.1353/ajm.0.0105},
}

@article {BakerRichter,
    AUTHOR = {Baker, Andrew and Richter, Birgit},
     TITLE = {On the {$\Gamma$}-cohomology of rings of numerical polynomials
              and {$E_\infty$} structures on {$K$}-theory},
   JOURNAL = {Comment. Math. Helv.},
  FJOURNAL = {Commentarii Mathematici Helvetici. A Journal of the Swiss
              Mathematical Society},
    VOLUME = {80},
      YEAR = {2005},
    NUMBER = {4},
     PAGES = {691--723},
       DOI = {10.4171/CMH/31},
}

@article {McClureStaffeldt,
    AUTHOR = {McClure, J. E. and Staffeldt, R. E.},
     TITLE = {On the topological {H}ochschild homology of {$b{\rm u}$}. {I}},
   JOURNAL = {Amer. J. Math.},
  FJOURNAL = {American Journal of Mathematics},
    VOLUME = {115},
      YEAR = {1993},
    NUMBER = {1},
     PAGES = {1--45},
       DOI = {10.2307/2374721},
}

@article {AngeltveitRognes,
    AUTHOR = {Angeltveit, Vigleik and Rognes, John},
     TITLE = {Hopf algebra structure on topological {H}ochschild homology},
   JOURNAL = {Algebr. Geom. Topol.},
  FJOURNAL = {Algebraic \& Geometric Topology},
    VOLUME = {5},
      YEAR = {2005},
     PAGES = {1223--1290},
       DOI = {10.2140/agt.2005.5.1223},
}

@article {BCS,
    AUTHOR = {Blumberg, Andrew J. and Cohen, Ralph L. and Schlichtkrull,
              Christian},
     TITLE = {Topological {H}ochschild homology of {T}hom spectra and the
              free loop space},
   JOURNAL = {Geom. Topol.},
  FJOURNAL = {Geometry \& Topology},
    VOLUME = {14},
      YEAR = {2010},
    NUMBER = {2},
     PAGES = {1165--1242},
       DOI = {10.2140/gt.2010.14.1165},
}

@article {RognesMU,
    AUTHOR = {Rognes, John},
     TITLE = {The circle action on topological {H}ochschild homology of
              complex cobordism and the {B}rown-{P}eterson spectrum},
   JOURNAL = {J. Topol.},
  FJOURNAL = {Journal of Topology},
    VOLUME = {13},
      YEAR = {2020},
    NUMBER = {3},
     PAGES = {939--968},
       DOI = {10.1112/topo.12141},
}

@article {ABGHR,
    AUTHOR = {Ando, Matthew and Blumberg, Andrew J. and Gepner, David and
              Hopkins, Michael J. and Rezk, Charles},
     TITLE = {Units of ring spectra, orientations and {T}hom spectra via
              rigid infinite loop space theory},
   JOURNAL = {J. Topol.},
  FJOURNAL = {Journal of Topology},
    VOLUME = {7},
      YEAR = {2014},
    NUMBER = {4},
     PAGES = {1077--1117},
       DOI = {10.1112/jtopol/jtu009},
}

@book {DGM,
    AUTHOR = {Dundas, Bj\o rn Ian and Goodwillie, Thomas G. and McCarthy,
              Randy},
     TITLE = {The local structure of algebraic {K}-theory},
    SERIES = {Algebra and Applications},
    VOLUME = {18},
 PUBLISHER = {Springer-Verlag London, Ltd., London},
      YEAR = {2013},
     PAGES = {xvi+435},
}

@article {RecentTHH,
    AUTHOR = {Mathew, Akhil},
     TITLE = {Some recent advances in topological {H}ochschild homology},
   JOURNAL = {Bull. Lond. Math. Soc.},
  FJOURNAL = {Bulletin of the London Mathematical Society},
    VOLUME = {54},
      YEAR = {2022},
    NUMBER = {1},
     PAGES = {1--44},
       DOI = {10.1112/blms.12558},
}

@book {WilsonSampler,
    AUTHOR = {Wilson, W. Stephen},
     TITLE = {Brown-{P}eterson homology: an introduction and sampler},
    SERIES = {CBMS Regional Conference Series in Mathematics},
    VOLUME = {48},
 PUBLISHER = {Conference Board of the Mathematical Sciences, Washington,
              D.C.},
      YEAR = {1982},
}

@online{sag,
	author={Lurie, Jacob},
	title={Spectral Algebraic Geometry},
	year={2018},
        pagination={none},
        url={https://www.math.ias.edu/~lurie/papers/SAG-rootfile.pdf},
}

@online{ha,
	author={Lurie, Jacob},
	title={Higher Algebra},
	year={2017},
        pagination={none},
	url={http://www.math.ias.edu/~lurie/papers/HA.pdf},
}

@book {htt,
    AUTHOR = {Lurie, Jacob},
     TITLE = {Higher topos theory},
    SERIES = {Annals of Mathematics Studies},
    VOLUME = {170},
 PUBLISHER = {Princeton University Press, Princeton, NJ},
      YEAR = {2009},
       DOI = {10.1515/9781400830558},
}

@article {clausen-mathew,
    AUTHOR = {Clausen, Dustin and Mathew, Akhil},
     TITLE = {Hyperdescent and \'{e}tale {$K$}-theory},
   JOURNAL = {Invent. Math.},
  FJOURNAL = {Inventiones Mathematicae},
    VOLUME = {225},
      YEAR = {2021},
    NUMBER = {3},
     PAGES = {981--1076},
       DOI = {10.1007/s00222-021-01043-3},
}

@online{Morin,
    title={Topological Hochschild homology and Zeta-values},
    author={Baptiste Morin},
    year={2021},
    eprinttype={arxiv},
    eprint={arXiv:2011.11549},
    eprintclass={math.NT},
}

@online{DavidLee,
    title={Integral topological {H}ochschild homology of connective complex K-theory},
    author={{David Jongwon} Lee},
    year={2022},
    eprinttype={arxiv},
    eprint={2206.02411},
    eprintclass={math.AT},
}

@online{LeeBPn,
  title={Uniqueness of $ p $-local truncated Brown-Peterson spectra},
  author={Lee, David Jongwon},
  year={2024},
  eprinttype={arxiv},
  eprint={2405.00889},
  eprintclass={math.AT},
}

@article {BhattMathew,
    AUTHOR = {Bhatt, Bhargav and Mathew, Akhil},
     TITLE = {Syntomic complexes and {$p$}-adic \'etale {T}ate twists},
   JOURNAL = {Forum Math. Pi},
  FJOURNAL = {Forum of Mathematics. Pi},
    VOLUME = {11},
      YEAR = {2023},
     PAGES = {Paper No. e1, 26},
       DOI = {10.1017/fmp.2022.21},
}

@article {MNN,
    AUTHOR = {Mathew, Akhil and Naumann, Niko and Noel, Justin},
     TITLE = {Nilpotence and descent in equivariant stable homotopy theory},
   JOURNAL = {Adv. Math.},
  FJOURNAL = {Advances in Mathematics},
    VOLUME = {305},
      YEAR = {2017},
     PAGES = {994--1084},
       DOI = {10.1016/j.aim.2016.09.027},
}

@article {angeltveit,
    AUTHOR = {Angeltveit, Vigleik},
     TITLE = {Topological {H}ochschild homology and cohomology of
              {$A_\infty$} ring spectra},
   JOURNAL = {Geom. Topol.},
  FJOURNAL = {Geometry \& Topology},
    VOLUME = {12},
      YEAR = {2008},
    NUMBER = {2},
     PAGES = {987--1032},
       DOI = {10.2140/gt.2008.12.987},
}

@article{dirac-i,
	author = {Hesselholt, Lars and Pstr\k{a}gowski, Piotr},
	doi = {10.1007/s42543-023-00072-6},
	journal = {Peking Mathematical Journal},
	title = {Dirac Geometry I: Commutative Algebra},
	year = {2023},
}

@online{bhatt--torsion-completion,
    title         = "{Torsion derived completions are small}",
    author        = "Bhatt, Bhargav",
    pagination    = "none",
    url           = "https://www.math.ias.edu/~bhatt/math/banachOMTderived.pdf",
}

@article {mathew--THH-base-change,
    AUTHOR = {Mathew, Akhil},
     TITLE = {T{HH} and base-change for {G}alois extensions of ring spectra},
   JOURNAL = {Algebr. Geom. Topol.},
  FJOURNAL = {Algebraic \& Geometric Topology},
    VOLUME = {17},
      YEAR = {2017},
    NUMBER = {2},
     PAGES = {693--704},
       DOI = {10.2140/agt.2017.17.693},
}

@article {mathew-thick,
    AUTHOR = {Mathew, Akhil},
     TITLE = {A thick subcategory theorem for modules over certain ring
              spectra},
   JOURNAL = {Geom. Topol.},
  FJOURNAL = {Geometry \& Topology},
    VOLUME = {19},
      YEAR = {2015},
    NUMBER = {4},
     PAGES = {2359--2392},
       DOI = {10.2140/gt.2015.19.2359},
}

@article {mccarthy-minasian--HKR,
    AUTHOR = {McCarthy, Randy and Minasian, Vahagn},
     TITLE = {H{KR} theorem for smooth {$S$}-algebras},
   JOURNAL = {J. Pure Appl. Algebra},
  FJOURNAL = {Journal of Pure and Applied Algebra},
    VOLUME = {185},
      YEAR = {2003},
    NUMBER = {1-3},
     PAGES = {239--258},
       DOI = {10.1016/S0022-4049(03)00089-6},
}

@online {liu-zheng--six-functors,
    title={Enhanced six operations and base change theorem for higher Artin stacks},
    author={Yifeng Liu and Weizhe Zheng},
    year={2017},
    eprinttype={arxiv},
    eprint={2201.06120},
    eprintclass={math.AG},
}

\end{document}